\newtheorem{thm}{Theorem}[section]
\newtheorem{defi}{Definition}[section]
\newtheorem{lem}{Lemma}[section]
\theoremstyle{notation}
\newtheorem*{notation}{Notation}
\newcommand{\R}{\mathbb{R}}
\newcommand{\C}{\mathbb{C}}
\numberwithin{equation}{section}
\newcommand{\N}{\mathbb{N}}
\newcommand{\Z}{\mathbb{Z}}
\newcommand{\eps}{\epsilon}
\newcommand{\wto}{\rightharpoonup}
\newcommand{\vertiii}[1]{{\left\vert\kern-0.25ex\left\vert\kern-0.25ex\left\vert #1
\right\vert\kern-0.25ex\right\vert\kern-0.25ex\right\vert}}
\newcommand{\leqnomode}{\tagsleft@true}
\newcommand{\reqnomode}{\tagsleft@false}
\begin{document}

\reqnomode

\title{Existence and dynamics of normalized solutions to nonlinear Schr\"{o}dinger equations with mixed fractional Laplacians}
\author[L. Chergui, T. Gou, H. Hajaiej]{Lassaad Chergui, Tianxiang Gou, Hichem Hajaiej}

\address{Lassaad Chergui,
\newline \indent Department of Mathematics, College of Science and Arts in Uglat Asugour,
\newline \indent Qassim University,  Buraydah, Kingdom of Saudia Arabia.
\newline \indent Department of Mathematics, College of Science in Bizerte,
\newline \indent 7021 Jarzouna, Carthage University, Tunis, Tunisia.}
\email{L.CHERGUI@qu.edu.sa}

\address{Tianxiang Gou
\newline \indent School of Mathematics and Statistics, Xi'an Jiaotong University,
\newline \indent Xi'an, Shaanxi 710049, People's Republic of China.}
\email{tianxiang.gou@xjtu.edu.cn}

\address{Hajaiej Hichem,
\newline \indent Department of Mathematics, College of Natural Sciences, California State University,
\newline \indent 5151 State Drive, 90032 Los Angeles, California, USA.}
\email{hhajaie@calstatela.edu}

\thanks{ {\bf Data Availability Statements}: The manuscript has no associated data.}
\thanks{{Acknowlegments}: Some of the results developed in the part dealing with the Cauchy problem in this paper have been used in L. Chergui's very recently accepted paper \cite{LC}. He unintentionally did not cite this paper.}

\begin{abstract} In this paper,
we are concerned with the existence and dynamics of solutions to the equation with mixed fractional Laplacians
$$
(-\Delta)^{s_1} u +(-\Delta)^{s_2} u + \lambda u=|u|^{p-2} u
$$
under the constraint
$$
\int_{\R^N} |u|^2 \, dx=c>0,
$$
where $N \geq 1$, $0<s_2<s_1<1$, $2+ \frac {4s_1}{N} \leq p< \infty $ if $N \leq 2s_1$, $2+ \frac {4s_1}{N} \leq p<\frac{2N}{N-2s_1}$ if $N >2s_1$, $\lambda \in \R$ appearing as Lagrange multiplier is unknown. The fractional Laplacian $(-\Delta)^s$ is characterized as $\mathcal{F}((-\Delta)^{s}u)(\xi)=|\xi|^{2s} \mathcal{F}(u)(\xi)$ for $\xi \in \R^N$, where $\mathcal{F}$ denotes the Fourier transform.  First we establish the existence of ground state solutions and the multiplicity of bound state solutions. Then we study dynamics of solutions to the Cauchy problem for the associated time-dependent equation. Moreover, we establish orbital instability of ground state solutions.

\medskip
{\noindent \textsc{Keywords}:} Mixed fractional Laplacians; Normalized solutions; Well-posedness; Orbital instability.

\medskip
{\noindent \textsc{AMS subject classifications:}} 35J50, 35Q55, 35B40, 35R11.

\end{abstract}

\maketitle

\tableofcontents

\section{Introduction and main results}

In this paper, we are interested in the existence and dynamics of solutions to the following equation with mixed fractional Laplacians,
\begin{align} \label{fequ}
(-\Delta)^{s_1} u +(-\Delta)^{s_2} u + \lambda u=|u|^{p-2} u,
\end{align}
under the constraint
\begin{align} \label{mass}
\int_{\R^N} |u|^2 \, dx=c>0,
\end{align}
where $N \geq 1$, $0<s_2<s_1<1$, $2+ \frac {4s_1}{N} \leq p< \infty $ if $N \leq 2s_1$, $2+ \frac {4s_1}{N} \leq p< \frac{2N}{N-2s_1}$ if $N >2s_1$, $\lambda \in \R$ appearing as Lagrange multiplier is unknown. The fractional Laplacian $(-\Delta)^s$ is characterized as $\mathcal{F}((-\Delta)^{s}u)(\xi)=|\xi|^{2s} \mathcal{F}(u)(\xi)$ for $\xi \in \R^N$, where $\mathcal{F}$ denotes the Fourier transform. The equation \eqref{fequ} arises from the study of standing waves to the time-dependent equation
\begin{align}\label{evolv pb0}
\left\{
\begin{aligned}
&i\partial_t \psi -(-\Delta)^{s_1} \psi -(-\Delta)^{s_2}\psi =-|\psi|^{p-2}\psi, \\
&\psi(0,x)=\psi_0(x), \quad x \in \R^N,
\end{aligned}
\right.
\end{align}
where $N\geq 1$, $0<s_2<s_1<1$, $2+ \frac {4s_1}{N} \leq p< \infty $ if $N \leq 2s_1$ and $2+ \frac {4s_1}{N} \leq p<\frac{2N}{N-2s_1}$ if $N >2s_1$. Here standing waves to \eqref{evolv pb0} are solutions of the form
$$
\psi(t, x)=e^{i\lambda t} u(x), \quad \lambda \in \R.
$$
It is obvious to see that standing wave $\psi$ is a solution to \eqref{evolv pb0} if and only if $u$ is a solution to \eqref{fequ}.

The equation \eqref{evolv pb0} appears in many fields and has been increasingly attracting the attention of scientists in recent years due to its numerous and important applications. It models many biological phenomena like describing the diffusion in an ecological niche subject to nonlocal dispersals. In the niche, the population is following a certain process so that if an individual exist the niche, it must come to the niche right away by selecting the return point according to the underlying stochastic process. This results in an equation involving mixed fractional Laplacians. The mixed operators are the outcome of the superposition of two long-range L\'evy processes or a classical Brownian motion and a long-range process. The population diffuses according to two or more types of nonlocal dispersals, modeled by L\'evy flights and encoded by two or more fractional Laplacians with two different powers, see \cite{VS} for more detailed accounts. The sum and the difference of two or more fractional Laplacians appear in many other fields, we refer the reader to page 2 of \cite{CBH} and the references therein for more details.

Note that any solution $\psi \in C([0, T), H^{s_1}(\R^N))$ to \eqref{evolv pb0} conserves the mass along time, i.e.
$$
\|\psi(t)\|_2=\|\psi_0\|_2, \quad \forall\,\, t \in [0, T).
$$
The mass often admits a clear physical meaning, for instance it represents the power supply in nonlinear optics or the total number of atoms in Bose-Einstein condensation. Therefore, from a physical point of view, it is interesting to explore standing waves to \eqref{evolv pb0} with prescribed $L^2$-norm. This then leads to the study of solutions to \eqref{fequ}-\eqref{mass}. Such solutions are often called normalized solutions to \eqref{fequ}. In this scenario, the parameter $\lambda \in \R$ is unknown and to be determined as Lagrange multiplier. Here we shall focus on normalized solutions to \eqref{fequ}. It is standard to check that any solution $u \in H^{s_1}(\R^N)$ to \eqref{fequ}-\eqref{mass} corresponds to a critical point of the functional
$$
E(u):=\frac{1}{2} \int_{\R^N} |(-\Delta)^{\frac{s_1}{2}} u|^2\,dx + \frac{1}{2} \int_{\R^N} |(-\Delta)^{\frac{s_2}{2}} u|^2\,dx-\frac{1}{p} \int_{\R^N}|u|^p\,dx
$$
restricted on the constraint
$$
S(c):=\left\{u \in H^{s_1}(\R^N) : \int_{\R^N}|u|^2 dx =c\right\}.
$$

When $2<p<2 + \frac{4s_1}{N}$, by using Gagliardo-Nirenberg inequality \eqref{gn}, we find that $E$ restricted on $S(c)$ is bounded from below for any $c>0$. Therefore, we are able to introduce the following minimization problem,
\begin{align} \label{gmin111}
m(c):=\inf_{u \in S(c)} E(u).
\end{align}
Apparently, minimizers to \eqref{gmin111} are solutions to \eqref{fequ}-\eqref{mass}. In this case, the authors in \cite{HL} established the existence of minimizers to \eqref{gmin111}. However, when $p \geq 2 + \frac{4s_1}{N}$, the study of solutions to \eqref{fequ}-\eqref{mass} is open so far. The aim of the present paper is to make some contributions towards this direction.

Firstly, we shall consider the existence of solutions to \eqref{fequ}-\eqref{mass} for the case $p=2 + \frac{4s_1}{N}$. In this case, by utilizing Gagliardo-Nirenberg inequality \eqref{gn}, we have the following result.

\begin{thm} \label{thm1}
Let $N \geq 1$, $0<s_2<s_1<1$ and $p=2+\frac{4s_1}{N}$. Then there exists a constant $c_{N, s_1}>0$ such that
\begin{align*}
m(c)=\left\{
\begin{aligned}
&0, \qquad 0<&c\leq c_{N, s_1},\\
&-\infty, \qquad & c>c_{N, s_1}.
\end{aligned}
\right.
\end{align*}
In addition, for any $0<c \leq  c_{N, s_1}$, $m(c)$ is not attained and there exists no solutions to \eqref{fequ}-\eqref{mass}, where $c_{N,s_1}>0$ is given by
$$
c_{N, s_1}:=\left(\frac{N+2s_1}{N C_{N, s_1}}\right)^{\frac{N}{2s_1}}
$$
and $C_{N, s_1}=C_{N,p,s_1}>0$ is the optimal constant in \eqref{gn} for $p=2+\frac{4s_1}{N}$.
\end{thm}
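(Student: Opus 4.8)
The plan is to argue entirely by ``soft'' means: no concentration--compactness is needed, only the Gagliardo--Nirenberg inequality \eqref{gn} together with the mass-preserving dilation $u_\theta(x):=\theta^{N/2}u(\theta x)$ and careful bookkeeping of the sharp constant. First I would record that at the critical exponent $p=2+\tfrac{4s_1}{N}$ the power of $\|(-\Delta)^{s_1/2}u\|_2$ in \eqref{gn} is exactly $2$, so \eqref{gn} reads $\|u\|_p^p\le C_{N,s_1}\,\|(-\Delta)^{s_1/2}u\|_2^2\,\|u\|_2^{4s_1/N}$, and that $\tfrac{N(p-2)}{2}=2s_1$; hence, writing $A_i:=\|(-\Delta)^{s_i/2}u\|_2^2$ and $B:=\|u\|_p^p$, for every $u\in S(c)$
\[
E(u_\theta)=\theta^{2s_1}\Bigl(\tfrac12A_1-\tfrac1pB\Bigr)+\frac{\theta^{2s_2}}{2}A_2 .
\]
Since $0<s_2<s_1$, the $\theta^{2s_1}$-term dominates the $\theta^{2s_2}$-term as $\theta\to+\infty$ and is dominated by it as $\theta\to0^+$, while $E(u_\theta)\to0$ as $\theta\to0^+$. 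Finally I would note the arithmetic $\tfrac p2=\tfrac{N+2s_1}{N}$ and $C_{N,s_1}c_{N,s_1}^{2s_1/N}=\tfrac{N+2s_1}{N}$, so that $\tfrac1pC_{N,s_1}c_{N,s_1}^{2s_1/N}=\tfrac12$.

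For the lower bound when $0<c\le c_{N,s_1}$: \eqref{gn} gives $B\le C_{N,s_1}c^{2s_1/N}A_1$ on $S(c)$, hence $E(u)\ge(\tfrac12-\tfrac1pC_{N,s_1}c^{2s_1/N})A_1+\tfrac12A_2$; for $c\le c_{N,s_1}$ the bracket is $\ge0$, so $E\ge0$ on $S(c)$ and $m(c)\ge0$. Moreover $u\in S(c)$ has $\|u\|_2^2=c>0$, so it is not identically zero, whence $A_1,A_2>0$ (because $|\xi|^{2s_i}>0$ for $\xi\ne0$), and the same inequality upgrades to $E(u)>0$ for every $u\in S(c)$: strictly from the first term when $c<c_{N,s_1}$, and from $\tfrac12A_2$ when $c=c_{N,s_1}$. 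To identify $m(c)$ I would then fix any $u\in S(c)$ and let $\theta\to0^+$ in the displayed identity to get $m(c)\le0$, hence $m(c)=0$; non-attainment is immediate, since $E(u^\star)=m(c)=0$ for some $u^\star\in S(c)$ would contradict $E>0$ on $S(c)$.

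For the case $c>c_{N,s_1}$: I would take an optimizer (or a near-optimizer sequence) for \eqref{gn} and rescale it by a multiplicative constant so that it lies on $S(c)$; this leaves unchanged the scale-invariant quotient $\|v\|_p^p/(\|(-\Delta)^{s_1/2}v\|_2^2\|v\|_2^{4s_1/N})$, which on $S(c)$ equals $B(v)/(A_1(v)c^{2s_1/N})$, so one obtains $v\in S(c)$ with $B(v)/A_1(v)$ as close as desired to $C_{N,s_1}c^{2s_1/N}$. When $c>c_{N,s_1}$ one has $C_{N,s_1}c^{2s_1/N}>\tfrac p2$, so such a $v$ satisfies $\tfrac12A_1(v)-\tfrac1pB(v)<0$; then $E(v_\theta)=\theta^{2s_1}(\tfrac12A_1(v)-\tfrac1pB(v))+\tfrac{\theta^{2s_2}}{2}A_2(v)\to-\infty$ as $\theta\to+\infty$, since $s_1>s_2$. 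Hence $m(c)=-\infty$.

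For the non-existence of solutions when $0<c\le c_{N,s_1}$ (which needs a separate argument, a solution not being a priori a minimizer): if $u\in S(c)$ solves \eqref{fequ}--\eqref{mass} then $u$ is a critical point of $E|_{S(c)}$, and since the constraint is invariant under $\theta\mapsto u_\theta$ one gets $\tfrac{d}{d\theta}E(u_\theta)|_{\theta=1}=0$, i.e. the Pohozaev-type identity $s_1A_1+s_2A_2=\tfrac{2s_1}{p}B$, equivalently $B=\tfrac p2A_1+\tfrac{ps_2}{2s_1}A_2$. Combining this with $B\le C_{N,s_1}c^{2s_1/N}A_1\le\tfrac p2A_1$ (valid precisely because $c\le c_{N,s_1}$) forces $\tfrac{ps_2}{2s_1}A_2\le0$, hence $A_2=0$, hence $u\equiv0$, contradicting $u\in S(c)$. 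The only step that requires genuine care is this Pohozaev identity: computing $\tfrac{d}{d\theta}E(u_\theta)|_{\theta=1}$ and pairing $E'(u)$ with $\partial_\theta u_\theta|_{\theta=1}$ needs the usual regularity and decay of $H^{s_1}$-solutions of the nonlocal equation, but this is standard (and it also follows from the Lagrange relation $E'(u)=-\lambda u$ together with $\tfrac{d}{d\theta}\|u_\theta\|_2^2|_{\theta=1}=0$). Everything else is elementary scaling plus sharp-constant bookkeeping, so I do not expect any real obstacle.
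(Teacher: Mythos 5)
Your proposal is correct and follows essentially the same route as the paper: the sharp-constant bookkeeping with \eqref{gn}, the mass-preserving dilation to obtain $m(c)\le 0$ and $m(c)=-\infty$ for $c>c_{N,s_1}$ (the paper uses a genuine Gagliardo--Nirenberg optimizer via $w=c^{1/2}u/\|u\|_2$ in \eqref{defw}--\eqref{scaling1}; your near-optimizer variant works just as well and is marginally more robust), and the Pohozaev identity $Q(u)=0$ combined with \eqref{gn} to rule out solutions for $0<c\le c_{N,s_1}$, which is exactly the computation \eqref{criticalineq}. The one genuine, if small, deviation is your non-attainment step: you note that $E(u)\ge \tfrac12\|(-\Delta)^{s_2/2}u\|_2^2>0$ for every $u\in S(c)$ when $c\le c_{N,s_1}$, so the level $m(c)=0$ cannot be attained; the paper instead applies the Pohozaev identity to a putative minimizer. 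Your variant is more elementary and cleanly decouples non-attainment from non-existence. One caution: the identity $s_1A_1+s_2A_2=\tfrac{2s_1}{p}B$ that you invoke for non-existence is precisely the paper's Lemma \ref{pohozaev}, and its rigorous proof is not just the formal computation $\frac{d}{d\theta}E(u_\theta)\big|_{\theta=1}=0$, nor the pairing of $E'(u)=-\lambda u$ with $\partial_\theta u_\theta\big|_{\theta=1}$, because $\tfrac N2 u+x\cdot\nabla u$ is not an admissible $H^{s_1}$ test direction for a general solution; the paper proves it in the Appendix through the Caffarelli--Silvestre type extension with a truncation and limiting argument along suitable radii. You correctly flag this as the delicate point, but be aware it is a real lemma requiring work beyond the scaling heuristic, and it is the only substantive input your sketch leaves unproved.
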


From Theorem \ref{thm1}, we see that $E$ restricted on $S(c)$ is unbounded from below for any $c>c_{N,s_1}$. This then suggests that it is unlikely to take advantage of \eqref{gmin111} to seek for solutions to \eqref{fequ}-\eqref{mass} for any $c>c_{N,s_1}$. This is also the case when $p>2+\frac{4s_1}{N}$. Indeed, for any $u \in S(c)$ and $t >0$, we define
$$
u_t(x):=t^{\frac N 2} u(tx), \quad x \in \R.
$$
By straightforward calculations, then $\|u_t\|_2=\|u\|_2$ and
\begin{align}\label{scaling}
E(u_t)=\frac{t^{2s_1}}{2} \int_{\R^N} |(-\Delta)^{\frac{s_1}{2}} u|^2\,dx + \frac{t^{2s_2}}{2} \int_{\R^N} |(-\Delta)^{\frac{s_2}{2}} u|^2\,dx-\frac{t^{\frac{N}{2}(p-2)}}{p} \int_{\R^N}|u|^p\,dx,
\end{align}
from which we conclude that $E(u_t) \to -\infty$ as $t \to \infty$, because of $p>2+\frac{4s_1}{N}$. Then there holds that $m(c)=-\infty$ for any $c>0$. In such a situation, deriving the existence of solutions to \eqref{fequ}-\eqref{mass}, we need to introduce the following minimization problem,
\begin{align} \label{min}
\gamma(c):=\inf_{u \in P(c)} E(u),
\end{align}
where $P(c)$ is the so-called Pohozaev manifold defined by
$$
P(c):=\{u \in S(c) : Q(u)=0\}
$$
and
$$
Q(u):=\frac{d}{dt}E(u_t)\mid_{t=1}=s_1\int_{\R^N} |(-\Delta)^{\frac{s_1}{2}} u|^2\,dx + s_2 \int_{\R^N} |(-\Delta)^{\frac{s_2}{2}} u|^2\,dx-\frac{N(p-2)}{2p} \int_{\R^N}|u|^p\,dx.
$$
Here $Q(u)=0$ is the Pohozaev identity associated to solutions of \eqref{fequ}-\eqref{mass}, see Lemma \ref{pohozaev}.

To further state the existence results for the case $p \geq 2+\frac{4s_1}{N}$, we define a constant $c_0 \geq 0$ by $c_0=c_{N, s_1}$ if $p=2+\frac{4s_1}{N}$ and $c_0=0$ if $p>2+\frac{4s_1}{N}$, where $c_{N,s_1}>0$ is the constant determined in Theorem \ref{thm1}

\begin{thm}\label{thm2}
Let $N \geq 1$, $0<s_2<s_1<1$ and $p \geq 2+\frac{4s_1}{N}$. Then there exists a constant $c_1>c_0$ such that, for any $c_0<c<c_1$, \eqref{fequ}-\eqref{mass} has a ground state solution $u_c \in S(c)$ satisfying $E(u_c)=\gamma(c)$. In particular, if $N=1$ and $2s_2 \geq 1$ or $N \geq 1$, $2s_2<N$ and $2< p \leq \frac{2N}{N-2s_2}$, then $c_1=\infty$
\end{thm}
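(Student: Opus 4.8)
The strategy is the standard Jeanjean-type variational scheme for $L^2$-constrained problems in the mass-supercritical regime, adapted to the presence of the two fractional operators. The core idea is that although $E$ is unbounded from below on $S(c)$, it is bounded below (and coercive, in an appropriate sense) on the Pohozaev manifold $P(c)$, so $\gamma(c)$ is a legitimate minimization problem whose minimizers, if they exist, are ground state solutions. First I would establish the basic geometry: for $u\in S(c)$, the fiber map $t\mapsto E(u_t)$ given by \eqref{scaling} has, for $c$ not too large, a unique critical point $t(u)>0$ which is a strict global maximum, and $u_{t(u)}\in P(c)$; moreover $Q(u)=0$ forces the term $\int|u|^p$ to be controlled by the kinetic terms, so via the Gagliardo-Nirenberg inequality \eqref{gn} one gets a lower bound $\gamma(c)>0$ and a coercivity estimate: any minimizing sequence on $P(c)$ is bounded in $H^{s_1}(\R^N)$. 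The smallness of $c$ is exactly where the constant $c_1$ enters: the constraint $c<c_1$ (with $c_1>c_0$) guarantees the quadratic-vs-nonlinear balance needed for the monotonicity of the fiber map and for $\gamma(c)>0$; when $p=2+4s_1/N$ one must additionally stay above $c_0=c_{N,s_1}$ by Theorem \ref{thm1}, and when $p>2+4s_1/N$ one has $c_0=0$.

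**The compactness step.** Next I would take a minimizing sequence $\{u_n\}\subset P(c)$ for $\gamma(c)$; by Ekeland's variational principle one may assume it is also a Palais-Smale sequence for $E$ restricted to $S(c)$, with $Q(u_n)\to 0$. Boundedness in $H^{s_1}$ is from the previous step. The delicate point is ruling out vanishing and dichotomy. To prevent vanishing one shows $\liminf_n \int_{\R^N}|u_n|^p\,dx>0$ — this follows because if it vanished then $Q(u_n)\to 0$ would force the kinetic energies to zero, contradicting $\|u_n\|_2^2=c$ together with the fractional Gagliardo-Nirenberg inequality (an $L^2$ bound plus vanishing $\dot H^{s_1}$ seminorm is incompatible with fixed mass $c$ in the relevant range). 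So after translation $u_n\rightharpoonup u\neq 0$ in $H^{s_1}$. Then one needs strong convergence: using the weak limit $u$ satisfies \eqref{fequ} for some $\lambda$, one derives the Pohozaev identity for $u$, compares $E(u)$ and $\gamma(\|u\|_2^2)$ with $\gamma(c)$, and invokes strict subadditivity $\gamma(c)<\gamma(c_1')+\gamma(c-c_1')$ for $0<c_1'<c$ to exclude the strict loss of mass — this is where I expect the main obstacle to lie, because the two seminorms scale differently under dilation, so the usual scaling proof of strict subadditivity ($\gamma(\theta c)<\theta\gamma(c)$ for $\theta>1$) does not go through verbatim; one has to argue more carefully with the two-term fiber map, possibly using that on $P(c)$ one has $E=\max_t E(u_t)$ and exploiting the strict concavity/coercivity in $t$ to get the needed strict inequality. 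Once strict subadditivity holds, $\|u\|_2^2=c$, hence $u_n\to u$ strongly in $L^2$ and then (by the PS property and the equation) in $H^{s_1}$, so $E(u)=\gamma(c)$ and $u$ is a ground state.

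**Handling the Lagrange multiplier and the sign of $\lambda$.** One must also check $u\in P(c)$ a posteriori and that the multiplier comes out with the right sign so that $u\in S(c)$ genuinely (no part of the mass escapes to a soliton at a different frequency); combining $Q(u)=0$ with the equation tested against $u$ and against the Pohozaev scaling yields $\lambda>0$ in the stated range, which also matches the standing-wave interpretation. The minimality $E(u_c)=\gamma(c)$ together with the fact that every solution of \eqref{fequ}-\eqref{mass} lies on $P(c)$ (Lemma \ref{pohozaev}) gives the ground-state property.

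**The improvement $c_1=\infty$.** Finally, for the special cases ($N=1$, $2s_2\geq 1$, or $2s_2<N$ with $2<p\leq 2N/(N-2s_2)$), the point is that one can now also control the nonlinearity by the \emph{lower-order} seminorm $\int|(-\Delta)^{s_2/2}u|^2\,dx$ through a second Gagliardo-Nirenberg inequality with exponent $s_2$, valid precisely because $p$ is subcritical (or critical) with respect to $s_2$ as well. This extra estimate removes the upper restriction on $c$: the monotonicity of the fiber map, the positivity $\gamma(c)>0$, and the boundedness of minimizing sequences all persist for every $c>c_0$, because the "bad" large-$c$ regime was only bad when only the $s_1$-seminorm was available to absorb $\int|u|^p$. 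So in those cases one reruns the whole argument with $c_1=+\infty$. I expect this part to be essentially a bookkeeping variant of the main proof, with the genuinely new content confined to invoking the second GN inequality at the right places.
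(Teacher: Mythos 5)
Your overall architecture (minimize $E$ on the Pohozaev manifold, get a bounded Palais--Smale sequence, exclude vanishing, pass to a nontrivial weak limit solving the equation) matches the paper, but the step you yourself flag as the ``main obstacle'' is a genuine gap, and it is resolved in the paper by a different mechanism than the one you propose. You invoke strict subadditivity $\gamma(c)<\gamma(c_1')+\gamma(c-c_1')$ to rule out loss of mass, and correctly observe that the usual scaling proof fails because the two seminorms $\|(-\Delta)^{s_1/2}u\|_2^2$ and $\|(-\Delta)^{s_2/2}u\|_2^2$ scale differently; you then leave this unproved. The paper never proves (or uses) subadditivity. Instead it shows: (a) $c\mapsto\gamma(c)$ is nonincreasing on $(c_0,\infty)$ (Lemma \ref{nonincreasing}, via a cut-off of $u$ plus a far-away small bump carrying the extra mass); (b) any constrained solution at level $\gamma(c)$ has Lagrange multiplier $\lambda_c>0$ precisely for $c_0<c<c_1$ (Lemma \ref{lagrange}); and (c) $\lambda>0$ forces $\gamma$ to be \emph{strictly} decreasing there (Lemmas \ref{decreasing}, \ref{ladecreasing}). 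Then, after the Brezis--Lieb splitting $w_n=u_n-u(\cdot-y_n)$ with $Q(w_n)=o_n(1)$ and $E(w_n)\geq o_n(1)$, one gets $\gamma(\|u\|_2^2)=\gamma(c)$, and the strict monotonicity gives $\|u\|_2^2=c$. This also shows that your account of where $c_1$ comes from is off: in the paper the fiber-map geometry, the bound $\gamma(c)>0$ and the coercivity on $P(c)$ hold for \emph{all} $c>c_0$ with no upper restriction; the threshold $c_1$ enters only through the sign of the Lagrange multiplier, and the special cases ($N=1$, $2s_2\geq1$, or $2<p\leq\frac{2N}{N-2s_2}$) give $c_1=\infty$ because then $\frac{2ps_2}{N(p-2)}\geq1$ makes $\lambda_c>0$ for every $c$ in \eqref{lc}, not because a second Gagliardo--Nirenberg estimate rescues coercivity for large $c$.

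Two further points need repair. First, obtaining the Palais--Smale sequence: Ekeland applied to a minimizing sequence on $P(c)$ gives, a priori, a PS sequence for $E$ constrained to $P(c)$, i.e.\ with an extra multiplier attached to $Q$; you cannot simply ``assume it is also a Palais--Smale sequence for $E$ restricted to $S(c)$'' without showing $P(c)$ is a natural constraint. The paper handles this by working with the functional $F(u)=\max_{t>0}E(u_t)$ and Ghoussoub's minimax principle on homotopy-stable families (Lemmas \ref{monotonicity}, \ref{ps}, \ref{pss}), which produces a PS sequence for $E|_{S(c)}$ already located on $P(c)$; some argument of this type (or an explicit proof that the $Q$-multiplier vanishes) is required. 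Second, your exclusion of vanishing is misjustified: fixed $L^2$ mass is perfectly compatible with vanishing $\dot H^{s_1}$ seminorm (dilate $u_t$ with $t\to0^+$), so the claimed contradiction does not exist. The correct argument, as in the paper, is that vanishing would give $\|u_n\|_p=o_n(1)$ by Lions' lemma, hence $E(u_n)=o_n(1)$ since $Q(u_n)=0$, contradicting $E(u_n)\to\gamma(c)>0$ from Lemma \ref{coercive}. This is an easy fix, but as written the step is wrong.
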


To prove Theorem \ref{thm2}, the essential argument is to demonstrate that $P(c)$ is a natural constraint, by which we can obtain a Palais-Smale sequence belonging to $P(c)$ for $E$ restricted on $S(c)$ at the level $\gamma(c)$ for any $c>c_0$. Later, by using the fact that $E$ restricted on $P(c)$ is coercive, then the Palais-Smale sequence is bounded in $H^{s_1}(\R^N)$. Finally, by verifying that the function $c \mapsto \gamma(c)$ is nonincreasing on $(c_0, \infty)$ and the associated Lagrange multiplier $\lambda_c$ is positive for any $c_0<c<c_1$, then the compactness of the Palais-Smale sequence in $H^{s_1}(\R^N)$ follows. This completes the proof.

\begin{thm} \label{thm6}
Let $N \geq 1$, $0<s_2<s_1<1$ and $p \geq 2+\frac{4s_1}{N}$. If $u \in S(c)$ is a ground state solution to \eqref{fequ}-\eqref{mass} at the level $\gamma(c)$, then $u$ admits the form $e^{i \theta} |u_c|$ for some $\theta \in \mathbb{S}^1$, where $|u_c| \geq 0$ is radially symmetric and nonincreasing up to translations.
\end{thm}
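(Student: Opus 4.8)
The plan is to replace $u$ by the nonnegative function $|u|$, then to symmetrize, showing at each stage that the energy cannot strictly drop and extracting the equality cases of the inequalities used; the rigidity of those equality cases then pins down the form of $u$. Throughout I would use the structure of the fibering map $t\mapsto E(v_t)$ coming from \eqref{scaling} and exploited in the proof of Theorem \ref{thm2}: since $p\ge 2+\tfrac{4s_1}{N}$, for every $v\in S(c)$ with $Q(v)\le 0$ this map is strictly increasing and then strictly decreasing, its maximum is attained at the unique $t_v\in(0,1]$ for which $v_{t_v}\in P(c)$, and $t_v=1$ exactly when $v\in P(c)$; in particular $\max_{t>0}E(v_t)=E(v)$ whenever $v\in P(c)$. \emph{Step 1 (reduction to $|u|$).} Given a ground state $u$ (so $u\in P(c)$ by Lemma \ref{pohozaev} and $E(u)=\gamma(c)$), one has $|u|\in S(c)$, $\||u|\|_p=\|u\|_p$, and, by $\bigl||u(x)|-|u(y)|\bigr|\le|u(x)-u(y)|$ together with the double-integral representation of $\|(-\Delta)^{s/2}f\|_2^2$, $\|(-\Delta)^{s_i/2}|u|\|_2\le\|(-\Delta)^{s_i/2}u\|_2$ for $i=1,2$; hence $Q(|u|)\le Q(u)=0$ and $E((|u|)_t)\le E(u_t)$ for all $t>0$. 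Letting $t_1\in(0,1]$ be the value with $(|u|)_{t_1}\in P(c)$, we get $\gamma(c)\le E((|u|)_{t_1})\le E(u_{t_1})\le\max_{t>0}E(u_t)=E(u)=\gamma(c)$, so $E((|u|)_{t_1})=E(u_{t_1})$; cancelling the equal $L^p$-terms and using that each fractional Dirichlet term of $|u|$ is $\le$ that of $u$ forces $\|(-\Delta)^{s_i/2}|u|\|_2=\|(-\Delta)^{s_i/2}u\|_2$, $i=1,2$, whence $Q(|u|)=0$, $E(|u|)=\gamma(c)$, i.e.\ $|u|$ is itself a ground state.

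\emph{Step 2 (constant phase).} The equalities $\|(-\Delta)^{s_i/2}|u|\|_2=\|(-\Delta)^{s_i/2}u\|_2$ force $\bigl||u(x)|-|u(y)|\bigr|=|u(x)-u(y)|$, equivalently $u(x)\overline{u(y)}\ge 0$, for a.e.\ $(x,y)\in\R^N\times\R^N$. Writing $u=\rho\,e^{i\varphi}$ with $\rho\ge 0$ and applying Fubini on $\{\rho>0\}$ (which has positive measure since $\|u\|_2^2=c>0$) shows $\varphi$ is a.e.\ equal to a constant $\theta$ there, so $u=e^{i\theta}|u|$ with $e^{i\theta}\in\mathbb{S}^1$.

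\emph{Step 3 (radial symmetry up to translation).} By Steps 1--2 it suffices to prove the nonnegative ground state $w:=|u|$ is, after a translation, radially symmetric and nonincreasing. Let $w^{\ast}$ be the symmetric decreasing rearrangement of $w$: then $\|w^{\ast}\|_q=\|w\|_q$ for all $q\in[1,\infty]$ (so $w^{\ast}\in S(c)$ with the same $L^p$-norm), and the fractional P\'olya--Szeg\H{o} inequality gives $\|(-\Delta)^{s_i/2}w^{\ast}\|_2\le\|(-\Delta)^{s_i/2}w\|_2$. Running Step 1 verbatim with $(w^{\ast},w)$ in place of $(|u|,u)$ (using $w\in P(c)$ from Step 1) yields $\|(-\Delta)^{s_i/2}w^{\ast}\|_2=\|(-\Delta)^{s_i/2}w\|_2$ for $i=1,2$, i.e.\ equality in the fractional P\'olya--Szeg\H{o} inequality; from this one deduces $w$ agrees a.e.\ with a translate of $w^{\ast}$, which together with Step 2 completes the proof.

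The hard part is the equality case of the fractional P\'olya--Szeg\H{o} inequality invoked at the end of Step 3. One route is the characterization of equality in the Riesz rearrangement inequality (Burchard), the kernel $|x-y|^{-N-2s}$ being strictly radially decreasing, together with a non-degeneracy input: since $w$ solves \eqref{fequ} with a multiplier $\lambda_c$, fractional elliptic regularity makes $w$ continuous, the strong maximum principle for $(-\Delta)^{s_1}+(-\Delta)^{s_2}$ makes it strictly positive, and its positive level sets are negligible -- enough for the rigidity to apply. A route that I expect to be cleaner in practice is to discard rearrangement and run the direct method of moving planes on \eqref{fequ}, using $\lambda_c>0$ and the decay of $w$ at infinity to obtain radial symmetry and monotonicity of $w$ about some point directly. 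Either way this is the heaviest ingredient; Steps 1--2 are routine once the fibering-map machinery of Theorem \ref{thm2} is in hand.
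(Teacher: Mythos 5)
Your proposal is correct and follows essentially the same route as the paper: pass to $|u|$, use the fibering map and the Pohozaev constraint to force equality of both fractional Dirichlet seminorms (hence $|u|\in P(c)$ with $E(|u|)=\gamma(c)$ and a constant phase), and then repeat the argument with the symmetric-decreasing rearrangement to reduce everything to the equality case of the fractional P\'olya--Szeg\H{o} inequality. The only difference is at the step you flag as the heaviest ingredient: the paper does not prove that rigidity via Burchard's theorem or moving planes, it simply invokes \cite[Proposition 3]{FSS} to conclude that $|u|$ is a translate of a radially symmetric nonincreasing function, so your Steps 1--3 together with that citation already constitute the published proof.
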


The proof of Theorem \ref{thm6} is principally based on the variational characteristics of ground state solutions to \eqref{fequ}-\eqref{mass} and P\'olya-Szeg\"o inequality for fractional Laplacian.

\begin{thm} \label{thm3}
Let $N \geq 2$ and $0<s_2<s_1<1$.
\begin{enumerate}
\item [$(\textnormal{i})$] If $p>2+\frac{4s_1}{N}$, then, for any $0<c<c_1$, \eqref{fequ}-\eqref{mass} has infinitely many radially symmetric solutions $\{u_k\} \subset H^{s_1}(\R^N)$ satisfying $E(u_{k+1}) \geq E(u_k)>0$ and $E(u_k) \to \infty$ as $k \to \infty$, where $c_1>0$ is the constant determined in Theorem \ref{thm2}.
\item [$(\textnormal{ii})$] If $p=2+\frac{4s_1}{N} \leq \frac{2N}{N-2s_2}$, then, for any $k \in \N^+$, there exists a constant $c_k>c_{N,s_1}$ such that, for any $c>c_k$, \eqref{fequ}-\eqref{mass} has at least $k$ radially symmetric solutions in $H^{s_1}(\R^N)$.
\end{enumerate}
\end{thm}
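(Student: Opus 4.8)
The proof of both parts rests on an equivariant minimax scheme on the $L^2$-sphere of radial functions, so I begin with the common preliminaries. Put $H^{s_1}_{\mathrm{rad}}:=\{u\in H^{s_1}(\R^N):u\text{ is radial}\}$, $S_r(c):=S(c)\cap H^{s_1}_{\mathrm{rad}}$ and $P_r(c):=P(c)\cap H^{s_1}_{\mathrm{rad}}$. Since $N\ge 2$ forces $2s_1<N$ and $p<\frac{2N}{N-2s_1}$ in both cases, the embedding $H^{s_1}_{\mathrm{rad}}(\R^N)\hookrightarrow L^q(\R^N)$ is compact for all $2<q<\frac{2N}{N-2s_1}$, in particular for $q=p$; moreover, by the principle of symmetric criticality, any critical point of $E$ restricted to $S_r(c)$ solves \eqref{fequ}-\eqref{mass}. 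The set $S_r(c)$ is a complete $C^1$ Hilbert manifold invariant under $u\mapsto-u$ and $E$ is an even $C^1$ functional on it, so the Krasnoselskii genus $\gamma$ and the associated Lusternik--Schnirelmann minimax values are available.

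Consider first part~(i), where $p>2+\frac{4s_1}{N}$. I would use the fibering map from \eqref{scaling}: for $u\in S_r(c)$ the function $t\mapsto E(u_t)$, a combination of $t^{2s_1}$, $t^{2s_2}$ with positive coefficients and $-t^{N(p-2)/2}$ with $N(p-2)/2>2s_1$, has a unique critical point $t(u)>0$, a strict global maximum, with $u_{t(u)}\in P_r(c)$. Hence $I_c(u):=\max_{t>0}E(u_t)=E(u_{t(u)})$ is an even $C^1$ functional on $S_r(c)$ with $I_c\ge\gamma(c)>0$, the assignment $u\mapsto u_{t(u)}$ is an odd homeomorphism of $S_r(c)$ onto $P_r(c)$ conjugating $I_c$ with $E|_{P_r(c)}$, and its critical points coincide with those of $E|_{S_r(c)}$ by the natural-constraint property of $P(c)$ already exploited for Theorem~\ref{thm2}. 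Next I would check that $I_c$ satisfies the Palais--Smale condition at every positive level for $c_0<c<c_1$: a Palais--Smale sequence can be chosen inside $P_r(c)$, hence is bounded in $H^{s_1}_{\mathrm{rad}}$ by the coercivity of $E|_{P(c)}$ proved for Theorem~\ref{thm2}, and compact embedding into $L^p$ together with positivity of the Lagrange multiplier $\lambda_c>0$ (also established there for $c_0<c<c_1$) promotes weak to strong convergence.

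It then remains to run a fountain-type (symmetric mountain-pass) argument. Fixing a Hilbert basis $(e_j)_{j\ge1}$ of $H^{s_1}_{\mathrm{rad}}$ and the tail spaces $Z_k:=\overline{\operatorname{span}}\{e_j:j\ge k\}$, the Gagliardo--Nirenberg quotient bounding $\|u\|_p$ by $\|(-\Delta)^{s_1/2}u\|_2$ and $\|u\|_2$ tends to $0$ on $Z_k$ as $k\to\infty$, a consequence of the compact embedding; choosing $\rho_k\to\infty$ optimally, this makes the infimum of $I_c$ over the scaled spheres $\{u_t:u\in Z_k\cap S_r(c),\ \|(-\Delta)^{s_1/2}u\|_2=\rho_k,\ t>0\}$ diverge to $+\infty$, while on the disks built from $\operatorname{span}\{e_1,\dots,e_k\}$ and the dilations $u_t$ the functional $E$ tends to $-\infty$ at the boundary. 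Equivariant linking of these families, together with the deformation lemma and the Palais--Smale condition, yields critical values $d_k(c)\to+\infty$ of $I_c$, with $d_{k+1}(c)\ge d_k(c)$ after reordering; pulling back through the homeomorphism gives radial solutions $u_k$ of \eqref{fequ}-\eqref{mass} with $E(u_k)=d_k(c)>0$ and $E(u_k)\to\infty$. The main difficulty is exactly the divergence $d_k(c)\to\infty$: since $I_c$ is bounded below a genus-of-sublevel-sets argument is useless, and one must set up the fountain geometry carefully inside $S_r(c)$ --- the normalization of the spheres in $Z_k$, the choice of $\rho_k$, and the intersection property linking the $Z_k$-spheres with the finite-dimensional dilated disks.

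Finally, part~(ii), where $p=2+\frac{4s_1}{N}$ and $p\le\frac{2N}{N-2s_2}$. Here the $t^{2s_1}$-term and the nonlinear term in \eqref{scaling} have the same homogeneity, and a direct computation gives, on $P(c)$,
\[
E(u)=\frac{s_1-s_2}{2s_1}\,\|(-\Delta)^{s_2/2}u\|_2^2\;>\;0 ,
\]
while $P_r(c)\ne\emptyset$ exactly when $c>c_{N,s_1}$, sharpening Theorem~\ref{thm1}. I would minimax $E$ over $P_r(c)$ by genus, setting $\gamma_k(c):=\inf\{\max_{u\in A}E(u):A\subset P_r(c)\text{ compact, symmetric},\ \gamma(A)\ge k\}$. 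The new issue is compactness, because $E|_{P(c)}$ no longer controls $\|(-\Delta)^{s_1/2}u\|_2$: a Palais--Smale sequence in $P_r(c)$ at level $\gamma_k(c)$ has $\|(-\Delta)^{s_2/2}u_n\|_2$ bounded by the displayed identity, hence is bounded in $H^{s_2}_{\mathrm{rad}}$; the hypothesis $p\le\frac{2N}{N-2s_2}$ then bounds $\|u_n\|_p$, and $Q(u_n)\to0$ forces $\|(-\Delta)^{s_1/2}u_n\|_2$ to be bounded as well, after which the usual analysis of the Lagrange multiplier's sign yields strong convergence. Each $\gamma_k(c)$ is then a critical value provided $P_r(c)$ contains a genus-$k$ symmetric set on which $E$ stays below the threshold beyond which Palais--Smale sequences may split; building such a set from $k$ dilated, spatially separated radial bumps whose energies tend to $0$ as the mass grows, this holds for all $c>c_k$ with $c_k$ increasing in $k$, which produces the required $k$ radial solutions. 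The core obstacle here is precisely this loss of $\dot H^{s_1}$-coercivity of $E$ on $P(c)$ in the $L^2$-critical regime: recovering control of the top-order kinetic energy along Palais--Smale sequences from the lower-order term alone (which is what $p\le\frac{2N}{N-2s_2}$ provides), and then quantifying how large $c$ must be so that the $k$-th minimax level remains in the compact range.
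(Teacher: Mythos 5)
Your part (i) is, up to packaging, a viable alternative to what the paper does, and the analytic ingredients you invoke (coercivity of $E$ on $P(c)$, compact embedding $H^{s_1}_{rad}\hookrightarrow L^p$ for $N\ge 2$, positivity of the multiplier for $c_0<c<c_1$) are exactly the ones the paper uses to pass from Palais--Smale sequences to solutions. The difference is in how the diverging levels are produced: the paper does not use a fountain/tail-space structure at all, but works with Krasnosel'skii genus classes $\mathcal{A}_k$ of compact symmetric subsets, shows $\mathcal{A}_k\neq\emptyset$ by projecting finite-dimensional spheres $V_k\cap S(c)$ onto $P_{rad}(c)$ through the fibering map $u\mapsto u_{t_u}$, obtains Palais--Smale sequences located on $P_{rad}(c)$ at the levels $\beta_k(c)$ via Ghoussoub's homotopy-stable-family theorem, and gets $\beta_k(c)\to\infty$ by the argument of Rabinowitz's Proposition 9.33. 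Your route (in the spirit of Bartsch--de Valeriola) can also work, but the equivariant intersection property between the $Z_k$-spheres and the dilated finite-dimensional disks inside $S_r(c)$, which you explicitly defer, is the entire content of that approach; as written it is asserted, not proved, whereas the genus route avoids it altogether.

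In part (ii) there is a genuine gap: you misidentify why $c$ must be large. Since $p=2+\frac{4s_1}{N}\le\frac{2N}{N-2s_2}$ gives $c_1=\infty$, the multiplier is positive for every $c>c_{N,s_1}$, and together with the compact radial embedding this yields the Palais--Smale condition at \emph{every} level; there is no ``threshold beyond which Palais--Smale sequences may split'' to stay below, so requiring the genus-$k$ sets to have small energy is both unnecessary and, if it were necessary, unverified in your sketch. The actual role of $c_k$ is the existence of compact symmetric subsets of $P_{rad}(c)$ of genus $\ge k$: in the mass-critical case the fibering map $t\mapsto E(u_t)$ admits a maximum (so that $u$ can be projected onto $P(c)$) only when
\begin{equation*}
\frac12\int_{\R^N}|(-\Delta)^{\frac{s_1}{2}}u|^2\,dx<\frac1p\int_{\R^N}|u|^p\,dx,
\end{equation*}
and on a $k$-dimensional sphere $V_k\cap S(c)$ this inequality holds uniformly (by equivalence of norms on $V_k$, with constants depending on $k$) only once $c$ is large; this is precisely where $c_k$ comes from in the paper. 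Your ``$k$ separated radial bumps with small energy'' construction does not address this membership condition, and without it you cannot even define the projected genus-$k$ sets over which your $\gamma_k(c)$ is taken; the rest of your compactness discussion for (ii) (the identity $E(u)=\frac{s_1-s_2}{2s_1}\|(-\Delta)^{\frac{s_2}{2}}u\|_2^2$ on $P(c)$ and the recovery of the $\dot H^{s_1}$ bound from $Q=0$ and the $s_2$-Gagliardo--Nirenberg inequality) is correct and matches the paper.
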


To achieve Theorem \ref{thm3}, we shall work in the subspace $H^{s_1}_{rad}(\R^N)$ consisting of radially symmetric functions in $H^{s_1}(\R^N)$. 
By applying the Kranosel'skii genus theory and following the strategies of the proof of Theorem \ref{thm2}, we can complete the proof. It is worth mentioning that the discussion of the compactness of Palais-Smale sequence for $E$ restricted on $S(c)$ becomes somewhat simple in $H^{s_1}_{rad}(\R^N)$, because the embedding $H^{s_1}_{rad}(\R^N) \hookrightarrow L^p(\R^N)$ is compact for any $2<p<\frac{2N}{N-2s_1}$ and $N \geq 2$.


\begin{thm}\label{thm4}
Let $N \geq 1$, $0<s_2<s_1<1$ and $p \geq 2+\frac{4s_1}{N}$.
\begin{enumerate}
\item[$(\textnormal{i})$] The function $c \mapsto \gamma(c)$ is continuous for any $c>c_0$ and it is nonincreasing on $(0, \infty)$. Moreover, $\lim_{c \to c_0^+} \gamma(c)=\infty$.
\item[$(\textnormal{ii})$] The function $c \mapsto \gamma(c)$ is strictly decreasing on $(c_0, c_1)$. Moreover, if $N=1$ and $2s_2 \geq 1$ or $N \geq 1$, $2s_2<N$ and $2 < p < \frac{2N}{N-2s_2}$, then the function $c \mapsto \gamma(c)$ is strictly decreasing on $(0, \infty)$
and $\lim_{c \to \infty} \gamma(c)=0$.
\item[$(\textnormal{iii})$] If $N>\max\left\{2 s_1+2,\frac{2s_1s_2}{s_1-s_2}\right\}$, then there exists a constant $c_{\infty}>0$ such that $\gamma(c)=m$ for any $c \geq c_{\infty}$, where $m>0$ is the ground state energy to \eqref{fequ00}.
\end{enumerate}
\end{thm}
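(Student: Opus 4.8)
The plan is to exploit the fibering structure of $E$ along the scaling $u\mapsto u_t$, together with the variational facts underlying Theorem~\ref{thm2}. Writing $a_i(u):=\int_{\R^N}|(-\Delta)^{s_i/2}u|^2\,dx$ and $b(u):=\int_{\R^N}|u|^p\,dx$, by \eqref{scaling} one has $E(u_t)=\tfrac{t^{2s_1}}{2}a_1(u)+\tfrac{t^{2s_2}}{2}a_2(u)-\tfrac{t^{N(p-2)/2}}{p}b(u)$, and since $N(p-2)/2\ge 2s_1>2s_2$ each $u\in S(c)$ has a unique $t_u>0$ with $u_{t_u}\in P(c)$, with $E(u_{t_u})=\max_{t>0}E(u_t)$ and $\gamma(c)=\inf_{u\in S(c)}\max_{t>0}E(u_t)$; I will also use the $H^{s_1}$-coercivity of $E$ on $P(c)$ and the positivity of $\lambda_c$ on $(c_0,c_1)$ from the proof of Theorem~\ref{thm2}. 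Eliminating $b(u)$ via $Q(u)=0$ gives on $P(c)$ the identity $E(u)=(\tfrac12-\tfrac{2s_1}{N(p-2)})a_1(u)+(\tfrac12-\tfrac{2s_2}{N(p-2)})a_2(u)$, whose coefficients are $\ge0$, the second strictly, and the first strictly unless $p=2+\tfrac{4s_1}{N}$, in which case it vanishes and $E(u)=\tfrac{s_1-s_2}{2s_1}a_2(u)$ on $P(c)$. For part~(i), monotonicity: given $c_0<c<c'$ and $u\in P(c)$, glue to $u$ a spread-out bump of mass $c'-c$ placed far away, $v_\varepsilon=u+\varepsilon^{N/2}\phi(\varepsilon(\cdot-y_\varepsilon))$ with $\varepsilon\to0$ and $|y_\varepsilon|\to\infty$; since $p>2$, the bump's kinetic and $L^p$ energies vanish and the cross terms decouple, so after renormalising to $S(c')$ one gets $E(v_\varepsilon)\to E(u)$ and $Q(v_\varepsilon)\to0$, hence $\gamma(c')\le\liminf_{\varepsilon\to0}\max_{t}E((v_\varepsilon)_t)=E(u)$; taking the infimum over $u\in P(c)$ gives $\gamma(c')\le\gamma(c)$. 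Continuity on $(c_0,\infty)$ then follows from monotonicity plus the test family $\sqrt{c'/c}\,u$ (projected onto $P(c')$) for $\limsup_{c'\to c}\gamma(c')\le\gamma(c)$, and the uniform $H^{s_1}$-boundedness of near-minimisers (from coercivity) for $\gamma(c)\le\liminf_{c'\to c}\gamma(c')$.

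For the limit $\lim_{c\to c_0^+}\gamma(c)=\infty$ I would distinguish the two values of $c_0$. If $p>2+\tfrac{4s_1}{N}$, then $c_0=0$; combining $s_1a_1(u)\le\tfrac{N(p-2)}{2p}b(u)$ on $P(c)$ with \eqref{gn}, whose $a_1$-exponent $\tfrac{N(p-2)}{4s_1}$ exceeds $1$, forces $a_1(u)\ge(\text{const})\,c^{-\kappa}$ with $\kappa>0$, uniformly over $u\in P(c)$, and since the $a_1$-coefficient of $E$ is positive, $\gamma(c)\ge(\text{const})\,c^{-\kappa}\to\infty$. If $p=2+\tfrac{4s_1}{N}$, then $c_0=c_{N,s_1}$; now \eqref{gn} only gives $\tfrac{ps_2}{2s_1}a_2(u)\le\delta(c)a_1(u)$ with $\delta(c)=C_{N,s_1}c^{2s_1/N}-\tfrac p2\to0^+$ as $c\downarrow c_{N,s_1}$, so $a_2(u)/a_1(u)\to0$ uniformly on $P(c)$, and since $\gamma(c)=\tfrac{s_1-s_2}{2s_1}\inf_{P(c)}a_2$, it suffices to show $\inf_{P(c)}a_2\to\infty$. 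I would argue by contradiction: a sequence $u_n\in P(c_n)$, $c_n\downarrow c_{N,s_1}$, with $a_2(u_n)$ bounded becomes, after the dilation normalising $a_1$, a maximising sequence for \eqref{gn}; by the compactness of such sequences (concentration--compactness for the sharp fractional Gagliardo--Nirenberg inequality) it converges, up to translations, to a fixed rescaling of the optimiser, along which $a_2/a_1$ is a positive constant, contradicting $a_2(u_n)/a_1(u_n)\to0$. I expect this concentration analysis to be the most delicate point of the theorem.

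For part~(ii), fix $\bar c\in(c_0,c_1)$ with ground state $\bar u\in P(\bar c)$, so $a_1(\bar u)+a_2(\bar u)+\bar\lambda\bar c=b(\bar u)$ with $\bar\lambda>0$. Setting $h(\delta):=\max_{t>0}E((\sqrt{1+\delta}\,\bar u)_t)$ one has $\gamma((1+\delta)\bar c)\le h(\delta)$ and $h(0)=\gamma(\bar c)$, and, the maximum at $\delta=0$ being attained at $t=1$, the envelope theorem gives $h'(0)=\tfrac12(a_1(\bar u)+a_2(\bar u)-b(\bar u))=-\tfrac12\bar\lambda\bar c<0$; hence $\gamma((1+\delta)\bar c)<\gamma(\bar c)$ for small $\delta>0$, and together with the monotonicity from (i) this yields strict decrease on $(c_0,c_1)$ (if $\gamma$ were constant on some $[c,c']\subset(c_0,c_1)$, applying the above at $\bar c=c$ would contradict constancy). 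Under the extra hypotheses $c_1=\infty$ by Theorem~\ref{thm2}, so the same argument gives strict decrease on all of $(c_0,\infty)$ (which is $(0,\infty)$ when $p>2+\tfrac{4s_1}{N}$), and $\gamma>0$ there by \eqref{gn} and the identity above. Finally, testing $\gamma(c)$ with $u=\sqrt{c/\|\phi\|_2^2}\,\phi$ for fixed $\phi\in H^{s_1}(\R^N)\setminus\{0\}$ and maximising over $t$ gives $\gamma(c)\le\max_{\rho>0}[\tfrac{A^2}{2}(\rho^{2s_1}a_1(\phi)+\rho^{2s_2}a_2(\phi))-\tfrac{A^p}{p}\rho^{N(p-2)/2}b(\phi)]$ with $A^2=c/\|\phi\|_2^2\to\infty$; after the substitution $\rho=A^{-\beta}\eta$ balancing the $s_1$-kinetic and the $L^p$ terms, this maximum is $O(A^{2-2\beta s_1})+O(A^{2-2\beta s_2})$, and exactly in the range $2<p<\tfrac{2N}{N-2s_2}$ (or $N=1$, $2s_2\ge1$) both exponents are negative, so $\gamma(c)\to0$ as $c\to\infty$.

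For part~(iii), under $N>\max\{2s_1+2,\tfrac{2s_1s_2}{s_1-s_2}\}$ the equation \eqref{fequ00} possesses a ground state $w$ of energy $m>0$, and $N>2s_1+2$ forces $w\in L^2(\R^N)$ (through the decay rate of $w$ dictated by the operator); put $c_\infty:=\|w\|_2^2$. For every $c>c_0$ one has $P(c)\subset\{u\ne0:Q(u)=0\}$, and the Pohozaev characterisation of the ground state of \eqref{fequ00} gives $m=\inf\{E(u):u\ne0,\ Q(u)=0\}$, whence $\gamma(c)\ge m$. Conversely, $w$ satisfies $Q(w)=0$ (Lemma~\ref{pohozaev}), so $w\in P(c_\infty)$ and $\gamma(c_\infty)\le E(w)=m$, giving $\gamma(c_\infty)=m$; in particular $c_\infty>c_0$ (when $p=2+\tfrac{4s_1}{N}$ this also follows from $P(c)=\emptyset$ for $c\le c_{N,s_1}$, which \eqref{gn} shows). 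Applying the monotonicity of part~(i) on $[c_\infty,\infty)$ yields $m=\gamma(c_\infty)\ge\gamma(c)\ge m$ for all $c\ge c_\infty$, i.e.\ $\gamma(c)=m$, which is the assertion.
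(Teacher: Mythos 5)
Most of your outline runs parallel to the paper's own proof (the far-away bump for monotonicity, scaled test functions for continuity and for $\gamma(c)\to0$, a derivative-in-$c$ argument at a ground state with $\lambda_c>0$ for strict decrease, and the Pohozaev characterisation of $m$ plus Lemma \ref{nonincreasing} for (iii)). The first genuine problem is your treatment of $\lim_{c\to c_0^+}\gamma(c)=\infty$ in the mass-critical case. Writing $a_i(u)=\|(-\Delta)^{s_i/2}u\|_2^2$ as you do, the ratio $a_2/a_1$ is \emph{not} dilation invariant (it scales like $t^{2s_2-2s_1}$), so after you rescale $u_n$ to normalise $a_1$, convergence of the rescaled sequence to an optimiser with $a_2/a_1>0$ does not contradict $a_2(u_n)/a_1(u_n)\to0$. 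Worse, since $Q(u_n)=0$ gives $b(u_n)\ge \frac{N+2s_1}{N}a_1(u_n)$, \emph{every} $u_n\in P(c_n)$ with $c_n\downarrow c_{N,s_1}$ is a maximising sequence for \eqref{gn}; your argument, taken literally, would therefore show $P(c_n)=\emptyset$, contradicting Lemma \ref{nonempty}. The step can be repaired, but only by actually using the contradiction hypothesis $a_2(u_n)\le M$ to control the dilation parameter (it forces $a_1(u_n)$ bounded, hence $a_2(u_n)\le \delta(c_n)a_1(u_n)\to0$, hence the rescaled $a_2$ tends to $0$, contradicting convergence to the optimiser), and even then you are leaning on compactness modulo symmetries of maximising sequences for the sharp fractional Gagliardo--Nirenberg inequality, a nontrivial input never established here. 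The paper's Lemma \ref{limit1} avoids all of this: if $\gamma(c_n)$ stayed bounded, the minimisers $u_n\in P(c_n)$ from Theorem \ref{thm2} are bounded in $H^{s_1}(\R^N)$ by the coercivity argument of Lemma \ref{coercive}; then $Q(u_n)=0$ and \eqref{gn} give $s_2a_2(u_n)\le s_1\bigl((c_n/c_{N,s_1})^{2s_1/N}-1\bigr)a_1(u_n)\to0$, so $\gamma(c_n)=\frac{s_1-s_2}{2s_1}a_2(u_n)\to0$, which contradicts $\gamma(c_n)\ge\gamma(\bar c)>0$ for a fixed $\bar c$.

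The second gap is in (iii): the assertion that $N>2s_1+2$ ``forces $w\in L^2(\R^N)$ through the decay rate'' is precisely the hard point and cannot be disposed of in a parenthesis, since no decay estimate for ground states of \eqref{fequ00} is at your disposal. In the paper the dimensional restriction enters exactly through the proof of this membership (Lemma \ref{l2}), via the extension problem for the mixed operator, a truncated growing weight, and a weighted Hardy-type inequality whose constants $4/(b-2s_i-1)^2<1$ with $b<N+1$ require $N>2s_1+2$; without this (or an equivalent integrability argument) your $c_\infty=\|w\|_2^2$ is not even defined. The remaining assembly of (iii) ($\gamma(c)\ge m$ from the Pohozaev characterisation, $\gamma(c_\infty)\le E(w)=m$, then Lemma \ref{nonincreasing}) coincides with Lemma \ref{limit3}, and the existence of the zero-mass ground state (which you also assert) is the easier part, via Lemma \ref{embedding} and the compact radial embedding. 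A smaller remark on $\gamma(c)\to0$: your substitution balancing the $s_1$-kinetic and $L^p$ terms degenerates when $p=2+\frac{4s_1}{N}$ (both carry the fiber exponent $2s_1$), so in that case the asymptotics must be done as in Lemma \ref{limit2} (optimiser-based test function, or splitting the $L^p$ term before maximising); the conclusion you state is nevertheless correct under the stated hypotheses.
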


The proofs of the assertions $(\textnormal{i})$ and $(\textnormal{ii})$ of Theorem \ref{thm4} are primarily beneficial from the definition of $\gamma(c)$. To prove the assertion $(\textnormal{iii})$ of Theorem \ref{thm4}, we first need to establish the existence of ground state solutions to the zero mass equation
\begin{align}  \label{fequ00}
(-\Delta)^{s_1} u +(-\Delta)^{s_2} u =|u|^{p-2} u
\end{align}
in a proper Sobolev space $H$ defined by the completion of $C^{\infty}_0(\R^N)$ under the norm
$$
\|u\|_H:=\left(\int_{\R^N}|(-\Delta)^{\frac{s_1}{2}} u|^2 \,dx\right)^{\frac 12}+\left(\int_{\R^N}|(-\Delta)^{\frac{s_2}{2}} u|^2 \,dx\right)^{\frac 12}.
$$
Then we require to show that the solutions belong to $L^2(\R^N)$, see Lemma \ref{l2}.

Let us now mention a few related works with respect to the study of normalized solutions to various nonlinear Schr\"odinger-type equations and systems. For the mass subcritical case, by the well-known Gagliardo-Nirenberg inequality, one derives that the energy functionals restricted on the $L^2$-norm constraints are bounded from below. In this situation, by introducing global minimization problems as the energy functionals restricted on the constraints, one can consider the existence and orbital stability of normalized solutions in the spirit of the Lions concentration compactness principle \cite{Li1, Li}, see for example \cite{AB, CS, CCW, CDSS, CP, G, Gou, GJ2, NW1, NW2, NW3, S} and references therein. Here normalized solutions corresponds to global minimizers.

For the mass critical or supercritical cases, things become quite different and complex. In these cases, the energy functionals restricted on the $L^2$-norm constraints may be unbounded from below, then it is impossible to bring in global minimization problems to investigate the existence of normalized solutions. In this situation, normalized solutions often corresponds to saddle type critical points or local minimizers, the existence of which are guaranteed by minimax arguments. For a long time, the paper \cite{Je} due to Jeanjean is the only one dealing with the existence of normalized solutions when the energy functionals restricted on the constraints are unbounded from below. During recent years,  because of its physical relevance and mathematical importance in theories and applications, the study of normalized solutions has received more attention from researchers, see for example \cite{BMRV, BJS, BS1, BS2, BV, BZZ, BJ, BJT, BCGJ, CJ, GJ1, GZ, HT, JS, JL, LY, NTV2, S1, S2} regarding normalized solutions to equations and systems in $\R^N$ and \cite{NTV1, NTV3, PPVV, PG} regarding normalized solutions to equations and systems in bounded domains.

Now we turn to investigate dynamics of solutions to the Cauchy problem for the time-dependent equation \eqref{evolv pb0}. To do this, we first need to establish the local wellposdness of solutions in $H^{s_1}(\R^N)$, whose proof is mainly based on the contraction mapping principle and improved Strichartz estimates.

\begin{thm}\label{pb wellposedness}
Let $N\geq 2$, $\frac 1 2 <s_2<s_1< 1$ and $2< p<\frac{2N}{N-2s_1}$. Then, for any $\psi_0\in  H_{rad}^{s_1}(\R^N)$, there exist a constant $T:=T(\|\psi_0\|_{H^{s_1}})>0$ and a unique maximal solution $\psi \in C([0, T), H_{rad}^{s_1}(\R^N))$ to \eqref{evolv pb0} satisfiing the alternative: either $T=+\infty$ or $T<+\infty$ and
$$
\lim_{t \to T^{-}}  \|(-\Delta)^{\frac{s_1}{2}} \psi\|_2 =+\infty.
$$
In addition, there holds that
\begin{enumerate}
\item [$(\textnormal{i})$] $\psi \in L_{loc}^{\frac{4s_1p}{N(p-2)}}([0,T),W^{s_1,p}(\R^N))$.
\item  [$(\textnormal{ii})$]The solution $\psi(t)$ satisfies the conservation of the mass and the energy, i.e. $\|\psi(t)\|_2=\|\psi_0\|_2$ and $E(\psi(t))=E(\psi_0)$ for any $t \in [0, T)$.
\item  [$(\textnormal{iii})$]The solution $\psi(t)$ exists globally in time if $p<2+\frac{4s_1}{N}$ or $p=2+\frac{4s_1}{N}$ and
$$
 \|\psi_0\|_2<\left(\frac{N+2s_1}{NC_{N,s_1}}\right)^{\frac{N}{4s_1}},
$$
where $C_{N, s_1}=C_{N,p,s_1}>0$ is the optimal constant appearing in \eqref{gn} for $p=2+\frac{4s_1}{N}$.
\end{enumerate}
\end{thm}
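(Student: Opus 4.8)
The plan is to run a contraction-mapping argument in a Strichartz space tailored to the mixed dispersive operator, and then close the global part with the conservation laws and the sharp Gagliardo--Nirenberg inequality. Write $U(t):=e^{-it\left((-\Delta)^{s_1}+(-\Delta)^{s_2}\right)}$ for the linear propagator, with Fourier multiplier $e^{-it\left(|\xi|^{2s_1}+|\xi|^{2s_2}\right)}$. The first step is to establish Strichartz estimates for $U(t)$ on radial data. Since $s_2>\tfrac12$ (hence $s_1>\tfrac12$), the radial phase $\phi(\rho):=\rho^{2s_1}+\rho^{2s_2}$ is strictly increasing on $(0,\infty)$ with
$$
\phi''(\rho)=2s_1(2s_1-1)\rho^{2s_1-2}+2s_2(2s_2-1)\rho^{2s_2-2}>0,
$$
so after reducing $U(t)\psi_0$ to a one-dimensional oscillatory integral in the radial frequency via the Fourier--Bessel representation, a stationary-phase analysis produces dispersive decay of $U(t)$ on radial functions, with a derivative loss that degenerates only at high frequency (where $\phi''(\rho)\sim\rho^{2s_1-2}$). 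Following the radial Strichartz improvements known for fractional Schr\"odinger equations, I would then record, for the admissible pair $(q,r):=\left(\tfrac{4s_1p}{N(p-2)},p\right)$ and for $(\infty,2)$, homogeneous and inhomogeneous estimates of the form
$$
\|U(t)\psi_0\|_{L^{q}_tW^{s_1,r}_x}\lesssim\|\psi_0\|_{H^{s_1}},\qquad
\left\|\int_0^tU(t-\tau)F(\tau)\,d\tau\right\|_{C_tH^{s_1}_x\cap L^{q}_tW^{s_1,r}_x}\lesssim\|F\|_{L^{\tilde q'}_tW^{s_1,\tilde r'}_x}
$$
for $(\tilde q,\tilde r)$ another admissible pair, the $s_1$ derivatives being placed precisely so as to absorb the loss; the radial hypothesis and $s_2>\tfrac12$ are exactly what make these available.

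Next I would set up the Duhamel map
$$
\Phi(\psi)(t):=U(t)\psi_0+i\int_0^tU(t-\tau)\,|\psi(\tau)|^{p-2}\psi(\tau)\,d\tau
$$
on $X_T:=C\left([0,T];H^{s_1}_{rad}(\R^N)\right)\cap L^{q}\left([0,T];W^{s_1,p}_{rad}(\R^N)\right)$ with $q:=\tfrac{4s_1p}{N(p-2)}$, and look for a fixed point in the closed ball of radius $2C\|\psi_0\|_{H^{s_1}}$. The nonlinear estimate is handled by the fractional Leibniz (Kato--Ponce / Christ--Weinstein) rule together with the embedding $W^{s_1,p}(\R^N)\hookrightarrow L^{\alpha}(\R^N)$ for the appropriate $\alpha$ and H\"older in time, yielding
$$
\left\||\psi|^{p-2}\psi\right\|_{L^{\tilde q'}_TW^{s_1,\tilde r'}}\lesssim T^{\theta}\,\|\psi\|_{X_T}^{\,p-1}
$$
with some $\theta>0$ when $p<2+\tfrac{4s_1}{N}$, and with a small Strichartz-norm factor replacing $T^{\theta}$ in the critical case; either way, choosing $T=T(\|\psi_0\|_{H^{s_1}})$ small makes $\Phi$ a contraction. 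This produces the unique local solution in $C([0,T);H^{s_1}_{rad})$, and since it is constructed inside $X_T$ it satisfies assertion (i), on the maximal interval.

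I would then glue local solutions to obtain a maximal existence time $T$. Because $T$ above depends only on $\|\psi_0\|_{H^{s_1}}$ and, by conservation of mass, $\|\psi(t)\|_2$ is constant, if $T<\infty$ then necessarily $\|(-\Delta)^{s_1/2}\psi(t)\|_2\to\infty$ as $t\to T^-$; this is the blow-up alternative. For assertion (ii), mass conservation follows by pairing \eqref{evolv pb0} with $\bar\psi$ and taking imaginary parts, and energy conservation by pairing with $\partial_t\bar\psi$ and taking real parts --- computations I would first justify for smooth, rapidly decaying data (where the solution is regular enough) and then transfer to general $\psi_0\in H^{s_1}_{rad}$ by approximation and continuity of the flow in $H^{s_1}$ (equivalently, a Friedrichs-mollifier/commutator argument works directly at the $H^{s_1}$ level).

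Finally, for assertion (iii): when $p<2+\tfrac{4s_1}{N}$, Gagliardo--Nirenberg \eqref{gn} gives $\|\psi\|_p^p\lesssim\|(-\Delta)^{s_1/2}\psi\|_2^{\theta p}\|\psi\|_2^{(1-\theta)p}$ with $\theta p<2$, so conservation of mass and energy bound $x:=\|(-\Delta)^{s_1/2}\psi(t)\|_2^2$ via the sublinear inequality $x\le 2E(\psi_0)+C(\|\psi_0\|_2)\,x^{\theta p/2}$ with exponent $\theta p/2<1$; when $p=2+\tfrac{4s_1}{N}$ one has $\theta p=2$ and $(1-\theta)p=\tfrac{4s_1}{N}$, and the sharp constant in \eqref{gn} gives $\tfrac1p\|\psi\|_p^p\le\tfrac{C_{N,s_1}}{p}\|\psi_0\|_2^{4s_1/N}\|(-\Delta)^{s_1/2}\psi\|_2^2$, so the smallness hypothesis on $\|\psi_0\|_2$ makes the prefactor strictly below $\tfrac12$ and hence $\left(\tfrac12-\tfrac{C_{N,s_1}}{p}\|\psi_0\|_2^{4s_1/N}\right)\|(-\Delta)^{s_1/2}\psi(t)\|_2^2\le E(\psi_0)$. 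In both cases the kinetic energy stays bounded, so the blow-up alternative forces $T=\infty$. The main obstacle is the first step: producing a workable family of Strichartz estimates for the \emph{mixed}-order, non-homogeneous dispersion, with the fractional derivative loss quantified sharply enough that working at regularity $s_1$ simultaneously absorbs the loss and leaves room for the fractional Leibniz rule to close the nonlinear estimate across the whole range $2<p<\tfrac{2N}{N-2s_1}$; the strict convexity of $\rho\mapsto\rho^{2s_1}+\rho^{2s_2}$ (which is where $s_2>\tfrac12$ enters) and the radial restriction are precisely what make this feasible.
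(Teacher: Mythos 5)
Your overall architecture coincides with the paper's: radial Strichartz estimates for the mixed propagator, a contraction argument in $C_T(H^{s_1}_{rad})\cap L^{q}_T(W^{s_1,p})$ with the pair $\bigl(\tfrac{4s_1p}{N(p-2)},p\bigr)$, the fractional chain rule to close the nonlinear estimate, conservation laws by regularization, and Gagliardo--Nirenberg for assertion (iii). The substantive difference is that the one step you yourself flag as the main obstacle is left essentially unproved. You propose to obtain the Strichartz estimates by a direct stationary-phase analysis of the radial propagator and then to ``record'' loss-free homogeneous and inhomogeneous estimates ``following the radial Strichartz improvements known for fractional Schr\"odinger equations.'' Those known results concern the homogeneous symbol $|\xi|^{2s}$ and do not apply verbatim to $|\xi|^{2s_1}+|\xi|^{2s_2}$, whose two homogeneities force different admissibility at high and low frequencies. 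The paper resolves exactly this point: it verifies that the phase $\phi(r)=r^{2s_1}+r^{2s_2}$ satisfies the hypotheses (H1)--(H4) of \cite{GuWa14} with $m_1=\alpha_1=2s_1$, $m_2=\alpha_2=2s_2$, obtains frequency-localized estimates for $S(t)P_k$ with dyadic constants $C^{q,r}_{s_1,s_2}(k)$ that differ for $k\geq 0$ and $k<0$ (Lemmas \ref{propos} and \ref{lem6.1}), passes to the inhomogeneous estimate by duality and the Christ--Kiselev lemma (Lemmas \ref{lemme1} and \ref{lemma2}), and only then sums over $k$, the summability being precisely the reason for restricting to pairs in $\Gamma_{s_1}\cup\Gamma_{s_2}$ (Theorem \ref{strichartz th}). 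Your remark that the loss ``degenerates only at high frequency'' does not by itself address how a single space-time norm is to be admissible simultaneously for both regimes, which is the actual difficulty; as written, the key estimate you build the fixed-point argument on is asserted rather than derived.

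A second, more minor point: you state that in the mass-critical case $p=2+\tfrac{4s_1}{N}$ one must replace the factor $T^{\theta}$ by a small Strichartz norm of the free evolution. This is unnecessary and, if followed, would undermine the claimed dependence $T=T(\|\psi_0\|_{H^{s_1}})$ and hence the blow-up alternative in the form stated. The relevant criticality for the local theory at regularity $H^{s_1}$ is $p=\tfrac{2N}{N-2s_1}$; for every $p$ strictly below it (including the mass-critical exponent) H\"older in time yields the positive power $T^{\frac{2s_1p-N(p-2)}{2s_1p}}$, which is exactly how the paper closes the contraction uniformly in the whole range $2<p<\tfrac{2N}{N-2s_1}$. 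The remaining parts of your plan (conservation laws via mollification/regularization, the a priori bound on $\|(-\Delta)^{s_1/2}\psi(t)\|_2$ from mass--energy conservation and \eqref{gn}, including the sharp-constant computation in the mass-critical case) agree with the paper's proof and are fine.
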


For further clarifications, we need to introduce a function $\phi \in H^{s_1}(\R^N)$ as the ground state solution to the following fractional nonlinear elliptic equation,
\begin{align}\label{psi eqt}
(-\Delta)^{s_1}\phi+\phi=\phi^{p-1}.
\end{align}
In fact, it turns out in \cite{Rup13,Rup16} that $\phi$ is positive, radially symmetric and decreasing. Moreover, whenever $ 2+\frac{4s_1}{N}\leq p <\frac{2N}{N-2s_1}$ and $N \geq 2$, we define
$$
0 \leq s_{c}:=\frac{N}{2}-\frac{2s_1}{p-2} <s_1, \quad \sigma_c:=\frac{s_1-s_c}{s_c}>0.
$$
It should be noted that $s_c>0$ if $p>2+\frac{4s_1}{N}$ and $s_c=0$ if $p=2+\frac{4s_1}{N}$. We also define a functional by
$$
\mathcal{E}(\phi):=\frac 12 \int_{\R^N} |(-\Delta)^{\frac{s_1}{2}} \phi |^2 \,dx-\frac 1 p \int_{\R^N} |\phi|^p \,dx.
$$

\begin{thm}\label{blow-up vs global solt}
Let $N\geq 2$, $\frac1 2<s_2<s_1<1$ and $p \geq 2 +\frac{4s_1}{N}$. Let $\psi \in C([0, T), H^{s_1}_{rad}(\R^N))$ be the solution to \eqref{evolv pb0} with initial datum $\psi_0 \in H^{s_1}_{rad}(\R^N)$ and $\phi \in H^{s_1}_{rad}(\R^N)$ be the ground state solution to \eqref{psi eqt}.
\begin{enumerate}
\item [$(\textnormal{i})$] If $s_c>0$ and $\psi_0 \in H^{s_1}_{rad}(\R^N)$ satisfies
\begin{align}\label{energ u0 inf energ gs}
E(\psi_0)M(\psi_0)^{\sigma_c} <\mathcal{E}(\phi)M(\phi)^{\sigma_c},
\end{align}
\begin{align}\label{mass u0 sup mass gs}
\|(-\Delta)^{\frac{s_1}{2}} \psi_0\|_2\|\psi_0\|_2^{\sigma_c} < \|(-\Delta)^{\frac{s_1}{2}} \phi\|_2\|\phi\|_2^{\sigma_c},
\end{align}
then $\psi(t)$ exists globally in time, i.e. $T=+\infty$.
\item [$(\textnormal{ii})$] If $s_c>0$ and $2<p < 2+4s_1$, either $E(\psi_0)<0$ or $E(\psi_0)\geq 0$ satisifes  \eqref{energ u0 inf energ gs} and the following condition,
\begin{align}\label{mass u0 sup mass gs1}
\|(-\Delta)^{\frac{s_1}{2}} \psi_0\|_2\|\psi_0\|_2^{\sigma_c} > \|(-\Delta)^{\frac{s_1}{2}} \phi\|_2\|\phi\|_2^{\sigma_c},
\end{align}
then $\psi(t)$ blows up in finite time and
$$
\limsup_{t\rightarrow T^-} \|(-\Delta)^{\frac{s_1}{2}} \psi\|_2=+\infty.
$$
\item [$(\textnormal{iii})$] If $s_c=0$ and $E(\psi_0)<0$, then $\psi(t)$ either blows up in finite time or blows up in infinite time satisfying there exist $C>0$ and $t^\ast>0$ such that
$$
\|(-\Delta)^{\frac{s_1}{2}} \psi\|_2 + \|(-\Delta)^{\frac{s_2}{2}} \psi\|_2\geq Ct^{s_1}, \quad \forall \,\, t \geq t^*.
$$
\end{enumerate}
\end{thm}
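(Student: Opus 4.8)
The plan is to combine the Holmer--Roudenko global/blow-up dichotomy with the fractional virial machinery of Boulenger, Himmelsbach and Lenzmann, treating the lower-order operator $(-\Delta)^{s_2}$ as a perturbation. First I would record the two inputs used repeatedly: the sharp Gagliardo--Nirenberg inequality $\|u\|_p^p\le C\,\|(-\Delta)^{s_1/2}u\|_2^{N(p-2)/(2s_1)}\|u\|_2^{p-N(p-2)/(2s_1)}$ together with the Pohozaev and Nehari identities for $\phi$, giving $s_1\|(-\Delta)^{s_1/2}\phi\|_2^2=\frac{N(p-2)}{2p}\|\phi\|_p^p$ and $\mathcal E(\phi)=\frac{N(p-2)-4s_1}{2N(p-2)}\|(-\Delta)^{s_1/2}\phi\|_2^2$; and the algebraic identity obtained by eliminating $\|u\|_p^p$ between $E$ and $Q$,
$$Q(u)=\frac{N(p-2)}{2}\,E(u)-\Big(\tfrac{N(p-2)}{4}-s_1\Big)\|(-\Delta)^{s_1/2}u\|_2^2-\Big(\tfrac{N(p-2)}{4}-s_2\Big)\|(-\Delta)^{s_2/2}u\|_2^2,$$
whose two coefficients are $\ge0$ for $p\ge2+\frac{4s_1}{N}$, the first vanishing exactly when $s_c=0$, and in which the $(-\Delta)^{s_2}$ term always carries the sign favourable to blow-up. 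Everywhere I use Theorem \ref{pb wellposedness}: local existence in $H^{s_1}_{rad}$, conservation of $M$ and $E$, and the blow-up alternative controlled by $\|(-\Delta)^{s_1/2}\psi\|_2$. Part (i) needs no virial identity: since $\frac12\|(-\Delta)^{s_2/2}\psi\|_2^2\ge0$ we get $\mathcal E(\psi(t))M(\psi(t))^{\sigma_c}\le E(\psi_0)M(\psi_0)^{\sigma_c}$, and inserting the sharp Gagliardo--Nirenberg inequality into $\mathcal E$ gives $\mathcal E(\psi(t))M(\psi(t))^{\sigma_c}\ge f(y(t))$, where $y(t):=\|(-\Delta)^{s_1/2}\psi(t)\|_2\|\psi_0\|_2^{\sigma_c}$ and $f(y)=\frac12 y^2-\frac Cp y^{N(p-2)/(2s_1)}$ has its unique positive maximum at $y^\ast=\|(-\Delta)^{s_1/2}\phi\|_2\|\phi\|_2^{\sigma_c}$ with $f(y^\ast)=\mathcal E(\phi)M(\phi)^{\sigma_c}$ (this identification is where the Pohozaev relations enter). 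Then \eqref{energ u0 inf energ gs}--\eqref{mass u0 sup mass gs} read $f(y(t))<f(y^\ast)$ and $y(0)<y^\ast$, so by continuity $y(t)<y^\ast$ for all $t$, $\|(-\Delta)^{s_1/2}\psi(t)\|_2$ stays bounded, and the blow-up alternative yields $T=+\infty$.

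For part (ii) the same continuity argument with $y(0)>y^\ast$ gives $y(t)>y^\ast$ on $[0,T)$, and a standard quantitative refinement (using the strict gap in \eqref{energ u0 inf energ gs} and the Pohozaev identities) upgrades this to a coercivity estimate $Q(\psi(t))\le-\delta\|(-\Delta)^{s_1/2}\psi(t)\|_2^2$ with $\|(-\Delta)^{s_1/2}\psi(t)\|_2$ bounded below; when $E(\psi_0)<0$ this is immediate from the displayed identity. Next I introduce the localised virial quantity $M_{\varphi_R}(t):=2\,\mathrm{Im}\int_{\R^N}\bar\psi\,\nabla\varphi_R\cdot\nabla\psi\,dx$ for a radial cut-off $\varphi_R(x)=R^2\varphi(x/R)$ with $\varphi(r)=r^2$ near $0$, $\varphi''\le2$, and $\varphi$ constant for $r\ge2$; this is well defined on $H^{s_1}$ since $s_1>s_2>\frac12$, with $|M_{\varphi_R}(t)|\lesssim R\big(1+\|(-\Delta)^{s_1/2}\psi(t)\|_2\big)$. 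Differentiating along \eqref{evolv pb0} and representing $(-\Delta)^{s_i}$ by the Balakrishnan/heat-semigroup formula to compute the commutators, the fractional virial machinery gives, for any $\varepsilon>0$ and $R$ large,
$$\tfrac{d}{dt}M_{\varphi_R}(t)\le 4\,Q(\psi(t))+\varepsilon\,\|(-\Delta)^{s_1/2}\psi(t)\|_2^2+C_\varepsilon R^{-\mu}\big(1+\|(-\Delta)^{s_1/2}\psi(t)\|_2^2\big),$$
the error collecting the localisation defects of the two kinetic terms (the $s_2$ one absorbed after interpolating $\|(-\Delta)^{s_2/2}\psi\|_2\le\|\psi_0\|_2^{1-s_2/s_1}\|(-\Delta)^{s_1/2}\psi\|_2^{s_2/s_1}$ and Young) and the tail $\int_{|x|\ge R}|\psi|^p$ estimated by the radial Strauss inequality; the hypothesis $2<p<2+4s_1$ is used precisely to keep this nonlinear error subcritical in $\|(-\Delta)^{s_1/2}\psi\|_2^2$. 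Choosing $\varepsilon<\delta$ and $R$ large makes $\tfrac{d}{dt}M_{\varphi_R}\le-c<0$, so $M_{\varphi_R}$ decays linearly while remaining bounded in terms of $\|(-\Delta)^{s_1/2}\psi\|_2$; the usual convexity/ODE argument then forces $T<+\infty$ and $\limsup_{t\to T^-}\|(-\Delta)^{s_1/2}\psi\|_2=+\infty$.

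For part (iii), $s_c=0$ means $p=2+\frac{4s_1}{N}$, so the identity degenerates to $Q(\psi)=2s_1E(\psi_0)-(s_1-s_2)\|(-\Delta)^{s_2/2}\psi\|_2^2\le2s_1E(\psi_0)<0$, independently of $\|(-\Delta)^{s_1/2}\psi\|_2$. The localised virial estimate then reads $\tfrac{d}{dt}M_{\varphi_R}(t)\le 8s_1E(\psi_0)+C R^{-\mu}\|(-\Delta)^{s_1/2}\psi(t)\|_2^2+o_R(1)$ (the mass-critical nonlinear tail being genuinely $o_R(1)$, and the $s_2$-defect absorbed into $-(s_1-s_2)\|(-\Delta)^{s_2/2}\psi\|_2^2$). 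If $T=+\infty$: were $\|(-\Delta)^{s_1/2}\psi\|_2$ globally bounded, large $R$ would make the right side negative and $M_{\varphi_R}\to-\infty$ against a bounded lower bound, a contradiction; hence $\|(-\Delta)^{s_1/2}\psi(t)\|_2$ is unbounded, and on each interval $[0,T_1]$, choosing $R$ comparable to $\big(\sup_{[0,T_1]}\|(-\Delta)^{s_1/2}\psi\|_2^2\big)^{1/\mu}$ so that the error term is at most $4s_1|E(\psi_0)|$, integrating, and comparing with $|M_{\varphi_R}|\lesssim R\big(1+\|(-\Delta)^{s_1/2}\psi\|_2\big)$ at $t=0$ and $t=T_1$, yields $\sup_{[0,T_1]}\|(-\Delta)^{s_1/2}\psi\|_2\gtrsim T_1^{s_1}$ after optimising in $R$, whence the stated growth of $\|(-\Delta)^{s_1/2}\psi\|_2+\|(-\Delta)^{s_2/2}\psi\|_2$.

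The main obstacle is the localised virial estimate itself. Because $s_1<1$ there is no exact virial identity, so $\tfrac{d}{dt}M_{\varphi_R}$ must be evaluated through the commutators $[(-\Delta)^{s_i},\text{dilation generator of }\varphi_R]$ via the Balakrishnan representation, controlling the resulting $m$-integrals and the nonlinear tail by the radial Strauss inequality (this is where $N\ge2$ and $s_1>\frac12$ are indispensable). The point genuinely new to the mixed operator is that the $(-\Delta)^{s_2}$ localisation error can only be reabsorbed by interpolation against $\|(-\Delta)^{s_1/2}\psi\|_2$, and that the cut-off scale $R$ must be tuned correctly -- fixed in (ii), $t$-dependent in (iii). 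Everything else reduces to the variational and continuity bookkeeping already carried out for Theorem \ref{thm2}.
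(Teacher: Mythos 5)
Your overall architecture coincides with the paper's: part (i) is exactly the invariant-set argument of Lemma \ref{invariant condits} (sharp Gagliardo--Nirenberg plus the Pohozaev/Nehari relations of Lemma \ref{pohoz 0} identifying the maximizer of $f$ with $\|(-\Delta)^{s_1/2}\phi\|_2\|\phi\|_2^{\sigma_c}$, then continuity and the blow-up alternative), and part (ii) is the paper's combination of the invariance/coercivity step, the BHL-type localized virial estimate (Lemma \ref{Virial}), and the nonlinear ODE argument packaged there as Lemma \ref{lem bup}. For (ii) your write-up is compressed but the quantitative ingredients (coercivity $Q(\psi(t))\le-\delta\|(-\Delta)^{s_1/2}\psi(t)\|_2^2$, kinetic lower bound, subquadratic tail error absorbed by Young) are the right ones.

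There is, however, a genuine gap in the bookkeeping that matters for (iii). First, your a priori bound $|M_{\varphi_R}(t)|\lesssim R\bigl(1+\|(-\Delta)^{s_1/2}\psi(t)\|_2\bigr)$ is not available at $H^{s_1}$ regularity: $M_{\varphi_R}$ contains a full gradient, and the correct estimate (the paper's Lemma \ref{lem A1}, i.e. BHL's Lemma A.1, followed by the interpolation $\||\nabla|^{1/2}\psi\|_2\le\|(-\Delta)^{s_1/2}\psi\|_2^{1/(2s_1)}\|\psi\|_2^{1-1/(2s_1)}$ and the kinetic lower bound) gives $|M_{\chi_R}[\psi]|\lesssim\bigl(\|(-\Delta)^{s_1/2}\psi\|_2+\|(-\Delta)^{s_2/2}\psi\|_2\bigr)^{1/s_1}$, which is superlinear in the kinetic norm since $s_1<1$. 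In (ii) this is harmless (any exponent strictly below $2$ closes the ODE argument, yielding $M'\lesssim-|M|^{2s_1}$), but in (iii) your route to the rate $t^{s_1}$ collapses: keeping an error $C R^{-\mu}\|(-\Delta)^{s_1/2}\psi\|_2^{2}$ and optimizing $R\sim(\sup K^2)^{1/\mu}$ over $[0,T_1]$ yields $\sup K\gtrsim T_1^{\mu/(\mu+2)}$ under your (unjustified) linear bound, and a strictly weaker exponent under the correct $1/s_1$-power bound; for the localization exponents actually available (e.g. $\mu=2s_2$, or the Strauss tail exponent when $N=2$) this falls short of $s_1$. Moreover the $K^2$-weighted error is inconsistent with the BHL absorption you yourself invoke: once the mass-critical tail $\int\chi_2|\psi|^p$ is absorbed into the dispersive-defect term $-4\int_0^\infty m^{s_1}\int\chi_1|\nabla\psi_m|^2\,dx\,dm$ via the radial Strauss inequality and Young, the inequality reads $\frac{d}{dt}M_{\chi_R}\le 8s_1E(\psi_0)+o_R(1)\le 4s_1E(\psi_0)$ at a \emph{fixed} large $R$, uniformly in the kinetic energy. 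With that, no $R$-optimization is needed: if $T=+\infty$ one gets $M_{\chi_R}[\psi(t)]\le-Ct$ for $t$ large, and the $1/s_1$-power bound on $M_{\chi_R}$ immediately gives $\|(-\Delta)^{s_1/2}\psi\|_2+\|(-\Delta)^{s_2/2}\psi\|_2\ge Ct^{s_1}$, which is precisely how the paper concludes (iii). So replace your linear bound on the virial quantity by the Lemma \ref{lem A1} estimate and drop the $t$-dependent cut-off scale; otherwise part (iii) as written does not deliver the stated growth rate.
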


The proof of Theorem \ref{blow-up vs global solt} crucially relies on the variational characteristics of $\phi$ and the analysis of the evolution of the following localized virial type quantity,
$$
M_{\chi_R}[\psi(t)]:=2 Im \int_{\R^N}\overline{\psi}\nabla\chi_R \cdot \nabla \psi\, dx,
$$
where $\chi_R: \R^N \to \R^+$ is a proper cut-off function.

Finally we are going to address orbital instability of ground state solutions to \eqref{fequ}-\eqref{mass} in the following sense.

\begin{defi}
We say that a solution $u \in H^{s_1}(\R^N)$ to \eqref{fequ} is orbitally unstable, if for any $\eps > 0$ there exists $ v \in H^{s_1}(\R^N)$ such that $\|v-u\|_{H^{s_1}} \leq \eps$ and the solution $\psi (t)$ to \eqref{evolv pb0} with initial datum $\psi(0) = v$ blows up in finite or infinite time.
\end{defi}

\begin{thm}\label{thm5}
Let $N \geq 2$, $\frac 1 2 <s_2<s_1<1$ and $p \geq 2 + \frac{4s_1}{N}$. Then standing waves associated with ground state solutions to \eqref{fequ}-\eqref{mass} are orbitally unstable by blowup in finite or infinite time. In addition, if $2<p<2+4s_1$, then they are orbitally unstable by blowup in finite time.
\end{thm}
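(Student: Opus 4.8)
The plan is to run the Berestycki--Cazenave instability-by-blowup argument in the constrained setting, using the mountain-pass level $\gamma(c)$ as the threshold and the Pohozaev functional $Q$ as the quantity that is forced to stay uniformly negative along the flow, and then to invoke the localized virial analysis already carried out in the proof of Theorem~\ref{blow-up vs global solt}. \emph{Step 1 (data below $\gamma(c)$ with $Q<0$, close to $u_c$).} Fix $c\in(c_0,c_1)$ and let $u_c\in S(c)$ be a ground state given by Theorem~\ref{thm2}, so $E(u_c)=\gamma(c)$; by Theorem~\ref{thm6} we may take $u_c$ real, nonnegative and radial. Since $u_c\in P(c)$, writing $A:=\|(-\Delta)^{s_1/2}u_c\|_2^2$, $B:=\|(-\Delta)^{s_2/2}u_c\|_2^2>0$, $C:=\|u_c\|_p^p$ and $\gamma_p:=\frac{N(p-2)}{2}\ \big(\ge 2s_1>2s_2\big)$, the identity $Q(u_c)=0$ reads $\tfrac{\gamma_p}{p}C=s_1A+s_2B$. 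For $t>0$ set $w_t:=(u_c)_t$, i.e.\ $w_t(x)=t^{N/2}u_c(tx)$; then $w_t\in S(c)\cap H^{s_1}_{rad}(\R^N)$ and $w_t\to u_c$ in $H^{s_1}(\R^N)$ as $t\to1^+$ (strong continuity of dilations on $H^{s_1}$). Substituting $Q(u_c)=0$ into \eqref{scaling} and into the expression for $Q$ gives, for every $t>1$,
$$
Q(w_t)=s_1A\,(t^{2s_1}-t^{\gamma_p})+s_2B\,(t^{2s_2}-t^{\gamma_p})<0,\qquad E(w_t)<E(u_c)=\gamma(c),
$$
the strict inequalities using $B>0$ and $2s_2<2s_1\le\gamma_p$.

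\emph{Step 2 (invariance and uniform negativity of $Q$).} Given $\eps>0$, choose $t>1$ so close to $1$ that $\|w_t-u_c\|_{H^{s_1}}\le\eps$, and let $\psi\in C([0,T),H^{s_1}_{rad}(\R^N))$ be the maximal solution of \eqref{evolv pb0} with $\psi(0)=\psi_0:=w_t$ provided by Theorem~\ref{pb wellposedness}. Mass and energy conservation give $\|\psi(s)\|_2^2=c$ and $E(\psi(s))=\beta:=E(\psi_0)<\gamma(c)$ on $[0,T)$. First, $Q(\psi(s))<0$ for all $s\in[0,T)$: otherwise, since $Q(\psi_0)<0$ and $s\mapsto Q(\psi(s))$ is continuous, there is $s_*$ with $Q(\psi(s_*))=0$, so $\psi(s_*)\in P(c)$ (it lies in $S(c)$ because $c>0$) and hence $E(\psi(s_*))\ge\gamma(c)>\beta$, a contradiction. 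Second, the negativity is quantitative: for any $v\in S(c)$ with $Q(v)<0$ there is $\tau_0\in(0,1)$ with $v_{\tau_0}\in P(c)$ (because $\tau^{-2s_2}Q(v_\tau)\to s_2\|(-\Delta)^{s_2/2}v\|_2^2>0$ as $\tau\to0^+$ while $Q(v_1)<0$), and then $\gamma(c)\le E(v_{\tau_0})$; using $\tau_0^{2s_1},\tau_0^{2s_2}\le 1$ together with $\tfrac12-\tfrac{s_1}{\gamma_p},\tfrac12-\tfrac{s_2}{\gamma_p}\ge 0$ one gets $E(v_{\tau_0})\le E(v)-\tfrac{1}{\gamma_p}Q(v)$, whence
$$
Q(v)\le-\gamma_p\big(\gamma(c)-E(v)\big).
$$
Applied to $v=\psi(s)$ this yields $Q(\psi(s))\le-\delta$ on $[0,T)$, with $\delta:=\gamma_p(\gamma(c)-\beta)>0$.

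\emph{Step 3 (blow-up).} Insert $Q(\psi(s))\le-\delta<0$ into the evolution law for the localized virial quantity $M_{\chi_R}[\psi(s)]$ obtained in the proof of Theorem~\ref{blow-up vs global solt}: for radial solutions one has $\frac{d}{ds}M_{\chi_R}[\psi(s)]\le 4\,Q(\psi(s))+\mathcal{R}_R(s)$, where the remainder $\mathcal{R}_R$ is a sum of terms carrying negative powers of $R$ (from localizing $(-\Delta)^{s_1}$ and $(-\Delta)^{s_2}$, using radiality and the Boulenger--Himmelsbach--Lenzmann truncation) plus a nonlinear contribution which, when $2<p<2+4s_1$, is of strictly subcritical order and is absorbed via Young's inequality into a small multiple of the kinetic energy. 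Choosing $R$ large one obtains $\frac{d}{ds}M_{\chi_R}[\psi(s)]\le-2\delta$; since $s_1>\tfrac12$, $|M_{\chi_R}[\psi(s)]|$ is bounded by $R$ times fixed powers of $\|\psi_0\|_2$ and $\|(-\Delta)^{s_1/2}\psi(s)\|_2$, and the Boulenger--Himmelsbach--Lenzmann convexity/comparison argument then forces $T<\infty$ in the range $2<p<2+4s_1$, and in the general range $p\ge 2+\frac{4s_1}{N}$ forces at least $\|(-\Delta)^{s_1/2}\psi(s)\|_2+\|(-\Delta)^{s_2/2}\psi(s)\|_2\to\infty$ along a sequence of times, i.e.\ blow-up in finite or infinite time. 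Since $\psi_0=w_t$ lies within $\eps$ of $u_c$ in $H^{s_1}$ and $\eps>0$ was arbitrary, the standing wave $e^{i\lambda t}u_c$ is orbitally unstable, proving Theorem~\ref{thm5}.

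The genuinely hard part is Step~3: deriving the differential inequality for $M_{\chi_R}$ with a remainder that becomes negligible once $R$ is fixed large — this requires the fractional commutator and truncation estimates for $\chi_R$ against $(-\Delta)^{s_1}$ and $(-\Delta)^{s_2}$, where radial symmetry is essential — and then converting $\frac{d}{ds}M_{\chi_R}\le-2\delta$ into a contradiction with global existence, including the sharp accounting of powers that yields the finite-time conclusion precisely in the window $2<p<2+4s_1$. All of this, however, is already contained in the proof of Theorem~\ref{blow-up vs global solt}, from which Step~3 is essentially read off; Steps~1 and~2 amount to one-variable calculus together with a continuity/conservation argument.
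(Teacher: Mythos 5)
Your overall strategy is the paper's: take $\psi_0=(u_c)_\tau$ with $\tau>1$, show the set $\{v\in S(c): E(v)<\gamma(c),\ Q(v)<0\}$ is invariant under the flow, upgrade $Q<0$ to a uniform bound $Q(\psi(t))\le-\delta<0$, and feed this into the localized virial estimate of Lemma \ref{Virial} together with Lemma \ref{lem bup}. Steps 1 and 2 are correct; your derivation of the uniform bound (rescale to $\tau_0\in(0,1)$ with $Q(v_{\tau_0})=0$ and compare the coefficients $\frac12-\frac{s_i}{\gamma_p}\ge 0$) is a valid, slightly more algebraic substitute for the concavity argument the paper uses via Lemma \ref{monotonicity}, and it yields the same conclusion $Q(\psi(t))\le-\gamma_p(\gamma(c)-E(\psi_0))$.

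The gap is in Step 3, precisely at the finite-time claim for $2<p<2+4s_1$. From $Q(\psi(t))\le-\delta$ you only extract $\frac{d}{dt}M_{\chi_R}[\psi]\le-2\delta$, a constant rate of decrease. That alone does not force $T<\infty$: integrating gives $M_{\chi_R}[\psi(t)]\lesssim -t$, and via the bound $|M_{\chi_R}[\psi]|\lesssim\big(\|(-\Delta)^{s_1/2}\psi\|_2+\|(-\Delta)^{s_2/2}\psi\|_2\big)^{1/s_1}$ this only yields growth $\gtrsim t^{s_1}$ of the kinetic norm, i.e. exactly the ``finite or infinite time'' conclusion used in the regime $p\ge 2+4s_1$. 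Lemma \ref{lem bup} requires the stronger differential inequality
\begin{align*}
\frac{d}{dt}M_{\chi_R}[\psi]\le -c\left(\|(-\Delta)^{\frac{s_1}{2}}\psi\|_2+\|(-\Delta)^{\frac{s_2}{2}}\psi\|_2\right)^2,
\end{align*}
so that $M'\lesssim-|M|^{2s_1}$ with $2s_1>1$ and $M$ reaches $-\infty$ in finite time. Your ``absorb the remainder by Young into a small multiple of the kinetic energy'' presupposes a negative kinetic-quadratic term already sitting on the right-hand side, which the bare bound $4Q\le-4\delta$ does not supply; nor can this be ``read off'' from the proof of Theorem \ref{blow-up vs global solt}, where the negative quadratic term comes from the hypotheses $E(\psi_0)<0$ or \eqref{energ u0 inf energ gs}--\eqref{mass u0 sup mass gs1} with $s_c>0$, neither of which is assumed here. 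What is needed (and what the paper's display \eqref{evolution} encodes) is to combine the uniform negativity of $Q$ with energy conservation, e.g. via the identity $Q(\psi)=\frac{N(p-2)}{2}E(\psi_0)-\frac{N(p-2)-4s_1}{4}\|(-\Delta)^{\frac{s_1}{2}}\psi\|_2^2-\frac{N(p-2)-4s_2}{4}\|(-\Delta)^{\frac{s_2}{2}}\psi\|_2^2$ and a dichotomy on the size of the kinetic energy (together with the lower bound $\|(-\Delta)^{\frac{s_1}{2}}\psi\|_2^2+\|(-\Delta)^{\frac{s_2}{2}}\psi\|_2^2\gtrsim 1$), so as to produce a bound of the form $\frac{d}{dt}M_{\chi_R}[\psi]\le-c\big(\|(-\Delta)^{\frac{s_1}{2}}\psi\|_2^2+\|(-\Delta)^{\frac{s_2}{2}}\psi\|_2^2\big)$ before Young's inequality is applied to the error terms (this is also where the restriction $\frac{p-2}{2s_1}+\eps<2$ is used). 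Without this quantitative step your argument proves instability by blow-up in finite or infinite time for all $p\ge 2+\frac{4s_1}{N}$, but not the finite-time statement in the window $2<p<2+4s_1$.
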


{\noindent \bf Structure of the Paper.} The paper is organized as follows. In Section \ref{section2}, we present some preliminary results and give the proof of Theorem \ref{thm1}. In Section \ref{section3}, we consider the existence of ground state solutions to \eqref{fequ}-\eqref{mass} and show the proofs of Theorems \ref{thm2} and \ref{thm6}. In Section \ref{section4}, we aim to prove the existence of bound state solutions to \eqref{fequ}-\eqref{mass} and give the proof of Theorems \ref{thm3}. In Section \ref{section5}, we discuss some properties of the function $c \mapsto \gamma(c)$ and present the proof of Theorem \ref{thm4}. Section \ref{section6} is devoted to the study of the local well-posednesss of solutions to \eqref{evolv pb0} and contains the proof of Theorem \ref{pb wellposedness}.  Section \ref{section7} is devoted the proof of Theorem \ref{blow-up vs global solt}. In Section \ref{section8}, orbital instability of ground state solutions to \eqref{fequ}-\eqref{mass} is discussed, i.e. Theorem \ref{thm5} is established. In Appendix, we deduce the Pohozaev identity satisfied by solutions to \eqref{fequ}.

\begin{notation}
Throughout the paper, $L^r(\R^N)$ denotes the usual Lebesgue space equipped with the norm
$$
\|f\|_r:=\left(\int_{\R^N}|f(x)|^r \, dx \right)^{\frac{1}{r}}, \quad 1 \leq r<\infty, \quad  \|f\|_\infty:= \underset{x\in \R^N}{\mbox{ess sup}} \, |f(x)|.
$$
Moreover, $W^{s,r}(\R^N)$ denotes the usual Sobolev space equipped with the norm
$$
\|f\|_{W^{s,r}}:=\|f\|_2+ \|(-\Delta)^{\frac{s}{2}}f\|_2, \quad 0<s<1.
$$
In the case $r=2$, we use $H^s(\R^N)$ to denote $W^{s,2}(\R^N)$ and use $H^{s}_{rad}(\R^N)$ to denote the subspace of $H^{s}(\R^N)$, consisting of radially symmetric functions in $H^{s}(\R^N)$. The real number $r^{\prime}:=\frac{r}{r-1}$ is the conjugate exponent associate to a number $r \geq 1$ with the convention $1^\prime=\infty$ and $\infty^\prime=1$.
We also need to introduce some B\"{o}chner spaces which are denoted by $L_T^qL_x^r:=L^q([0,T),L^r(\R^N))$ equipped with the natural norms. If $X$ is an abstract space, then the set of continuous functions defined on $[0,T)$ and valued in $X$ is denoted by $C_T(X):=C([0,T),X)$, if necessary the interval of time may be closed.
If $A$ and $B$ are two nonnegative quantities, we write $A\lesssim B$ to denote $A\leq CB$. We write $A\sim B$ if $A\lesssim B$ and $B\lesssim A$ hold.
Whenever $\varepsilon_n\rightarrow0$ as $n$ goes to infinity, we denote $o_n(1)=\varepsilon_n$. 
\end{notation}

\section{Preliminaries and proof of Theorem \ref{thm1}} \label{section2}

In this section, we shall present some preliminary results used to prove our main theorems and give the proof of Theorem \ref{thm1}. First of all, let us display the well-known Gagliardo-Nirenberg inequality in $H^{s_1}(\R^N)$.

\begin{lem} \label{gninequ}
Let $0<s<1$, $2 \leq p< \infty$ if $N<2s$ and $2 \leq p<\frac{2N}{N-2s}$ if $N>2s$, then
\begin{align} \label{gn}
\int_{\R^N} |u|^p\,dx \leq C_{N,p,s} \left(\int_{\R^N} |(-\Delta)^{\frac s 2} u|^2 \,dx \right)^{\frac{N(p-2)}{4s}}\left(\int_{\R^N}|u|^2 \,dx \right)^{\frac p2 -\frac{N(p-2)}{4s}},
\end{align}
where $C_{N,p,s}>0$ denotes the optimal constant.
\end{lem}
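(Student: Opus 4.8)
The plan is to derive \eqref{gn} from the standard (inhomogeneous) fractional Sobolev embedding by exploiting the exact scaling symmetry of the two sides, which forces the displayed exponents automatically.

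First I would record the embedding $H^s(\R^N) \hookrightarrow L^p(\R^N)$ for the stated range of $p$, i.e. the existence of a constant $C_0 = C_0(N,p,s) > 0$ with
$$\|u\|_p \le C_0\left(\|(-\Delta)^{s/2} u\|_2 + \|u\|_2\right), \qquad u \in H^s(\R^N).$$
This is classical: for $N > 2s$ and $2 \le p < \tfrac{2N}{N-2s}$ it follows from the continuous embedding of $H^s$ into the subcritical $L^p$ (e.g. by interpolating between $L^2$ and an $L^q$ with $q$ below critical, or from the Riesz-potential/Hardy--Littlewood--Sobolev estimate for $(-\Delta)^{-s/2}$); for $N \le 2s$, i.e. $N=1$, $s>\tfrac12$, and $2 \le p < \infty$, it follows from $H^s(\R)\hookrightarrow L^\infty\cap L^2 \subset L^p$. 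Since the admissible range in \eqref{gn} is strictly subcritical, no sharp endpoint embedding is needed at this stage.

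Next I would apply this inequality to the dilations $u_\lambda(x) := u(\lambda x)$, $\lambda > 0$. A direct Plancherel computation gives the scalings $\|u_\lambda\|_p^p = \lambda^{-N}\|u\|_p^p$, $\|u_\lambda\|_2^2 = \lambda^{-N}\|u\|_2^2$ and $\|(-\Delta)^{s/2} u_\lambda\|_2^2 = \lambda^{2s - N}\|(-\Delta)^{s/2} u\|_2^2$. Writing $A := \|(-\Delta)^{s/2}u\|_2$, $B := \|u\|_2$, feeding $u_\lambda$ into the embedding and clearing the factor $\lambda^{-N/p}$ yields
$$\|u\|_p \le C_0\left(\lambda^{\gamma_1} A + \lambda^{\gamma_2} B\right), \qquad \gamma_1 := s + \tfrac Np - \tfrac N2, \quad \gamma_2 := \tfrac Np - \tfrac N2.$$
Here $\gamma_2 < 0$ since $p > 2$, and $\gamma_1 > 0$ throughout the admissible range: $\gamma_1$ is decreasing in $p$, equals $s>0$ at $p=2$, stays positive below the critical threshold $\tfrac{2N}{N-2s}$ when $N>2s$ (where it vanishes exactly), and stays positive for all finite $p$ when $N\le 2s$; moreover $\gamma_1 - \gamma_2 = s$ in every case. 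Thus the right-hand side is a sum of a strictly increasing and a strictly decreasing power of $\lambda$, so minimizing over $\lambda > 0$ (setting the derivative to zero, $\lambda_\ast = (B(-\gamma_2)/(A\gamma_1))^{1/s}$, or invoking Young's inequality) produces a bound $\|u\|_p \le C\, A^{-\gamma_2/s} B^{\gamma_1/s}$. Raising to the $p$-th power and using $-p\gamma_2/s = \tfrac{N(p-2)}{2s}$ together with $p\gamma_1/s = p - \tfrac{N(p-2)}{2s}$ gives $\|u\|_p^p \le C^p A^{N(p-2)/(2s)} B^{\,p - N(p-2)/(2s)}$; since $A^2 = \int_{\R^N}|(-\Delta)^{s/2}u|^2$ and $B^2 = \int_{\R^N}|u|^2$, this is precisely \eqref{gn} with exponents $\tfrac{N(p-2)}{4s}$ and $\tfrac p2 - \tfrac{N(p-2)}{4s}$. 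The case $p=2$ is trivial.

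Finally, with the inequality established for some constant, I would define $C_{N,p,s}$ as the optimal one, namely the supremum over $u \in H^s(\R^N)\setminus\{0\}$ of the quotient of the left-hand side of \eqref{gn} by its right-hand side: the bound above shows this supremum is finite, while evaluating the quotient at any fixed nonzero test function shows it is positive, so $C_{N,p,s}\in(0,\infty)$ is well defined. I expect the only genuine input to be the fractional Sobolev embedding invoked in the first step; everything after is scaling and optimization bookkeeping, whose virtue is that it pins down the sharp homogeneity and treats the regimes $N>2s$, $N=2s$ and $N<2s$ uniformly. An alternative to the scaling step, valid only when $N>2s$, is to interpolate $\|u\|_p \le \|u\|_2^\theta \|u\|_{2^\ast_s}^{1-\theta}$ with $2^\ast_s = \tfrac{2N}{N-2s}$ and then apply the critical embedding $\|u\|_{2^\ast_s}\lesssim \|(-\Delta)^{s/2}u\|_2$; this also yields \eqref{gn}, but requires the sharp endpoint embedding that the scaling route avoids.
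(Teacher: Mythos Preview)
Your argument is correct: the scaling-and-optimization route from the inhomogeneous Sobolev embedding is a standard and clean way to produce \eqref{gn} with the right exponents, and your bookkeeping checks out. Note, however, that the paper does not prove this lemma at all; it is stated without proof as the ``well-known Gagliardo--Nirenberg inequality,'' so there is no in-paper argument to compare against. Your write-up thus supplies strictly more than the paper does here; the only minor slip is writing ``$N\le 2s$'' in a couple of places where the statement has $N<2s$, but since you immediately specialize to $N=1$, $s>\tfrac12$, the content is unaffected.
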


\begin{lem} \label{pohozaev}
Let $u \in H^{s_1}(\R^N)$ is a solution to \eqref{fequ}-\eqref{mass}, then $Q(u)=0$, i.e.
$$
s_1\int_{\R^N} |(-\Delta)^{\frac{s_1}{2}} u|^2\,dx + s_2 \int_{\R^N} |(-\Delta)^{\frac{s_2}{2}} u|^2\,dx=\frac{N(p-2)}{2p} \int_{\R^N}|u|^p\,dx.
$$
\end{lem}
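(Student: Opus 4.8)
The plan is to exploit the $L^2$-invariant dilation $u_t(x):=t^{N/2}u(tx)$ already used in \eqref{scaling}, together with the fact that a solution $u$ to \eqref{fequ}-\eqref{mass} is a constrained critical point of $E$ on $S(c)$ and hence satisfies $E'(u)=\lambda u$ in $H^{-s_1}(\R^N)$ for the multiplier $\lambda$ appearing in \eqref{fequ}. Since $u_t\in S(c)$ for every $t>0$ and $u_1=u$, differentiating $t\mapsto E(u_t)$ at $t=1$ should kill the constraint direction and leave exactly $Q(u)=0$. Concretely, from the explicit identity \eqref{scaling} the function $g(t):=E(u_t)$ is smooth on $(0,\infty)$ and
\begin{align*}
g'(1)=s_1\int_{\R^N}|(-\Delta)^{\frac{s_1}{2}}u|^2\,dx+s_2\int_{\R^N}|(-\Delta)^{\frac{s_2}{2}}u|^2\,dx-\frac{N(p-2)}{2p}\int_{\R^N}|u|^p\,dx=Q(u),
\end{align*}
so the whole matter reduces to proving $g'(1)=0$.

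The first step is to establish enough regularity and decay of $u$. Starting from $u\in H^{s_1}(\R^N)$ solving \eqref{fequ}, a standard bootstrap using the Sobolev embeddings $H^{s_1}\hookrightarrow L^q$ and the smoothing of $\big((-\Delta)^{s_1}+(-\Delta)^{s_2}+\lambda\big)^{-1}$ yields $u\in H^{s}(\R^N)$ for all $s>0$, hence $u\in C^\infty(\R^N)\cap L^\infty(\R^N)$; moreover $u$ and its derivatives decay at infinity, as is known for (mixed) fractional elliptic equations (see \cite{CBH,Rup13,Rup16} and the Green-function/Fourier estimates therein). In particular $x\cdot\nabla u\in H^{s_1}(\R^N)$, so that $w:=\tfrac{N}{2}u+x\cdot\nabla u=\frac{d}{dt}u_t\big|_{t=1}\in H^{s_1}(\R^N)$ is an admissible test function, and $t\mapsto u_t$ is of class $C^1$ from a neighbourhood of $1$ into $H^{s_1}(\R^N)$.

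With this in hand, the chain rule together with $E\in C^1(H^{s_1}(\R^N))$ gives $Q(u)=g'(1)=\langle E'(u),w\rangle$. Since $E'(u)=\lambda u$, this equals $\lambda\langle u,w\rangle_{L^2}$, and an integration by parts (legitimate by the decay of $u$) shows $\langle u,w\rangle_{L^2}=\tfrac{N}{2}\|u\|_2^2+\mathrm{Re}\int_{\R^N}\overline{u}\,(x\cdot\nabla u)\,dx=\tfrac{N}{2}\|u\|_2^2-\tfrac{N}{2}\|u\|_2^2=0$; equivalently, $\langle u,w\rangle_{L^2}=0$ already because $t\mapsto u_t$ moves along the sphere $S(c)$, so $w$ is tangent to $S(c)$ at $u$. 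Hence $Q(u)=0$, which is precisely the stated identity. (One can also phrase this as: through a critical point of $E|_{S(c)}$, the function $g$ is stationary to first order along $S(c)$, so $g'(1)=Q(u)=0$.)

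The main obstacle is the regularity/decay step: one must justify that $x\cdot\nabla u$ genuinely defines an element of $H^{s_1}(\R^N)$ and that the integration by parts $\mathrm{Re}\int_{\R^N}\overline{u}\,(x\cdot\nabla u)\,dx=-\tfrac{N}{2}\|u\|_2^2$ is valid, which for the Fourier-defined mixed fractional operator is not completely routine and is the content of the Appendix. To economise one could instead pair \eqref{fequ} with a cut-off direction $\chi(\cdot/R)\,w$, where $\chi\in C_0^\infty(\R^N)$ with $\chi\equiv1$ near the origin, let $R\to\infty$, and control the commutator terms $[(-\Delta)^{s_i},\chi(\cdot/R)]$; this recovers $Q(u)=0$ under the weaker requirement that $u\in H^1_{loc}(\R^N)$ with mild decay, which the bootstrap already supplies.
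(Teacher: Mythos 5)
Your reduction is the classical Derrick-type one: writing $v:=\tfrac{N}{2}u+x\cdot\nabla u$, you claim $Q(u)=g'(1)=\langle E'(u),v\rangle=-\lambda\langle u,v\rangle_{L^2}=0$ (note the sign: $E'(u)=-\lambda u$, not $\lambda u$, though this slip is harmless). This is formally fine, but the entire content of the lemma is the justification of this computation for an arbitrary solution $u\in H^{s_1}(\R^N)$, and that is exactly where your argument has a genuine gap. The chain-rule step needs $t\mapsto u_t$ to be differentiable into $H^{s_1}(\R^N)$ at $t=1$, i.e. $x\cdot\nabla u\in H^{s_1}(\R^N)$, and your justification is asserted rather than proved: (i) the bootstrap cannot give $u\in C^\infty(\R^N)$ in general, since $z\mapsto|z|^{p-2}z$ has only limited smoothness when $p$ is not an even integer (and here $p$ may even be below $3$); (ii) the smoothing of $\bigl((-\Delta)^{s_1}+(-\Delta)^{s_2}+\lambda\bigr)^{-1}$ is available only when the symbol $|\xi|^{2s_1}+|\xi|^{2s_2}+\lambda$ does not vanish, essentially $\lambda>0$, whereas at this stage nothing is known about the sign of the Lagrange multiplier (positivity comes only later, in Lemma \ref{lagrange}, and only for certain masses), so at least a high/low frequency splitting would have to be carried out and is not; (iii) most importantly, the spatial decay of $u$ and $\nabla u$, which you need both to place $x\cdot\nabla u$ in $L^2$ and to legitimize the integration by parts $\mathrm{Re}\int_{\R^N}\overline{u}\,(x\cdot\nabla u)\,dx=-\tfrac N2\|u\|_2^2$, is not ``known'' for the mixed operator at this level of generality, and the references you invoke do not supply it; proving such decay would be a piece of work comparable to the lemma itself. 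The truncation variant you sketch at the end suffers from the same problem: the commutator and far-field terms are precisely the boundary terms whose control is the heart of the matter, and no estimate is offered.

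For comparison, the paper's proof takes a different route that avoids all pointwise regularity and decay questions: after a scaling normalization it passes to the Caffarelli--Silvestre extension \cite{CS}, multiplies the degenerate elliptic extended equation (with weights $y^{1-2s_1}$ and $y^{1-2s_2}$) by $(x,y)\cdot\nabla w$, where $w$ is the extension, integrates over half-balls $\mathcal{B}^+(0,R)$, and removes the boundary terms by selecting a sequence $R_n\to\infty$ along which they vanish; this uses only the finiteness of the weighted Dirichlet integrals and of $\|u\|_2$, $\|u\|_p$, i.e. exactly the information encoded in $u\in H^{s_1}(\R^N)$ being a solution. Combining the resulting identity with the Nehari-type identity obtained by testing against $w$ itself yields $Q(u)=0$. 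If you want to keep your dilation argument, you must supply an honest substitute for the missing step (for example, carry out the cut-off computation with explicit commutator estimates, or first establish regularity and decay of solutions); as written, your proof is incomplete precisely at the point the lemma is meant to settle.
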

\begin{proof}
For convenience of readers, the proof of this lemma shalled be postponed to Appendix.
\end{proof}

Making use of Lemmas \ref{gninequ} and \ref{pohozaev}, we are now able to prove Theorem \ref{thm1}.

\begin{proof}[Proof of Theorem \ref{thm1}] In view of \eqref{gn}, we first have that, for any $u \in S(c)$,
$$
E(u) \geq \frac 12 \left(1-\left(\frac{c}{c_{N,s_1}}\right)^{\frac{2s_1}{N}}\right) \int_{\R^N} |(-\Delta)^{\frac{s_1}{2}} u|^2 \, dx + \frac 12  \int_{\R^N} |(-\Delta)^{\frac{s_2}{2}} u |^2 \, dx.
$$
This clearly shows that $m(c)\geq 0$ for any $0<c \leq c_{N, s_1}$. On the other hand, from \eqref{scaling}, we can deduce that $E(u_t) \to 0$ as $t \to 0^+$. This leads to $m(c) \leq 0$ for any $c>0$. Therefore, we obtain that $m(c)=0$ for any $0<c \leq c_{N, s_1}$. We next prove that $m(c)$ cannot be attained for any $0<c<c_{N,s_1}$. Let us suppose that $m(c)$ is attained for some $0<c\leq c_{N,s_1}$. Hence there exists $u \in S(c)$ such that $m(c)=E(u)$. From Lemma \ref{pohozaev}, we get that $Q(u)=0$.
Using \eqref{gn}, we then see that
\begin{align}\label{criticalineq}
\begin{split}
s_1\int_{\R^N} |(-\Delta)^{\frac{s_1}{2}} u|^2\, dx +s_2\int_{\R^N} |(-\Delta)^{\frac{s_2}{2}} u|^2\, dx
&=\frac{Ns_1}{N+2s_1} \int_{\R^N} |u|^{2+\frac{4s_1}{N}} \,dx \\
& \leq s_1\left(\frac{c}{c_{N, s_1}}\right)^{\frac{2s_1}{N}}\int_{\R^N} |(-\Delta)^{\frac{s_1}{2}} u|^2\,dx.
\end{split}
\end{align}
This then suggests that $u=0$, because of $0<c\leq c_{N,s_1}$. As a result, we derive that $m(c)$ is not attained for any $0<c \leq c_{N,s_1}$. From the discussions above, we can also conclude that \eqref{fequ}-\eqref{mass} has no solutions for any $0<c \leq  c_{N, s_1}$. We now prove that $m(c)=-\infty$ for any $c>c_{N, s_1}$. Let $ u \in H^{s_1}(\R^N)$ be such that the optimal constant $C_{N, s_1}$ in \eqref{gn} is achieved for $p=2+\frac{4s_1}{N}$. This means that
\begin{align} \label{optimal}
\int_{\R^N}|u|^{2+\frac{4s_1}{N}} \,dx =C_{N,s_1} \left(\int_{\R^N}|(-\Delta)^{\frac {s_1} {2}}u|^2 \,dx\right)\left( \int_{\R^N}|u|^2 \, dx \right)^{\frac{2s_1}{N}}.
\end{align}
Define
\begin{align}  \label{defw}
w:=c^{\frac 12}\frac{u}{\|u\|_2} \in S(c).
\end{align}
By applying \eqref{optimal}, we can derive that
\begin{align} \label{scaling1}
\begin{split}
E(w_t)&=\frac{c}{2\|u\|_2^2} t^{2s_1}\int_{\R^N} |(-\Delta)^{\frac{s_1}{2}} u|^2 \, dx +\frac{c}{2\|u\|_2^2} t^{2s_2}\int_{\R^N} |(-\Delta)^{\frac{s_2}{2}} u|^2 \, dx \\
&\quad -\frac{N}{2N+4s_1}\frac{c^{1+\frac{2s_1}{N}}}{\|u\|_2^{2+\frac{4s_1}{N}}} t^{2s_1} \int_{\R^N}|u|^{2+\frac{4s_1}{N}}  \,dx\\
&=\frac{c}{2\|u\|_2^2} \left(1-\left(\frac{c}{c_{N,s_1}}\right)^{\frac{2s_1}{N}}\right) t^{2s_1}\int_{\R^N} |(-\Delta)^{\frac{s_1}{2}}u|^2 \, dx +\frac{c}{2\|u\|_2^2} t^{2s_2} \int_{\R^N} |(-\Delta)^{\frac{s_2}{2}}u|^2\, dx.
\end{split}
\end{align}
This indicates that $E(w_t) \to -\infty$ as $t \to \infty$ for any $c>c_{N, s_1}$, due to $0<s_2<s_1<1$. Hence $m(c)=-\infty$ for any $c>c_{N, s_1}$. This completes the proof.
\end{proof}

\begin{lem}\label{pohoz 0}
Let $\phi \in H^{s_1}(\R^N)$ be the ground state solution to \eqref{psi eqt}, then
\begin{align}\label{pohoz 1}
\|(-\Delta)^{\frac {s_1} {2}} \phi\|_2^2=\frac{N(p-2)}{2s_1p-N(p-2)}\|\phi\|_2^2,
\end{align}
\begin{align}\label{pohoz 2}
\|\phi\|_p^p=\frac{2s_1p}{N(p-2)}\|(-\Delta)^{\frac {s_1} {2}} \phi\|_2^2,
\end{align}
\begin{align}\label{best const with GS}
\displaystyle C_{N,p,s_1}=\frac{2s_1p}{N(p-2)}\left(\frac{N(p-2)}{2s_1p-N(p-2)}\right)^{\frac{4s_1-Np+2N}{4s_1}}\|\phi\|_2^{2-p},
\end{align}
where $C_{N,p,s_1}>0$ is the optimal constant in \eqref{gn} with $s=s_1$.
\end{lem}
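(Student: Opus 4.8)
Write $A:=\|(-\Delta)^{s_1/2}\phi\|_2^2$, $B:=\|\phi\|_2^2$ and $C:=\|\phi\|_p^p$; the three asserted formulas are then linear, resp.\ monomial, relations among $A,B,C$. The plan is to derive two independent balance laws satisfied by $\phi$, solve them for $A$ and $C$ as multiples of $B$, and then feed the outcome into the equality case of the Gagliardo--Nirenberg inequality \eqref{gn}.

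First I would test \eqref{psi eqt} against $\phi$, which (using $\phi>0$) gives the Nehari-type identity $A+B=C$. For the second relation I would invoke the Pohozaev identity for \eqref{psi eqt}: since $\phi$ is a ground state, it is positive, smooth and decaying at infinity by \cite{Rup13,Rup16}, so one has
$$
\frac{N-2s_1}{2}\,A+\frac{N}{2}\,B=\frac{N}{p}\,C ,
$$
by exactly the computation of the Appendix applied to \eqref{psi eqt} (i.e.\ with the $(-\Delta)^{s_2}$-term dropped and $\lambda=1$), or see \cite{Rup13}. Eliminating $C$ between these two identities gives $\big(2s_1p-N(p-2)\big)A=N(p-2)B$, which is \eqref{pohoz 1}, and reinserting this into $C=A+B$ gives $C=\frac{2s_1p}{N(p-2)}A$, which is \eqref{pohoz 2}. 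Note that $2s_1p-N(p-2)>0$ under the standing hypotheses on $p$ (it is equivalent to $2N>p(N-2s_1)$, which is automatic when $N\le 2s_1$ and is the subcriticality condition when $N>2s_1$), so the division is licit.

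For \eqref{best const with GS} I would use the classical fact that the ground state $\phi$ is an optimizer for \eqref{gn} with $s=s_1$: the quantity $1/C_{N,p,s_1}$ is the infimum over $u\in H^{s_1}(\R^N)\setminus\{0\}$ of the scale-invariant quotient $\|(-\Delta)^{s_1/2}u\|_2^{N(p-2)/(2s_1)}\|u\|_2^{p-N(p-2)/(2s_1)}\big/\|u\|_p^p$, and by the uniqueness theory of \cite{Rup13,Rup16} a minimizer, after a suitable rescaling $u\mapsto a\,u(b\cdot)$ (under which the quotient is unchanged), equals $\phi$; hence $\phi$ itself is a minimizer and equality holds in \eqref{gn} for $u=\phi$, i.e.\ $C=C_{N,p,s_1}\,A^{N(p-2)/(4s_1)}B^{p/2-N(p-2)/(4s_1)}$. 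Substituting $C=\frac{2s_1p}{N(p-2)}A$ and $B=\frac{2s_1p-N(p-2)}{N(p-2)}A$ from the previous step and then simplifying the powers of $A$ and of $\frac{2s_1p-N(p-2)}{N(p-2)}$ (using $\|\phi\|_2^2=B$ to produce the factor $\|\phi\|_2^{2-p}$) yields \eqref{best const with GS}. This final part is pure exponent arithmetic.

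The only non-mechanical ingredients are the inputs that $\phi$ saturates \eqref{gn} and that $\phi$ is regular enough for the Pohozaev identity to hold; both belong to the ground-state theory for the fractional equation \eqref{psi eqt} in \cite{Rup13,Rup16} and should be quoted rather than reproved. Everything else reduces to solving a $2\times 2$ linear system in $A,B,C$ and arithmetic of exponents, so there is really no serious obstacle.
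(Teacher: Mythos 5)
Your proposal is correct and follows essentially the same route as the paper: the Nehari identity from testing \eqref{psi eqt} against $\phi$, the Pohozaev identity of Frank--Lenzmann--Silvestre (the paper quotes \cite[Lemma 8.1]{Rup16} where you propose adapting the Appendix computation or citing \cite{Rup13}), and then equality in \eqref{gn} for $\phi$ to extract $C_{N,p,s_1}$. Your justification that $\phi$ saturates \eqref{gn} via the variational characterization of the optimal constant and the uniqueness/rescaling theory is exactly the (unstated) ingredient the paper relies on, so there is nothing to correct.
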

\begin{proof}
Multiplying \eqref{psi eqt} against $\phi$ and integrating on $\R^N$, one gets that
\begin{align}\label{eq 1}
\|(-\Delta)^{\frac{s_1}{2}}\phi\|_2^2+\|\phi\|_2^2=\|\phi\|_p^p,
\end{align}
On the other hand, from \cite[Lemma 8.1]{Rup16}, one has that
\begin{align}\label{eq 2}
\frac{N-2s_1}{2}\|(-\Delta)^{\frac{s_1}{2}}\phi\|_2^2+\frac{N}{2}\|\phi\|_2^2=\frac{N}{p}\|\phi\|_p^p.
\end{align}
It then follows from \eqref{eq 1} and \eqref{eq 2} that
\begin{align*}
\|(-\Delta)^{\frac{s_1}{2}}\phi\|_2^2=\frac{N(p-2)}{2s_1p-N(p-2)}\|\phi\|_2^2.
\end{align*}
This along with \eqref{eq 1} then yields to
\begin{align*}
\|\phi\|_p^p
& = \frac{2s_1p}{N(p-2)}\|(-\Delta)^{\frac{s_1}{2}}\phi\|_2^2.
\end{align*}
From straightforward calculations and the equality
$$
\|\phi\|_{p}^p =C_{N,p,s_1} \|(-\Delta)^{\frac s 2} \phi\|_2^{\frac{N(p-2)}{2s_1}}\|\phi\|_2^{p-\frac{N(p-2)}{2s}},
$$
then \eqref{best const with GS} follows. Thus the proof is completed.
\end{proof}

\section{Existence and characteristics of ground state solutions} \label{section3}

In this section, we shall present the proofs of Theorems \ref{thm2} and \ref{thm6}. For this aim, we first need to establish some preliminary results. Let us remind that $c_0=c_{N,s_1}$ if $p=2 +\frac{4s_1}{N}$ and $c_0=0$ if $p> 2 +\frac{4s_1}{N}$, where $c_{N,s_1}>0$ is the constant given in Theorem \ref{thm1}.

\begin{lem} \label{nonempty}
Let $N \geq 1$, $0<s_2<s_1<1$ and $p \geq 2 +\frac{4s_1}{N}$, then $P(c) \neq \emptyset$ for any $c>c_0$.
\end{lem}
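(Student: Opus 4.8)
The plan is to show that, for every $c>c_0$, a suitable scaling fiber over a single well-chosen element of $S(c)$ already meets $P(c)$. Fix $u\in S(c)\setminus\{0\}$ (the choice of $u$ will depend on whether $p$ is critical) and, for $t>0$, put $u_t(x):=t^{N/2}u(tx)$, so that $u_t\in S(c)$. By the scaling identity \eqref{scaling}, the map $g(t):=E(u_t)$ is smooth on $(0,\infty)$: it is a linear combination of $t^{2s_1}$, $t^{2s_2}$ and $-t^{N(p-2)/2}$ with coefficients $\tfrac12\|(-\Delta)^{s_1/2}u\|_2^2>0$, $\tfrac12\|(-\Delta)^{s_2/2}u\|_2^2>0$ and $-\tfrac1p\|u\|_p^p<0$ respectively. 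Moreover, since $(u_t)_\tau=u_{t\tau}$, differentiating in $\tau$ at $\tau=1$ gives $t\,g'(t)=Q(u_t)$. Hence it suffices to produce some $t^\ast>0$ with $g'(t^\ast)=0$, for then $u_{t^\ast}\in P(c)$ and we are done.

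Step 1: behaviour as $t\to0^+$. Because $p\geq 2+\tfrac{4s_1}N$ we have $2s_2<2s_1\leq\tfrac{N(p-2)}2$, so among the three exponents the smallest is $2s_2$; its coefficient being strictly positive, $g(t)\to0^+$ as $t\to0^+$ and $g(t)>0$ for all sufficiently small $t>0$. Step 2: behaviour as $t\to\infty$; this is where $c>c_0$ enters. If $p>2+\tfrac{4s_1}N$ (so $c_0=0$), then $\tfrac{N(p-2)}2>2s_1>2s_2$, the term $-\tfrac1p\|u\|_p^p\,t^{N(p-2)/2}$ dominates, and $g(t)\to-\infty$ for any nonzero $u\in S(c)$. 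If $p=2+\tfrac{4s_1}N$ (so $c_0=c_{N,s_1}$), the $L^p$-term has homogeneity $t^{2s_1}$ and does not dominate on its own; here I would take $u$ to be the function $w\in S(c)$ constructed from the Gagliardo--Nirenberg extremal in the proof of Theorem \ref{thm1}, for which \eqref{scaling1} gives $g(t)=\alpha\,t^{2s_1}+\beta\,t^{2s_2}$ with $\beta>0$ and $\alpha$ a positive multiple of $\bigl(1-(c/c_{N,s_1})^{2s_1/N}\bigr)$. Since $c>c_0=c_{N,s_1}$ we get $\alpha<0$, and as $2s_1>2s_2$ again $g(t)\to-\infty$.

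Step 3: conclusion. In either case $g$ is continuous on $(0,\infty)$, tends to $0$ at $0^+$, tends to $-\infty$ at $\infty$, and is positive somewhere; hence $M:=\sup_{t>0}g(t)\in(0,\infty)$, and any maximizing sequence eventually stays in a compact subinterval $[\delta,R]\subset(0,\infty)$ (bounded away from $0$ since $M>0$ while $g\to0$ there, bounded above since $g\to-\infty$). Thus $M$ is attained at some $t^\ast\in(0,\infty)$, an interior maximum of the differentiable function $g$, so $g'(t^\ast)=0$, i.e. $Q(u_{t^\ast})=0$. Since $u_{t^\ast}\in S(c)$, this gives $u_{t^\ast}\in P(c)$, hence $P(c)\neq\emptyset$.

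The main obstacle is the mass-critical case $p=2+\tfrac{4s_1}N$: there the $L^p$-nonlinearity shares the homogeneity $t^{2s_1}$ with the leading kinetic term, so $g(t)\to-\infty$ is \emph{not} automatic, and one genuinely needs both the restriction $c>c_0=c_{N,s_1}$ and the explicit Gagliardo--Nirenberg optimizer to make the relevant coefficient negative. In the mass-supercritical regime the nonlinearity prevails on its own, and no condition on $c$ beyond $c>0$ is required.
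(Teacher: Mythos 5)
Your proof is correct and follows essentially the same route as the paper: in both arguments one works along the scaling fiber $t\mapsto E(u_t)$, uses $t\,\frac{d}{dt}E(u_t)=Q(u_t)$, takes an arbitrary $u\in S(c)$ in the supercritical case and the Gagliardo--Nirenberg extremal-based $w$ of \eqref{defw} together with \eqref{scaling1} in the critical case, and locates an interior maximum where $Q$ vanishes. No substantive differences from the paper's proof of Lemma \ref{nonempty}.
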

\begin{proof}
For the case $p=2+\frac{4s_1}{N}$ and $c>c_{N,s_1}$, it follows from \eqref{scaling1} that $E(w_t)>0$ for any $t>0$ small enough and $E(w_t)<0$ for any $t>0$ large enough, where $w \in S(c)$ is defined by \eqref{defw}. Therefore, one finds that there exists a constant $t_w>0$ such that
$$
t_wQ(w_{t_w})=\frac{d E(w_t)}{d t}{\mid_{t=t_w}}=0.
$$
Hence we have that $w_{t_w} \in P(c)$ and $P(c) \neq \emptyset$ for any $c>c_{N, s_1}$. For the case $p>2+\frac{4s_1}{N}$ and $c>0$, from applying \eqref{scaling}, we can obtain the desired conclusion by a similar way. Thus the proof is completed.
\end{proof}

\begin{lem} \label{coercive}
Let $N \geq 1$, $0<s_2<s_1<1$ and $p \geq 2 +\frac{4s_1}{N}$, then, for any $c>c_0$, $E$ restricted on $P(c)$ is coercive and bounded from below by a positive constant. 
\end{lem}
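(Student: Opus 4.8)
The plan is to work directly with the two ingredients defining $P(c)$: membership in $S(c)$ (so $\|u\|_2^2 = c$ is fixed) and the Pohozaev constraint $Q(u) = 0$. Write for brevity $A_1 := \|(-\Delta)^{s_1/2}u\|_2^2$ and $A_2 := \|(-\Delta)^{s_2/2}u\|_2^2$. The constraint $Q(u)=0$ reads $s_1 A_1 + s_2 A_2 = \frac{N(p-2)}{2p}\|u\|_p^p$, so on $P(c)$ we may substitute
$$
E(u) = \frac12 A_1 + \frac12 A_2 - \frac1p\|u\|_p^p = \frac12 A_1 + \frac12 A_2 - \frac{2}{N(p-2)}\bigl(s_1 A_1 + s_2 A_2\bigr) = \frac{\alpha_1}{2} A_1 + \frac{\alpha_2}{2} A_2,
$$
where $\alpha_i := 1 - \frac{4s_i}{N(p-2)}$. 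Since $p \geq 2 + \frac{4s_1}{N}$, i.e. $N(p-2) \geq 4s_1$, we get $\alpha_1 \geq 0$, and $\alpha_2 = 1 - \frac{4s_2}{N(p-2)} > 1 - \frac{4s_1}{N(p-2)} = \alpha_1 \geq 0$, with $\alpha_2$ bounded below by the strictly positive constant $1 - s_2/s_1$ when $\alpha_1 = 0$ (the mass-critical case). Hence on $P(c)$,
$$
E(u) \geq \frac{\alpha_1}{2} A_1 + \frac{\alpha_2}{2} A_2 \geq \tfrac12\min\{\alpha_1,\alpha_2\}\,(A_1 + A_2) \geq 0,
$$
which already gives the lower bound by a nonnegative — and in fact, once we rule out the degenerate situation, positive — constant, and simultaneously shows $E(u) \to \infty$ whenever $A_1 + A_2 \to \infty$ along $P(c)$, i.e. coercivity with respect to the homogeneous Sobolev norms. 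Since $\|u\|_{H^{s_1}}^2 \sim c + A_1$ on $S(c)$, this is coercivity in $H^{s_1}(\R^N)$.

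The one case needing extra care is $p = 2 + \frac{4s_1}{N}$, where $\alpha_1 = 0$ and the $A_1$-term drops out of the above identity; then $E(u) = \frac{\alpha_2}{2}A_2$ on $P(c)$, and I must still control $A_1$ by $A_2$ (and $c$) to conclude coercivity in $H^{s_1}$. Here I would invoke Gagliardo–Nirenberg \eqref{gn}: the Pohozaev relation gives $s_1 A_1 \leq \frac{N(p-2)}{2p}\|u\|_p^p = \frac{Ns_1}{N+2s_1}\|u\|_p^p$, and by \eqref{gn} with the optimal constant, $\|u\|_p^p \leq C_{N,s_1} A_1\, c^{2s_1/N}$, so $A_1(1 - (c/c_{N,s_1})^{2s_1/N}) \leq 0$. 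Since in this subcase $c > c_0 = c_{N,s_1}$ this inequality is vacuous and gives no control — so instead I would use \eqref{gn} to bound $\|u\|_p^p \lesssim A_1 c^{2s_1/N}$ together with the constraint rewritten as $A_1 = \frac{1}{s_1}\bigl(\frac{N(p-2)}{2p}\|u\|_p^p - s_2 A_2\bigr) \leq \frac{N(p-2)}{2ps_1}\|u\|_p^p$, and interpolate $\|u\|_p$ between $\|u\|_2 = c^{1/2}$ and $A_2$ via \eqref{gn} applied with $s = s_2$ instead: $\|u\|_p^p \leq C_{N,p,s_2} A_2^{N(p-2)/(4s_2)} c^{p/2 - N(p-2)/(4s_2)}$. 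This requires $p < \frac{2N}{N-2s_2}$ (automatically satisfied in the mass-critical subcase only under an additional restriction), so in fact the correct argument is simply: on $P(c)$, $E(u) = \frac{\alpha_2}{2}A_2$ is coercive in $A_2$, and then the Pohozaev identity plus \eqref{gn} with $s = s_2$ controls $\|u\|_p^p$ by $A_2$ and $c$, which in turn controls $A_1$ by the Pohozaev identity.

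The main obstacle I anticipate is precisely this mass-critical endpoint $\alpha_1 = 0$: the naive substitution loses all information about $\|(-\Delta)^{s_1/2}u\|_2$, and recovering it forces one to use the lower-order operator $(-\Delta)^{s_2}$ together with the subcriticality of $p$ relative to $s_2$ — which is why the paper's hypotheses repeatedly single out conditions like $2 < p \leq \frac{2N}{N-2s_2}$. I would therefore structure the proof as: (1) the substitution identity $E|_{P(c)} = \frac{\alpha_1}{2}A_1 + \frac{\alpha_2}{2}A_2$; (2) the case $p > 2 + \frac{4s_1}{N}$ where $\alpha_1 > 0$ and coercivity plus positivity of the infimum are immediate; (3) the case $p = 2 + \frac{4s_1}{N}$, handled by coercivity in $A_2$ from the $\alpha_2$-term and then bounding $A_1$ via the Pohozaev constraint and Gagliardo–Nirenberg in terms of $A_2$ and $c$. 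The lower bound by a strictly positive constant follows in both cases once one notes that on $P(c)$ one cannot have $A_1 = A_2 = 0$ (else $u \equiv 0 \notin S(c)$ since $c > 0$), combined with the scaling/compactness that $\inf_{P(c)} (A_1 + A_2) > 0$ — which itself follows from \eqref{gn} and $Q(u) = 0$ forcing a lower bound on $A_1 + A_2$.
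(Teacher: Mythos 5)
Your starting identity is precisely the paper's first step (equation \eqref{ide1}): on $P(c)$ one has $E(u)=\frac{N(p-2)-4s_1}{2N(p-2)}A_1+\frac{N(p-2)-4s_2}{2N(p-2)}A_2$, with $A_1:=\|(-\Delta)^{s_1/2}u\|_2^2$, $A_2:=\|(-\Delta)^{s_2/2}u\|_2^2$, and your handling of the mass-supercritical case and of the mass-critical case under $N=1$, $2s_2\ge1$ or $2<p\le\frac{2N}{N-2s_2}$ is the same as the paper's: the $s_2$-Gagliardo--Nirenberg inequality controls $\|u\|_p$ by $A_2$, the exponent $\frac{N(p-2)}{4s_2}>1$ yields the positive lower bound, and $Q(u)=0$ transfers control of $A_2$ to $A_1$. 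One small caveat: in the supercritical case the positive lower bound is not ``immediate''; it needs the estimate $s_1A_1\le\frac{N(p-2)}{2p}\|u\|_p^p\le C\,c^{\frac p2-\frac{N(p-2)}{4s_1}}A_1^{\frac{N(p-2)}{4s_1}}$ with exponent strictly larger than one, which your closing paragraph does gesture at, though the phrase ``scaling/compactness'' should be replaced by this explicit computation.

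The genuine gap is the remaining regime of the mass-critical case, $2s_2<N$ and $p=2+\frac{4s_1}{N}>\frac{2N}{N-2s_2}$ (equivalently $N(s_1-s_2)>2s_1s_2$), which the hypotheses of the lemma do not exclude. You note parenthetically that \eqref{gn} with $s=s_2$ requires $p\le\frac{2N}{N-2s_2}$, but you then assert that ``the correct argument is simply'' the $s_2$-route; in this regime that inequality is unavailable, so your plan establishes neither coercivity nor the positive lower bound there. The paper closes the coercivity part by interpolation: write $p=\theta p_1+(1-\theta)p_2$ with $2<p_2<\frac{2N}{N-2s_2}<p_1$, apply H\"older and then \eqref{gn} with $s_1$ to the $p_1$-factor and with $s_2$ to the $p_2$-factor; the decisive point is that the resulting power of $\|(-\Delta)^{s_1/2}u\|_2$ equals $\frac{\theta N(p_1-2)}{2s_1}<2$, strictly subquadratic, so $Q(u)=0$ together with the boundedness of $A_2$ (coming from \eqref{ide1}) still bounds $A_1$. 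For the strict positivity of the infimum in the same regime, your final claim that $\inf_{P(c)}(A_1+A_2)>0$ ``follows from \eqref{gn} and $Q(u)=0$'' also breaks down: as you yourself computed, the $s_1$-Gagliardo--Nirenberg bound at the critical exponent is vacuous once $c>c_{N,s_1}$, and the $s_2$-bound cannot be applied at $p$. The paper instead argues by contradiction that $\|(-\Delta)^{s_2/2}u\|_2$ is bounded away from zero on $P(c)$ (using the Sobolev embedding of $\dot H^{s_2}$, interpolation of the $L^p$-norm between $L^{2N/(N-2s_2)}$ and $L^{2N/(N-2s_1)}$, the constraint $Q=0$, and the coercivity just established), and then concludes via \eqref{ide1} together with \eqref{criticalineq}. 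Your proposal needs these two additional arguments to cover the full range of the lemma.
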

\begin{proof}
If $u \in P(c)$, then $Q(u)=0$. As a result, there holds that
\begin{align} \label{ide1}
\begin{split}
\hspace{-1cm}E(u)&=E(u)-\frac{2}{N(p-2)}Q(u)\\
&=\frac{N(p-2)-4s_1}{2N(p-2)}\|(-\Delta)^{\frac{s_1}{2}} u\|_2^2 +\frac{N(p-2)-4s_2}{2N(p-2)}\|(-\Delta)^{\frac{s_2}{2}} u\|_2^2.
\end{split}
\end{align}
We first consider the case $p=2+\frac{4s_1}{N}$. In this case, we shall prove that $E$ restricted on $P(c)$ is coercive for any $c>c_{N,s_1}$. To do this, we argue by contradiction that there exists a sequence $\{u_n\} \subset P(c)$ satisfying $\|u_n\|_{H^{s_1}} \to \infty$ as $n \to \infty$ such that $\{E(u_n)\} \subset \R$ is bounded for some $0<c<c_{N,s_1}$. From \eqref{ide1}, we then infer that $\{\|(-\Delta)^{{s_2}/{2}} u_n\|_2\} \subset \R$ is bounded. Next we are going to deduce that $\{\|(-\Delta)^{{s_1}/{2}} u_n\|_2\} \subset \R$ is bounded. If $N=1$ and $2s_2\geq 1$ or $N \geq 1$, $2s_2<N$ and $2 < p \leq \frac{2N}{N-2s_2}$, it then follows from \eqref{gn} in that $\{\|u_n\|_p\} \subset \R$ is bounded, because of $\{\|(-\Delta)^{{s_2}/{2}} u_n\|_2\} \subset \R$ is bounded. Thanks to $Q(u_n)=0$, we then get that $\{\|(-\Delta)^{{s_1}/{2}} u_n\|_2\} \subset \R$ is bounded. If $N \geq 1$, $2s_2<N$ and $p >\frac{2N}{N-2s_2}$, then there exist $p_1, p_2 >0$ with $2<p_2<\frac{2N}{N-2s_2}<p_1$ and $0<\theta<1$ such that $p=\theta p_1 +(1-\theta)p_2$. Using H\"older's inequality, we then derive that
\begin{align} \label{inter}
\|u_n\|_p^p \leq \|u_n\|_{p_1}^{\theta p_1} \|u_n\|_{p_2}^{(1-\theta)p_2}.
\end{align}
On the other hand, applying \eqref{gn}, one gets that
$$
\|u_n\|_{p_1}^{p_1} \leq C_{N,p_1,s_1} \|(-\Delta)^{\frac {s_1} {2}} u_n\|_2^{\frac{N(p_1-2)}{2s_1}}\|u_n\|_2^{p_1-\frac{N(p_1-2)}{2s_1}}
$$
and
$$
\|u_n\|_{p_2}^{p_2} \leq C_{N,p_2,s_2}  \|(-\Delta)^{\frac {s_2} {2}} u_n\|_2^{\frac{N(p_2-2)}{2s_2}}\|u_n\|_2^{p_2-\frac{N(p_2-2)}{2s_2}}.
$$
This along with \eqref{inter} results in
\begin{align} \label{inequ1}
\|u_n\|_p^p \leq C\|(-\Delta)^{\frac {s_1} {2}} u_n\|_2^{\frac{\theta N(p_1-2)}{2s_1}} \|(-\Delta)^{\frac {s_2} {2}} u_n\|_2^{\frac{(1-\theta )N(p_2-2)}{2s_2}} \|u_n\|_2^{p-\frac{\theta N(p_1-2)}{2s_1}-\frac{(1-\theta)N(p_2-2)}{2s_2}},
\end{align}
where $ C:=C_{N,p_1,s_1}^{\theta}C_{N,p_2,s_2}^{1-\theta}>0$. Observe that
$$
\theta(p_1-2)=(p-2)-(1-\theta)(p_2-2)<p-2=\frac{4s_1}{N}.
$$
Therefore, there holds that
$$
\frac{\theta N(p_1-2)}{2s_1}<2.
$$
Since $Q(u_n)=0$ and $\{\|(-\Delta)^{{s_2}/{2}} u_n\|_2\} \subset \R$ is bounded, from \eqref{inequ1}, then $\{\|(-\Delta)^{{s_1}/{2}} u_n\|_2\} \subset \R$ is bounded. Hence we reach a contradiction, because $\|u_n\|_{H^{s_1}} \to \infty$ as $n \to \infty$. This in turn implies that $E$ restricted on $P(c)$ is coercive. We now demonstrate that $E$ restricted on $P(c)$ is bounded from below by a positive constant for any $c>c_{N,s_1}$.  If $N=1$ and $2s_2\geq 1$ or $N \geq 1$, $2s_2<N$ and $2<p \leq \frac{2N}{N-2s_2}$, it then yields from \eqref{gn} that, for any $u \in P(c)$,
\begin{align} \label{ide11}
\begin{split}
&s_1\int_{\R^N} |(-\Delta)^{\frac{s_1}{2}} u|^2\, dx +s_2\int_{\R^N} |(-\Delta)^{\frac{s_2}{2}} u|^2\, dx=\frac{Ns_1}{N+2s_1} \int_{\R^N}|u|^p\,dx \\
&\leq \frac{Ns_1 C_{N,p,s_2}}{N+2s_1} \left(\int_{\R^N} |(-\Delta)^{\frac {s_2} 2} u|^2 \,dx \right)^{\frac{N(p-2)}{4s_2}}\left(\int_{\R^N}|u|^2 \,dx \right)^{\frac p2 -\frac{N(p-2)}{4s_2}}.
\end{split}
\end{align}
Note that
$$
\frac{N(p-2)}{4s_2}>1.
$$
Therefore, by \eqref{ide11}, we get that $\|(-\Delta)^{{s_2}/{2}} u\|_2$ is bounded from below by a positive constant. Coming back to \eqref{ide1}, we then have the desired result. If $N \geq 1$, $2s_2<N$ and $p >\frac{2N}{N-2s_2}$, it then follows from \eqref{criticalineq} that $\|(-\Delta)^{{s_1}/{2}} u\|_2$ is bounded from below by a positive constant, because of $c>c_{N,s_1}$. Let us claim that $\|(-\Delta)^{{s_2}/{2}} u\|_2$ is also bounded from below by a positive constant. Otherwise, we may suppose that there exists $\{u_n\}\subset P(c)$ such that $\|(-\Delta)^{{s_2}/{2}} u_n\|_2=o_n(1)$. This leads to $\|u_n\|_{\frac{2N}{N-2s_1}}=o_n(1)$. If $\{(-\Delta)^{{s_1}/{2}} u_n\|_2\} \subset \R$ is bounded, we then derive from H\"older's inequality that $\|u_n\|_p=o_n(1)$. Since $Q(u_n)=0$, it then gives that $\|(-\Delta)^{{s_1}/{2}} u_n\|_2=o_n(1)$, which is a contradiction. If $\{\|(-\Delta)^{{s_1}/{2}} u_n\|_2\} \subset \R$ is unbounded, we get by the coerciveness of $E$ restricted on $P(c)$ that $E(u_n) \to \infty$ as $n \to \infty$. In view of \eqref{ide1}, we then obtain that $\|(-\Delta)^{{s_2}/{2}} u_n\|_2 \to \infty$ as $n \to \infty$. This is impossible, because we assumed that $\|(-\Delta)^{{s_2}/{2}} u_n\|_2=o_n(1)$. Therefore, the claim holds true. Utilizing \eqref{ide1}, we get the desired result.

Next we consider the case $p>2+\frac{4s_1}{N}$. In this case, there holds that
$$
\frac{N(p-2)}{4s_1}>1.
$$
It is immediate to find from \eqref{ide1} that $E$ restricted on $P(c)$ is coercive for any $c>0$. We now prove that $E$ restricted on $P(c)$ is bounded from below by a positive constant. For any $u \in P(c)$, we know that $Q(u)=0$.  It then follows from \eqref{gn} that
\begin{align*}
\hspace{-1cm}s_1\int_{\R^N} |(-\Delta)^{\frac{s_1}{2}} u|^2\, dx +s_2\int_{\R^N} |(-\Delta)^{\frac{s_2}{2}} u|^2\, dx&=\frac{Ns_1}{N+2s_1} \int_{\R^N}|u|^p \,dx \\
&\leq C_{N,s_1}c^{\frac p 2 -\frac{N(p-2)}{4s_1}}\left(\int_{\R^N} |(-\Delta)^{\frac{s_1}{2}} u|^2dx\right)^{\frac{N(p-2)}{4s_1}}.
\end{align*}
This implies that $\|(-\Delta)^{{s_1}/{2}} u\|_2$ is bounded from below by a positive constant. The proof is completed by applying \eqref{ide1}.
\end{proof}

\begin{lem} \label{monotonicity}
Let $N \geq 1$, $0<s_2<s_1<1$ and $p \geq 2 +\frac{4s_1}{N}$. Let $u \in S(c)$ for $c >c_0$. Assume in addition that $ \sup_{t \geq 0} E(u_t)<\infty$ if $p=2+\frac{4s_1}{N}$. Then there exists a unique $t_u>0$ such that $u_{t_u} \in P(c)$ and $E(u_{t_u})=\sup_{t \geq 0} E(u_t)$. Moreover, the function $t \mapsto E(u_t)$ is concave on $[t_u,\infty)$ and $0<t_u < 1$ if $Q(u) < 0$.
\end{lem}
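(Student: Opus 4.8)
The proof analyzes the real-valued function $f(t) := E(u_t)$ for fixed $u \in S(c)$. From the scaling formula \eqref{scaling}, writing $A := \|(-\Delta)^{s_1/2}u\|_2^2$, $B := \|(-\Delta)^{s_2/2}u\|_2^2$, and $D := \|u\|_p^p$, we have
$$
f(t) = \frac{A}{2}t^{2s_1} + \frac{B}{2}t^{2s_2} - \frac{D}{p}t^{\frac{N}{2}(p-2)}, \qquad t > 0,
$$
and $tf'(t) = Q(u_t) = s_1 A t^{2s_1} + s_2 B t^{2s_2} - \frac{N(p-2)}{2p}D\, t^{\frac{N}{2}(p-2)}$. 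First I would record that $A, B > 0$ (since $u \neq 0$) and $D > 0$ (if $D = 0$ then $f$ is increasing with no critical point, but then $u_t \notin P(c)$ for any $t$, so this case is vacuous for the statement — alternatively one notes $D>0$ is forced by $c > c_0 \geq 0$ and the assumption $\sup_t E(u_t) < \infty$ in the critical case, and automatically in the supercritical case). The key structural fact is that the nonlinear exponent strictly dominates: $\frac{N}{2}(p-2) \geq 2s_1 > 2s_2$, with strict inequality $\frac{N}{2}(p-2) > 2s_1$ when $p > 2 + \frac{4s_1}{N}$.

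For existence and uniqueness of $t_u$, I would treat the two cases separately. When $p > 2 + \frac{4s_1}{N}$: $f(t) \to 0^+$ as $t \to 0^+$ (all positive powers), $f'(t) > 0$ for small $t$ (the two positive terms with smaller exponents dominate), and $f(t) \to -\infty$ as $t \to \infty$ because the negative term has the largest exponent. Hence $f$ attains a positive maximum at some interior $t_u > 0$, where $f'(t_u) = 0$, i.e. $Q(u_{t_u}) = 0$, so $u_{t_u} \in P(c)$. When $p = 2 + \frac{4s_1}{N}$: here $\frac{N}{2}(p-2) = 2s_1$, so $f(t) = \frac12(A - \frac{2}{p}D)t^{2s_1} + \frac{B}{2}t^{2s_2}$; the hypothesis $\sup_{t\geq 0} f(t) < \infty$ forces the coefficient $A - \frac2p D < 0$, and then the same picture holds ($f \to 0^+$, rises, then $\to -\infty$), giving an interior maximizer $t_u$. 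For uniqueness of the critical point, I would factor $tf'(t) = t^{2s_2}\, g(t)$ where $g(t) = s_1 A t^{2s_1 - 2s_2} + s_2 B - \frac{N(p-2)}{2p}D\, t^{\frac{N}{2}(p-2) - 2s_2}$; since $g$ is a sum of an increasing term ($t^{2s_1-2s_2}$, exponent positive) plus a constant minus an increasing term with strictly larger exponent, $g$ is strictly decreasing once it starts decreasing — more precisely $g(0^+) = s_2 B > 0$ (supercritical, where $2s_1 - 2s_2 > 0$ kills the first term at $0$) or $g(0^+) = +\infty$... the cleanest argument: $h(t) := t^{-2s_2} \cdot tf'(t)/(\text{leading structure})$; alternatively divide $f'(t) = 0$ through by $t^{\frac{N}{2}(p-2)-1}$ and observe that $s_1 A t^{2s_1 - \frac{N}{2}(p-2)} + s_2 B t^{2s_2 - \frac{N}{2}(p-2)}$ is strictly decreasing in $t$ (both exponents are $\leq 0$, and $< 0$ for at least the $s_2$ term, with the $s_1$ term $\leq 0$), so it equals the constant $\frac{N(p-2)}{2p}D$ at exactly one point. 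This gives uniqueness of $t_u$, and combined with the boundary behavior, $E(u_{t_u}) = \sup_{t \geq 0} E(u_t)$.

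For concavity on $[t_u, \infty)$, I would compute $f''(t)$ and show $f''(t_u) \leq 0$ and $f''$ can only cross zero below $t_u$: write $f''(t) = s_1(2s_1-1)A t^{2s_1 - 2} + s_2(2s_2-1)B t^{2s_2 - 2} - \frac{N(p-2)}{2p}(\frac{N(p-2)}{2} - 1) D\, t^{\frac{N}{2}(p-2) - 2}$, and similarly factor out the smallest power to see it changes sign at most once, from $+$ to $-$; since $f$ has a strict interior max at $t_u$ (so $f'' \leq 0$ near $t_u$) and $f' < 0$ on $(t_u, \infty)$ with $f'(t_u) = 0$, concavity on $[t_u,\infty)$ follows from the single-sign-change property. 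Finally, $0 < t_u < 1$ when $Q(u) < 0$: since $t f'(t) = Q(u_t)$ and $t \mapsto t^{2s_1 - \frac{N}{2}(p-2)}$-type analysis shows the sign of $Q(u_t)$ is $+$ for $t < t_u$ and $-$ for $t > t_u$; as $Q(u) = Q(u_1) < 0$, we must have $1 > t_u$, and $t_u > 0$ always.

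I expect the main obstacle to be handling the borderline exponents cleanly — specifically the fact that in the mass-critical case $p = 2 + \frac{4s_1}{N}$ the highest power $\frac{N}{2}(p-2)$ coincides with $2s_1$ rather than strictly exceeding it, so that the "leading term dominates" arguments for existence of the maximizer and for the sign changes of $f'$ and $f''$ must be rerun using the combined coefficient $A - \frac{2}{p}D$, whose negativity is exactly what the extra hypothesis $\sup_{t\geq 0} E(u_t) < \infty$ buys us. Once that sign is pinned down, the critical and supercritical cases merge and the monotonicity/uniqueness/concavity claims all reduce to the elementary observation that a function of the form $\sum_i a_i t^{\alpha_i}$ with exactly one negative coefficient attached to the strictly largest exponent has a single interior maximum and a single inflection point.
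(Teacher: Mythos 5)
Your existence/uniqueness argument for $t_u$, the identification $E(u_{t_u})=\sup_{t\ge0}E(u_t)$, and the deduction $0<t_u<1$ from $Q(u)=Q(u_1)<0$ are correct and essentially the paper's own route (both reduce $tf'(t)=Q(u_t)=0$ to a strictly monotone expression meeting a constant, with the hypothesis $\sup_t E(u_t)<\infty$ supplying the sign of the combined $t^{2s_1}$-coefficient in the mass-critical case).

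The gap is in the concavity step. Everything there rests on your claim that, after factoring out the smallest power, $f''(t)=s_1(2s_1-1)At^{2s_1-2}+s_2(2s_2-1)Bt^{2s_2-2}-\frac{N(p-2)}{2p}\bigl(\frac{N(p-2)}{2}-1\bigr)Dt^{\frac N2(p-2)-2}$ changes sign at most once, from $+$ to $-$. That is not a property of such three-term power sums in the admissible parameter range: for $s_2<\frac12<s_1$ (and $N(p-2)>2$) the coefficients have signs $(+,-,-)$, and a function $c_1t^{\alpha}+c_2+c_3t^{\beta}$ with $c_1>0$, $c_2<0$, $c_3<0$, $0<\alpha<\beta$ can be negative, then positive, then negative when $c_1$ is large relative to $|c_2|,|c_3|$ (and such ratios of $A,B,D$ are realizable, e.g.\ by modulating $u$ with a high frequency, which inflates $A$ while keeping $D$ small). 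With the sign pattern $-,+,-$ your fallback ("$f''(t_u)\le0$ and $f'<0$ on $(t_u,\infty)$") proves nothing: a decreasing function can be convex, so you would still have to show that the positivity window of $f''$ lies entirely to the left of $t_u$, and nothing in your sketch locates $t_u$ relative to that window. The same unproved "single inflection point" assertion reappears in your closing sentence.

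This is precisely where the paper does something you do not: it evaluates $\frac{d^2}{dt^2}E(u_t)$ at $t=\gamma t_u$ with $\gamma\ge1$ and uses the defining relation $Q(u_{t_u})=0$, i.e.\ $s_1At_u^{2s_1}+s_2Bt_u^{2s_2}=\frac{N(p-2)}{2p}Dt_u^{\frac N2(p-2)}$, to eliminate one of the three terms before doing any sign analysis. Equivalently (eliminating the $D$-term instead, which is cleaner), one gets for $\gamma\ge1$
\begin{align*}
\gamma^2 t_u^2\,\frac{d^2}{dt^2}E(u_t)\Big|_{t=\gamma t_u}
&=s_1 A t_u^{2s_1}\left[(2s_1-1)\gamma^{2s_1}-\left(\tfrac{N(p-2)}{2}-1\right)\gamma^{\frac{N(p-2)}{2}}\right]\\
&\quad+s_2 B t_u^{2s_2}\left[(2s_2-1)\gamma^{2s_2}-\left(\tfrac{N(p-2)}{2}-1\right)\gamma^{\frac{N(p-2)}{2}}\right],
\end{align*}
and when $N(p-2)\ge 2$ both brackets are $\le0$, since $\frac N2(p-2)\ge 2s_1>2s_2$ and $\frac N2(p-2)\ge1$; this is the missing ingredient that ties the sign of $f''$ on $[t_u,\infty)$ to the constraint $Q(u_{t_u})=0$ rather than to a global sign-change count. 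Be aware, finally, that when $N(p-2)<2$ (possible only for $N=1$, $s_1<\frac12$) the term $-\frac Dp t^{\frac N2(p-2)}$ is itself convex and dominates $f''$ for large $t$, so $f''>0$ eventually and no argument of this shape can give concavity on all of $[t_u,\infty)$; the paper's own case $0<2s_1<1$ tacitly needs $N(p-2)>2$ at that point as well, so your difficulty is real and not merely cosmetic.
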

\begin{proof}
For any $u \in S(c)$, by using \eqref{scaling}, we first have that
\begin{align}\label{derivative} \nonumber
\frac{d}{d t}E(u_t)&=s_1 {t^{2s_1-1}}\int_{\R^N} |(-\Delta)^{\frac{s_1}{2}} u|^2 \, dx + s_2{t^{2s_2-1}} \int_{\R^N} |(-\Delta)^{\frac{s_2}{2}} u|^2 \, dx -\frac{N(p-2)}{2p}{t^{N(\frac p 2 -1)-1}} \int_{\R^N} |u|^p dx \\
&=\frac 1 t Q(u_t).
\end{align}
If $p=2+\frac{4s_1}{N}$ and $\displaystyle\sup_{t \geq 0} E(u_t)<\infty$, it then follows from \eqref{scaling} that
$$
\frac 12 \int_{\R^N} |(-\Delta)^{\frac{s_1}{2}} u|^2 \, dx <\frac 1 p \int_{\R^N} |u|^p \, dx.
$$
In light of \eqref{derivative}, we then easily derive there exists a unique $t_u>0$ such that $Q(u_{t_u})=0$. Notice that $Q(u_t)>0$ for any $0<t<t_u$ and $Q(u_t)<0$ for any $t>t_u$, then $E(u_{t_u})=\sup_{t \geq 0} E(u_t)$. Furthermore, if $Q(u) < 0$,  we then obtain that $0<t_u < 1$. If $p>2+\frac{4s_1}{N}$, then
$$
\frac {N (p-2)}{2}>2s_1.
$$
In this case, by means of \eqref{derivative}, we can also get the desired result. We now show that the function $t \mapsto E(u_t)$ is concave on $[t_u, \infty)$. Observe that
\begin{align*}
\frac{d^2}{d t^2}E(u_t)&=s_1(2s_1-1) {t^{2s_1-2}}\int_{\R^N} |(-\Delta)^{\frac{s_1}{2}} u|^2 \, dx + s_2(2s_2-1){t^{2s_2-2}} \int_{\R^N} |(-\Delta)^{\frac{s_2}{2}} u|^2 dx \\
&\quad -\frac{N(p-2)(N(p-2)-2)}{4p}{t^{N(\frac p 2 -1)-2}} \int_{\R^N} |u|^p dx.
\end{align*}
If $2s_1 =1$, we then have that $\frac{d^2}{d t^2}E(u_t)<0$ for any $t >0$. We next consider the case $2s_1 \neq 1$. Let us write $t=\gamma t_u$ for $\gamma >1$, then
\begin{align*}
\frac{d^2}{d t^2}E(u_t)&=\frac{2s_1-1}{\gamma^{2-2s_1} t^2_u} \left(s_1{t^{2s_1}_u}\int_{\R^N} |(-\Delta)^{\frac{s_1}{2}} u|^2 dx + \frac{s_2(2s_2-1)}{2s_1-1}\gamma^{2(s_2-s_1)}{t^{2s_2}_u} \int_{\R^N} |(-\Delta)^{\frac{s_2}{2}} u|^2  dx \right. \\
& \quad \left.-\frac{N(p-2)(N(p-2)-2)}{4p(2s_1-1)}{t^{N(\frac p 2 -1)}_u} \gamma^{{N(\frac p 2 -1)}-2s_1} \int_{\R^N} |u|^pdx \right)\\
&=:\frac{2s_1-1}{\gamma^{2-2s_1} t^2_u} g(\gamma).
\end{align*}
Since $Q(u_{t_u})=0$, i.e.
$$
s_1{t^{2s_1}_u}\int_{\R^N} |(-\Delta)^{\frac{s_1}{2}} u|^2  dx + s_2{t^{2s_2}_u} \int_{\R^N} |(-\Delta)^{\frac{s_2}{2}} u|^2  dx=\frac{N(p-2)}{2p}{t^{N(\frac p 2 -1)}_u} \int_{\R^N} |u|^p dx,
$$
then
\begin{align*}
g(\gamma)&=s_2\left(\frac{2s_2-1}{2s_1-1}\gamma^{2(s_2-s_1)}-1\right){t^{2s_2}_u} \int_{\R^N} |(-\Delta)^{\frac{s_2}{2}} u|^2 \, dx \\
&\quad  +\frac{N(p-2)}{2p}\left(1-\frac{(N(p-2)-2)}{2(2s_1-1)}\gamma^{{N(\frac p 2 -1)}-2s_1} \right){t^{N(\frac p 2 -1)}_u} \int_{\R^N} |u|^p \,dx.
\end{align*}
If $2s_1>1$, we then deduce that $g(\gamma)<0$ for any $\gamma >1$. If $0<2s_1<1$, we then find that $g(\gamma) \to -\infty$ as $\gamma \to 0^+$, $g(1)>0$ and $g(\gamma) \to \infty$ as $\gamma \to \infty$. This together with the monotonicity of $g$ shows that $g(\gamma)>0$ for any $\gamma >1$. Consequently, we obtain that $\frac{d^2}{d t^2}E(u_t)<0$ for any $t \geq t_u$. This finishes the proof.
\end{proof}

\begin{defi}\label{homotopy} \cite[Definition 3.1]{Gh}
Let $B$ be a closed subset of a set $Y \subset H^{s_1}(\R^2)$. We say that a class $\mathcal{G}$ of compact subsets of $Y$ is a homotopy stable family with the closed boundary $B$ provided that
\begin{enumerate}
\item [\textnormal{(i)}] every set in $\mathcal{G}$ contains $B$;
\item [\textnormal{(ii)}] for any $A \in \mathcal{G}$ and any function $\eta \in C([0, 1] \times Y, Y)$ satisfying $\eta(t, x)=x$ for all $(t, x) \in (\{0\} \times Y) \cup([0, 1] \times B)$, then $\eta(\{1\} \times A) \in \mathcal{G}$.
\end{enumerate}
\end{defi}

Let us remark that $B=\emptyset$ is admissible. For further discussions, we shall introduce some notations. If $p=2+\frac{4s_1}{N}$, we define a functional $F: \mathcal{S}(c) \to R$ by $F(u):=E(u_{t_u})=\max_{t>0}E(u_t)$, where
\begin{align} \label{sc}
\mathcal{S}(c):=\left\{u \in S(c) : \frac 12 \int_{\R^N} |(-\Delta)^{\frac{s_1}{2}} u|^2 \, dx <\frac 1 p \int_{\R^N} |u|^p  dx\right\}.
\end{align}
If $p>2+\frac{4s_1}{N}$, we define a functional $F: S(c) \to R$ by $F(u):=E(u_{t_u})=\displaystyle\max_{t>0}E(u_t)$.

\begin{lem}\label{ps}
Let $N \geq 1$, $0<s_2<s_1<1$ and $p \geq 2 +\frac{4s_1}{N}$. Let $\mathcal{G}$ be a homotopy stable family of compact subsets of $S(c)$ with closed boundary $B$ and set
\begin{align} \label{ming}
\gamma_{\mathcal{G}}(c):=\inf_{A\in \mathcal{G}}\max_{u \in A} F(u).
\end{align}
Suppose that $B$ is contained on a connected component of $P(c)$ and $\max\{\sup F(B), 0\}<\gamma_{\mathcal{G}}(c)<\infty$. Then there exists a Palais-Smale sequence $\{u_n\} \subset P(c)$ for $E$ restricted on $S(c)$ at the level $\gamma_{\mathcal{G}}(c)$ for any $c>c_0$.
\end{lem}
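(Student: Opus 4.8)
The plan is to avoid working directly with the merely continuous functional $F$ on $S(c)$ and instead to run Ghoussoub's minimax principle for the $C^1$ functional obtained by adjoining the dilation parameter, then to descale the resulting Palais--Smale sequence and project it onto $P(c)$.

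First I would introduce the augmented functional. Set $X:=\R\times S(c)$, a complete $C^1$-Finsler manifold, and define $\tilde E\in C^1(X,\R)$ by $\tilde E(s,u):=E(s\star u)$, where $(s\star u)(x):=e^{Ns/2}u(e^sx)$. Since $\|s\star u\|_2=\|u\|_2$ this is well defined, and in the notation of \eqref{scaling}--\eqref{derivative} one has $\tilde E(s,u)=E(u_{e^s})$, $\partial_s\tilde E(s,u)=Q(s\star u)$, and the additive group identity $\tilde E(\tau,s\star u)=\tilde E(\tau+s,u)$. By Lemma \ref{monotonicity}, for every $u$ in the domain of $F$ the map $s\mapsto\tilde E(s,u)$ has a unique maximum point $s_u:=\log t_u$, the assignment $u\mapsto s_u$ is continuous (uniqueness of the zero of $s\mapsto Q(s\star u)$ together with the strict concavity at $s_u$ recorded in Lemma \ref{monotonicity}), $F(u)=\tilde E(s_u,u)$, and $s_u=0$ precisely when $Q(u)=0$; in particular $s_u\equiv0$ on $B\subset P(c)$.

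Next I would transfer $\mathcal G$ to the homotopy stable family $\tilde{\mathcal G}$ of compact subsets of $X$ generated by $\{\{0\}\times A:A\in\mathcal G\}$ with closed boundary $\{0\}\times B$. Deforming $\{0\}\times A$ via the continuous maps $(\tau,(s,u))\mapsto(\tau s_u+(1-\tau)s,u)$, which fix $\{0\}\times X$ and, because $s_u\equiv0$ on $B$, fix $[0,1]\times(\{0\}\times B)$, places each $\{(s_u,u):u\in A\}$ inside $\tilde{\mathcal G}$; conversely, lifting/projecting admissible homotopies shows that the second-coordinate projection carries members of $\tilde{\mathcal G}$ back into $\mathcal G$. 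Since $\tilde E(s,u)\le F(u)$ with equality at $s=s_u$, this forces $\gamma_{\tilde{\mathcal G}}(c):=\inf_{\tilde A\in\tilde{\mathcal G}}\max_{\tilde A}\tilde E=\gamma_{\mathcal G}(c)$; moreover $\tilde E=E=F$ on $\{0\}\times B$, so the hypothesis $\max\{\sup_BF,0\}<\gamma_{\mathcal G}(c)<\infty$ becomes $\max\{\sup_{\{0\}\times B}\tilde E,0\}<\gamma_{\tilde{\mathcal G}}(c)<\infty$. The assumption that $B$ is connected inside a component of $P(c)$ is what makes $\{0\}\times B$ a legitimate closed boundary for the transferred homotopies. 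Ghoussoub's minimax theorem (\cite[Section 3]{Gh}) then yields a Palais--Smale sequence $\{(s_n,v_n)\}\subset X$ for $\tilde E$ at the level $\gamma_{\mathcal G}(c)$.

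Finally I would descale and project. Put $w_n:=s_n\star v_n\in S(c)$; then $E(w_n)=\tilde E(s_n,v_n)\to\gamma_{\mathcal G}(c)$ and, from $\partial_s\tilde E(s_n,v_n)\to0$, $Q(w_n)\to0$. Feeding this into the algebraic identity underlying \eqref{ide1} (retaining the remainder $\tfrac{2}{N(p-2)}Q(w_n)$) and arguing exactly as in the proof of Lemma \ref{coercive} shows $\{w_n\}$ is bounded in $H^{s_1}(\R^N)$, hence so is $\{v_n\}=\{(-s_n)\star w_n\}$; using the positivity and coercivity of $E$ on $P(c)$ from Lemma \ref{coercive} together with $\gamma_{\mathcal G}(c)>0$ one checks that $\{s_n\}$ is bounded. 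Because $\phi\mapsto s\star\phi$ is a bounded isomorphism of $H^{s_1}(\R^N)$ with norm controlled by $e^{|s|s_1}$, the bounded $S(c)$-component of $d\tilde E(s_n,v_n)$ then transfers (using also $\tilde E(\tau,s\star u)=\tilde E(\tau+s,u)$ and the $C^1$ bounds on $E$ over the bounded set $\{v_n\}$) into $\|(E|_{S(c)})'(w_n)\|\to0$. Since $Q(w_n)\to0$ with $\{w_n\}$ bounded one gets $t_{w_n}\to1$, so $u_n:=(w_n)_{t_{w_n}}\in P(c)$, and $E(u_n)=E(w_n)+\int_1^{t_{w_n}}\tfrac1tQ((w_n)_t)\,dt\to\gamma_{\mathcal G}(c)$ while $\|(E|_{S(c)})'(u_n)\|\to0$ by the same scaling identities; thus $\{u_n\}\subset P(c)$ is the desired Palais--Smale sequence. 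I expect the two delicate points to be the faithful transfer of the homotopy stable family, so that the augmented minimax level coincides with $\gamma_{\mathcal G}(c)$, and the boundedness of the dilation parameters $s_n$; both rest on the sharp behaviour of $t\mapsto E(u_t)$ and on the coercivity estimates of Lemmas \ref{monotonicity} and \ref{coercive}.
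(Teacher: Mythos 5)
Your overall strategy (pass to the augmented $C^1$ functional $\tilde E(s,u)=E(s\star u)$ on $\R\times S(c)$, run Ghoussoub's minimax there, then descale and project onto $P(c)$) is a legitimate alternative to the paper's argument, but as written it has a genuine gap at the step you yourself flag as delicate: the claimed identity $\gamma_{\tilde{\mathcal G}}(c)=\gamma_{\mathcal G}(c)$. Your argument for the inequality $\gamma_{\tilde{\mathcal G}}(c)\geq\gamma_{\mathcal G}(c)$ rests on projecting members of $\tilde{\mathcal G}$ back into $\mathcal G$ and on the pointwise bound $\tilde E(s,u)\leq F(u)$; but that bound goes the wrong way: from $\pi_2(\tilde A)\in\mathcal G$ you only get $\max_{\tilde A}\tilde E\leq\max_{\pi_2(\tilde A)}F$, which gives no lower bound on $\max_{\tilde A}\tilde E$. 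In fact the asserted equality is false for the family ``generated'' by $\{\{0\}\times A: A\in\mathcal G\}$ in precisely the situation needed later (Lemma \ref{pss}): there $\mathcal G$ is the collection of singletons and $B=\emptyset$, and for any fixed $s_0\in\R$ the homotopy $\eta(t,(s,u))=(s+ts_0,u)$ is admissible (it is the identity at $t=0$ and there is no boundary to respect), so the generated family contains every singleton $\{(s_0,u)\}$. Consequently $\gamma_{\tilde{\mathcal G}}(c)\leq\inf_{s,u}E(s\star u)$, which equals $-\infty$ when $p>2+\frac{4s_1}{N}$ (and is $\leq 0$ in the critical case), whereas $\gamma_{\mathcal G}(c)=\gamma(c)>0$. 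The collapse happens because a homotopy-stable family with empty (or small) boundary imposes no linking in the $s$-direction, while the fiberwise maximization that pins the level is built into $F$ but not into $\tilde E$; to make the augmented route work one must either restrict to classes with a genuine min--max geometry in $s$ (as in Jeanjean's original construction) or compare with $E\circ\pi$ via the descaling map $\pi(s,u)=s\star u$ rather than with $F$ via $\pi_2$, and neither is done here.

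There is also a secondary, more repairable issue: you assert that $\{s_n\}$ is bounded ``using the positivity and coercivity of $E$ on $P(c)$'', but from $E(w_n)\to\gamma_{\mathcal G}(c)$ and $Q(w_n)\to 0$ alone one only controls $w_n=s_n\star v_n$; nothing constrains $v_n$ itself, so $s_n$ could diverge with $v_n$ compensating. The missing ingredient is the localization in Ghoussoub's theorem (the Palais--Smale sequence can be taken with $\mathrm{dist}$ to a minimizing sequence of sets tending to $0$), which is exactly what the paper uses to prove $1/C\leq t_n\leq C$. For comparison, the paper avoids the level-transfer problem altogether: it first deforms a minimizing sequence $\{D_n\}\subset\mathcal G$ into $P(c)$ through the admissible homotopy $\eta(s,u)=u_{1-s+st_u}$ (admissible because $t_u=1$ on $P(c)\supset B$), applies \cite[Theorem 3.2]{Gh} directly to $F$ on $S(c)$ to get a Palais--Smale sequence $\{\tilde u_n\}$ close to $A_n=\eta(\{1\}\times D_n)\subset P(c)$, and then rescales, transferring the derivative estimate through the isomorphism $\psi\mapsto\psi_{t}$ of tangent spaces together with $dF(u)[\psi]=dE(u_{t_u})[\psi_{t_u}]$. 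If you wish to keep the augmented-functional route, you should redo the comparison of levels along these lines (working with $E\circ\pi$ and sets whose projections under $\pi$ stay in $\mathcal G$, or restricting the admissible homotopies so that the $s$-variable is pinned on the boundary) and invoke the distance statement of the minimax theorem to control $s_n$.
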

\begin{proof}
The proof benefits from ingredients developed in \cite{BS1, BS2}. For simplicity, we only show the proof for the case $p>2+ \frac{4s_1}{N}$. Replacing the role of $S(c)$ by $\mathcal{S}(c)$, one can similarly treat the case $p=2+ \frac{4s_1}{N}$. To begin with, we define a mapping $\eta: [0,1] \times S(c) \to S(c)$ by $\eta(s, u)=u_{1-s+st_u}$. From Lemma \ref{monotonicity}, we have that $t_u=1$ if $u \in P(c)$. Thus we see that $\eta(s ,u)=u$ for any $(s, u) \in (\{0\} \times S(c)) \cup([0, 1] \times B)$, because of $B \subset P(c)$. In addition, it is simple to derive that $\eta$ is continuous on $[0,1] \times S(c)$. Let $\{D_n\} \subset \mathcal{G}$ be a minimizing sequence to \eqref{ming}. By Definition \ref{homotopy}, we then get that
$$
A_n:=\eta(\{1\} \times D_n)=\{u_{t_u} : u \in D_n\} \in \mathcal{G}.
$$
Note that $A_n \subset P(c)$, it then holds that
$$
\displaystyle\max_{v \in A_n}F(v)=\displaystyle\max_{u \in D_n}F(u).
$$
Therefore, there exists another minimizing sequence $\{A_n\} \subset P(c)$ to \eqref{ming}. Using \cite[Theorem 3.2]{Gh}, we then deduce that there exists a Palais-Smale sequence $\{\tilde{u}_n\} \subset S(c)$ for $F$ at the level $\gamma_{\mathcal{G}}(c)$ such that $\mbox{dist}_{H^{s_1}}(\tilde{u}_n, A_n)=o_n(1)$. For simplicity, we shall write $t_n=t_{\tilde{u}_n}$ and $u_n=(\tilde{u}_n)_{t_n}$.

We now claim that there exists a constant $C>0$ such that $1/C \leq t_n \leq C$. Indeed, notice first that
$$
t_n^{2s_1}=\frac{\int_{\R^N} |(-\Delta)^{\frac{s_1}{2}} u_n|^2 \, dx}{\int_{\R^N} |(-\Delta)^{\frac{s_1}{2}} \tilde{u}_n|^2 \, dx}.
$$
Since $E(u_n)=F(\tilde{u}_n)=\gamma_{\mathcal{G}}(c)+o_n(1)$ and $\{u_n\} \subset P(c)$, it then follows from Lemma \ref{coercive} that there exists a constant $C_1>0$ such that $1/C_1 \leq \|u_n\|_{H^{s_1}} \leq C_1$. On the other hand, since $\{A_n\} \subset P(c)$ is a minimizing sequence to \eqref{ming}, by Lemma \ref{coercive}, we then have that $\{A_n\}$ is bounded in $H^{s_1}$. Note that $\mbox{dist}_{H^{s_1}}(\tilde{u}_n, A_n)=o_n(1)$, then $\{\tilde{u}_n\}$ is bounded in $H^{s_1}(\R^N)$. In addition, since $A_n$ is compact for any $n\in \N$, then there exists $v_n \in A_n$ such that
$$
\mbox{dist}_{H^{s_1}}(\tilde{u}_n, A_n)=\|\tilde{u}_n-v_n\|_{H^{s_1}}=o_n(1).
$$
Applying again Lemma \ref{coercive}, we then get that
$$
\|\tilde{u}_n\|_{H^{s_1}} \geq \|v_n\|_{H^{s_1}} -\|\tilde{u}_n-v_n\|_{H^{s_1}} \geq \frac{1}{C_2}+o_n(1).
$$
Therefore, the claim follows.

We next show that $\{u_n\} \subset P(c)$ is a Palais-Smale sequence for $E$ restricted on $S(c)$ at the level $\gamma_{\mathcal{G}}(c)$. In the following, we denote by $\|\cdot\|_{*}$ the dual norm of $(T_u S(c))^*$. Observe that
\begin{align*}
\|dE(u_n)\|_*=\sup_{\psi \in T_{u_n}S(c), \|\psi\|_{H^{s_1}}\leq 1}|dE(u_n)[\psi]|=\sup_{\psi \in T_{u_n}S(c), \|\psi\|_{H^{s_1}}\leq 1}|dE(u_n)[(\psi_{\frac{1}{t_n}})_{t_n}]|.
\end{align*}
By straightforward calculations, we can find that the mapping $T_uS(c) \to T_{u_{t_u}}S(c)$ define by $\psi \mapsto \psi_{t_u}$ is an isomorphism. Moreover, we have that $dF(u)[\psi]=dE(u_{t_u})[\psi_{t_u}]$ for any $u \in S(c)$ and $\psi \in T_uS(c)$. As a consequence, we get that
$$
\|dE(u_n)\|_*=\sup_{\psi \in T_{u_n}S(c), \|\psi\|_{H^{s_1}}\leq 1}|dF(\tilde{u}_n)[\psi_{\frac{1}{t_n}}]|.
$$
Since $\{\tilde{u}_n\} \subset S(c)$ is a Palais-Smale sequence for $F$ at the level $\gamma_{\mathcal{G}}(c)$, we then  apply the claim to deduce that $\{u_n\} \subset P(c)$ is a Palais-Smale sequence for $E$ restricted on $S(c)$ at the level $\gamma_{\mathcal{G}}(c)$. Thus the proof is completed.
\end{proof}

\begin{lem} \label{pss}
Let $N \geq 1$, $0<s_2<s_1<1$ and $p \geq 2 +\frac{4s_1}{N}$. Then there exists a Palais-Smale sequence $\{u_n\} \subset P(c)$ for $E$ restricted on $S(c)$ at the level $\gamma(c)$ for any $c>c_0$.
\end{lem}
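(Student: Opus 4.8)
The plan is to deduce this from the abstract minimax Lemma \ref{ps} by choosing the simplest admissible homotopy stable family, namely the family of all singletons. Concretely, I would take $B=\emptyset$ and, when $p>2+\frac{4s_1}{N}$, let $\mathcal{G}$ be the collection of all one-point subsets $\{u\}$ of $S(c)$, while when $p=2+\frac{4s_1}{N}$ let $\mathcal{G}$ be the collection of all one-point subsets of $\mathcal{S}(c)$ with $\mathcal{S}(c)$ as in \eqref{sc}. That $\mathcal{G}$ is a homotopy stable family with closed boundary $B=\emptyset$ is immediate from Definition \ref{homotopy}: every singleton contains the empty set, and if $\eta\in C([0,1]\times Y,Y)$ fixes $\{0\}\times Y$, then $\eta(\{1\}\times\{u\})=\{\eta(1,u)\}$ is again a singleton.

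With this choice, $\gamma_{\mathcal{G}}(c)=\inf_u F(u)=\inf_u E(u_{t_u})$, the infimum being taken over $S(c)$ (resp. $\mathcal{S}(c)$), and the key point is to identify this with $\gamma(c)$. For the inequality $\gamma_{\mathcal{G}}(c)\geq\gamma(c)$, note that for every admissible $u$ one has $u_{t_u}\in P(c)$ by Lemma \ref{monotonicity}, so $F(u)=E(u_{t_u})\geq\gamma(c)$. For the reverse inequality, every $v\in P(c)$ is admissible: in the mass-supercritical case this is trivial, and in the mass-critical case $Q(v)=0$ together with $s_2\|(-\Delta)^{s_2/2}v\|_2^2>0$ forces $\frac12\|(-\Delta)^{s_1/2}v\|_2^2<\frac1p\|v\|_p^p$, i.e. $v\in\mathcal{S}(c)$. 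For such $v$, Lemma \ref{monotonicity} gives $t_v=1$, hence $F(v)=E(v)$; taking the infimum over $v\in P(c)$ yields $\gamma_{\mathcal{G}}(c)\leq\gamma(c)$.

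It then remains to check the quantitative hypothesis $\max\{\sup F(B),0\}<\gamma_{\mathcal{G}}(c)<\infty$ of Lemma \ref{ps}. Since $B=\emptyset$, $\sup F(B)=-\infty$, so this reduces to $0<\gamma(c)<\infty$. Finiteness follows from $P(c)\neq\emptyset$ (Lemma \ref{nonempty}) together with the fact that $E$ is finite-valued on $H^{s_1}(\R^N)$, while strict positivity is exactly the lower bound furnished by Lemma \ref{coercive}. The requirement that $B$ lie in a single connected component of $P(c)$ is vacuous for $B=\emptyset$ once $P(c)\neq\emptyset$. Applying Lemma \ref{ps} then produces a Palais-Smale sequence $\{u_n\}\subset P(c)$ for $E$ restricted on $S(c)$ at the level $\gamma_{\mathcal{G}}(c)=\gamma(c)$, which is the assertion.

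Because all the substantive work — the fibering analysis $u\mapsto u_t$, the construction of the deformation $\eta$, and the transfer from a Palais-Smale sequence for $F$ to one for $E$ lying on $P(c)$ — is already packaged in Lemmas \ref{monotonicity} and \ref{ps}, I do not expect any genuine obstacle here; the only point demanding mild care is the bookkeeping of the two regimes $p=2+\frac{4s_1}{N}$ and $p>2+\frac{4s_1}{N}$ and, in the former, the verification that $P(c)\subset\mathcal{S}(c)$ so that $F$ is defined precisely where it is needed.
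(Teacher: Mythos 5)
Your proposal is correct and follows essentially the same route as the paper: take $B=\emptyset$, let $\mathcal{G}$ consist of all singletons in $S(c)$ (resp.\ $\mathcal{S}(c)$ in the mass-critical case), identify $\gamma_{\mathcal{G}}(c)=\gamma(c)$ via the fibering map of Lemma \ref{monotonicity}, and invoke Lemma \ref{ps} together with the positivity and finiteness of $\gamma(c)$ from Lemmas \ref{coercive} and \ref{nonempty}. Your extra check that $P(c)\subset\mathcal{S}(c)$ when $p=2+\frac{4s_1}{N}$ is a detail the paper leaves implicit, but it is the same argument.
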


\begin{proof}
Let $B=\emptyset$ and $\mathcal{G}$ be all singletons in $\mathcal{S}(c)$ if $p=2+\frac{4s_1}{N}$ and all singletons in $S(c)$ if $p>2+\frac{4s_1}{N}$, where $\mathcal{S}(c)$ is defined by \eqref{sc}. Therefore, from \eqref{ming}, there holds that
$$
\gamma_{\mathcal{G}}(c)=\inf_{u \in \mathcal{S}(c)} \sup_{t>0} E(u_t) \,\,\,\mbox{if} \,\, p=2+\frac{4s_1}{N}
$$
and
$$
\gamma_{\mathcal{G}}(c)=\inf_{u \in S(c)} \sup_{t>0} E(u_t) \,\,\,\mbox{if} \,\, p>2+\frac{4s_1}{N}.
$$
We next prove that $\gamma_{\mathcal{G}}(c)=\gamma(c)$. For simplicity, we only consider the case $p>2+\frac{4s_1}{N}$. From Lemma \ref{monotonicity}, we know that, for any $u \in S(c)$, there exists a unique $t_u>0$ such that $u_{t_u} \in P(c)$ and $E(u_{t_u})=\displaystyle\max_{t >0}E(u_t)$. This then implies that
$$
\inf_{u \in S(c)} \sup_{t>0} E(u_t)  \geq \inf_{u \in P(c)} E(u).
$$
On the other hand, for any $u \in P(c)$, we have that $E(u)=\displaystyle\max_{t >0}E(u_t)$. This then gives that
$$
\inf_{u \in S(c)} \sup_{t>0} E(u_t)  \leq \inf_{u \in P(c)} E(u).
$$
Accordingly, we derive that $\gamma_{\mathcal{G}}(c)=\gamma(c)$. It then follows from Lemma \ref{ps} that the result of this lemma holds true and the proof is completed.
\end{proof}

\begin{lem} \label{lagrange}
Let $N \geq 1$, $0<s_2<s_1<1$ and $p \geq 2 +\frac{4s_1}{N}$. Let $u_c \in S(c)$ be a solution to the equation
\begin{align} \label{fequ1}
(-\Delta)^{s_1} u_c +(-\Delta)^{s_2} u_c + \lambda_c u_c=|u_c|^{p-2} u_c, \quad c>c_0.
\end{align}
Then there exists a constant $c_1>0$ such that $\lambda_c>0$ for any $c_0<c<c_1$. In particular, if $N=1$ and $2s_2 \geq 1$ or $N \geq 1$, $2s_2<N$ and $2< p \leq \frac{2N}{N-2s_2}$, then $c_1=\infty$.
\end{lem}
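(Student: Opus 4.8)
The plan is to express $\la_c$ through two identities and then control the ratio of the kinetic energies $K_1:=\|(-\De)^{\frac{s_1}{2}}u_c\|_2^2$ and $K_2:=\|(-\De)^{\frac{s_2}{2}}u_c\|_2^2$. Testing \eqref{fequ1} with $u_c$ gives $K_1+K_2+\la_c c=\|u_c\|_p^p$, and, since any solution lies in $P(c)$ by Lemma \ref{pohozaev}, we also have $s_1K_1+s_2K_2=\frac{N(p-2)}{2p}\|u_c\|_p^p$. Eliminating $\|u_c\|_p^p$ yields
$$
\la_c c=\frac{2ps_1-N(p-2)}{N(p-2)}K_1-\frac{N(p-2)-2ps_2}{N(p-2)}K_2=:\al K_1-\be K_2,
$$
where $\al>0$ always ($2ps_1>N(p-2)$ is automatic when $N\le2s_1$ and follows from $p<\frac{2N}{N-2s_1}$ when $N>2s_1$), while $\be\le0$ precisely in the two cases of the ``in particular'' clause ($N\le2s_2$, or $2s_2<N$ and $p\le\frac{2N}{N-2s_2}$). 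In those cases $\la_c c=\al K_1+|\be|K_2\ge\al K_1>0$, since $K_1>0$ (otherwise $u_c$ would be a constant in $L^2(\R^N)$, hence $u_c\equiv0$), and the conclusion holds with $c_1=\infty$.

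From now on I assume $\be>0$, and the goal is to show $K_2<\frac{\al}{\be}K_1$ for $c$ close to $c_0$. The basic step is to feed \eqref{gn} with $s=s_1$ into $Q(u_c)=0$: this gives $s_1K_1+s_2K_2\le\frac{N(p-2)}{2p}C_{N,p,s_1}K_1^{\frac{N(p-2)}{4s_1}}c^{\frac p2-\frac{N(p-2)}{4s_1}}$, hence $K_2\le\frac{s_1}{s_2}(\kappa_c-1)K_1$ with $\kappa_c:=\frac{N(p-2)C_{N,p,s_1}}{2ps_1}K_1^{\frac{N(p-2)}{4s_1}-1}c^{\frac p2-\frac{N(p-2)}{4s_1}}\ge1$, the Gagliardo--Nirenberg deficit of $u_c$ along $P(c)$. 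Then $\la_c c\ge\big(\al-\frac{\be s_1}{s_2}(\kappa_c-1)\big)K_1$, so it suffices to prove $\kappa_c<1+\frac{\al s_2}{\be s_1}$ for $c$ near $c_0$. When $p=2+\frac{4s_1}{N}$ the power of $K_1$ in $\kappa_c$ disappears and, by the definition of $c_{N,s_1}$ in Theorem \ref{thm1}, $\kappa_c$ has the closed form $(c/c_{N,s_1})^{2s_1/N}$ --- independent of $u_c$ --- which tends to $1$ as $c\to c_0^+=c_{N,s_1}^+$; this yields an explicit $c_1$ (and works for every solution, not only ground states).

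For $p>2+\frac{4s_1}{N}$ we have $c_0=0$, and for the ground state $u_c$ produced in Theorem \ref{thm2} (which is the solution relevant here, so that $E(u_c)=\ga(c)$) I would show $\kappa_c\to1$ as $c\to0^+$ by comparing with the Gagliardo--Nirenberg extremal. By Lemma \ref{pohoz 0} the extremal of \eqref{gn} is, up to dilation and mass normalization, the ground state $\phi$ of \eqref{psi eqt}; so set $w(x):=\mu^{N/2}\big(c/\|\phi\|_2^2\big)^{1/2}\phi(\mu x)$ and choose $\mu=\mu(c)$ via Lemma \ref{monotonicity} so that $w\in P(c)$. Since $w$ saturates \eqref{gn}, $\kappa[w]=1+\frac{s_2K_2[w]}{s_1K_1[w]}$; moreover $\mu(c)\to\infty$ as $c\to0^+$ (forced by $w\in P(c)$ together with $\|w\|_2^2=c\to0$), whence $\frac{K_2[w]}{K_1[w]}=\mu(c)^{2(s_2-s_1)}\frac{K_2[\phi]}{K_1[\phi]}\to0$ and $\kappa[w]\to1$. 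Finally $E(u_c)=\ga(c)\le E(w)$, combined with the identity \eqref{ide1} (whose two coefficients are strictly positive when $p>2+\frac{4s_1}{N}$), gives $K_1=K_1[u_c]\le K_1[w](1+o(1))$ and therefore $\kappa_c\le\kappa[w](1+o(1))\to1$. I expect this mass-supercritical case to be the main obstacle: one must prevent the ground states from concentrating at the wrong scale, and the extremal comparison above seems to be the cleanest way to force $\kappa_c\to1$; everything else reduces to the two elementary identities and \eqref{gn}, with only routine bookkeeping of scaling exponents.
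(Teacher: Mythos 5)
Your reduction to the identity $\lambda_c c=\alpha K_1-\beta K_2$ (where $K_i:=\|(-\Delta)^{\frac{s_i}{2}}u_c\|_2^2$), your treatment of the cases where $\beta\le 0$ (giving $c_1=\infty$), and your mass-critical computation $\kappa_c=(c/c_{N,s_1})^{2s_1/N}\to 1$ as $c\to c_{N,s_1}^+$ all coincide, up to bookkeeping, with the paper's argument. The genuine gap is in the remaining case $p>2+\frac{4s_1}{N}$ with $2s_2<N$ and $p>\frac{2N}{N-2s_2}$: there you establish $\kappa_c\to1$ only for the ground states furnished by Theorem \ref{thm2}, since your key step compares $E(u_c)=\gamma(c)\le E(w)$ with a rescaled Gagliardo--Nirenberg extremal and then uses \eqref{ide1}. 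But the lemma is stated for an \emph{arbitrary} solution $u_c\in S(c)$ of \eqref{fequ1}, and this generality is used later: in the proof of Theorem \ref{thm3} the lemma is applied to the bound states $u^k$ at the levels $\beta_k(c)$, which are not minimizers on $P(c)$, so the inequality $E(u_c)\le E(w)$ is unavailable for them. In addition, invoking Theorem \ref{thm2} here is structurally circular within the paper: the existence of a ground state at level $\gamma(c)$ is proved after, and partly by means of, the present lemma (through Lemma \ref{ladecreasing}), so the positivity of $\lambda_c$ cannot be allowed to depend on it.

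The repair is the paper's direct and solution-independent route for this case. From $Q(u_c)=0$ and \eqref{gn} one gets $s_1K_1\le s_1K_1+s_2K_2\le \widetilde C\, c^{\frac p2-\frac{N(p-2)}{4s_1}}K_1^{\frac{N(p-2)}{4s_1}}$, where $\frac{N(p-2)}{4s_1}>1$ and $\frac p2-\frac{N(p-2)}{4s_1}=\frac{2ps_1-N(p-2)}{4s_1}>0$; hence $K_1\ge C' c^{-\eta}$ for some $\eta>0$ and constants depending only on $N,p,s_1,s_2$, uniformly over all solutions in $S(c)$. Combining this with the interpolation inequality $K_2\le K_1^{s_2/s_1}c^{(s_1-s_2)/s_1}$ gives $\beta K_2/(\alpha K_1)\le C''\,c^{(1+\eta)(s_1-s_2)/s_1}\to 0$ as $c\to 0^+$, so $\lambda_c>0$ for every solution once $c$ is below a threshold $c_1$ depending only on the parameters. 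Your $\kappa_c$ formalism is fine as far as it goes, but in the supercritical regime you should replace the ground-state comparison by this uniform lower bound on $K_1$.
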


\begin{proof}
Since $u_c$ is a solution to \eqref{fequ1}, then $Q(u_c)=0$, see Lemma \ref{pohozaev}. This means that
\begin{align} \label{ph111}
s_1\int_{\R^N}|(-\Delta)^{\frac{s_1}{2}} u_c|^2 \,dx +s_2\int_{\R^N}|(-\Delta)^{\frac{s_2}{2}} u_c|^2 \,dx =\frac{N(p-2)}{2p} \int_{\R^N}|u_c|^p\,dx.
\end{align}
Multiplying \eqref{fequ1} by $u_c$ and integrating on $\R^N$, we get that
\begin{align} \label{integrate}
\int_{\R^N}|(-\Delta)^{\frac{s_1}{2}} u_c|^2 \,dx +\int_{\R^N}|(-\Delta)^{\frac{s_2}{2}} u_c|^2 \,d +\lambda_c \int_{\R^N}|u_c|^2 \,dx =\int_{\R^N}|u_c|^p \,dx .
\end{align}
Combining \eqref{ph111} and \eqref{integrate}, we have that
\begin{align} \label{lc}
 \hspace{-0.5cm}\lambda_c c = \left(\frac{2ps_1}{N(p-2)}-1\right) \int_{\R^N}|(-\Delta)^{\frac{s_1}{2}} u_c|^2 \,dx + \left(\frac{2ps_2}{N(p-2)}-1\right) \int_{\R^N}|(-\Delta)^{\frac{s_2}{2}} u_c|^2 \,dx.
\end{align}
If $N=1$ and $2s_2\geq 1$ or $N \geq 1$, $2s_2<N$ and $2< p \leq \frac{2N}{N-2s_2}$, then
$$
\frac{2ps_1}{N(p-2)}>\frac{2ps_2}{N(p-2)} \geq 1.
$$
This indicates that $\lambda_c>0$, by \eqref{lc}. In this case, we choose $c_1=\infty$. We now treat the case  $N \geq 1$, $N >2s_2$ and $p >\frac{2N}{N-2s_2}$. In virtue of \eqref{gn} and \eqref{ph111}, we derive that
$$
s_1\int_{\R^N}|(-\Delta)^{\frac{s_1}{2}} u_c|^2 \,dx +s_2\int_{\R^N}|(-\Delta)^{\frac{s_2}{2}} u_c|^2 \,dx  \leq \widetilde{C}_{N,p,s_1} c^{\frac p 2 -\frac{N(p-2)}{4s_1}}\left(\int_{\R^N}|(-\Delta)^{\frac{s_1}{2}} u_c|^2 \,dx \right)^{\frac{N(p-2)}{4s_1}},
$$
where $\widetilde{C}_{N,p,s_1}>0$ is defined by
$$
\widetilde{C}_{N,p,s_1}:=\frac{N(p-2)C_{N,p,s_1}}{2p}.
$$
If $p>2+\frac{4s_1}{N}$, from the inequality above, it then leads to $\|(-\Delta)^{{s_1}/{2}} u_c\|_2 \to \infty$ as $c \to 0^+$. In addition, by interpolation inequality, there holds that
\begin{align*}
 \int_{\R^N}|(-\Delta)^{\frac{s_2}{2}} u_c|^2 \,dx \leq \left(\int_{\R^N}|(-\Delta)^{\frac{s_1}{2}} u_c|^2 \,dx \right)^{\frac{s_2}{s_1}} \left(\int_{\R^N}|u_c|^2\,dx \right)^{\frac{s_1-s_2}{s_1}}.
\end{align*}
Due to
$$
\frac{2ps_1}{N(p-2)}>1, \quad s_2<s_1,
$$
we then conclude from \eqref{lc} that $\lambda_c>0$ if $c>0$ small enough. We now treat the case $p=2+\frac{4s_1}{N}$. In this case, from \eqref{ph111} and \eqref{integrate}, we see that
$$
\lambda_c c=(s_1-s_2) \int_{\R^N} |(-\Delta)^{\frac {s_1}{2}} u|^2 \,dx-\frac{N(s_1-s_2)-2s_1s_2}{N+2s_1} \int_{\R^N} |u|^{2+\frac{4s_1}{N}} \,dx.
$$
If $N \geq 1$, $N \geq 2s_2$ and $p>\frac{2N}{N-2s_2}$, i.e. $N(s_1-s_2) > 2s_1 s_2$, by \eqref{gn}, then
$$
\lambda_c c \geq \left((s_1-s_2)-\frac{N(s_1-s_2)-2s_1s_2}{N}\left(\frac{c}{c_{N,s_1}}\right)^{\frac{2s_1}{N}}\right)\int_{\R^N} |(-\Delta)^{\frac {s_1}{2}}u|^2 \,dx,
$$
from which we get that $\lambda_c>0$ if $c_{N,s_1}<c<c_1$, where $c_1>0$ is defined by
$$
c_1:=\left(\frac{N(s_1-s_2)}{N(s_1-s_2)-2s_1s_2}\right)^{\frac{N}{2s_1}} c_{N,s_1}.
$$
This completes the proof.
\end{proof}

\begin{lem} \label{nonincreasing}
Let $N \geq 1$, $0<s_2<s_1<1$ and $p \geq 2 +\frac{4s_1}{N}$. Then the function $c \mapsto \gamma(c)$ is nonincreasing on $(c_0, \infty)$.
\end{lem}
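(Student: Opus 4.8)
The plan is to show that $\gamma(c')\le\gamma(c)$ whenever $c_0<c<c'$, by proving that $\gamma(c')\le E(u)$ for \emph{every} $u\in P(c)$ and then passing to the infimum over $P(c)$. So fix $u\in P(c)$; by Lemma \ref{monotonicity} the map $t\mapsto E(u_t)$ attains its maximum $E(u)$ at $t=1$, and for any $v\in S(c')$ (with $v\in\mathcal S(c')$ when $p=2+\frac{4s_1}{N}$) Lemma \ref{monotonicity} furnishes $t_\ast>0$ with $v_{t_\ast}\in P(c')$, so that $\gamma(c')\le E(v_{t_\ast})=\max_{t>0}E(v_t)$; hence it suffices to produce $v\in S(c')$ with $\max_{t>0}E(v_t)$ arbitrarily close to $E(u)$. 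If $N=1$ and $2s_2\ge1$, or $N>2s_2$ and $2<p\le\frac{2N}{N-2s_2}$, the rescaled function $v:=\mu u$ with $\mu:=\sqrt{c'/c}>1$ already works: by \eqref{scaling} one has $E((\mu u)_t)=\mu^2 f(t)-\mu^p g(t)$ with $f(t):=\tfrac12 t^{2s_1}\|(-\Delta)^{s_1/2}u\|_2^2+\tfrac12 t^{2s_2}\|(-\Delta)^{s_2/2}u\|_2^2$ and $g(t):=\tfrac1p t^{N(p-2)/2}\|u\|_p^p$, and, choosing $\beta$ with $\tfrac1{s_2}\le\beta\le\tfrac{2p}{N(p-2)}$ (possible precisely in this range of $p$), the substitution $\tau=\mu^\beta t$ together with $\mu>1$ and $s_2<s_1$ gives $\mu^2 f(t)-\mu^p g(t)\le f(\tau)-g(\tau)\le E(u)$ for every $t>0$, so $\max_{t>0}E(v_t)\le E(u)$. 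Here $\mu u\in\mathcal S(c')$ when $p=2+\frac{4s_1}{N}$ because $u\in P(c)\subset\mathcal S(c)$ and $\mu^p>\mu^2$.

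For the remaining exponents the bare rescaling is not enough, and the plan is instead to glue to $u$ a highly concentrated bump that has been pushed to infinity. Fix $\varphi\in C^\infty_0(\R^N)$ with $\|\varphi\|_2^2=c'-c$, set $\varphi^\rho:=\rho^{N/2}\varphi(\rho\,\cdot)$ for $\rho>0$ (so that $\|\varphi^\rho\|_2^2=c'-c$ while $\|(-\Delta)^{s_i/2}\varphi^\rho\|_2^2=\rho^{2s_i}\|(-\Delta)^{s_i/2}\varphi\|_2^2\to0$ and $\|\varphi^\rho\|_p^p=\rho^{N(p-2)/2}\|\varphi\|_p^p\to0$ as $\rho\to0^+$, hence $E(\varphi^\rho)\to0$), fix a unit vector $e\in\R^N$, and for $R>0$ put $w_{\rho,R}:=u+\varphi^\rho(\cdot-Re)$ and $v_{\rho,R}:=\sqrt{c'}\,w_{\rho,R}/\|w_{\rho,R}\|_2\in S(c')$. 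Since $\varphi^\rho$ has compact support, $(-\Delta)^{s_i/2}$ commutes with translations, and a translate to infinity of a fixed $L^2$ function tends weakly to $0$, all mixed terms vanish as $R\to\infty$, so that $\|w_{\rho,R}\|_2^2\to c'$, $\|(-\Delta)^{s_i/2}w_{\rho,R}\|_2^2\to\|(-\Delta)^{s_i/2}u\|_2^2+\rho^{2s_i}\|(-\Delta)^{s_i/2}\varphi\|_2^2$ and $\|w_{\rho,R}\|_p^p\to\|u\|_p^p+\rho^{N(p-2)/2}\|\varphi\|_p^p$; the normalising factor tending to $1$, the same limits hold for $v_{\rho,R}$.

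To conclude I would invoke Lemma \ref{coercive}: $\|(-\Delta)^{s_1/2}u\|_2$, $\|(-\Delta)^{s_2/2}u\|_2$, $\|u\|_p$ are bounded, and $\|(-\Delta)^{s_1/2}u\|_2$ is bounded below by a positive constant; hence for $\rho$ small and $R$ large the analogous quantities for $v_{\rho,R}$ are bounded above and, for the top-order one, below away from $0$, so the maximiser of $t\mapsto E((v_{\rho,R})_t)$ remains in a fixed compact subinterval of $(0,\infty)$. Letting $R\to\infty$ and then $\rho\to0^+$, and using the convergences above together with uniform convergence on that compact interval, one obtains $\limsup_{\rho\to0^+}\limsup_{R\to\infty}\max_{t>0}E((v_{\rho,R})_t)=\max_{t>0}E(u_t)=E(u)$. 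Thus, given $\varepsilon>0$, choosing first $\rho$ small and then $R$ large makes $\max_{t>0}E((v_{\rho,R})_t)<E(u)+\varepsilon$ (and $v_{\rho,R}\in\mathcal S(c')$ when $p=2+\frac{4s_1}{N}$, since its quantities approximate those of $u\in\mathcal S(c)$), so the first paragraph gives $\gamma(c')\le E(u)+\varepsilon$; letting $\varepsilon\to0$ and taking the infimum over $u\in P(c)$ yields $\gamma(c')\le\gamma(c)$.

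The hard part is not the construction but the fact that $\gamma(c')$ is an infimum of dilation-maxima $\max_{t>0}E(v_t)$: exhibiting a competitor of small energy $E(v)$ is useless unless one simultaneously controls the entire profile $t\mapsto E(v_t)$. This is exactly why the bump must be both concentrated (to annihilate its Sobolev and $L^p$ norms, forcing the profile of $v_{\rho,R}$ to converge to that of $u$) and remote (to annihilate the mixed terms), and why the two-sided bounds of Lemma \ref{coercive} — which confine the maximising dilation to a compact set and so legitimise exchanging the two limits with $\max_{t>0}$ — are indispensable. The only mild technicality, the nonlocality of $(-\Delta)^{s_i/2}$ in the vanishing of the mixed terms, is harmless because translations commute with $(-\Delta)^{s_i/2}$ and a translate to infinity of a fixed $L^2$ function converges weakly to $0$.
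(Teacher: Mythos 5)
Your proposal is correct, and at its core it follows the same strategy as the paper: fix a (near-)optimal $u\in P(c)$, attach a far-away piece of asymptotically negligible energy that carries the extra mass $c'-c$, and then compare fiberwise maxima $\max_{t>0}E(\cdot_t)$ via Lemma \ref{monotonicity} to conclude $\gamma(c')\le E(u)+\eps$. The implementations differ in instructive ways. The paper first truncates $u$ to a compactly supported $u^{\delta}$ and places the extra mass on a bump supported in a disjoint annulus, dilated so that its whole dilation profile is small; disjointness gives exact additivity of the $L^2$ and $L^p$ terms (the nonlocal cross terms in the $\dot H^{s_i}$ seminorms are treated as negligible). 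You instead keep $u$ intact, translate a spread-out bump $\varphi^{\rho}$ (note: with your scaling $\rho\to0^+$ the bump is dilated, not concentrated, but the computed limits are exactly what you need) to spatial infinity, renormalize the $L^2$ norm by a scalar factor tending to $1$, and kill all cross terms — including the nonlocal ones — by weak convergence of translates; this makes the treatment of the fractional cross terms more explicit than in the paper. Your confinement of the maximizing dilation to a fixed compact interval really only needs that the norms of $v_{\rho,R}$ converge to the (finite, strictly positive) norms of $u$, rather than the uniform bounds of Lemma \ref{coercive}, but the argument is sound, as is the verification that $v_{\rho,R}\in\mathcal{S}(c')$ in the mass-critical case, since $Q(u)=0$ gives the strict inequality with a positive gap. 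Finally, your first-paragraph shortcut (pure rescaling $v=\mu u$ with the substitution $\tau=\mu^{\beta}t$, valid when $2s_2\ge N$ or $p\le\frac{2N}{N-2s_2}$) has no counterpart in the paper; it buys a gluing-free proof in that exponent range, though it is not needed once the gluing argument is in place.
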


\begin{proof}
For any $0<c_2<c_1$, we shall prove that $\gamma(c_1) \leq \gamma(c_2)$. By the definition of $\gamma(c)$, we first have that, for any $\eps>0$, there exists $u \in P(c_2)$ such that
\begin{align} \label{non1}
E(u) \leq \gamma(c_2) + \frac{\eps}{2}.
\end{align}
Let $\chi \in C_0^{\infty}(\R^N, [0,1])$ be a cut-off function such that $\chi(x) =1$ for $|x| \leq 1$ and $\chi(x)=0$ for $|x|\geq 2$. For $\delta>0$ small, we define $u^{\delta}(x):=u(x) \chi(\delta x)$ for $x \in \R^N$. It is easy to check that $u^{\delta} \to u$ in $H^{s_1}(\R^N)$ as $\delta \to 0^+$. Since $Q(u)=0$, we then get that $(u^{\delta})_{t_{u^{\delta}}} \to u$ in $H^{s_1}(\R^N)$ as $\delta \to 0^+$, where $t_{u^{\delta}}>0$ is determined by Lemma \ref{monotonicity} such that $Q((u^{\delta})_{t_{u^{\delta}}})=0$. Therefore, there exists a constant $\delta>0$ small such that
\begin{align} \label{non2}
E((u^{\delta})_{t_{u_{\delta}}}) \leq E(u) + \frac{\eps}{4}.
\end{align}
Let $v \in C^{\infty}_0(\R^N)$ be such that $\mbox{supp}\,v \subset B(0, 1+2/\delta) \backslash B(0, 2/\delta)$ and set
$$
\tilde{v}^{\delta}:=\frac{\left(c_1-\|u^{\delta}\|_2^2\right)^{\frac 12}}{\|v\|_2} v.
$$
For $0<t<1$, we define $w^{\delta}_t:=u^{\delta} + (\tilde{v}^{\delta})_t$, where $ (\tilde{v}^{\delta})_t(x):= t^{N/2}\tilde{v}^{\delta}(tx)$ for $x\in \R^N$. Observe that $\mbox{supp}\, u_{\delta} \cap \mbox{supp}\, (\tilde{v}^{\delta})_t =\emptyset$ for any $0<t<1$, then $\|w^{\delta}_t\|_2^2=c_1$. It is not hard to verify that $ (\tilde{v}^{\delta})_t \to 0$ in $\dot{H}^{s_1}(\R^N)$ as $t \to 0^+$. Hence we deduce that there exist $t, \delta>0$ small such that
\begin{align}\label{non3}
\max_{\lambda>0}E(((\tilde{v}^{\delta})_t)_{\lambda}) \leq \frac{\eps}{4}.
\end{align}
Consequently, using \eqref{non1}-\eqref{non3}, we have that
\begin{align*}
\gamma(c_1) \leq \max_{\lambda>0} E((w^{\delta}_t)_{\lambda})&=\max_{\lambda>0}\left(E((u^{\delta})_{\lambda}) + E(((\tilde{v}^{\delta})_t)_{\lambda}) \right) \\
&\leq E((u^{\delta})_{t_{u_{\delta}}})  + \frac{\eps}{4}\leq E(u)+ \frac{\eps}{2} \leq \gamma(c_2) +\eps.
\end{align*}
Since $\eps>0$ is arbitrary, then $\gamma(c_1) \leq \gamma(c_2)$. Thus the proof is completed.
\end{proof}

\begin{lem}\label{decreasing}
Let $N \geq 1$, $0<s_2<s_1<1$ and $p \geq 2 +\frac{4s_1}{N}$. If there exists $u \in S(c)$ with $E(u)=\gamma(c)$ satisfying the equation
\begin{align} \label{fequ2}
(-\Delta)^{s_1} u +(-\Delta)^{s_2} u + \lambda u=|u|^{p-2} u,
\end{align}
then $\lambda \geq 0$. If $\lambda>0$, then the function $c \mapsto \gamma(c)$ is strictly decreasing in a right neighborhood of $c$.  If $\lambda<0$, then the function $c \mapsto \gamma(c)$ is strictly increasing in a left neighborhood of $c$.
\end{lem}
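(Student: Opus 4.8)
The strategy is to perturb the mass by rescaling the amplitude of $u$ and then projecting onto the Pohozaev manifold $P(c')$ via Lemma~\ref{monotonicity}, producing an upper bound for $\gamma(c')$ whose derivative at $c$ is controlled by $\lambda$. Since $u$ solves \eqref{fequ2}, Lemma~\ref{pohozaev} gives $Q(u)=0$, that is $u\in P(c)$; when $p=2+\frac{4s_1}{N}$ the identity $Q(u)=0$ moreover forces $\frac{1}{2}\|(-\Delta)^{\frac{s_1}{2}}u\|_2^2<\frac{1}{p}\|u\|_p^p$, so $u\in\mathcal{S}(c)$. For $c'$ near $c$ set $v(c'):=(c'/c)^{1/2}u\in S(c')$; this is a smooth curve with $v(c)=u$ and $v'(c)=\frac{1}{2c}u$, and for $c'$ close to $c$ one still has $v(c')\in S(c')$ (respectively $v(c')\in\mathcal{S}(c')$ if $p=2+\frac{4s_1}{N}$), so Lemma~\ref{monotonicity} provides a unique $t(c')>0$ with $v(c')_{t(c')}\in P(c')$ and $E\big(v(c')_{t(c')}\big)=\max_{t>0}E\big(v(c')_t\big)$, with $t(c)=1$ because $u\in P(c)$.

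Next I would check that $c'\mapsto t(c')$ is $C^1$ near $c$. Applying the implicit function theorem to $Q\big(v(c')_t\big)=0$ reduces this to $\frac{\partial}{\partial t}Q(u_t)\big|_{t=1}\neq 0$; since $Q(u_t)=t\frac{d}{dt}E(u_t)$ and $Q(u)=0$, this partial derivative equals $\frac{d^2}{dt^2}E(u_t)\big|_{t=1}$, which is strictly negative because the proof of Lemma~\ref{monotonicity} shows $\frac{d^2}{dt^2}E(u_t)<0$ for $t\geq t_u=1$. Now define the upper bound $G(c'):=E\big(v(c')_{t(c')}\big)$, so that $\gamma(c')\leq G(c')$ for $c'$ close to $c$ and $G(c)=E(u)=\gamma(c)$. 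Differentiating $G$ at $c$ and using that $\frac{\partial}{\partial t}E\big(v(c')_t\big)=\frac{1}{t}Q\big(v(c')_t\big)$ vanishes at $(c',t)=(c,1)$, the term involving $t'(c)$ cancels and
$$
G'(c)=dE(u)\big[v'(c)\big]=\frac{1}{2c}\,dE(u)[u].
$$
Testing \eqref{fequ2} with $u$ gives $dE(u)[u]=\|(-\Delta)^{\frac{s_1}{2}}u\|_2^2+\|(-\Delta)^{\frac{s_2}{2}}u\|_2^2-\|u\|_p^p=-\lambda\|u\|_2^2=-\lambda c$, hence $G'(c)=-\lambda/2$.

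The three assertions follow. If $\lambda>0$ then $G'(c)<0$, so $\gamma(c')\leq G(c')<G(c)=\gamma(c)$ for all $c'$ in a right neighborhood of $c$; combined with Lemma~\ref{nonincreasing} this yields the asserted strict decrease. If $\lambda<0$ then $G'(c)>0$, and the same argument gives $\gamma(c')<\gamma(c)$ for all $c'$ slightly smaller than $c$, i.e. strict increase on a left neighborhood. Finally, Lemma~\ref{nonincreasing} forces $\gamma(c')\geq\gamma(c)$ for every $c'<c$, which is incompatible with what was just obtained under the hypothesis $\lambda<0$; therefore $\lambda\geq 0$.

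The one delicate step is the second paragraph: one must justify rigorously that the projection $t(\cdot)$ is differentiable at $c$ and that the envelope-type cancellation in $G'(c)$ is valid. Both are consequences of the strict concavity information already recorded in Lemma~\ref{monotonicity}, after which $G'(c)=-\lambda/2$ is a short computation and the conclusions are immediate.
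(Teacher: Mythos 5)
Your proposal is correct and follows essentially the same route as the paper: you build the two-parameter family (amplitude rescaling to change the mass, composed with the $L^2$-preserving dilation), use the implicit function theorem at $(c,1)$ — justified by $\partial_t Q(u_t)\big|_{t=1}=\tfrac{d^2}{dt^2}E(u_t)\big|_{t=1}<0$ from Lemma \ref{monotonicity} — to project onto $P(c')$, and exploit $dE(u)[u]=-\lambda c$ together with Lemma \ref{nonincreasing}. The only cosmetic difference is that you package the conclusion as an envelope-type derivative $G'(c)=-\lambda/2$ of the value along the projected curve, whereas the paper reaches the same strict inequality $\gamma(c')<\gamma(c)$ via the sign of $\partial_{t_1}\alpha$ near $(1,1)$ and the fact that $t_2\mapsto E(u_{t_2})$ is maximized at $t_2=1$.
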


\begin{proof}
For any $t_1, t_2>0$, we set $u_{t_1,t_2}(x):=t_1^{1/2}t_2^{N/2}u(t_2x)$ for $x \in \R^N$. Then we find that $\|u_{t_1,t_2}\|_2^2 =t_1c$. Define
\begin{align*}
\alpha(t_1,t_2):=E(u_{t_1,t_2})=\frac{t_1^2t_2^{2s_1}}{2}\int_{\R^N}|(-\Delta)^{\frac{s_1}{2}} u|^2\,dx +
\frac{t_1^2t_2^{2s_2}}{2}\int_{\R^N}|(-\Delta)^{\frac{s_s}{2}} u|^2\,dx-\frac{t_1^pt_2^{\frac{N}{2}(p-2)}}{p} \int_{\R^N}|u|^p\,dx.
\end{align*}
We compute that
\begin{align*}
\frac{\partial}{\partial t_1} \alpha(t_1, t_2)={t_1t_2^{2s_1}}\int_{\R^N}|(-\Delta)^{\frac{s_1}{2}} u|^2\,dx+
{t_1t_2^{2s_2}}\int_{\R^N}|(-\Delta)^{\frac{s_2}{2}} u|^2\,dx  -{t_1^{p-1}t_2^{\frac{N}{2}(p-2)}} \int_{\R^N}|u|^p\,dx.
\end{align*}
Note that $u_{t_1, t_2} \to u$ in $H^{s_1}(\R^N)$ as $(t_1, t_2) \to (1, 1)$ and $E'(u)u=-\lambda c$. If $\lambda>0$, then there exists a constant $\delta>0$ small such that, for any $(t_1, t_2) \in (1, 1+\delta) \times [1-\delta, 1+\delta]$,
$$
\frac{\partial}{\partial t_1} \alpha(t_1, t_2) <0.
$$
This leads to $\alpha(t_1, t_2) <\alpha(1, t_2)$ for any $(t_1, t_2) \in (1, 1+\delta) \times [1-\delta, 1+\delta]$. Observe that
\begin{align*}
\frac{\partial}{\partial t_2} \alpha(t_1, t_2)&=s_1{t_1^2t_2^{2s_1-1}}\int_{\R^N}|(-\Delta)^{\frac{s_1}{2}} u|^2\,dx+
s_2{t_1^2t_2^{2s_2-1}}\int_{\R^N}|(-\Delta)^{\frac{s_2}{2}} u|^2\,dx \\
& \quad -\frac{N(p-2)}{2p}{t_1^p t_2^{\frac{N}{2}(p-2)-1}} \int_{\R^N}|u|^p\,dx \\
&=t_1^2\frac{Q(u_{t_2})}{t_2}
\end{align*}
and
\begin{align*}
\frac{\partial^2}{\partial t_2^2} \alpha(t_1, t_2)&=s_1(2s_1-1){t_1^2t_2^{2s_1-2}}\int_{\R^N}|(-\Delta)^{\frac{s_1}{2}} u|^2\,dx+s_2(2s_2-1){t_1^2t_2^{2s_2-2}}\int_{\R^N}|(-\Delta)^{\frac{s_2}{2}} u|^2\,dx \\
& \quad -\frac{N(p-2)(N(p-2)-2)}{4p}{t_1^p t_2^{\frac{N}{2}(p-2)-2}} \int_{\R^N}|u|^p\,dx.
\end{align*}
Then we have that
$$
\frac{\partial}{\partial t_2} \alpha(t_1, t_2) {\mid}_{(1,1)}=0, \quad \frac{\partial^2}{\partial t_2^2} \alpha(t_1, t_2) {\mid}_{(1,1)}<0.
$$
For $\eps>0$ small, by the implicit function theorem, then there exists a continuous function $g: [1-\eps, 1+\eps] \to \R $ with $g(1)=1$ such that $Q(u_{1+\eps, g(1+\eps)})=0$. Therefore, we conclude that
$$
\gamma((1+\eps)c) \leq \alpha(1+\eps, g(1+\eps))<\alpha(1, g(1+\eps)) \leq \alpha(1,1)=E(u)=\gamma(c),
$$
where we used the fact $u \in P(c)$. If $\lambda<0$, we can similarly obtain the desired result. This jointly with Lemma \ref{nonincreasing} implies that $\lambda \geq 0$. Thus the proof is completed.
\end{proof}

\begin{lem}\label{ladecreasing}
Let $N \geq 1$, $0<s_2<s_1<1$ and $p \geq 2 +\frac{4s_1}{N}$. Then the function $c \mapsto \gamma(c)$ is strictly decreasing on $(c_0, c_1)$, where the constant $c_1>0$ is determined in Lemma \ref{lagrange}.
\end{lem}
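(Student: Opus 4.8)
The plan is to bootstrap the monotonicity already in hand. By Lemma \ref{nonincreasing} we know that $c \mapsto \gamma(c)$ is nonincreasing on $(c_0,\infty)$, so on $(c_0,c_1)$ it remains only to upgrade ``nonincreasing'' to ``strictly decreasing''. The mechanism for this upgrade is Lemma \ref{decreasing}, which converts a single attained level with positive Lagrange multiplier into a local strict decrease; the two ingredients it requires---attainment of $\gamma(c)$ and positivity of the associated multiplier---are supplied respectively by Theorem \ref{thm2} and Lemma \ref{lagrange}, both valid precisely on the range $(c_0,c_1)$.

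Concretely, I would fix $c_0 < a < b < c_1$ and prove $\gamma(a) > \gamma(b)$. First, Theorem \ref{thm2} yields a ground state $u_a \in S(a)$ with $E(u_a) = \gamma(a)$, and $u_a$ solves \eqref{fequ} with some Lagrange multiplier $\lambda_a \in \R$; since $a \in (c_0,c_1)$, Lemma \ref{lagrange} gives $\lambda_a > 0$. Hence the hypotheses of Lemma \ref{decreasing} are met at $c = a$ with $\lambda = \lambda_a > 0$, so $c \mapsto \gamma(c)$ is strictly decreasing on some right neighborhood $[a, a+\delta)$ of $a$; after shrinking $\delta$ if necessary we may assume $a + \delta \le b$. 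Then $\gamma(a) > \gamma(a + \delta/2)$, while Lemma \ref{nonincreasing} gives $\gamma(a+\delta/2) \ge \gamma(b)$, and chaining these inequalities produces $\gamma(a) > \gamma(b)$. Since $a < b$ in $(c_0,c_1)$ were arbitrary, $\gamma$ is strictly decreasing on $(c_0,c_1)$.

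This argument is essentially bookkeeping, so I do not expect a genuine obstacle; the only point requiring a little care is the passage from ``strictly decreasing on a right neighborhood of every point, together with globally nonincreasing'' to ``globally strictly decreasing'', which is exactly why I route the comparison through the intermediate point $a + \delta/2$ and keep the neighborhood inside $(c_0,c_1)$. Everything substantive---the existence of minimizers, the positivity of the Lagrange multiplier, and the local monotonicity---has already been carried out in Theorem \ref{thm2} and Lemmas \ref{lagrange}, \ref{nonincreasing} and \ref{decreasing}.
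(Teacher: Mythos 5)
Your route is the same as the paper's: the paper's entire proof of this lemma is the sentence that it ``follows directly from Lemmas \ref{lagrange} and \ref{decreasing}'', and your chaining of the local right-neighborhood decrease from Lemma \ref{decreasing} with the global monotonicity of Lemma \ref{nonincreasing} is exactly the bookkeeping the authors leave implicit. That part is fine.

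The one point to flag is your explicit appeal to Theorem \ref{thm2} to get a ground state $u_a\in S(a)$ with $E(u_a)=\gamma(a)$. In the paper's logical order this lemma is proved \emph{before} Theorem \ref{thm2}, and the compactness step in the proof of Theorem \ref{thm2} itself cites Lemma \ref{ladecreasing} (it is used to rule out $\|u\|_2^2<c$ for the weak limit via $\gamma(\|u\|_2^2)=\gamma(c)$). So as written, your argument is circular relative to the paper's architecture: you cannot quote Theorem \ref{thm2} here and then use this lemma to prove Theorem \ref{thm2}. To be fair, the paper's own one-line proof has the same hidden gap, since Lemma \ref{decreasing} only applies once a minimizer of $\gamma(c)$ solving \eqref{fequ} is available; the clean way to break the circle is to apply Lemma \ref{decreasing} not on all of $(c_0,c_1)$ in advance, but at the mass $c'=\|u\|_2^2$ of the weak limit inside the proof of Theorem \ref{thm2} (there $u$ solves the equation, $E(u)=\gamma(c')$ by nonincreasingness, Lemma \ref{lagrange} gives $\lambda>0$, and the right-neighborhood strict decrease at $c'$ contradicts $\gamma(c')=\gamma(c)$ if $c'<c$), after which the global strict monotonicity on $(c_0,c_1)$ follows a posteriori exactly by your chaining argument. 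So the content of your proof is right, but the order of dependence needs to be rearranged to avoid assuming attainment that is only established later.
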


\begin{proof}
The proof follows directly from Lemmas \ref{lagrange} and \ref{decreasing}.
\end{proof}

We are now ready to prove Theorem \ref{thm2}.

\begin{proof}[Proof of Theorem \ref{thm2}]
In light of Lemma \ref{pss}, we first obtain that there exists a Palais-Smale sequence $\{u_n\} \subset P(c)$ for $E$ restricted on $S(c)$ at the level $\gamma(c)$. From Lemma \ref{coercive}, we have that $\{u_n\}$ is bounded in $H^{s_1}(\R^N)$. Reasoning as the proof of \cite[Lemma 3]{BeLi}, we are able to deduce that $u_n \in H^{s_1}(\R^N)$ satisfies the following equation,
\begin{align} \label{fequ3}
(-\Delta)^{s_1} u_n +(-\Delta)^{s_2} u_n + \lambda_n u_n=|u_n|^{p-2} u_n+o_n(1),
\end{align}
where
$$
\lambda_n:=\frac{1}{c} \left(\int_{\R^N}|u_n|^p\,dx -\int_{\R^N}|(-\Delta)^{\frac{s_1}{2}} u_n|^2\, dx-\int_{\R^N}|(-\Delta)^{\frac{s_2}{2}} u_n|^2\,dx \right).
$$
We claim that $\{u_n\} \subset H^{s_1}(\R^N)$ is non-vanishing. Otherwise, by using \cite[Lemma I.1]{Li}, we have that $\|u_n\|_p=o_n(1)$. Since $Q(u_n)=0$, then there holds that $E(u_n)=o_n(1)$. This is impossible, because of $\gamma(c)>0$ for any $c>c_0$, see Lemma \ref{coercive}. As a result, we know that there exists a sequence $\{y_n\} \subset \R^N$ such that $u_n(\cdot+y_n) \wto u$  in $H^{s_1}(\R^N)$ as $n \to \infty$ and $u\neq 0$. Since $\{u_n\}$ is bounded in $H^{s_1}(\R^N)$, then $\{\lambda_n\} \subset \R$ is bounded. Hence, there exists a constant $\lambda\in\R$ such that $\lambda_n \to \lambda$ in $\R$ as $n \to \infty$. Therefore, from \eqref{fequ3}, we get that
\begin{align} \label{fequ4}
(-\Delta)^{s_1} u +(-\Delta)^{s_2} u + \lambda u=|u|^{p-2} u.
\end{align}
This results in $Q(u)=0$, see Lemma \ref{pohozaev}. We next prove that $u \in S(c)$. To do this, let us define $w_n=u_n-u(\cdot-y_n)$. It is immediate to see that $w_n(\cdot+y_n) \wto 0$ in $H^{s_1}(\R^N)$ as $n \to \infty$. In addition, there holds that
\begin{align*}
\|(-\Delta)^{\frac{s_i}{2}} u_n\|^2_2=\|(-\Delta)^{\frac{s_i}{2}} w_n\|^2_2+\|(-\Delta)^{\frac{s_i}{2}} u\|^2_2+o_n(1) \quad \mbox{for}\,\, i=1,2
\end{align*}\
and
$$
\|u_n\|_p^p=\|w_n\|_p^p+\|u\|_p^p+o_n(1).
$$
Thus we have that
\begin{align} \label{bl}
\begin{split}
\gamma(c)=E(u_n)+o_n(1) =E(w_n) +E(u)+o_n(1) \geq E(w_n) +\gamma(\|u\|_2^2)+o_n(1),
\end{split}
\end{align}
and
\begin{align} \label{bl1}
Q(w_n)=Q(w_n)+Q(u)=Q(u_n)+o_n(1)=o_n(1).
\end{align}
In view of \eqref{bl1}, then
$$
0 \leq E(w_n)-\frac{2p}{N(p-2)}Q(w_n)=E(w_n) +o_n(1).
$$
Since $0<\|u\|_2^2 \leq c$, by using \eqref{bl} and Lemma \ref{nonincreasing}, we then have that $\gamma(\|u\|_2^2)=\gamma(c)$. This together with Lemmas \ref{lagrange} and \ref{ladecreasing} gives rise to $u \in S(c)$. As a consequence, we get that $\|w_n\|_p=o_n(1)$. From \eqref{fequ3} and \eqref{fequ4}, it then follows that $\|(-\Delta)^{\frac{s_i}{2}} w_n\|^2_2=o_n(1)$ for $i=1,2$. Therefore, we are able to derive that $\gamma(c)=E(u)$. Thus the proof is completed.
\end{proof}

\begin{proof}[Proof of Theorem \ref{thm6}]
Let $u \in S(c)$ be a ground state solution to \eqref{fequ}-\eqref{mass} at the level $\gamma(c)$. We claim that
$$
|u| \in P(c), \quad \|(-\Delta)^{\frac{s_1}{2}} |u|\|_2 = \|(-\Delta)^{\frac{s_1}{2}} u \|_2, \quad \|(-\Delta)^{\frac{s_2}{2}} |u|\|_2 = \|(-\Delta)^{\frac{s_2}{2}} u \|_2.
$$
Indeed, we first observe that $E(|u|) \leq E(u)$ and $Q(|u|) \leq Q(u)=0$. Then, by Lemma \ref{monotonicity}, there exists a unique constant $0<t_{|u|} \leq 1$ such that $|u|_{t_{|u|}} \in P(c)$. Therefore, we conclude that
\begin{align*}
\gamma(c) \leq E(|u|_{t_{|u|}})&=E(|u|_{t_{|u|}})-\frac{2}{N(p-2)}Q(|u|_{t_{|u|}})\\
&=\frac{N(p-2)-4s_1}{2N(p-2)}\|(-\Delta)^{\frac{s_1}{2}} \left(|u|_{t_{|u|}}\right)\|_2^2 +\frac{N(p-2)-4s_2}{2N(p-2)}\|(-\Delta)^{\frac{s_2}{2}} \left( |u|_{t_{|u|}}\right)\|_2^2\\
&=t_{|u|}^{2s_1}\frac{N(p-2)-4s_1}{2N(p-2)}\|(-\Delta)^{\frac{s_1}{2}} |u|\|_2^2 +t_{|u|}^{2s_2}\frac{N(p-2)-4s_2}{2N(p-2)}\|(-\Delta)^{\frac{s_2}{2}} |u|\|_2^2\\
& \leq \frac{N(p-2)-4s_1}{2N(p-2)}\|(-\Delta)^{\frac{s_1}{2}} |u|\|_2^2 +\frac{N(p-2)-4s_2}{2N(p-2)}\|(-\Delta)^{\frac{s_2}{2}} |u|\|_2^2\\
& \leq \frac{N(p-2)-4s_1}{2N(p-2)}\|(-\Delta)^{\frac{s_1}{2}} u\|_2^2 +\frac{N(p-2)-4s_2}{2N(p-2)}\|(-\Delta)^{\frac{s_2}{2}} u\|_2^2 \\
&=E(u)-\frac{2}{N(p-2)}Q(u)=E(u)=\gamma(c).
\end{align*}
This leads to $t_{|u|}=1$ and
$$
\|(-\Delta)^{\frac{s_1}{2}} |u|\|_2 = \|(-\Delta)^{\frac{s_1}{2}} u \|_2, \quad \|(-\Delta)^{\frac{s_2}{2}} |u|\|_2 = \|(-\Delta)^{\frac{s_2}{2}} u \|_2.
$$
Then the claim follows. Hence we have that $u=e^{i \theta} |u|$ for some $\theta \in \mathbb{S}$. From the discussion above, we know that $|u| \in S(c)$ is also a ground state solution to \eqref{fequ}-\eqref{mass} at the level $\gamma(c)$. Let us now denote by $|u|^{\ast}$ the symmetric-decreasing rearrangement of $|u|$. Similarly, we are able to show that
$$
|u|^{\ast} \in P(c) ,\quad \|(-\Delta)^{\frac{s_1}{2}} |u|^{\ast}\|_2 = \|(-\Delta)^{\frac{s_1}{2}} |u| \|_2, \quad \|(-\Delta)^{\frac{s_2}{2}} |u|^{\ast}\|_2 = \|(-\Delta)^{\frac{s_2}{2}} |u| \|_2.
$$
From \cite[Proposition 3]{FSS}, we then derive that $|u|=\rho(|\cdot-x_0|)$ for some $x_0 \in \R^N$, where $\rho$ is a decreasing function. Thus the proof is completed.
\end{proof}

\section{Multiplicity of bound state solutions} \label{section4}

In this section, we aim to prove Theorem \ref{thm3}. To begin with, we need to fix some notations. We denote by $\sigma : H^{s_1}(\R^N) \to H^{s_1}(\R^N)$ the transformation $\sigma(u)=-u$. A set $A \subset H^{s_1}(\R^N)$ is called $\sigma$-invariant if $\sigma(A)=A$. Let $Y \subset H^{s_1}(\R^N)$. A homotopy $\eta: [0,1] \times Y \to Y$ is called $\sigma$-equivariant if $\eta(t,\sigma(u))=\sigma(\eta(t,u))$ for any $(t, u) \in [0,1] \times Y$.

\begin{defi}\label{homotopy1} \cite[Definition 7.1]{Gh}
Let $B$ be a closed $\sigma$-invariant subset of a set $Y \subset H^{s_1}(\R^N)$. We say that a class $\mathcal{F}$ of compact subsets of $Y$ is a $\sigma$-homotopy stable family with the closed boundary $B$ provided that
\begin{enumerate}
\item [\textnormal{(i)}] every set in $\mathcal{F}$ is $\sigma$-invariant;
\item [\textnormal{(ii)}] every set in $\mathcal{F}$ contains $B$;
\item [\textnormal{(iii)}] for any $A \in \mathcal{F}$ and any $\sigma$-equivariant homotopy $\eta \in C([0, 1] \times Y, Y)$ satisfying $\eta(t, x)=x$ for all $(t, x) \in (\{0\} \times Y) \cup([0, 1] \times B)$, then $\eta(\{1\} \times A) \in \mathcal{G}$.
\end{enumerate}
\end{defi}

\begin{lem} \label{ps1}
Let $\mathcal{F}$ be a $\sigma$-homotopy stable family of compact subsets of $S_{rad}(c)$ with a close boundary $B$. Let
$$
\gamma_{\mathcal{F}}(c):= \inf_{A\in \mathcal{F}}\max_{u\in A}F(u).
$$
Suppose that $B$ is contained in a connected component of $P_{rad}(c)$ and $ \max \{\sup E(B),0\}<\gamma_{\mathcal{F}}(c)<\infty$. Then there exists a Palais-Smale sequence $\{u_n\} \subset P_{rad}(c)$ for $E$ restricted on $S_{rad}(c)$ at the level $\gamma_{\mathcal{F}}(c)$.
\end{lem}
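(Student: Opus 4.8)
\medskip
\noindent\textbf{Proof proposal.} The plan is to reproduce the argument of Lemma \ref{ps} almost verbatim in the $\sigma$-equivariant setting, replacing the minimax principle \cite[Theorem 3.2]{Gh} by its $\Z_2$-equivariant counterpart \cite[Theorem 7.2]{Gh}. Here I write $S_{rad}(c)$ and $P_{rad}(c)$ for $S(c)\cap H^{s_1}_{rad}(\R^N)$ and $P(c)\cap H^{s_1}_{rad}(\R^N)$; when $p=2+\frac{4s_1}{N}$ the functional $F$ is only defined on the radial analogue of the set $\mathcal{S}(c)$ from \eqref{sc}, and one works there instead, so for definiteness I describe the case $p>2+\frac{4s_1}{N}$. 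The starting observation is that the fibering $u\mapsto u_t$, $u_t(x)=t^{N/2}u(tx)$, maps $H^{s_1}_{rad}(\R^N)$ into itself, so Lemma \ref{monotonicity} applies inside the radial subspace: for each $u\in S_{rad}(c)$ there is a unique $t_u>0$ with $u_{t_u}\in P_{rad}(c)$ and $F(u)=E(u_{t_u})=\max_{t>0}E(u_t)$, with $t_u=1$ precisely when $u\in P_{rad}(c)$.

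I would then define $\eta:[0,1]\times S_{rad}(c)\to S_{rad}(c)$ by $\eta(s,u):=u_{1-s+st_u}$. Because $(-u)_t=-(u_t)$, hence $t_{-u}=t_u$, the map $\eta$ is $\sigma$-equivariant; it is continuous, and since $B\subset P_{rad}(c)$ it fixes $(\{0\}\times S_{rad}(c))\cup([0,1]\times B)$. Given a minimizing sequence $\{D_n\}\subset\mathcal{F}$ for $\gamma_{\mathcal{F}}(c)$, Definition \ref{homotopy1}(iii) yields $A_n:=\eta(\{1\}\times D_n)=\{u_{t_u}:u\in D_n\}\in\mathcal{F}$ with $A_n\subset P_{rad}(c)$; since $F=E$ on $P_{rad}(c)$ we get $\max_{v\in A_n}F(v)=\max_{u\in D_n}F(u)$, so $\{A_n\}\subset P_{rad}(c)$ is again minimizing. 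The equivariant deformation result \cite[Theorem 7.2]{Gh} applied to $F$ on $S_{rad}(c)$ then produces a Palais--Smale sequence $\{\tilde u_n\}\subset S_{rad}(c)$ for $F$ at level $\gamma_{\mathcal{F}}(c)$ with $\mathrm{dist}_{H^{s_1}}(\tilde u_n,A_n)\to 0$.

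Setting $t_n:=t_{\tilde u_n}$ and $u_n:=(\tilde u_n)_{t_n}\in P_{rad}(c)$, the remainder is exactly as in Lemma \ref{ps}: from $E(u_n)=F(\tilde u_n)=\gamma_{\mathcal{F}}(c)+o_n(1)$ and the coercivity and strict positivity of $E$ on $P(c)$ established in Lemma \ref{coercive} (which applies since $P_{rad}(c)\subset P(c)$) one gets $1/C\le\|u_n\|_{H^{s_1}}\le C$; boundedness of $\{A_n\}$ in $H^{s_1}$ together with $\mathrm{dist}_{H^{s_1}}(\tilde u_n,A_n)\to 0$ and one more use of Lemma \ref{coercive} show $\|\tilde u_n\|_{H^{s_1}}$ is bounded above and below; and since $t_n^{2s_1}=\|(-\Delta)^{s_1/2}u_n\|_2^2/\|(-\Delta)^{s_1/2}\tilde u_n\|_2^2$ this forces $1/C\le t_n\le C$. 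Finally, $\psi\mapsto\psi_{t_u}$ is a linear isomorphism $T_uS_{rad}(c)\to T_{u_{t_u}}S_{rad}(c)$ satisfying $dF(u)[\psi]=dE(u_{t_u})[\psi_{t_u}]$, so the two-sided bound on $t_n$ transfers the Palais--Smale property of $\{\tilde u_n\}$ for $F$ on $S_{rad}(c)$ to that of $\{u_n\}\subset P_{rad}(c)$ for $E$ restricted to $S_{rad}(c)$, which is the claim.

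The one point that genuinely differs from Lemma \ref{ps}, and where I expect to have to be careful, is verifying that the pseudo-gradient flow underlying the minimax step can be chosen simultaneously $\sigma$-equivariant and invariant under the radial subspace $H^{s_1}_{rad}(\R^N)$; this is precisely what the symmetric machinery of \cite[Section 7]{Gh} delivers, once one checks that $E$, $F$, the constraint $S_{rad}(c)$, the scaling map and the family $\mathcal{F}$ are all compatible with the $\Z_2$-action $\sigma$. Everything else is a transcription of the proof of Lemma \ref{ps}, the payoff being that in $H^{s_1}_{rad}(\R^N)$ the embedding into $L^p(\R^N)$ for $2<p<\frac{2N}{N-2s_1}$ is compact, which will simplify the compactness analysis in the proof of Theorem \ref{thm3}.
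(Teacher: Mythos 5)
Your proposal is correct and follows exactly the route the paper takes: the paper's proof of this lemma simply invokes the $\sigma$-equivariant minimax principle \cite[Theorem 7.2]{Gh} and states that the argument is otherwise almost identical to that of Lemma \ref{ps}, which is precisely what you carry out (including the equivariance check $t_{-u}=t_u$ and the radial invariance of the scaling map, details the paper omits).
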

\begin{proof}
By applying \cite[Theorem 7.2]{Gh}, the proof is almost identical to the one of Lemma \ref{ps}, then we omit the proof.
\end{proof}

\begin{defi}\label{genus}
For any closed $\sigma$-invariant set $A \subset H^{s_1}(\R^N)$, the genus of $A$ is defined by
$$
\textnormal{Ind}(A):= \min \{n \in \N^+ : \exists \,\, \varphi : A \rightarrow \R^n\backslash \{0\}, \varphi \,\, \text{is continuous and odd}\}.
$$
If there is no $\varphi$ as described above, we set $\textnormal{Ind}(A)= \infty.$ If $A=\emptyset$, we set $\textnormal{Ind}(A)=0$.
\end{defi}

Let $\mathcal{A}$ be a family of compact and $\sigma$-invariant sets contained in $S_{rad}(c)$. For any $k \in \N^+
$, we now define
\begin{align*}
\beta_k(c):=\inf_{A \in \mathcal{A}_{k}}\sup_{u \in A}E(u),
\end{align*}
where the set $\mathcal{A}_k$ is defined by
$$
\mathcal{A}_{k}:=\{A \in \mathcal{A}:\textnormal{Ind}(A) \geq k\}.
$$
\begin{lem} \label{akne}
\begin{enumerate}
\item [\textnormal{(i)}] If $p=2+\frac{4s_1}{N}$, then, for any $k \in \N^+$, there exists a constant $c_k>c_{N,s_1}$ such that $\mathcal{A}_k \neq \emptyset$ for any $c \geq c_k$, where $c_{N, s_1}>0$ is determined in Theorem \ref{thm1}.
\item [\textnormal{(ii)}] If $p >2+\frac{4s_1}{N}$, then, for any $k \in \N^+$, $\mathcal{A}_k \neq \emptyset$ for any $c >0$.
\item [\textnormal{(iii)}] For any $k \in \N^+$, there holds that $\beta_{k+1}(c) \geq \beta_{k}(c)>0$.
\end{enumerate}
\end{lem}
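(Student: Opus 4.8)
The plan is to dispatch assertion (iii) first, as it is nearly formal, and then to prove (i) and (ii) by exhibiting, for each $k$, a concrete compact $\sigma$-invariant set of genus $k$ lying inside the class over which $\beta_k(c)$ is taken. Since $\textnormal{Ind}(A)\geq k+1$ forces $\textnormal{Ind}(A)\geq k$, we have $\mathcal{A}_{k+1}\subseteq\mathcal{A}_k$, and therefore $\beta_{k+1}(c)\geq\beta_k(c)$, an infimum over a smaller collection being no smaller. For the strict positivity, fix $k\geq1$ and any $A\in\mathcal{A}_k$; as $\textnormal{Ind}(A)\geq1$ we have $A\neq\emptyset$, so pick $u\in A$. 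Writing $F(u):=\max_{t>0}E(u_t)=E(u_{t_u})$ for the scaled functional underlying $\beta_k$ (on whose domain the class $\mathcal{A}$ lives), and using that $u_{t_u}\in P(c)$ by Lemma \ref{monotonicity}, Lemma \ref{coercive} gives $F(u)\geq\gamma(c)$; hence $\sup_{v\in A}F(v)\geq\gamma(c)$ for every $A\in\mathcal{A}_k$, so $\beta_k(c)\geq\gamma(c)>0$, the positivity of $\gamma(c)$ being valid because $c>c_0$ (in case (i), $c\geq c_k>c_{N,s_1}=c_0$; in case (ii), $c>0=c_0$).

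For the genus-$k$ set, fix $k\in\N^+$, choose radii $0<R_1<\cdots<R_{k+1}$ and nonzero radial functions $\phi_i\in C_0^{\infty}(\{x\in\R^N:R_i<|x|<R_{i+1}\})$ for $i=1,\dots,k$, normalized by $\|\phi_i\|_2=1$, so that their supports are pairwise disjoint. Set $V_k:=\textnormal{span}\{\phi_1,\dots,\phi_k\}\subset H^{s_1}_{rad}(\R^N)$ and
$$
A_k:=V_k\cap S_{rad}(c)=\Big\{\,u=\sum_{i=1}^{k}a_i\phi_i:\ \sum_{i=1}^{k}a_i^2=c\,\Big\},
$$
the second identity using disjointness of supports. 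Then $A_k$ is compact (closed and bounded in the finite-dimensional $V_k$) and $\sigma$-invariant, and the odd linear isomorphism $\R^k\to V_k$, $(a_i)_i\mapsto\sum_i a_i\phi_i$, restricts to an odd homeomorphism of $\sqrt{c}\,S^{k-1}$ onto $A_k$; since the genus of $S^{k-1}$ equals $k$, this yields $\textnormal{Ind}(A_k)=k$, hence $A_k\in\mathcal{A}_k$.

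It remains to make $A_k$ admissible, i.e.\ to place it inside the domain of $F$. In case (ii), $p>2+\tfrac{4s_1}{N}$, that domain is all of $S_{rad}(c)$, so the previous step already gives $\mathcal{A}_k\neq\emptyset$ for every $c>0$, which is (ii). In case (i), $p=2+\tfrac{4s_1}{N}$, one must further arrange $A_k\subset\mathcal{S}_{rad}(c)$, the domain \eqref{sc} of $F$, i.e.\ $\tfrac12\|(-\Delta)^{s_1/2}u\|_2^2<\tfrac1p\|u\|_p^p$ for every $u=\sum_i a_i\phi_i\in A_k$. For such $u$, disjointness of supports together with $\sum_i a_i^2=c$ give $\|(-\Delta)^{s_1/2}u\|_2^2=\sum_i a_i^2\|(-\Delta)^{s_1/2}\phi_i\|_2^2\leq C_1 c$ with $C_1:=\max_i\|(-\Delta)^{s_1/2}\phi_i\|_2^2$, while $\|u\|_p^p=\sum_i|a_i|^p\|\phi_i\|_p^p\geq C_2\sum_i|a_i|^p\geq C_2\,k^{1-p/2}c^{p/2}$ with $C_2:=\min_i\|\phi_i\|_p^p$, the last step by convexity of $t\mapsto t^{p/2}$ on $[0,\infty)$ (Jensen, since $p>2$). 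Since $\tfrac p2-1=\tfrac{2s_1}{N}>0$, the difference $\tfrac1p\|u\|_p^p-\tfrac12\|(-\Delta)^{s_1/2}u\|_2^2\geq c\big(\tfrac{C_2}{p}k^{1-p/2}c^{2s_1/N}-\tfrac{C_1}{2}\big)$ is positive once $c$ exceeds a threshold $c_k=c_k(k,N,s_1,\phi_1,\dots,\phi_k)$; enlarging $c_k$ so that $c_k>c_{N,s_1}$ gives $A_k\in\mathcal{A}_k$ for all $c\geq c_k$, which is (i).

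The main obstacle is (i): for an arbitrarily large number $k$ of prescribed profiles one needs every energy curve $t\mapsto E(u_t)$ on the genus-$k$ set to attain a positive interior maximum (so that $F$ is defined there), and this is exactly why the mass must be taken large; the disjoint-support device converts this into the elementary Jensen estimate above, but one should still verify that $C_1$ and $C_2$, hence $c_k$, can be chosen uniformly over the whole ellipsoid $A_k$, which they can because $A_k$ only involves the finitely many fixed bumps $\phi_1,\dots,\phi_k$. Assertions (ii) and (iii) are soft once the genus-$k$ set is in hand.
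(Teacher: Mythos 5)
Your argument follows essentially the same route as the paper's: a $k$-dimensional subspace of $H^{s_1}_{rad}(\R^N)$ intersected with the sphere has genus $k$, and in the mass-critical case a large mass threshold $c_k$ forces this set into $\mathcal{S}(c)$ so that the fibering map $F$ is defined on it. The paper does this abstractly (any $k$-dimensional $V_k$, plus equivalence of norms in finite dimensions) and then additionally pushes the set into $P_{rad}(c)$ via the odd continuous map $u\mapsto u_{t_u}$ from Lemma \ref{monotonicity}, using monotonicity of the genus under odd continuous maps; that extra step is what makes $\beta_k(c)\geq\gamma(c)$ immediate when $\beta_k$ is read with $E$, while your reading of $\beta_k$ through $F$ together with $F(u)=E(u_{t_u})\geq\gamma(c)$ and Lemma \ref{coercive} accomplishes the same thing, so (iii) is fine. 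One slip in your (i): the identity $\|(-\Delta)^{s_1/2}u\|_2^2=\sum_i a_i^2\|(-\Delta)^{s_1/2}\phi_i\|_2^2$ is false for disjointly supported bumps, because the fractional seminorm is nonlocal and the Gagliardo cross terms $-2a_ia_j\iint \phi_i(x)\phi_j(y)\,|x-y|^{-N-2s_1}\,dx\,dy$ do not vanish (unlike the $L^2$ and $L^p$ norms, which do split). This is harmless for your purpose: you only need an upper bound linear in $c$, and Cauchy--Schwarz gives $\|(-\Delta)^{s_1/2}u\|_2^2\leq k\big(\max_i\|(-\Delta)^{s_1/2}\phi_i\|_2^2\big)c$ on $A_k$ (or simply invoke equivalence of norms on the finite-dimensional span, as the paper does), so the threshold argument, and with it (i) and (ii), goes through after this correction.
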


\begin{proof}
Let us first prove $\mbox{(i)}$. For any $k \in \N^+$ and $V_k \subset H^{s_1}_{rad}(\R^N)$ be such that $\mbox{dim} V_k=k$, we define $SV_k(c):=V_k \cap S(c)$. By basic properties of genus, see \cite[Theorem 10.5]{AM}, we have that $\mbox{Ind}(SV_k(c))=k$. Note that all norms are equivalent in finite dimensional subsequence of $H^{s_1}(\R^N)$, then there exists a constant $c_k>c_{N, s_1}$ large enough such that, for any $u \in SV_k(c)$, there holds that $u \in \mathcal{S}(c)$, where $\mathcal{S}(c)$ is defined by \eqref{sc}. It follows that $\sup_{t>0}E(u_t)<\infty$. Thus we apply Lemma \ref{monotonicity} to conclude that, for any $u \in SV_k(c)$ with $c \geq c_k$, there exists a unique $t_u>0$ such that $u_{t_u} \in P(c)$. We now define a mapping $\varphi: SV_k(c) \to P_{rad}(c)$ by $\varphi(u)=u_{t_u}$. It is easy to see that $\varphi$ is continuous and odd. By means of \cite[Lemma 10.4]{AM}, we then obtain that  $\mbox{Ind}(\varphi(SV_k(c)))\geq \mbox{Ind}(SV_k(c))=k$. This then suggests that $\mathcal{A}_k \neq \emptyset$. The assertion $\mbox{(ii)}$ can be achieved by using similar arguments. We now prove $\mbox{(iii)}$. Observe that, for any $k \in \N^
+$, $\mathcal{A}_{k+1} \subset \mathcal{A}_k$, then $\beta_{k+1}(c) \geq \beta_{k}(c)$. In addition, by the definition of $\beta_k(c)$, we have that, for any $k \in \N^+$, $\beta_k(c) \geq \gamma(c)>0$. Thus the proof is completed.
\end{proof}

At this point, we are able to present the proof of Theorem \ref{thm3}.

\begin{proof}[Proof of Theorem \ref{thm3}]
Let us first prove the assertion $(\textnormal{i})$. Thanks to Lemma \ref{akne}, we know that, for any $k \in \N^+$, $0<\beta_k(c)<\infty$. From Lemma \ref{ps1}, we have that, for any $k\in \N^+$, there exists a Palais-Smale sequence $\{u_n^k\} \subset P_{rad}(c)$ for $E$ restricted on $S_{rad}(c)$ at the level $\beta_k(c)$. Arguing as the proof of Theorem \ref{thm2}, we can derive that $u_n^k \in H^{s_1}(\R^N)$ satisfies the following equation,
\begin{align} \label{fequ31}
(-\Delta)^{s_1} u_n^k +(-\Delta)^{s_2} u_n^k + \lambda_n^k u_n^k=|u_n^k|^{p-2} u_n^k+o_n(1),
\end{align}
where
$$
\lambda_n^k:=\frac{1}{c} \left(\int_{\R^N}|u_n^k|^p\,dx -\int_{\R^N}|(-\Delta)^{\frac{s_1}{2}} u_n^k|^2\, dx-\int_{\R^N}|(-\Delta)^{\frac{s_2}{2}} u_n^k|^2\,dx \right).
$$
In addition, there exist a constant $\lambda_k \in \R$ such that $\lambda_n^k \to \lambda_k$ as $n \to \infty$ and a nontrivial $u^k \in H^{s_1}(\R^N)$ such that $u_n^k \wto u^k$ in $H_{rad}^{s_1}(\R^N)$ as $n \to \infty$. As a consequence, we deduce that $u^k \in H^{s_1}(\R^N)$ satisfies the equation
\begin{align} \label{fequ41}
(-\Delta)^{s_1} u^k +(-\Delta)^{s_2} u^k + \lambda^k u^k=|u^k|^{p-2} u^k.
\end{align}
Taking into account the fact that $H^{s_1}_{rad}(\R^N) \hookrightarrow L^p(\R^N)$ is compact for $N \geq 2$, we then get that $u_n^k \to u^k$ in $L^p(\R^N)$ as $n \to \infty$. Since $Q(u_n^k)=Q(u)=0$, then
$$
\|(-\Delta)^{\frac{s_i}{2}} u_n^k-(-\Delta)^{\frac{s_i}{2}}u^k\|_2=o_n(1), \quad i=1,2.
$$
In view of Lemma \ref{lagrange}, we know that $\lambda_k>0$ for any $c_0<c<c_1$. Applying \eqref{fequ31} and \eqref{fequ41}, we then have that $u_n^k \to u^k$ in $H_{rad}^{s_1}(\R^N)$ as $n \to \infty$. This indicates that $u^k \in S(c)$ is a solution to \eqref{fequ}-\eqref{mass} and $E(u^k)=\beta_k(c)$. Reasoning as the proof of \cite[Proposition 9.33]{Ra}, we can derive that $\beta_k(c) \to \infty$ as $k \to \infty$. Note that if $p=2+\frac{4s_1}{N} \leq \frac{2N}{N-2s_2}$, then $c_1=\infty$, see Lemma \ref{lagrange}. Hence, by a similar way, the assertion $(\textnormal{ii})$ follows. The proof is completed.
\end{proof}

\section{Properties of the function $c \mapsto \gamma(c)$} \label{section5}

In this section, our goal is to prove Theorem \ref{thm4}. We first have the following result.

\begin{lem}\label{continuous}
Let $N \geq 1$, $0<s_2<s_1<1$ and $p\geq 2+\frac{4s_1}{N}$. Then the function $c \mapsto \gamma(c)$ is continuous for any $c>c_0$.
\end{lem}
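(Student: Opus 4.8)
The plan is to leverage the monotonicity of $\gamma$ already proved in Lemma~\ref{nonincreasing}: since $\gamma$ is nonincreasing on $(c_0,\infty)$ it possesses one-sided limits $\gamma(c^-)\ge\gamma(c)\ge\gamma(c^+)$ at every $c>c_0$, so continuity at $c$ reduces to the two inequalities $\gamma(c^-)\le\gamma(c)$ and $\gamma(c^+)\ge\gamma(c)$. Because of monotonicity the first only needs to be checked along a sequence $c_n\uparrow c$ (``upper semicontinuity'') and the second along a sequence $c_n\downarrow c$ (``lower semicontinuity''). Both steps are carried out by the same device: rescale a near-optimal function from one mass level to the other by a scalar multiple, project it onto the relevant Pohozaev manifold via the dilation $u\mapsto u_t$ of Lemma~\ref{monotonicity}, and pass to the limit.

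For the upper bound, fix $\eps>0$ and choose $u\in P(c)$ with $E(u)\le\gamma(c)+\eps$. For $c_n\to c$ put $v_n:=(c_n/c)^{1/2}u\in S(c_n)$, so that $v_n\to u$ in $H^{s_1}(\R^N)$. When $p=2+\frac{4s_1}{N}$ one checks, using $Q(u)=0$ and $s_2>0$, that the inequality in \eqref{sc} is strict for $u$, hence $v_n\in\mathcal{S}(c_n)$ for $n$ large, and Lemma~\ref{monotonicity} provides $t_n:=t_{v_n}>0$ with $(v_n)_{t_n}\in P(c_n)$. The coefficients of the function $t\mapsto Q((v_n)_t)$ (see \eqref{scaling}) converge to those of $t\mapsto Q(u_t)$, whose unique zero $t=1$ is a sign change from $+$ to $-$; therefore $t_n\to1$ and $E((v_n)_{t_n})\to E(u_1)=E(u)$. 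Since $\gamma(c_n)\le E((v_n)_{t_n})$ this yields $\limsup_n\gamma(c_n)\le\gamma(c)+\eps$, and $\eps>0$ is arbitrary.

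For the lower bound, take $c_n\downarrow c$ and $u_n\in P(c_n)$ with $E(u_n)\le\gamma(c_n)+1/n$. Fix $\eta>0$ with $c-\eta>c_0$; monotonicity of $\gamma$ gives $0<\gamma(c+\eta)\le\gamma(c_n)\le\gamma(c)<\infty$ for $n$ large, so $\{E(u_n)\}$ is bounded, and Lemma~\ref{coercive} (whose proof in fact provides coercivity and lower bounds that are uniform over masses in any compact subset of $(c_0,\infty)$) shows $\{u_n\}$ is bounded in $H^{s_1}(\R^N)$. Set $w_n:=(c/c_n)^{1/2}u_n\in S(c)$; then $\|w_n-u_n\|_{H^{s_1}}\to0$, hence $E(w_n)=E(u_n)+o_n(1)$ and, using $Q(u_n)=0$, $Q(w_n)=o_n(1)$. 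Moreover $\|(-\Delta)^{s_1/2}u_n\|_2$ and $\|u_n\|_p$ stay bounded away from $0$, for otherwise \eqref{ide1} together with $Q(u_n)=0$ would force $E(u_n)\to0$, contradicting $\gamma(c_n)\ge\gamma(c+\eta)>0$; the same bounds transfer to $w_n$, and when $p=2+\frac{4s_1}{N}$ one deduces $w_n\in\mathcal{S}(c)$ for $n$ large. Applying Lemma~\ref{monotonicity} to $w_n$ yields $s_n:=t_{w_n}>0$ with $(w_n)_{s_n}\in P(c)$; the sign-change argument as above, now using $Q(w_n)\to0$ and the uniform upper and lower bounds on the relevant norms, forces $s_n\to1$. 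Hence $\gamma(c)\le E((w_n)_{s_n})=E(w_n)+o_n(1)=E(u_n)+o_n(1)\le\gamma(c_n)+o_n(1)$, i.e.\ $\gamma(c)\le\liminf_n\gamma(c_n)$.

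The delicate point is this lower semicontinuity step. One must extract bounds that are uniform in the nearby mass levels — both the coercivity constant of Lemma~\ref{coercive} and the positive lower bounds on $\|(-\Delta)^{s_1/2}u_n\|_2$ and $\|u_n\|_p$ — precisely in order to control the rescaling parameter $s_n$ and conclude $s_n\to1$; this is exactly where the positivity of $\gamma$ on $(c_0,\infty)$, i.e.\ the second half of Lemma~\ref{coercive}, is needed, and where the mass-critical case $p=2+\frac{4s_1}{N}$ requires the extra verification that $w_n$ (and $v_n$) lie in $\mathcal{S}(c)$ (resp.\ $\mathcal{S}(c_n)$) so that Lemma~\ref{monotonicity} is applicable.
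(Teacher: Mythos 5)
Your proposal is correct and follows essentially the same route as the paper: rescale near-optimal functions between nearby mass levels by a scalar multiple, use Lemma \ref{monotonicity} to project back onto the Pohozaev manifold, and compare energies, with monotonicity (Lemma \ref{nonincreasing}) handling the one-sided limits. The paper proves the $\limsup$ inequality this way and dismisses the $\liminf$ direction with ``by a similar way''; your write-up supplies exactly the uniform coercivity and non-vanishing bounds (via Lemma \ref{coercive} and $\gamma>0$) needed to make that half rigorous, including the mass-critical membership in $\mathcal{S}(\cdot)$, so it is a faithful, more detailed version of the same argument.
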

\begin{proof}
For simplicity, we only consider the case $p>2+\frac{4s_1}{N}$. Let $c>c_0$ and $\{c_n\} \subset (c_0, \infty)$ satisfying $c_n \to c$ as $n \to \infty$, we shall prove that $\gamma(c_n) \to \gamma(c)$ as $n \to \infty$. From the definition of $\gamma(c)$, we know that, for any $\eps>0$, there exists $v \in P(c)$ such that
$$
E(v) \leq \gamma(c) + \eps/2.
$$
Define
$$
v_n:=\left(\frac{c_n}{c}\right)^{\frac 12} v \in S(c_n).
$$
It is not hard to verify that $v_n \to v$ in $H^{s_1}(\R^N)$ as $n \to \infty$. Therefore, by Lemma \ref{monotonicity}, we can deduce that
\begin{align*}
\gamma(c_n) \leq \max_{\lambda>0} E((v_n)_{\lambda}) \leq \max_{\lambda>0} E(v_{\lambda}) + \frac{\eps}{2}=E(v)+\frac{\eps}{2} \leq \gamma(c) +\eps.
\end{align*}
This implies that
$$
\displaystyle \limsup_{n\to \infty}\gamma(c_n) \leq \gamma(c).
$$
By a similar way, we can prove that
$$
\gamma(c) \leq \displaystyle\liminf_{n\to \infty}\gamma(c_n).
$$
Thus proof is completed.
\end{proof}

\begin{lem} \label{limit1}
Let $N \geq 1$, $0<s_2<s_1<1$ and $p\geq 2+\frac{4s_1}{N}$. Then $\gamma(c) \to \infty$ as $c \to c_0^+$.
\end{lem}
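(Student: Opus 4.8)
The plan is to bound $E(u)$ from below, uniformly over $u\in P(c)$, by a quantity that diverges as $c\to c_0^+$; since $\gamma(c)=\inf_{u\in P(c)}E(u)$ this gives the lemma. I would split into the mass supercritical case $p>2+\frac{4s_1}{N}$ (where $c_0=0$) and the mass critical case $p=2+\frac{4s_1}{N}$ (where $c_0=c_{N,s_1}$), the latter being the delicate one. In both cases the starting point is the identity \eqref{ide1}.

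\textbf{Supercritical case.} For $u\in P(c)$, dropping the nonnegative $(-\Delta)^{s_2/2}$-term in $Q(u)=0$ and using \eqref{gn} with $s=s_1$,
\[
s_1\|(-\Delta)^{\frac{s_1}{2}}u\|_2^2\le\frac{N(p-2)}{2p}\int_{\R^N}|u|^p\,dx\le\frac{N(p-2)}{2p}\,C_{N,p,s_1}\,c^{\frac p2-\frac{N(p-2)}{4s_1}}\,\|(-\Delta)^{\frac{s_1}{2}}u\|_2^{\frac{N(p-2)}{2s_1}}.
\]
Here $\frac{N(p-2)}{2s_1}-2>0$ because $p>2+\frac{4s_1}{N}$, and $\frac p2-\frac{N(p-2)}{4s_1}$ is strictly positive throughout the admissible range (it equals $\frac{2s_1}{N}$ at $p=2+\frac{4s_1}{N}$, is affine in $p$, and vanishes only at the fractional Sobolev exponent when $N>2s_1$). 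Solving for $\|(-\Delta)^{\frac{s_1}{2}}u\|_2^2$ yields constants $\kappa,\beta>0$ depending only on $N,p,s_1$ with $\|(-\Delta)^{\frac{s_1}{2}}u\|_2^2\ge\kappa\,c^{-\beta}$ for all $u\in P(c)$. Since $N(p-2)-4s_1>0$, \eqref{ide1} then gives $E(u)\ge\frac{N(p-2)-4s_1}{2N(p-2)}\kappa\,c^{-\beta}\to\infty$ as $c\to0^+$, hence $\gamma(c)\to\infty$.

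\textbf{Critical case.} Now $N(p-2)=4s_1$, so \eqref{ide1} collapses to $E(u)=\frac{s_1-s_2}{2s_1}\|(-\Delta)^{\frac{s_2}{2}}u\|_2^2$ for $u\in P(c)$, and it is enough to show $\inf_{u\in P(c)}\|(-\Delta)^{\frac{s_2}{2}}u\|_2^2\to\infty$ as $c\to c_{N,s_1}^+$. Set $A:=\|(-\Delta)^{\frac{s_1}{2}}u\|_2^2$, $B:=\|(-\Delta)^{\frac{s_2}{2}}u\|_2^2$. First, arguing exactly as in \eqref{criticalineq}, $Q(u)=0$ with \eqref{gn} for $s=s_1$ gives
\[
s_2B\le s_1\,\varepsilon(c)\,A,\qquad \varepsilon(c):=\Big(\tfrac{c}{c_{N,s_1}}\Big)^{\frac{2s_1}{N}}-1>0,\quad \varepsilon(c)\to0^+\ \text{as}\ c\to c_{N,s_1}^+,
\]
which I denote $(\star)$. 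Next, as in the proof of Lemma \ref{coercive}: if $N=1$ and $2s_2\ge1$, or $2s_2<N$ and $p\le\frac{2N}{N-2s_2}$, then \eqref{gn} with $s=s_2$ gives $\int_{\R^N}|u|^p\,dx\le C_{N,p,s_2}\,B^{\frac{s_1}{s_2}}c^{\frac p2-\frac{s_1}{s_2}}$; otherwise, interpolating $\|u\|_p$ between $\|u\|_{p_1}$ and $\|u\|_{p_2}$ with $2<p_2<\frac{2N}{N-2s_2}<p_1$, $p=\theta p_1+(1-\theta)p_2$, and applying \eqref{gn} for $s=s_1$ and $s=s_2$ respectively, exactly as in \eqref{inequ1}, gives $\int_{\R^N}|u|^p\,dx\le C\,A^{\alpha}B^{\beta}c^{\delta}$ with $\alpha=\frac{\theta N(p_1-2)}{4s_1}<1$, $\beta=\frac{(1-\theta)N(p_2-2)}{4s_2}$ and $\delta=\frac p2-\alpha-\beta$. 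In both situations $\alpha<1$ and, using $\theta(p_1-2)+(1-\theta)(p_2-2)=p-2=\frac{4s_1}{N}$, one checks that $\alpha+\beta>1$ (in the second situation $\alpha+\beta-1=\frac N4(1-\theta)(p_2-2)\big(\tfrac1{s_2}-\tfrac1{s_1}\big)>0$; in the first $\alpha=0$, $\beta=\frac{s_1}{s_2}>1$). I would then combine these: from $Q(u)=0$, $\int_{\R^N}|u|^p\,dx=\frac{p}{2s_1}(s_1A+s_2B)\ge\frac p2A$, so $\frac p2A\le C A^{\alpha}B^{\beta}c^{\delta}$, i.e. $A\le C'\,B^{\frac{\beta}{1-\alpha}}c^{\frac{\delta}{1-\alpha}}$; inserting this into $(\star)$, which reads $A\ge\frac{s_2}{s_1\varepsilon(c)}B$, and using $1-\frac{\beta}{1-\alpha}=\frac{1-\alpha-\beta}{1-\alpha}<0$, one obtains
\[
\|(-\Delta)^{\frac{s_2}{2}}u\|_2^2=B\ge\Big(\frac{s_2}{s_1C'\varepsilon(c)}\Big)^{\frac{1-\alpha}{\alpha+\beta-1}}c^{-\frac{\delta}{\alpha+\beta-1}},
\]
a bound independent of $u\in P(c)$ whose right-hand side tends to $\infty$ as $c\to c_{N,s_1}^+$ because $\varepsilon(c)\to0^+$. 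Hence $\gamma(c)=\frac{s_1-s_2}{2s_1}\inf_{u\in P(c)}B\to\infty$.

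\textbf{The hard part} is the critical case: since the $(-\Delta)^{s_1/2}$-coefficient in \eqref{ide1} vanishes there, one is forced to blow up the weaker seminorm $\|(-\Delta)^{\frac{s_2}{2}}u\|_2$, and $(\star)$ alone only says $B\lesssim\varepsilon(c)A$, which does not prevent $B$ from staying bounded while $A\to\infty$. What forces $B\to\infty$ is the coupling of $(\star)$ with a Gagliardo--Nirenberg estimate in which the exponent of $B$ exceeds $1$ — precisely the place where the strict ordering $s_2<s_1$ is used. Everything else is a rearrangement of inequalities already established in the paper.
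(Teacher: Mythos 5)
Your proposal is correct, and in the mass-critical case it takes a genuinely different route from the paper. For $p>2+\frac{4s_1}{N}$ both arguments are the same computation (Pohozaev identity plus \eqref{gn} with $s=s_1$, the exponent $\frac{N(p-2)}{4s_1}>1$ forcing $\|(-\Delta)^{s_1/2}u\|_2\to\infty$ as the mass tends to $0$); the only difference is that you state it uniformly on $P(c)$ while the paper applies it to the minimizers $u_n$ furnished by Theorem \ref{thm2}. For $p=2+\frac{4s_1}{N}$ the paper argues by contradiction: assuming $\gamma(c_n)$ bounded, it uses the coercivity argument of Lemma \ref{coercive} to bound the minimizers in $H^{s_1}$, then \eqref{criticalineq} to get $\|(-\Delta)^{s_2/2}u_n\|_2\to 0$, hence $\gamma(c_n)\to 0$, contradicting the positive lower bound coming from Lemmas \ref{coercive} and \ref{nonincreasing}. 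You instead produce a direct, quantitative lower bound valid for every $u\in P(c)$: you couple the estimate $(\star)$, $s_2 B\le s_1\varepsilon(c)A$, with a second Gagliardo--Nirenberg/interpolation bound (exactly the one in Lemma \ref{coercive}) in which the exponent of $B$ strictly exceeds $1-\alpha$ with $\alpha<1$ (your verification that $\alpha+\beta>1$, resting on $s_2<s_1$, is correct), and since $c$ stays near the fixed positive number $c_{N,s_1}$ the factor $c^{-\delta/(\alpha+\beta-1)}$ is harmless, so $B$ and hence $E=\frac{s_1-s_2}{2s_1}B$ on $P(c)$ blow up as $\varepsilon(c)\to 0^+$. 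What your approach buys: no appeal to the attainment of $\gamma(c)$ (Theorem \ref{thm2}) or to the monotonicity of $\gamma$, a cleaner logical structure (no contradiction/compactness step), and an explicit divergence rate in terms of $\varepsilon(c)$; what the paper's buys is brevity, since the existence and coercivity machinery is already in place. Minor caveats, shared with the paper's own Lemma \ref{coercive}: the borderline cases $p=\frac{2N}{N-2s_2}$ and $N=2s_2$ are used with \eqref{gn} stated for strict inequalities, and the existence of admissible $p_1<\frac{2N}{N-2s_1}$ in the interpolation step should be noted; neither affects correctness.
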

\begin{proof}
Let $\{c_n\} \subset (c_0, \infty)$ satisfy $c_n \to c_0$ as $n \to \infty$. In view of Theorem \ref{thm2}, we have that there exists $\{u_n\}\subset P(c_n)$ such that $E(u_n)=\gamma(c_n)$. Record that
\begin{align} \label{ide111}
\begin{split}
\gamma(c_n)=E(u_n)&=E(u_n)-\frac{2}{N(p-2)}Q(u_n) \\
&=\frac{N(p-2)-4s_1}{2N(p-2)}\int_{\R^N}|(-\Delta)^{\frac{s_1}{2}} u_n|^2 \,dx+\frac{N(p-2)-4s_2}{2N(p-2)}\int_{\R^N}|(-\Delta)^{\frac{s_2}{2}} u_n|^2 \,dx.
\end{split}
\end{align}
Let us first treat the case $p=2+\frac{4s_1}{N}$. Suppose by contradiction that $\{\gamma(c_n)\} \subset \R$ is bounded.  Therefore, by arguing as the proof of Lemma \ref{coercive}, we are able to deduce that $\{u_n\} \subset H^{s_1}(\R^N)$ is bounded. Since $Q(u_n)=0$, by using \eqref{gn}, we obtain that
\begin{align*}
s_1\int_{\R^N}|(-\Delta)^{\frac{s_1}{2}} u_n|^2 \,dx +s_2\int_{\R^N}|(-\Delta)^{\frac{s_2}{2}} u_n|^2 \,dx
&=\frac{Ns_1}{N+2s_1} \int_{\R^N}|u_n|^{2+\frac{4s_1}{N}}\,dx \\
&\leq s_1\left(\frac{c_n}{c_{N, s_1}}\right)^{\frac{2s_1}{N}}
\int_{\R^N}|(-\Delta)^{\frac{s_1}{2}} u_n|^2 \,dx.
\end{align*}
Note that $c_n \to c_{N, s_1}^+$ as $n \to \infty$, it then follows that $\|(-\Delta)^{{s_2}/{2}} u_n\|_2=o_n(1)$. Therefore, from \eqref{ide111}, we find that $\gamma(c_n)=o_n(1)$. It is impossible, see Lemmas \ref{coercive} and \ref{nonincreasing}. Then we get that $\gamma(c) \to \infty$ as $c \to c_{N, s_1}$.

Next we  handle the case $p>2+\frac{4s_1}{N}$. In this case, by applying \eqref{gn}, we have that
\begin{align*}
s_1\int_{\R^N}|(-\Delta)^{\frac{s_1}{2}} u_n|^2 \,dx +s_2\int_{\R^N}|(-\Delta)^{\frac{s_2}{2}} u_n|^2 \,dx
&=\frac{N(p-2)}{2p}\int_{\R^N}|u_n|^p \,dx \\
&\leq C_{N,p,s_1}c^{\frac p2 -\frac{N(p-2)}{4s_1}}
\left(\int_{\R^N}|(-\Delta)^{\frac{s_1}{2}} u_n|^2 \,dx \right)^{\frac{N(p-2)}{4s_1}}.
\end{align*}
This readily infers that $\|(-\Delta)^{{s_1}/{2}} u_n\|_2\to \infty$ as $n \to \infty$. Utilizing \eqref{ide111}, we then get the desired result. Thus the proof is completed.
\end{proof}

\begin{lem} \label{limit2}
Let $N=1$, $2s_2 \geq 1$ and $p\geq 2+4s_1$ or $N \geq 1$, $2s_2<N$ and $2 + \frac{4s_2}{N}< p < \frac{2N}{N-2s_2}$. Then $\gamma(c) \to 0$ as $c \to \infty$.
\end{lem}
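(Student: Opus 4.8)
The plan is to make $\gamma(c)$ small by exhibiting, for each large $c$, an explicit member of $P(c)$ whose energy tends to $0$. Since by \eqref{ide1} every $u\in P(c)$ satisfies $E(u)=\frac{N(p-2)-4s_1}{2N(p-2)}\|(-\Delta)^{s_1/2}u\|_2^2+\frac{N(p-2)-4s_2}{2N(p-2)}\|(-\Delta)^{s_2/2}u\|_2^2\ge 0$ (recall $p\ge 2+\frac{4s_1}{N}$), this will be enough. Fix a nonzero $u_0\in C^\infty_0(\R^N)$, set $A_i:=\|(-\Delta)^{s_i/2}u_0\|_2^2/\|u_0\|_2^2$ and $B:=\|u_0\|_p^p/\|u_0\|_2^p$, and for $c>0$ put $w_c:=\sqrt{c}\,u_0/\|u_0\|_2\in S(c)$. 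Then \eqref{scaling} gives
$$
E\big((w_c)_t\big)=\frac{c}{2}\big(A_1t^{2s_1}+A_2t^{2s_2}\big)-\frac{c^{p/2}}{p}B\,t^{\frac{N(p-2)}{2}},\qquad
Q\big((w_c)_t\big)=c\big(s_1A_1t^{2s_1}+s_2A_2t^{2s_2}\big)-\frac{N(p-2)}{2p}c^{p/2}B\,t^{\frac{N(p-2)}{2}}.
$$
Because $\frac{N(p-2)}{2}\ge 2s_1>2s_2$, with equality in the first inequality only when $p=2+\frac{4s_1}{N}$, Lemma \ref{monotonicity} applies for $c$ large (its extra hypothesis, relevant only when $p=2+\frac{4s_1}{N}$, holds since then $\frac c2A_1<\frac{c^{p/2}}{p}B$): there is a unique $\tau(c)>0$ with $(w_c)_{\tau(c)}\in P(c)$, equivalently the unique positive root of
$$
s_1A_1t^{2s_1}+s_2A_2t^{2s_2}=\frac{N(p-2)}{2p}B\,c^{\frac{p-2}{2}}\,t^{\frac{N(p-2)}{2}}.
$$
As the coefficient $c^{(p-2)/2}$ on the right blows up with $c$, this root satisfies $\tau(c)\to 0^+$.

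I would then estimate $E\big((w_c)_{\tau(c)}\big)$. Using \eqref{ide1} once more,
$$
\gamma(c)\le E\big((w_c)_{\tau(c)}\big)=\frac{c}{2N(p-2)}\Big((N(p-2)-4s_1)A_1\tau(c)^{2s_1}+(N(p-2)-4s_2)A_2\tau(c)^{2s_2}\Big),
$$
and both coefficients are $\ge 0$ since $N(p-2)\ge 4s_1>4s_2$; as $\tau(c)\to 0$ and $2s_1>2s_2$, it suffices to show $c\,\tau(c)^{2s_2}\to 0$. Feeding $\tau(c)\to 0$ into the defining equation, whose left side is then asymptotic to its lowest-order term $s_2A_2\tau(c)^{2s_2}$, one gets $\tau(c)^{2s_2-\frac{N(p-2)}{2}}\sim\mathrm{const}\cdot c^{\frac{p-2}{2}}$; consequently $c\,\tau(c)^{2s_2}$ is comparable to $c^{\theta}$ with $\theta=\dfrac{(p-2)(N-2s_2)-4s_2}{N(p-2)-4s_2}$. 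In the degenerate subcase $p=2+\frac{4s_1}{N}$ (where $\frac{N(p-2)}{2}=2s_1$), one instead moves $s_1A_1t^{2s_1}$ to the right, obtaining $\tau(c)^{2(s_2-s_1)}\sim\mathrm{const}\cdot c^{\frac{p-2}{2}}$ and thus $c\,\tau(c)^{2s_2}\sim\mathrm{const}\cdot c^{\theta_c}$ with $\theta_c=1-\dfrac{2s_1s_2}{N(s_1-s_2)}$.

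It then remains to check that these exponents are negative in each stated parameter range. The denominator $N(p-2)-4s_2$ is positive precisely because $p>2+\frac{4s_2}{N}$. For the numerator of $\theta$: if $N\le 2s_2$ — which, since $1\le N$ and $2s_2<2$, is exactly the case $N=1$, $2s_2\ge 1$ — then $(p-2)(N-2s_2)\le 0<4s_2$; if $2s_2<N$, the hypothesis $p<\frac{2N}{N-2s_2}=2+\frac{4s_2}{N-2s_2}$ yields $(p-2)(N-2s_2)<4s_2$. In the degenerate subcase, $\theta_c<0$ amounts to $N(s_1-s_2)<2s_1s_2$, which holds when $N=1\le 2s_2$ because $2s_1s_2\ge s_1>s_1-s_2$, and when $2s_2<N$ follows from $p=2+\frac{4s_1}{N}<\frac{2N}{N-2s_2}$ by clearing denominators. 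Hence $c\,\tau(c)^{2s_2}\to 0$, so $E\big((w_c)_{\tau(c)}\big)\to 0$, and with $\gamma(c)\ge 0$ we conclude $\gamma(c)\to 0$ as $c\to\infty$.

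The work here is bookkeeping rather than conceptual. The delicate point is that the sign of the exponent ($\theta$ in the generic case, $\theta_c$ in the critical one) is pinned down exactly by the two thresholds in the hypotheses: $p>2+\frac{4s_2}{N}$ is what makes the denominator positive, and $p<\frac{2N}{N-2s_2}$ (in the range $2s_2<N$) is what makes the numerator negative, while the case $N=1$, $2s_2\ge 1$ trivializes the numerator. One must also not overlook the borderline value $p=2+\frac{4s_1}{N}$, at which the equation defining $\tau(c)$ changes shape and the asymptotics must be redone.
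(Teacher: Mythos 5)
Your proof is correct and follows essentially the same route as the paper: bound $\gamma(c)$ from above by the energy of the Pohozaev projection $(w_c)_{\tau(c)}$ of a rescaled test function and show $c\,\tau(c)^{2s_2}\to 0$, which is exactly the paper's argument for $p>2+\frac{4s_1}{N}$. The only real difference is at $p=2+\frac{4s_1}{N}$, where the paper uses the Gagliardo--Nirenberg optimizer so that $\sup_{t>0}E(w_t)$ has a closed form, whereas you keep a generic bump and redo the asymptotics of the Pohozaev root; both yield the same rate $c^{1-\frac{2s_1s_2}{N(s_1-s_2)}}$, and your exponent bookkeeping in each parameter regime checks out.
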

\begin{proof}
We first consider the case $p=2+\frac{4s_1}{N}$. Observe that
$$
E(w_t)=\frac{c}{2\|u\|_2^2} \left(1-\left(\frac{c}{c_{1,s_1}}\right)^{\frac{2s_1}{N}}\right) t^{2s_1}\int_{\R^N} |(-\Delta)^{\frac{s_1}{2}}u|^2 \, dx +\frac{c}{2\|u\|_2^2} t^{2s_2} \int_{\R^N} |(-\Delta)^{\frac{s_2}{2}}u|^2 \, dx,
$$
where $w$ is defined by \eqref{defw}. Then we have that
$$
0<\gamma(c) \leq \sup_{t>0} E(w_t)=\frac{s_1-s_2}{s_1}\left(\frac{s_2}{s_1}\right)^{\frac{s_2}{s_1-s_2}}\frac{\frac{c}{2\|u\|_2^2} \left( \int_{\R^N} |(-\Delta)^{\frac{s_2}{2}} u|^2 \,dx\right)^{\frac{s_1}{s_1-s_2}}}{\left(\left(\left(\frac{c}{c_{N,s_1}}\right)^{\frac{2s_1}{N}}-1\right) \int_{\R^N} |(-\Delta)^{\frac{s_1}{2}} u|^2 \,dx\right)^{\frac{s_2}{s_1-s_2}}}.
$$
This immediately yields that $\gamma(c) \to 0$ as $c \to \infty$, because of
$$
\frac{2s_1s_2}{N(s_1-s_2)}>1.
$$
Next we consider the case $p>2+\frac{4s_1}{N}$. For any $u \in S(1)$, we see that $c^{1/2} u \in S(c)$. From Lemma \ref{monotonicity}, we then know that there exists a constant $t_c>0$ such that $Q((c^{1/2}u)_{t_c})=0$. This means that
\begin{align*}
&t^{2(s_1-s_2)}_c s_1\int_{\R} |(-\Delta)^{\frac{s_1}{2}} u|^2 dx + s_2\int_{\R}|(-\Delta)^{\frac{s_2}{2}} u|^2dx=(c t_c^{2s_2})^{\frac {N(p-2)-4s_2}{4s_2}} c^{\frac p2-\frac{N(p-2)}{4s_2}}\frac{N(p-2)}{2p} \int_{\R}|u|^p dx.
\end{align*}
Therefore, we derive that $ct_c^{2s_2} \to 0$ as $c \to \infty$. Observe that
\begin{align*}
\gamma(c) \leq E((c^{1/2}u)_{t_c})&=E((c^{1/2}u)_{t_c})-\frac{2}{N(p-2)}Q((c^{1/2}u)_{t_c})\\
&=c t_c^{2s_1}\frac{N(p-2)-4s_1}{2N(p-2)} \int_{\R} |(-\Delta)^{\frac{s_1}{2}} u|^2 \,dx + c t_c^{2s_2}\frac{N(p-2)-4s_2}{2N(p-2)} \int_{\R} |(-\Delta)^{\frac{s_2}{2}} u|^2\, dx.
\end{align*}
Then we conclude that $\gamma(c) \to 0$ as $c \to \infty$. Thus the proof is completed.
\end{proof}

To further reveal some properties of the function $c \mapsto \gamma(c)$ as $c \to \infty$, we need to investigate the following zero mass equation,
\begin{align} \label{zfequ}
(-\Delta)^{s_1} u +(-\Delta)^{s_2} u =|u|^{p-2} u,
\end{align}
where $N>2s_2$ and $p\geq 2+\frac{4s_1}{N}$. The Sobolev space related to \eqref{zfequ} is defined by
$$
H:=\left\{u \in \dot{H}^{s_2}(\R^N) : \int_{\R^N}|(-\Delta)^{\frac{s_1}{2}} u|^2 \,dx <\infty\right\}
$$
equipped with norm
$$
\|u\|_H:=\left(\int_{\R^N}|(-\Delta)^{\frac{s_1}{2}} u|^2 \,dx\right)^{\frac 12}+\left(\int_{\R^N}|(-\Delta)^{\frac{s_2}{2}} u|^2 \,dx\right)^{\frac 12}, \quad  u \in H.
$$
It is standard to check that $H$ is a reflexive Banach space.

\begin{lem} \label{embedding}
Let $N>\max\{2s_1,\frac{2s_1s_2}{s_1-s_2}\}$, $0<s_2<s_1<1$ and $p \geq 2+\frac{4s_1}{N}$. Then there holds that
$$
\int_{\R^B}|u|^p \,dx \leq  C_{N, s_1,s_2} \left(\int_{\R^N}|(-\Delta)^{\frac{s_2}{2}} u|^2 \,dx\right)^{^{\frac{N(1-\theta)}{N-2s_2}}}\left(\int_{\R^N}|(-\Delta)^{\frac{s_1}{2}} u|^2 \,dx\right)^{^{\frac{N\theta}{N-2s_1}}},  \quad  u \in H,
$$
where $C_{N,s_1,s_2}>0$ is a constant and $0<\theta<1$ satisfying that
$$
p=\frac{2N(1-\theta)}{N-2s_2}+\frac{2N\theta}{N-2s_1}.
$$
\end{lem}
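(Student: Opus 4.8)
The plan is to obtain the inequality by combining the two fractional Sobolev embeddings with an elementary H\"older interpolation between Lebesgue spaces; no compactness or variational input is needed. Throughout I write $q_i:=\frac{2N}{N-2s_i}$ for $i=1,2$ for the critical Sobolev exponents associated with $\dot H^{s_1}$ and $\dot H^{s_2}$; these are finite and $>2$ because $N>2s_1>2s_2$, and $s_2<s_1$ forces $q_2<q_1$. Recall the fractional Sobolev inequalities $\|v\|_{q_i}^2\le S_{s_i}\int_{\R^N}|(-\Delta)^{\frac{s_i}{2}}v|^2\,dx$, $i=1,2$, valid since $N>2s_i$.

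First I would check that $p$ lies strictly between $q_2$ and $q_1$. The upper bound $p<q_1=\frac{2N}{N-2s_1}$ belongs to the standing range of admissible exponents (and is in any case forced by the requirement $\theta<1$ in the statement). For the lower bound, using $p\ge 2+\frac{4s_1}{N}=\frac{2(N+2s_1)}{N}$ it suffices to verify $\frac{2(N+2s_1)}{N}>\frac{2N}{N-2s_2}$, which after clearing denominators is equivalent to $2(s_1-s_2)N>4s_1s_2$, i.e.\ precisely to the hypothesis $N>\frac{2s_1s_2}{s_1-s_2}$. Hence $q_2<p<q_1$, and there is a unique $\theta\in(0,1)$ with $p=(1-\theta)q_2+\theta q_1$, which is the $\theta$ appearing in the statement.

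Next I would interpolate. Put $\alpha:=\theta q_1/p$; from $p=(1-\theta)q_2+\theta q_1$ one gets $1-\alpha=(1-\theta)q_2/p$, hence $\frac1p=\frac{1-\alpha}{q_2}+\frac{\alpha}{q_1}$ with $\alpha\in(0,1)$, so H\"older's inequality gives $\|u\|_p\le\|u\|_{q_2}^{1-\alpha}\|u\|_{q_1}^{\alpha}$ for $u\in C_0^\infty(\R^N)$. Raising to the power $p$ turns the exponents into $(1-\alpha)p=(1-\theta)q_2$ and $\alpha p=\theta q_1$; inserting the two Sobolev inequalities and using $(1-\theta)q_2/2=\frac{N(1-\theta)}{N-2s_2}$ and $\theta q_1/2=\frac{N\theta}{N-2s_1}$ yields exactly the claimed estimate with $C_{N,s_1,s_2}:=S_{s_2}^{N(1-\theta)/(N-2s_2)}S_{s_1}^{N\theta/(N-2s_1)}$. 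Finally I would pass from $C_0^\infty(\R^N)$ to all $u\in H$: since the inequality just proved controls $\|v\|_p$ by a power of $\|v\|_H$, it is stable under approximation in $\|\cdot\|_H$ and $C_0^\infty(\R^N)$ is dense in $H$ for this norm; alternatively, for $u\in H$ one has $u\in L^{q_1}$ with $\|u\|_{q_1}\lesssim\|(-\Delta)^{\frac{s_1}{2}}u\|_2$ directly from Hardy--Littlewood--Sobolev, since $u\in\dot H^{s_2}\subset L^{q_2}$ fixes the Riesz potential representation.

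The one step that genuinely needs care is the second: guaranteeing $q_2<p<q_1$, so that both $\theta$ and $\alpha$ lie in $(0,1)$. The lower inequality is exactly where the arithmetic assumption $N>\frac{2s_1s_2}{s_1-s_2}$ is used, and $N>2s_1$ is what makes the $\dot H^{s_1}$ Sobolev embedding available in the first place; the upper inequality is the admissibility bound on $p$. Everything after that is routine.
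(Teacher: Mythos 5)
Your proposal is correct and follows essentially the same route as the paper: verify $\frac{2N}{N-2s_2}<p<\frac{2N}{N-2s_1}$ (the lower bound being exactly the hypothesis $N>\frac{2s_1s_2}{s_1-s_2}$), then combine H\"older interpolation between the two critical exponents with the fractional Sobolev inequalities for $\dot H^{s_1}$ and $\dot H^{s_2}$. The only additions are the explicit bookkeeping of the exponent $\alpha$ and the density step from $C_0^\infty(\R^N)$ to $H$, which the paper leaves implicit.
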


\begin{proof}
Since $N>2s_2$ and $N>\frac{2s_1s_2}{s_1-s_2}$, then
$$
2+\frac{4s_1}{N} >\frac{2N}{N-2s_2}.
$$
Due to $N>2s_1$ and $p<\frac{2N}{N-2s_1}$, then there exists a constant $0<\theta<1$ such that
$$
p=\frac{2N(1-\theta)}{N-2s_2}+\frac{2N\theta}{N-2s_1}.
$$
By using H\"older's inequality, we have that
$$
\int_{\R^N}|u|^p\,dx \leq \left(\int_{\R^N}|u|^{\frac{2N}{N-2s_2}} \,dx \right)^{1-\theta} \left(\int_{\R^N}|u|^{\frac{2N}{N-2s_1}} \,dx \right)^{\theta} .
$$
In view of Sobolev inequalities, we know that
$$
\int_{\R^N}|u|^{\frac{2N}{N-2s_2}} \,dx \leq C_{N, s_2}\left(\int_{\R^N}|(-\Delta)^{\frac{s_2}{2}} u|^2\,dx\right)^{\frac{N}{N-2s_2}}
$$
and
$$
\int_{\R^N}|u|^{\frac{2N}{N-2s_1}} \,dx \leq C_{N, s_1}\left(\int_{\R^N}|(-\Delta)^{\frac{s_1}{2}} u|^2\,dx\right)^{\frac{N}{N-2s_1}}.
$$
Therefore, the result of the lemma follows immediately and the proof is completed.
\end{proof}

Taking into account Lemma \ref{embedding}, we are able to seek for solutions to \eqref{zfequ} in $H$. Indeed, solutions to \eqref{zfequ} correspond to critical points of the following energy functional,
$$
E(u)=\frac 12 \int_{\R^N}|(-\Delta)^{\frac{s_1}{2}} u|^2 \, dx+\frac 12 \int_{\R^N}|(-\Delta)^{\frac{s_2}{2}} u|^2 \, dx-\frac 1p  \int_{\R^N} |u|^p \, dx.
$$

\begin{lem}
Let $N>\max\{2s_1,\frac{2s_1s_2}{s_1-s_2}\}$, $0<s_2<s_1<1$ and $p \geq 2+\frac{4s_1}{N}$. Then there exists a ground state solution to \eqref{zfequ} in $H$, namely the ground state energy $m$ is achieved, where
\begin{align} \label{defm}
m:=\left\{E(u) : u \in H\backslash\{0\}, E^\prime(u)=0\right\}.
\end{align}
\end{lem}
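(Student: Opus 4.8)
The plan is to obtain the ground state by minimizing $E$ over the Nehari manifold inside the radial subspace $H_{rad}$ of $H$, using the compactness of the radial embedding, and then to identify this minimum with $m$ by symmetrization. Throughout write $A(u):=\int_{\R^N}|(-\Delta)^{\frac{s_1}{2}}u|^2\,dx$ and $B(u):=\int_{\R^N}|(-\Delta)^{\frac{s_2}{2}}u|^2\,dx$, so that $E(u)=\frac12 A(u)+\frac12 B(u)-\frac1p\|u\|_p^p$; by Lemma \ref{embedding} $E\in C^1(H,\R)$.

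First I would set up the Nehari manifold $\mathcal N:=\{u\in H\setminus\{0\}:A(u)+B(u)=\|u\|_p^p\}$, on which $E(u)=(\frac12-\frac1p)(A(u)+B(u))$. Lemma \ref{embedding} applied on $\mathcal N$, where $A+B=\|u\|_p^p\lesssim(A+B)^{p/2}$ with $\frac p2>1$, shows that $A+B$ is bounded below by a positive constant, hence $m_{\mathcal N}:=\inf_{\mathcal N}E>0$ and likewise $m_r:=\inf_{\mathcal N\cap H_{rad}}E\geq m_{\mathcal N}>0$. For every admissible $u\in H\setminus\{0\}$ — all $u\neq0$ if $p>2+\frac{4s_1}{N}$, and those satisfying the analogue of \eqref{sc} if $p=2+\frac{4s_1}{N}$ — there is, exactly as in Lemma \ref{monotonicity}, a unique $t_u>0$ with $u_{t_u}\in\mathcal N$ and $E(u_{t_u})=\max_{t>0}E(u_t)$, where $A(u_t)=t^{2s_1}A(u)$, $B(u_t)=t^{2s_2}B(u)$; moreover the derivative of $u\mapsto A(u)+B(u)-\|u\|_p^p$ paired with $u$ equals $(2-p)(A(u)+B(u))\neq0$ on $\mathcal N$, so $\mathcal N$ is a natural constraint.

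Next take a minimizing sequence $\{u_n\}\subset\mathcal N\cap H_{rad}$ for $m_r$. From $E(u_n)=(\frac12-\frac1p)(A(u_n)+B(u_n))$ it is bounded in $H$, so up to a subsequence $u_n\rightharpoonup u$ in $H$. The crucial point is that the embedding $H_{rad}\hookrightarrow L^p(\R^N)$ is compact: since $q_2:=\frac{2N}{N-2s_2}<p<\frac{2N}{N-2s_1}=:q_1$ (a strict inequality forced by $N>2s_1$ and $N>\frac{2s_1s_2}{s_1-s_2}$), a Strauss-type radial decay estimate built from the two endpoint Sobolev embeddings controls the $L^p$-mass of $\{u_n\}$ near infinity and near the origin uniformly, while on bounded annuli the usual fractional Rellich embedding applies; hence $u_n\to u$ in $L^p(\R^N)$. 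Since $\|u_n\|_p^p=A(u_n)+B(u_n)\to m_r/(\frac12-\frac1p)>0$, we get $u\not\equiv0$. By weak lower semicontinuity of $A$ and $B$, $A(u)+B(u)\leq\lim_n(A(u_n)+B(u_n))=\|u\|_p^p$, so the Nehari projection $u_{t_u}$ of $u$ has $t_u\in(0,1]$ and
$$
E(u_{t_u})=\left(\tfrac12-\tfrac1p\right)\!\left(t_u^{2s_1}A(u)+t_u^{2s_2}B(u)\right)\leq\left(\tfrac12-\tfrac1p\right)(A(u)+B(u))\leq\liminf_n E(u_n)=m_r .
$$
As $u_{t_u}\in\mathcal N\cap H_{rad}$ also gives $E(u_{t_u})\geq m_r$, we conclude $E(u_{t_u})=m_r$. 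The natural-constraint property makes $u_{t_u}$ a critical point of $E|_{H_{rad}}$, and by the principle of symmetric criticality a critical point of $E$ on $H$; thus $u_{t_u}$ solves \eqref{zfequ}.

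Finally I would check $m_r=m$. Since $u_{t_u}$ is a nonzero critical point, $m\leq E(u_{t_u})=m_r$. Conversely, any nonzero critical point $v$ of $E$ lies on $\mathcal N$ (pair \eqref{zfequ} with $v$), and its symmetric-decreasing rearrangement $v^\ast\in H_{rad}$ still admits a Nehari projection $v^\ast_{t}$ (for $p=2+\frac{4s_1}{N}$ this uses the Pohozaev identity for \eqref{zfequ}); by the P\'olya--Szeg\"o inequality for the fractional seminorms, as in the proof of Theorem \ref{thm6}, $A(v^\ast)\leq A(v)$, $B(v^\ast)\leq B(v)$ and $\|v^\ast\|_p=\|v\|_p$, whence $E(v^\ast_{t})\leq(\frac12-\frac1p)(A(v^\ast)+B(v^\ast))\leq(\frac12-\frac1p)(A(v)+B(v))=E(v)$ with $v^\ast_{t}\in\mathcal N\cap H_{rad}$, so $E(v)\geq m_r$ and hence $m\geq m_r$. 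Therefore $m=m_r$ is attained, by $u_{t_u}$. I expect the compact embedding $H_{rad}\hookrightarrow L^p(\R^N)$ — which is exactly what pins down the range $q_2<p<q_1$ and thereby the hypotheses $N>2s_1$ and $N>\frac{2s_1s_2}{s_1-s_2}$ — to be the main obstacle, since $H$ carries only the homogeneous norms $A$ and $B$ and is translation invariant; everything else is a routine Nehari-manifold argument.
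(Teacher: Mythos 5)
Your proposal is correct in substance but follows a different route from the paper. The paper's proof is a two-line argument: it minimizes $J(u)=\|(-\Delta)^{\frac{s_1}{2}}u\|_2^2+\|(-\Delta)^{\frac{s_2}{2}}u\|_2^2$ over the sphere $M=\{\|u\|_p=1\}$, replaces the minimizing sequence by its symmetric-decreasing rearrangement, and invokes the compactness of $H_{rad}\hookrightarrow L^p(\R^N)$ (asserted there without proof); the passage from this constrained minimizer to a least-energy critical point, and the identification with $m$ in \eqref{defm}, are left implicit. You instead minimize $E$ on the Nehari manifold $\mathcal N$ inside $H_{rad}$, use the same compact embedding, recover a free critical point via the natural-constraint computation and symmetric criticality, and then prove $m=m_r$ explicitly by comparing an arbitrary critical point with its rearrangement, exactly in the spirit of the proof of Theorem \ref{thm6}. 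In this respect your write-up is more complete than the paper's. Both arguments hinge on the compactness of the radial embedding for $\frac{2N}{N-2s_2}<p<\frac{2N}{N-2s_1}$, which you justify at roughly the same level of detail as the paper asserts it; be aware that the pointwise Strauss decay in $\dot H^{s}$ needs $s>\frac12$, so for small $s_1$ one should argue through weighted integral radial inequalities instead.

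One step as written is incorrect and must be repaired, though the repair is immediate. You project onto $\mathcal N$ along the mass-preserving dilation $u_t(x)=t^{N/2}u(tx)$ (so $A(u_t)=t^{2s_1}A(u)$, $B(u_t)=t^{2s_2}B(u)$) and claim, ``exactly as in Lemma \ref{monotonicity}'', that the maximizer of $t\mapsto E(u_t)$ lies on $\mathcal N$. It does not: stationarity along this fiber reads
\begin{equation*}
s_1t^{2s_1}A(u)+s_2t^{2s_2}B(u)=\frac{N(p-2)}{2p}\,t^{\frac N2(p-2)}\|u\|_p^p,
\end{equation*}
which is the Pohozaev condition $Q(u_t)=0$ of Lemma \ref{monotonicity}, not the Nehari condition $A+B=\|u\|_p^p$; the two sets differ in general. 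The fix even simplifies your argument: use the multiplicative projection $t\mapsto tu$, for which $t_u=\left((A(u)+B(u))/\|u\|_p^p\right)^{1/(p-2)}$, so that $t_u\in(0,1]$ precisely when $A(u)+B(u)\leq\|u\|_p^p$ and $E(t_uu)=\left(\tfrac12-\tfrac1p\right)t_u^2\left(A(u)+B(u)\right)\leq\left(\tfrac12-\tfrac1p\right)\left(A(u)+B(u)\right)$. These are exactly the inequalities you need for the weak limit $u$ and for $v^\ast$ in your final paragraph; moreover this projection exists for every $u\neq0$, so the admissibility caveat modeled on \eqref{sc} at $p=2+\frac{4s_1}{N}$ is unnecessary here (there is no mass constraint in $H$). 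The natural-constraint computation and the symmetric-criticality step are unaffected, and with this replacement your proof is correct.
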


\begin{proof}
To prove this, it is equivalent to show that the functional $J$ restricted on $M$ admits a minimizer in $H$, where
$$
J(u):=\int_{\R^N}|(-\Delta)^{\frac{s_1}{2}} u|^2 \, dx+\int_{\R^N}|(-\Delta)^{\frac{s_2}{2}} u|^2 \, dx, \quad M:=\left\{u \in H: \int_{\R^N} |u|^p\,dx=1 \right\}.
$$
Let $\{u_n\} \subset M$ be a minimizing sequence. Without restriction, we may assume that $\{u_n\}$ is radially symmetric. Otherwise, we shall use its symmetric-decreasing rearrangement to replace $\{u_n\}$. Note that the embedding $H_{rad}\hookrightarrow L^p(\R^N)$ is compact for $N \geq 2$, where $H_{rad}$ denotes the subspace consisting of radially symmetric functions in $H$. Hence we easily obtain the existence of minimizers and the proof is completed.
\end{proof}

\begin{lem} \label{l2}
Let $N>\max\{2s_1+2,\frac{2s_1s_2}{s_1-s_2}\}$, $0<s_2<s_1<1$ and $p \geq 2+\frac{4s_1}{N}$. Then any solution to \eqref{zfequ} belongs to $L^2(\R^N)$.
\end{lem}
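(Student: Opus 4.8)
I plan to prove Lemma~\ref{l2} by combining an elliptic bootstrap in Lebesgue spaces with the Green-kernel representation of the operator $L:=(-\Delta)^{s_1}+(-\Delta)^{s_2}$. Write $q_i:=\frac{2N}{N-2s_i}$ for $i=1,2$, so that $q_2<q_1$, and set $f:=|u|^{p-2}u$ for a solution $u$ of \eqref{zfequ}. Since $u\in H\subset \dot H^{s_1}\cap \dot H^{s_2}$, the fractional Sobolev embeddings give $u\in L^{q_1}(\R^N)\cap L^{q_2}(\R^N)$, hence $u\in L^q(\R^N)$ for all $q\in[q_2,q_1]$. The plan has three stages: raise the integrability of $u$ to all finite exponents; represent $u$ as a convolution with the Green kernel $\mathcal G$ of $L$; and then iterate that representation \emph{downwards} to reach $L^2$.

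For the first stage I would read \eqref{zfequ} as $Lu=Vu$ with $V:=|u|^{p-2}$. Because $p<q_1$, we have $V\in L^{\rho}(\R^N)$ with $\rho=\frac{q_1}{p-2}\ge \frac{N}{2s_1}$, so $V$ is at least in the critical Lebesgue class for $\dot H^{s_1}$; since $L\ge(-\Delta)^{s_1}$ in the form sense and $\dot H^{s_1}\hookrightarrow L^{q_1}$, a standard fractional Moser/Brezis--Kato iteration (testing against truncated powers of $u$) yields $u\in L^q(\R^N)$ for every $q\in[q_1,\infty)$, hence $u\in L^q(\R^N)$ for every $q\in[q_2,\infty)$.

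For the Green kernel, writing $\widehat{\mathcal G}(\xi)=\big(|\xi|^{2s_1}+|\xi|^{2s_2}\big)^{-1}=|\xi|^{-2s_2}\big(1+|\xi|^{2(s_1-s_2)}\big)^{-1}$ exhibits $\mathcal G$ as the convolution of the Riesz kernel $\sim|x|^{-(N-2s_2)}$ with the nonnegative, integrable Bessel-type kernel of $(I+(-\Delta)^{s_1-s_2})^{-1}$; using $N>2s_1$ this gives $\mathcal G\ge0$, $\mathcal G(x)\lesssim|x|^{-(N-2s_1)}$ for $|x|\le1$, and $\mathcal G(x)\lesssim|x|^{-(N-2s_2)}$ for $|x|\ge1$. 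Taking Fourier transforms in $Lu=f$ yields $u=\mathcal G\ast f$. Split $\mathcal G=\mathcal G_0+\mathcal G_\infty$ with $\mathcal G_0:=\mathcal G\,\mathbf 1_{\{|x|\le1\}}$ and $\mathcal G_\infty:=\mathcal G\,\mathbf 1_{\{|x|>1\}}$, so $\mathcal G_0\in L^r$ for all $r<\frac{N}{N-2s_1}$ and $\mathcal G_\infty\in L^r$ for all $r\in(\frac{N}{N-2s_2},\infty]$. Now the main step: if $u\in L^q(\R^N)$ for all $q\in(\beta,\infty)$ with $\beta\in[p-1,q_2]$, then $f\in L^b(\R^N)$ for all $b\in(\frac{\beta}{p-1},\infty)$, and Young's inequality applied separately to $\mathcal G_0\ast f$ and $\mathcal G_\infty\ast f$ (the second being the restrictive term, using $\mathcal G_\infty\in L^r$ with $r$ close to $\frac{N}{N-2s_2}$) gives $u\in L^q(\R^N)$ for all $q>T(\beta)$, where $T(\beta):=\big(\frac{p-1}{\beta}-\frac{2s_2}{N}\big)^{-1}$. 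Since $T(\beta)<\beta$ exactly when $\beta<\frac{N(p-2)}{2s_2}$, and since $q_2<\frac{N(p-2)}{2s_2}$ is precisely $p>2+\frac{4s_2}{N-2s_2}$ --- which follows from $p\ge2+\frac{4s_1}{N}$ and $N>\frac{2s_1s_2}{s_1-s_2}$ by the same elementary computation as in Lemma~\ref{embedding} --- the recursion $\beta_0=q_2$, $\beta_{n+1}=T(\beta_n)$ strictly decreases with a uniform contraction ratio on $[p-1,q_2]$; the hypothesis $N>2s_1+2$ (so $N>4s_1$) guarantees that the Young exponents remain admissible throughout, in particular $\beta_n<\frac{N(p-1)}{2s_1}$ so $\mathcal G_0\ast f$ stays bounded. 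After finitely many steps $\beta_n<p-1$, whence $u\in L^{p-1}(\R^N)$ and $f=|u|^{p-2}u\in L^1(\R^N)$. Then $\mathcal G_0\ast f\in L^1\cap L^\infty\subset L^2$, and by Young $\mathcal G_\infty\ast f\in L^r(\R^N)$ for every $r\in(\frac{N}{N-2s_2},\infty]$; since $N>2s_1+2$ gives $N>4s_2$, hence $\frac{N}{N-2s_2}<2$, we conclude $u=\mathcal G_0\ast f+\mathcal G_\infty\ast f\in L^2(\R^N)$.

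The step I expect to be the main obstacle is the downward iteration: one must verify carefully that $T$ strictly decreases the exponent over the whole range $(p-1,q_2]$ and that every exponent occurring in Young's inequality stays admissible --- this is exactly where the two hypotheses $N>2s_1+2$ and $N>\frac{2s_1s_2}{s_1-s_2}$ are consumed. Secondary technical points are the fractional Moser iteration of the first stage, since a priori only homogeneous Sobolev information on $u$ is available, and the pointwise control of $\mathcal G$ near the crossover scale $|x|\sim1$.
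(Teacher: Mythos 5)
Your outline is essentially workable, but it follows a genuinely different route from the paper. The paper never leaves the variational/extension framework: after a scaling it rewrites \eqref{zfequ} as a degenerate local problem in $\R^{N+1}_+$ via the Caffarelli--Silvestre extension with the two weights $y^{1-2s_1}$ and $y^{1-2s_2}$, tests the extended equation against $\psi^2 w$ for a cut-off weight $\psi$ growing like $|(x,y)|^{s_2}$ in an annulus, absorbs the resulting error terms through the weighted Hardy--trace inequalities of Tzirakis \cite{T}, and concludes $u\in L^2$ outside a large ball by Fatou; the hypothesis $N>2s_1+2$ is consumed there precisely in making the Hardy constants $4/(b-2s_i-1)^2$ smaller than one for some admissible $b<N+1$. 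You instead work on the potential-theoretic side: nonnegativity and the two-scale bounds $|x|^{-(N-2s_1)}$ near the origin and $|x|^{-(N-2s_2)}$ at infinity for the Green kernel $\mathcal G$ of $(-\Delta)^{s_1}+(-\Delta)^{s_2}$, an upward Brezis--Kato iteration, and then a downward Young iteration; the bookkeeping of that iteration ($T(\beta)<\beta$ iff $\beta<N(p-2)/(2s_2)$, with the starting condition $q_2<N(p-2)/(2s_2)$ equivalent to $p>2N/(N-2s_2)$, which indeed follows from $p\ge 2+4s_1/N$ and $N>2s_1s_2/(s_1-s_2)$) checks out. What each route buys: the paper's argument needs no kernel estimates, no $L^q$ bootstrap and no representation formula, at the price of the full strength of $N>2s_1+2$; your argument, once the asserted ingredients are written out in detail (positivity and pointwise bounds of $\mathcal G$ via subordination, the fractional Moser iteration starting from purely homogeneous Sobolev information, and the identity $u=\mathcal G\ast(|u|^{p-2}u)$, which requires a Liouville-type step excluding corrections supported at $\xi=0$ since $u$ is only known in $\dot H^{s_1}\cap\dot H^{s_2}$), uses only $N>2s_1$, $N>4s_2$ and $N>2s_1s_2/(s_1-s_2)$, so it would in fact relax the dimensional restriction. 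Two small inaccuracies, neither fatal: the constraint $\beta_n<N(p-1)/(2s_1)$ that you attribute to $N>2s_1+2$ is not actually needed, since $\mathcal G_0\in L^r$ for every $1\le r<N/(N-2s_1)$, including $r=1$, so the local piece never obstructs the iteration; and when $q_2\le p-1$ your recursion has empty range, but then $|u|^{p-2}u\in L^1$ immediately and your final step applies directly.
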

\begin{proof}
By using scaling techniques, it suffices to demonstrate that any solution to the equation
\begin{align} \label{zfequ1}
\left(\frac{k_{s_2}}{k_{s_1}}\right)^{\alpha s_1}(-\Delta)^{s_1} u +\left(\frac{k_{s_2}}{k_{s_1}}\right)^{\alpha s_2}(-\Delta)^{s_2} u =|u|^{p-2} u, \quad u \in H
\end{align}
belongs to $L^2(\R^N)$, where
$$
k_s:=2^{1-2s} \frac{\Gamma(1-s)} {\Gamma(s)} ,\quad \alpha=\frac{1}{s_1-s_2}.
$$
Indeed, if $u \in H$ is a solution to \eqref{zfequ}, then $\tilde{u} \in H$ is a solution to \eqref{zfequ1}, where $\tilde{u}$ is defined by
$$
\tilde{u}(x):=u\left({k_{s_1}^{\frac{\alpha}{2}}}/{k_{s_2}^{\frac{\alpha}{2}}}x\right), \quad x \in \R^N.
$$
For our purpose, by making use of the harmonic extension theory from \cite{CS}, we need to introduce the following extended problem
\begin{align}\label{fequ11}
\displaystyle
\left\{
\begin{aligned}
&-\mbox{div}(y^{1-2s_1} \nabla w+y^{1-2s_2} \nabla w)=0 \,\, \,  &\mbox{in} \,\, \R^{N+1}_+,\\
&-\frac{\partial w}{\partial {\nu}}=k_{s_1,s_2}|u|^{p-2}u \,\,\, &\mbox{on} \,\, \R^N \times \{0\},
\end{aligned}
\right.
\end{align}
where
$$
\frac{\partial w}{\partial {\nu}}:= \lim_{y \to 0^+} y^{1-2s_1} \frac{\partial w}{\partial y}(x, y)+y^{1-2s_2} \frac{\partial w}{\partial y}(x, y)  =- \frac{1}{k_{s_1}} (- \Delta )^{s_1} u(x)- \frac{1}{k_{s_2}} (- \Delta )^{s_2} u(x)
$$
and
$$
k_{s_1,s_2}:=\left(\frac{k_{s_1}^{s_2}}{k_{s_2}^{s_1}}\right)^{\alpha}.
$$
Let $\varphi \in C^{\infty}_0(\R^{N+1}_+ \backslash \mathcal{B}^+(0, R), \R)$ be a cut-off function such that $\varphi(x, y)=1$ for $|(x, y)| \geq 2R$, where
$$
\mathcal{B}^+(0, R):=\{(x, y) \in \R^{N+1}_+: |(x, y)| <R\}
$$
and $R>0$ is a constant to be determined later. For $R_1>2R$, we define $\psi:=\varphi h_{R_1}$, where $h_{R_1}$ is given by
\begin{align*}
h_{R_1}(x, y):=\left\{
\begin{aligned}
&|(x, y)|^{s_2}  &\quad \mbox{for} \,\, 2 R \leq |(x, y)|<R_1,\\
&R_1^{s_2} \left(1+\tanh((|(x, y)|-R_1)/R_1)\right) & \mbox{for} \,\, |(x, y)| \geq R_1.
\end{aligned}
\right.
\end{align*}
As a direct consequence, we see that
\begin{align} \label{cutoff}
\sup_{|(x, y)| \geq 2R} \frac{|(x,y)||\nabla \psi(x ,y)|}{\psi(x, y)}=1.
\end{align}
Multiplying \eqref{fequ11} by $\psi^2 w$ and integrating on $\R^{N+1}_+$, we have that
\begin{align*}
\int_{\R^{N+1}_+} \left(y^{1-2s_1} \nabla w +y^{1-2s_2} \nabla w \right) \cdot \nabla(w \psi^2) \, dx dy=k_{s_1,s_2}\int_{\R^N} |u|^p |\psi(\cdot, 0)|^2 \, dx.
\end{align*}
Observe that
$$
|\nabla(w \psi)|^2=\nabla w \cdot \nabla(w \psi^2) + |w|^2|\nabla \psi|^2.
$$
It then follows that
\begin{align} \label{estimate} \nonumber
\int_{\R^{N+1}_+} y^{1-2s_1} |\nabla (w \psi)|^2 +y^{1-2s_2} |\nabla (w \psi)|^2 \, dx &=\int_{\R^{N+1}_+} \left(y^{1-2s_1} |w|^2+ y^{1-2s_2} |w|^2 \right) |\nabla \psi|^2 \, dxdy \\
&\quad + \int_{\R^N} |u|^p |\psi(\cdot, 0)|^2 \, dx.
\end{align}
Applying H\"older's inequality, the definition of $\varphi$ and trace inequalities, we know that
\begin{align*}
\int_{\R^N} |u|^p |\psi(\cdot, 0)|^2 \, dx &\leq \left(\int_{\R^N} |u \psi(\cdot, 0)|^{p}\,dx \right)^{\frac2p} \left(\int_{\R^N \backslash B(0, R)} |u|^p \, dx\right)^{\frac{p-2}{p}}
\end{align*}
and
\begin{align*}
\int_{\R^N} |u \psi(\cdot, 0)|^{p}\,dx
&\leq \left(\int_{\R^N}|u \psi(\cdot, 0)|^{\frac{2N}{N-2s_1}}\,dx\right)^{\theta} \left(\int_{\R^N}|u \psi(\cdot, 0)|^{\frac{2N}{N-2s_2}}\,dx\right)^{1-\theta}\\
&\leq \widetilde{C}\left(\int_{\R^{N+1}_+} y^{1-2s_1} |\nabla (w\psi)|^2 \,dxdy\right)^{\frac{\theta N}{N-2s_1}} \left(\int_{\R^{N+1}_+} y^{1-2s_2} |\nabla (w\psi)|^2\, dxdy \right)^{\frac{(1-\theta)N}{N-2s_2}},
\end{align*}
where $0<\theta<1$ and
$$
p=\frac{2N\theta}{N-2s_1}+\frac{2N(1-\theta)}{N-2s_2}
$$
and $\widetilde{C}=C_{N,s_1,s_2}>0$ is a constant.
Define
$$
\delta(R):=\widetilde{C}k_{s_1,s_2}\left(\int_{\R^N \backslash B(0, R)} |u|^p \, dx\right)^{\frac{p-2}{p}},
$$
and note that $\delta(R) \to 0$ as $R \to \infty$. It then follows from \eqref{estimate} and Young's inequality that
\begin{align} \label{estimate1}
\begin{split}
&\left(1-\delta(R)\right)\int_{\R^{N+1}_+} y^{1-2s_1} |\nabla (w\psi)|^2 +y^{1-2s_2} |\nabla (w\psi)|^2 \, dx \\
&\leq \int_{\R^{N+1}_+} \left(y^{1-2s_1} |w|^2+ y^{1-2s_2} |w|^2 \right) |\nabla \psi|^2 \, dxdy.
\end{split}
\end{align}
Let us now estimate the terms in the right side hand of \eqref{estimate1}. Using \eqref{cutoff} and \cite[Theorem 1.1]{T}, we have that
\begin{align*}
\int_{\R^{N+1}_+} y^{1-2s_1}  |w|^2 |\nabla \psi|^2 dxdy
&\leq \int_{\mathcal{B}^+(0, 2R)} y^{1-2s_1} |w|^2 dxdy
+\int_{\R^{N+1}_+ \backslash \mathcal{B}^+(0, 2R)} y^{1-2s_1} \frac{|w \psi |^2}{|(x, y)|^{2}} dxdy \\
& \leq C_1(R) +\frac {4}{(b-2s_1-1)^2}\int_{\R^{N+1}_+} y^{1-2s_1} |\nabla (w\psi)|^2  dxdy,
\end{align*}
where $b>0$ is a constant satisfying $1+2s_1 < b < N+1$. Similarly, we can obtain that
\begin{align*}
\int_{\R^{N+1}_+} y^{1-2s_2} |w|^2 |\nabla \psi|^2  dxdy &\leq \int_{\mathcal{B}^+(0, 2R)} y^{1-2s_2} |w|^2 dxdy
+\int_{\R^{N+1}_+ \backslash \mathcal{B}^+(0, 2R)} y^{1-2s_2} \frac{|w\psi|^2}{|(x, y)|^{2}}dxdy \\
& \leq C_2(R) +\frac {4}{(b-2s_2-1)^2}\int_{\R^{N+1}_+} y^{1-2s_2} |\nabla (w\psi)|^2  dxdy,
\end{align*}
where $1+2s_2<1+2s_1 < b < N+1$. Choosing $b$ close to $N+1$ with $N>2s_1+2$ such that
$$
\frac {4}{(b-2s_1-1)^2}<1, \quad \frac {4}{(b-2s_2-1)^2}<1,
$$
and taking $R>0$ large enough, we then get from \eqref{estimate1} that
$$
\int_{\R^{N+1}_+} y^{1-2s_1} |\nabla (w\psi)|^2 +y^{1-2s_2} |\nabla (w\psi)|^2 \, dx  \leq C(R).
$$
Applying again \cite[Theorem 1.1]{T}, we then have that
$$
\int_{\R^N \backslash B(0, 2R)}\frac{|u \psi(\cdot, 0)|^2}{|x|^{2s_1}} +\frac{|u \psi(\cdot, 0)|^2}{|x|^{2s_2}}  \, dx \leq C
$$
uniformly with respect to $R_1$. Observe that
$$
\frac{|u \psi(\cdot, 0)|^2}{|x|^{2s_1}} +\frac{|u \psi(\cdot, 0)|^2}{|x|^{2s_2}} \to |u|^2 \quad \mbox{a.e.} \,\,\, |x| \geq 2R\,\,\,  \mbox{as}\,\, R_1 \to \infty.
$$
It then yields from Faton's Lemma that $u \in L^2(\R^N \backslash B(0, 2R))$. This indicates that $u \in L^2(\R^N)$ and the proof is completed.
\end{proof}

\begin{lem} \label{limit3}
Let $N>\max\{2s_1+2,\frac{2s_1s_2}{s_1-s_2}\}$, $0<s_2<s_1<1$ and $p \geq 2+\frac{4s_1}{N}$. Then there exists a constant $c_{\infty}>0$ such that $\gamma(c) = m$ for any $c \geq c_{\infty}$, where $m$ is defined  by \eqref{defm}.
\end{lem}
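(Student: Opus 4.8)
The plan is to prove the two inequalities $\gamma(c)\le m$ and $\gamma(c)\ge m$ separately, taking $c_\infty$ to be the $L^2$-mass of the ground state of the zero-mass equation \eqref{zfequ}.

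\emph{Upper bound.} Let $v\in H$ be the ground state of \eqref{zfequ} produced in the previous lemma, so that $E'(v)=0$ and $E(v)=m$. By Lemma \ref{l2} we have $v\in L^2(\R^N)$, and since $v\in\dot H^{s_1}(\R^N)\cap\dot H^{s_2}(\R^N)$ as well, in fact $v\in H^{s_1}(\R^N)$. Put $c_\infty:=\|v\|_2^2>0$. Being a solution of \eqref{zfequ}, $v$ obeys the associated Pohozaev identity (the $\lambda=0$ case of Lemma \ref{pohozaev}), i.e. $Q(v)=0$; hence $v\in P(c_\infty)$ and $\gamma(c_\infty)\le E(v)=m$. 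Since $c\mapsto\gamma(c)$ is nonincreasing on $(c_0,\infty)$ by Lemma \ref{nonincreasing}, we obtain $\gamma(c)\le m$ for all $c\ge c_\infty$.

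\emph{Lower bound.} I would show that $\gamma(c)\ge m$ holds for every $c>c_0$. Let $\cP:=\{w\in H\setminus\{0\}:Q(w)=0\}$ denote the Pohozaev manifold of the autonomous problem \eqref{zfequ}. If $u\in P(c)$ then $u\in H^{s_1}(\R^N)$, and interpolating as in the proof of Lemma \ref{lagrange} one has $\|(-\Delta)^{s_2/2}u\|_2\le\|u\|_2^{1-s_2/s_1}\|(-\Delta)^{s_1/2}u\|_2^{s_2/s_1}<\infty$, so $u\in H\setminus\{0\}$; together with $Q(u)=0$ this gives $P(c)\subset\cP$ and hence $\gamma(c)=\inf_{P(c)}E\ge\inf_{\cP}E$. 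It therefore remains to prove $\inf_{\cP}E=m$. The inequality $\inf_{\cP}E\le m$ is clear since $v\in\cP$. For the reverse, I would minimize $E$ over $\cP$: exactly as in Lemma \ref{coercive}, $E$ is coercive and bounded below by a positive constant on $\cP$ (using Lemma \ref{embedding} in place of \eqref{gn}), so a minimizing sequence — taken radial, after replacing each term by the symmetric rearrangement rescaled back onto $\cP$, which does not increase $E$ — is bounded in $H_{rad}$ and, via the compact embedding $H_{rad}\hookrightarrow L^p(\R^N)$ ($N\ge 2$), converges strongly in $L^p$ to a nontrivial limit $\tilde v\in\cP$ achieving $\inf_{\cP}E$. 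Because $\frac{d}{dt}Q(\tilde v_t)\mid_{t=1}\ne 0$ — this non-degeneracy is precisely the strict concavity of $t\mapsto E(\tilde v_t)$ at its maximum established in Lemma \ref{monotonicity} — the manifold $\cP$ is a natural constraint, so the Lagrange multiplier of the $Q$-constraint vanishes and $\tilde v$ solves \eqref{zfequ}. Consequently $\inf_{\cP}E=E(\tilde v)\ge m$ by the definition \eqref{defm} of $m$, whence $\gamma(c)\ge m$.

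\emph{Conclusion and main obstacle.} For $c\ge c_\infty$ the two bounds force $\gamma(c)=m$, which is the assertion. The upper bound is soft: it uses only that Lemma \ref{l2} makes the zero-mass ground state an admissible element of $S(c_\infty)$, together with the monotonicity of $\gamma$. The substantive part is the lower bound, namely the identification of $m$ with $\inf_{\cP}E$ for the autonomous problem \eqref{zfequ}; within it the two delicate points are (i) the compactness of the minimizing sequence for $\inf_{\cP}E$, which is why one must pass to the radial subspace and invoke $H_{rad}\hookrightarrow L^p(\R^N)$, and (ii) showing that the Pohozaev constraint is natural, for which the strict concavity of $t\mapsto E(w_t)$ past its maximum from Lemma \ref{monotonicity} is exactly the input needed.
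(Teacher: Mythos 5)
Your argument is correct and its skeleton is the same as the paper's: the paper also sets $c_\infty=\|v\|_2^2$ for the zero-mass ground state $v$ (which is admissible in $S(c_\infty)$ precisely because of Lemma \ref{l2}), obtains $\gamma(c_\infty)\leq m$ from $v\in P(c_\infty)$, invokes the monotonicity of Lemma \ref{nonincreasing}, and gets the lower bound from $P(c)\subset\{w\in H\setminus\{0\}: Q(w)=0\}$. The one substantive difference is that the paper simply asserts the identity $m=\inf\{E(w): w\in H\setminus\{0\},\, Q(w)=0\}$ (``Note first that\dots''), whereas you prove the nontrivial inequality $\inf_{\mathcal{P}}E\geq m$ by minimizing $E$ on the zero-mass Pohozaev manifold (rearrangement plus the compact embedding of radial functions of $H$ into $L^p(\R^N)$ already used in the paper's existence lemma for \eqref{zfequ}) and then showing the constraint is natural through the nondegeneracy $\tfrac{d}{dt}Q(\tilde v_t)\mid_{t=1}=\tfrac{d^2}{dt^2}E(\tilde v_t)\mid_{t=1}<0$, i.e.\ the computation of Lemma \ref{monotonicity}; this is a legitimate way to supply the step the paper leaves implicit, and it is consistent with the paper's toolkit. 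Two details to tighten if you write it out: the coercivity and positive lower bound on $\mathcal{P}$ are not literally ``exactly as in Lemma \ref{coercive}'', since on $\mathcal{P}$ there is no mass constraint; in the mass-critical case one must argue via Lemma \ref{embedding}, using that the exponent $\tfrac{N\theta}{N-2s_1}$ of the $\dot H^{s_1}$-seminorm is strictly less than $1$ (because $p=2+\tfrac{4s_1}{N}<2+\tfrac{4s_1}{N-2s_2}$ under the hypothesis $N>\tfrac{2s_1s_2}{s_1-s_2}$), so that $Q=0$ controls $\|(-\Delta)^{s_1/2}u\|_2$ by $\|(-\Delta)^{s_2/2}u\|_2$, the supercritical case being immediate from $E=E-\tfrac{2}{N(p-2)}Q$; and Lemma \ref{monotonicity} is stated for $u\in S(c)\subset H^{s_1}(\R^N)$, while your minimizer $\tilde v$ a priori only lies in $H$ — the fibering computation is mass-independent, so this is harmless, but it should be noted. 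Neither point affects the validity of your proof.
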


\begin{proof}
Note first that
$$
m=\left\{E(u) : u \in H \backslash\{0\}, Q(u)=0\right\}.
$$
Then we have that $\gamma(c) \geq m$ for any $c >c_0$. From Lemma \ref{l2}, we know that there exists $u \in H^{s_1}(\R^N)$ such that $E(u)=m$ and $Q(u)=0$. Consequently, we obtain that $\gamma(c_{\infty})=m$, where $c_{\infty}:=\|u\|_2^2$. Combining Lemma \ref{nonincreasing}, we then derive that $\gamma(c)=m$ for any $c \geq c_{\infty}$. Thus the proof is completed.
\end{proof}

From the lemmas above, we are now able to give the proof of Theorem \ref{thm4}.

\begin{proof}[Proof of Theorem \ref{thm4}]
The proof of Theorem \ref{thm4} follows directly from Lemmas \ref{lagrange}, \ref{nonincreasing}, \ref{decreasing}, \ref{continuous}, \ref{limit1}, \ref{limit2} and \ref{limit3}.
\end{proof}

\section{Local well-posedness of solutions to the Cauchy problem} \label{section6}

In this section, we shall demonstrate Theorem \ref{pb wellposedness}. First we need to present the definition of admissible pairs. For convenience, in the remaining sections, we shall replace the notation $\psi(t)$ by $u(t)$ to denote solutions to \eqref{evolv pb0}.


\begin{defi}
Let $N\geq 2$ and $s\in (0,1]$. Any pair $(q,r)$ of positive real numbers is said to be $s$-admissible, if
$q,r\geq 2$ and
$$\frac{2s}{q}+\frac{N}{r}=\frac{N}{2}.$$
Such a set of $s$-admissible pairs is denoted by $\Gamma_s$.
\end{defi}

The first main result in this section is with respect to a family of Strichartz estimates without loss of regularity, which are useful to control radially symmetric solutions of \eqref{evolv pb0}.

\begin{thm}\label{strichartz th}
Let $N\geq 2$, $\frac 12 <s_2<s_1< 1$, $u, u_0$ and $F$ are radially symmetric in space and satisfy the equation
\begin{align}\label{evolv pb}
\left\{
\begin{aligned}
&i\partial_tu-(-\Delta)^{s_1}u-(-\Delta)^{s_2}u=F(t,x), \\
&u(0,x)=u_0(x), \quad x\in \R^N.
\end{aligned}
\right.
\end{align}
Then
\begin{align}\label{Radial Strichartz}
\|u\|_{L_t^q L^r_x} \lesssim \|u_0\|_2+\|F\|_{L_t^{\tilde{q}^{\prime}}L_x^{\tilde{r}^{\prime}}},
\end{align}
if $(q,r)$ and $(\tilde{q},\tilde{r})$ belong to $\Gamma_{s_1}\cup \Gamma_{s_2}$ and either
$(q,r)\neq (2,\infty)$ or $(\tilde{q}^{\prime},\tilde{r}^{\prime})\neq (2,\infty)$.
\end{thm}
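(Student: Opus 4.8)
The plan is to derive \eqref{Radial Strichartz} from two ingredients: the dispersive/Strichartz machinery available for a single fractional Schr\"odinger operator of order $2s$ \emph{restricted to radial data} — where the usual derivative loss is recovered, see the radial improved Strichartz estimates of Guo--Wang and Cho--Ozawa--Xia type — and a decomposition of the full propagator $S(t)=\mathcal F^{-1}(e^{-it(|\xi|^{2s_1}+|\xi|^{2s_2})}\mathcal F\,\cdot\,)$ into a high-frequency piece controlled by the $s_1$-scaling and a low-frequency piece controlled by the $s_2$-scaling. Concretely, I would introduce a Littlewood--Paley cutoff at frequency $1$: for $|\xi|\gtrsim 1$ the phase $|\xi|^{2s_1}+|\xi|^{2s_2}\sim|\xi|^{2s_1}$ behaves like the symbol of $(-\Delta)^{s_1}$ with comparable Hessian lower bounds, while for $|\xi|\lesssim 1$ it behaves like $|\xi|^{2s_2}$. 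The point is that on each piece the perturbing term $|\xi|^{2s_2}$ (resp.\ $|\xi|^{2s_1}$) is lower order and smooth, so a stationary-phase / $TT^*$ argument gives the same dispersive decay as for the pure operator, and this is where the hypothesis $\tfrac12<s_2<s_1<1$ (ensuring nondegeneracy of the phase after restriction to radial functions in dimension $N\ge 2$) enters.

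The steps, in order, would be: \textbf{(1)} Record the homogeneous and inhomogeneous radial Strichartz estimates for each pure propagator $e^{-it(-\Delta)^{s_j}}$, $j=1,2$, valid for all pairs in $\Gamma_{s_j}$ away from the forbidden endpoint $(2,\infty)$ — these are known and I would cite them. \textbf{(2)} Prove a dispersive estimate for $S(t)$ localized to dyadic frequency $N_0$, say $\|P_{N_0}S(t)\|_{L^1\to L^\infty}\lesssim |t|^{-N/2}\,N_0^{\,a(N_0)}$ with $a$ matching the dominant power, by writing the kernel as an oscillatory integral and using that on radial functions the phase's radial second derivative is bounded below by $\min(N_0^{2s_1},N_0^{2s_2})\cdot(\text{scale})$; summing over the two regimes. \textbf{(3)} Feed this into the abstract Keel--Tao machinery (or the Christ--Kiselev lemma for the non-diagonal inhomogeneous estimate, which is exactly why the endpoint exclusion $(q,r)\ne(2,\infty)$ or $(\tilde q',\tilde r')\ne(2,\infty)$ is needed) to get, for each fixed frequency block and each $j$, the estimate with $(q,r),(\tilde q,\tilde r)\in\Gamma_{s_j}$. \textbf{(4)} Sum the low block using the $s_2$-admissible estimates and the high blocks using the $s_1$-admissible estimates; the mixed case where $(q,r)\in\Gamma_{s_1}$ but $(\tilde q,\tilde r)\in\Gamma_{s_2}$ is handled by interpolation/duality pairing the high output norm against the low input norm, using that the two scaling lines both pass through admissibility and that the frequency-localized constants are uniform after the radial gain removes the derivative loss.

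The main obstacle I expect is precisely step \textbf{(2)} combined with the \emph{mixed} admissibility in step \textbf{(4)}: one must show that a single estimate holds when the exponent pair on the solution side and the pair on the forcing side are adapted to \emph{different} scalings ($s_1$ vs.\ $s_2$). This is not a formal consequence of the two single-operator estimates, because $S(t)$ does not obey an exact scaling symmetry; the resolution is to exploit that $|\xi|^{2s_1}+|\xi|^{2s_2}$ is monotone and convex in $|\xi|$, so its inverse and its derivative are controlled from above and below by the two pure symbols simultaneously, giving a unified dispersive bound $\|P_{N_0}S(t)\|_{L^1\to L^\infty}\lesssim |t|^{-N/2}$ after the radial improvement kills every positive power of $N_0$. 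A secondary subtlety is the careful bookkeeping of the radial-improved range of admissible pairs (the radial gain enlarges $\Gamma_{s_j}$ beyond the classical Schr\"odinger range, and one must check the pairs used downstream in Theorem \ref{pb wellposedness}, namely $\big(\tfrac{4s_1p}{N(p-2)},p\big)$, actually lie in it for $2<p<\tfrac{2N}{N-2s_1}$); this is routine exponent arithmetic but must be done to make the fixed-point argument in Section \ref{section6} close.
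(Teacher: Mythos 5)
Your overall architecture (Littlewood--Paley splitting at frequency $\sim 1$, high frequencies governed by the $s_1$-scaling and low frequencies by the $s_2$-scaling, then Strichartz machinery plus Christ--Kiselev away from the endpoint) is the same skeleton as the paper's proof. But your central step \textbf{(2)} rests on a mechanism that does not work: you claim that after the radial improvement one gets a frequency-\emph{uniform} dispersive bound $\|P_{N_0}S(t)\|_{L^1\to L^\infty}\lesssim|t|^{-N/2}$, and you then feed this into Keel--Tao. Radial symmetry of the data cannot improve an $L^1\to L^\infty$ kernel bound: for the symbol $|\xi|^{2s_1}+|\xi|^{2s_2}$ the Hessian at $|\xi|\sim N_0$ is of size $N_0^{2s_1-2}$ (high frequencies) or $N_0^{2s_2-2}$ (low frequencies), so stationary phase inevitably produces a power of $N_0$ in the dispersive estimate, and this loss does not disappear for radial data. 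The radial gain lives one level up, in frequency-localized space-time estimates: this is exactly \cite[Theorem 1.2]{GuWa14}, quoted in the paper as Lemma \ref{propos}, which for radial $u_0$ gives $\|S(t)P_k u_0\|_{L^q_{t,x}}\lesssim 2^{(\frac N2-\frac{N+m(k)}{2})k}\|u_0\|_2$ with $m(k)=2s_1$ for $k\ge0$ and $m(k)=2s_2$ for $k<0$ --- note the constant is still $k$-dependent. The theorem is then \emph{not} obtained by erasing this dependence, but by observing (Lemma \ref{lem6.1} and the proof of Theorem \ref{strichartz th}) that for any pair in $\Gamma_{s_1}\cup\Gamma_{s_2}$ the localized constant $C^{q,r}_{s_1,s_2}(k)$ equals $1$ in the ``matching'' frequency regime and decays geometrically, like $2^{\frac{2k}{q}(s_2-s_1)}$ for $k\ge0$ or $2^{\frac{2k}{q}(s_1-s_2)}$ for $k<0$, in the other regime precisely because $s_2<s_1$; then $\ell^2_k$ summation together with the square-function equivalence $\|f\|_r^2\sim\sum_k\|P_kf\|_r^2$ closes the argument (Lemma \ref{lem6.4}). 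Without this bookkeeping your step (2) is a gap, not a shortcut.

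A second, smaller point: your worry about the ``mixed'' case $(q,r)\in\Gamma_{s_1}$, $(\tilde q,\tilde r)\in\Gamma_{s_2}$ and the proposed interpolation/duality pairing is unnecessary once the frequency-localized estimates with uniformly bounded constants are available. The paper simply combines the homogeneous estimate for $(q,r)$ with the dual estimate for $(\tilde q,\tilde r)$ (Lemma \ref{lemme1}) on each block, applies Christ--Kiselev (Lemma \ref{lemma2}) to pass to the retarded integral --- this is where the exclusion of the double endpoint $(2,\infty)$ enters --- and sums; no interpolation between the two scaling lines is needed, and indeed none would be available since $S(t)$ has no exact scaling. Also, your step (1) (quoting radial Strichartz for the two \emph{pure} propagators $e^{-it(-\Delta)^{s_j}}$) is never used in a correct proof: the estimates must be proved for the mixed symbol itself, which is why the paper verifies the hypotheses (H1)--(H4) of Guo--Wang for $\phi(r)=r^{2s_1}+r^{2s_2}$ rather than borrowing single-operator results.
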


To prove Theorem \ref{strichartz th}, we first need to introduce some notations and preliminary results. Let us introduce a nonnegative smooth even function $\varphi:\R^N\to [0,1]$ such that $\mbox{supp} \, \varphi \subset \{x\in \R^N : |x|\leq 2\}$ and $\varphi(x)=1$ if $|x|\leq 1$.
Let $\psi (x):=\varphi(x)-\varphi(2x)$ and $P_k$ be the Littelwood-Paley projector for $k\in \Z$, namely
$$
P_kf:= \mathcal{F}^{-1}\psi(2^{-k}|\xi|)\mathcal{F}f.
$$
Recall that
$$
\|f\|_p^2\sim \sum_{k\in\Z}\|P_kf\|_p^2\,\,\, \text{and} \,\,\, \|f\|_{H^s}^2\sim \sum_{k\in\Z}2^{2s}\|P_kf\|_2^2.
$$
It is clear to see that the function $\phi(r):=r^{2s_1}+r^{2s_2}$ for any $r\in \R_+$ satisfies the following conditions introduced in \cite{GuPe08,GuWa14}.

(H1) There exists $m_1 > 0$ such that for any $\alpha \geq2$ and $\alpha \in \N$,
$$
|\phi^{\prime}(r)|\sim r^{m_1-1}\,\,\, \text{ and }\,\,\, |\phi^{(\alpha)}(r)|\lesssim r^{m_1-\alpha}, \quad r\geq 1.
$$

(H2) There exists $m_2 > 0$ such that for any $\alpha \geq2$ and $\alpha \in \N$,
$$
|\phi^{\prime}(r)|\sim r^{m_2-1}\,\,\, \text{ and }\,\,\, |\phi^{(\alpha)}(r)|\lesssim r^{m_2-\alpha}, \quad 0<r< 1.
$$

(H3) There exists $\alpha_1>0$ such that
$$
|\phi^{\prime\prime}(r)|\sim r^{\alpha_1-2}, \quad r\geq 1.
$$

(H3) There exists $\alpha_2>0$ such that
$$
|\phi^{\prime\prime}(r)|\sim r^{\alpha_2-2}, \quad 0<r< 1.
$$
Precisely, for our case one has $m_1=\alpha_1=2s_1$ and $m_2=\alpha_2=2s_2$. Let us now denote
\begin{align*}
m(k)=\alpha(k):=
\left\{
\begin{aligned}
&2s_1, &\text{ for }k\geq 0,\\
&2s_2, &\text{ for }k< 0.
\end{aligned}
\right.
\end{align*}
Thus, according to \cite[Theorem 1.2]{GuWa14}, the following result holds.

\begin{lem}\label{propos}
Suppose $N\geq 2, k\in \Z$, $(4N+2)/(2N-1)\leq q\leq +\infty$ and $u_0 \in L^2(\R^N)$ is radially symmetric. Then
$$
\|S(t)P_ku_0\|_{L_{t,x}^q(\R^{N+1})} \lesssim 2^{\left(\frac{N}{2}-\frac{N+m(k)}{2}\right)k}\|u_0\|_2,
$$
where $S(t)$ denotes the evolution group related to \eqref{evolv pb}, namely
$$
S(t)u_0:=\mathcal{F}^{-1}(e^{-it(|\xi|^{2s_1}+|\xi|^{2s_2})}\mathcal{F}u_0).
$$
\end{lem}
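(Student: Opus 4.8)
The plan is to obtain this estimate as an immediate consequence of the abstract radial Strichartz inequality of Guo--Wang, \cite[Theorem 1.2]{GuWa14}. That theorem applies to any propagator of the form $S(t)=e^{-it\phi(|D|)}$ whose radial symbol $\phi$ satisfies the structural hypotheses (H1)--(H3) recalled above, and it yields, for radially symmetric $L^2$ data frequency-localized at a dyadic scale $2^k$, a bound of exactly the asserted form, with scaling exponent governed by the effective order of $\phi$ at that scale. Hence the proof reduces to two routine verifications: first, that $\phi(r)=r^{2s_1}+r^{2s_2}$ satisfies (H1)--(H3) with $m_1=\alpha_1=2s_1$ and $m_2=\alpha_2=2s_2$; second, that the effective order at frequency $2^k$ is precisely $m(k)=\alpha(k)$ as defined above.

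For the first verification I would argue as follows. Since $0<s_2<s_1$, the monomial $r^{2s_1}$ dominates for $r\geq 1$ and $r^{2s_2}$ dominates for $0<r<1$. Differentiating, $\phi'(r)=2s_1 r^{2s_1-1}+2s_2 r^{2s_2-1}$, and more generally each $\phi^{(\alpha)}$ is a linear combination of $r^{2s_1-\alpha}$ and $r^{2s_2-\alpha}$; therefore $|\phi'(r)|\sim r^{2s_1-1}$ and $|\phi^{(\alpha)}(r)|\lesssim r^{2s_1-\alpha}$ for $r\geq 1$ and $\alpha\geq 2$, with the analogous bounds featuring $2s_2$ in place of $2s_1$ for $0<r<1$. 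Likewise $\phi''(r)=2s_1(2s_1-1)r^{2s_1-2}+2s_2(2s_2-1)r^{2s_2-2}$, whose two coefficients are strictly positive in the range $\frac12<s_2<s_1<1$ under consideration, so that $|\phi''(r)|\sim r^{2s_1-2}$ for $r\geq 1$ and $|\phi''(r)|\sim r^{2s_2-2}$ for $0<r<1$; in any case the two monomials have distinct exponents and hence never cancel. This gives (H1)--(H3). The second verification is then immediate: on the Fourier support of $P_ku_0$, i.e.\ $|\xi|\sim 2^k$, the governing power of $\phi$ is $r^{2s_1}$ when $k\geq 0$ and $r^{2s_2}$ when $k<0$, which matches $m(k)=\alpha(k)$.

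With these two points secured, applying \cite[Theorem 1.2]{GuWa14} to $S(t)P_ku_0$ with radially symmetric $u_0$ produces the claimed inequality throughout the admissible range $(4N+2)/(2N-1)\leq q\leq \infty$; the value $(4N+2)/(2N-1)$ is exactly the lower threshold for which the radial improvement is valid, and the endpoint $q=\infty$ is in any case elementary by Bernstein's inequality. I do not expect a genuine obstacle here: the only slightly delicate point is the bookkeeping in the second verification---making sure the \emph{inhomogeneous} symbol $r^{2s_1}+r^{2s_2}$, as opposed to a single power $|\xi|^{2s}$, is correctly fitted into the Guo--Wang framework with the effective order switching between high and low frequencies---after which nothing further remains to be done.
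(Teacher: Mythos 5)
Your proposal matches the paper's treatment: the authors likewise observe that $\phi(r)=r^{2s_1}+r^{2s_2}$ satisfies the hypotheses (H1)--(H3) of Guo--Wang with $m_1=\alpha_1=2s_1$, $m_2=\alpha_2=2s_2$, and then quote \cite[Theorem 1.2]{GuWa14} directly, with the frequency-dependent order $m(k)$ switching between $2s_1$ ($k\geq 0$) and $2s_2$ ($k<0$) exactly as you describe. So your argument is correct and takes essentially the same route, merely spelling out the derivative computations that the paper leaves implicit.
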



\begin{defi}
Let $N\geq 2$. The exponent pair $(q,r)$ is said to be $N$-D radial Schr\"{o}dinger-admissible, if $q,r\geq 2$ and
$$
\frac{4N+2}{2N-1}\leq q\leq \infty, \quad \frac{2}{q}+\frac{2N-1}{r}\leq N-\frac{1}{2}
$$
or
$$
2\leq q<\frac{4N+2}{2N-1},\quad \frac{2}{q}+\frac{2N-1}{r}\leq N-\frac{1}{2}.
$$
\end{defi}

In a similar way as the proof of \cite[theorem 1.5]{GuWa14}, from Lemma \ref{propos}, one can derive the following interesting result.

\begin{lem} \label{lem6.1}
Let $N\geq 2, k\in \Z, \frac{1}{2}<s_2<s_1<1$ and let
\begin{align}\label{Csq}
C_{s_1,s_2}^{q,r}(k):=
\left\{
\begin{aligned}
&2^{k\left(\frac{N}{2}-\frac{2s_1}{q}-\frac{N}{r}\right)}, &\text{ for }k\geq 0,\\
&2^{k\left(\frac{N}{2}-\frac{2s_2}{q}-\frac{N}{r}\right)}, &\text{ for }k< 0.
\end{aligned}
\right.
\end{align}
Then for any radial function $u_0\in L^2(\R^N)$, there holds that
\begin{align}\label{strichartz pk}
\|S(t)P_ku_0\|_{L_t^q L^r_x(\R \times \R^N)} \lesssim C_{s_1,s_2}^{q,r}(k)\|u_0\|_2,
\end{align}
if $(q,r)$ is $N$-D radial Schr\"{o}dinger-admissible.
\end{lem}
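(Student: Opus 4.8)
Since \eqref{strichartz pk} concerns a single Littlewood--Paley block there is no dyadic sum to control, so fix $k\in\Z$. It is enough to treat $k\geq0$: on $\mathrm{supp}\,\psi(2^{-k}|\cdot|)$ one has $|\xi|\sim2^k$, and for $k\geq0$ the phase $\phi(|\xi|)=|\xi|^{2s_1}+|\xi|^{2s_2}$ is comparable, together with all of its derivatives up to any fixed order, to $|\xi|^{2s_1}$ (the conditions recalled above hold with $m_1=\alpha_1=2s_1$), while for $k<0$ the same holds with $|\xi|^{2s_2}$ (with $m_2=\alpha_2=2s_2$). Hence the argument in the two regimes is identical after interchanging $s_1\leftrightarrow s_2$, which is exactly why the target weight carries the piecewise quantity $m(k)$; from now on $k\geq0$, so $m(k)=2s_1$ and the exponent $\frac N2-\frac{2s_1}{q}-\frac Nr$ of $C_{s_1,s_2}^{q,r}(k)$ is precisely the amount by which $(q,r)$ fails to be $s_1$-admissible, vanishing on $\Gamma_{s_1}$.

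The plan is to run the scheme of \cite[Theorem~1.5]{GuWa14} with Lemma~\ref{propos} playing the role of the frequency-localized kernel estimate used there, and to track the power of $2^k$ at every step. The building blocks are: (a) the conservation law --- $S(t)$ is unitary on $L^2(\R^N)$ and $P_k$ is $L^2$-bounded --- which gives $\|S(t)P_ku_0\|_{L_t^\infty L_x^2}\leq\|u_0\|_2$, i.e.\ the endpoint $(q,r)=(\infty,2)$ with $C_{s_1,s_2}^{\infty,2}(k)=1$, and, after Bernstein's inequality $\|P_kg\|_{L^r}\lesssim2^{kN(\frac1a-\frac1r)}\|P_kg\|_{L^a}$ applied pointwise in $t$, also the endpoint $(\infty,\infty)$ with weight $2^{kN/2}$; (b) Lemma~\ref{propos}, which supplies the sharp diagonal bound $\|S(t)P_ku_0\|_{L_{t,x}^q}\lesssim C^{q,q}_{s_1,s_2}(k)\|u_0\|_2$ for $\frac{4N+2}{2N-1}\leq q\leq\infty$ (this is where the radial gain, via the oscillatory-integral analysis of \cite{GuWa14,GuPe08}, enters), and, after Bernstein in space, the off-diagonal bound $\|S(t)P_ku_0\|_{L_t^qL_x^r}\lesssim C_{s_1,s_2}^{q,r}(k)\|u_0\|_2$ for all $r\geq q\geq\frac{4N+2}{2N-1}$, in particular the vertex $(\frac{4N+2}{2N-1},\infty)$; (c) for the range $2\leq q<\frac{4N+2}{2N-1}$, a $TT^*$/duality argument based on the radially-improved dispersive decay of $S(t)P_k$ --- which is of the same nature as Lemma~\ref{propos} and is obtained by the same arguments of \cite{GuWa14} --- yielding the $L_t^2$ endpoint and, via Bernstein again, the pair $(2,\infty)$, all with the weight $C_{s_1,s_2}^{2,r}(k)$.

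Finally one interpolates. In the plane of reciprocals $(1/q,1/r)$ the set of $N$-D radial Schr\"odinger-admissible pairs is the convex polygon cut out by $1/q\leq\tfrac12$, $1/r\leq\tfrac12$, $1/q,1/r\geq0$ and $\tfrac2q+\tfrac{2N-1}{r}\leq N-\tfrac12$, and one verifies (using $N\geq2$) that its vertices are exactly the pairs $(\infty,\infty)$, $(\infty,2)$, $(2,\infty)$ and $(2,\tfrac{2(2N-1)}{2N-3})$ handled in (a)--(c), the pair $(\frac{4N+2}{2N-1},\frac{4N+2}{2N-1})$ of Lemma~\ref{propos} lying on an edge. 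Since the exponent of $C_{s_1,s_2}^{q,r}(k)$ is affine in $(1/q,1/r)$, a finitely iterated Riesz--Thorin interpolation in the pair $(q,r)$, with $P_k$ and $S(t)$ held fixed, propagates the bound --- with exactly the weight $C_{s_1,s_2}^{q,r}(k)$ --- to every admissible $(q,r)$; replacing $2s_1$ by $2s_2$ throughout disposes of $k<0$. The genuinely hard analytic work is already packaged in Lemma~\ref{propos} (itself a consequence of \cite[Theorem~1.2]{GuWa14} once hypotheses (H1)--(H4) are checked, which is done above), so the expected difficulties are purely organizational: (i) producing the $q=2$ endpoint with the \emph{correct} power of $2^k$, which requires tracking the frequency weight through the $TT^*$ argument and verifying it is $C_{s_1,s_2}^{2,r}(k)$ rather than the cruder weight coming from the non-radial dispersive estimate; and (ii) checking that the $2^k$ factors from the conservation law, Lemma~\ref{propos}, Bernstein and interpolation all assemble into the admissibility-deficit exponent, with particular care at the transition frequency $k=0$ where the two branches of $m(k)$ meet.
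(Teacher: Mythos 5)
Your proposal follows essentially the same route as the paper: there the lemma is obtained simply by invoking the proof scheme of \cite[Theorem 1.5]{GuWa14} with Lemma \ref{propos} (i.e. \cite[Theorem 1.2]{GuWa14} applied to the phase $\phi(r)=r^{2s_1}+r^{2s_2}$, which satisfies (H1)--(H4) with $m_1=\alpha_1=2s_1$, $m_2=\alpha_2=2s_2$) as the key input, which is exactly the scheme you describe. Your extra bookkeeping (unitarity, Bernstein, the $TT^*$/duality step for small $q$, and interpolation using that the exponent of $2^k$ is affine in $(1/q,1/r)$) matches that reference, so the proposal is correct and not genuinely different from the paper's argument.
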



\begin{lem}\label{lemme1}\cite[Lemma 3.2]{GuWa14}
Assume $1\leq q,r \leq \infty$, $\frac{1}{q}+\frac{1}{q^{\prime}}=\frac{1}{r}+\frac{1}{r^{\prime}}=1$ and $k\in \Z$. If for any $u_0\in L_{rad}^2(\R^N)$ there exists $C(k)>0$ such that
$$
\|S(t)P_ku_0\|_{L_t^qL^r_x}\lesssim C(k)\|u_0\|_2,
$$
then for any $f\in L_t^{q^\prime}L^{r^\prime}_x$ and $f$ is radially symmetric in space, there holds that
$$
\left\|\int_{\R}S(-\tau)P_kf(\tau,\cdot)\,d\tau\right\|_2 \lesssim C(k)\|f\|_{L_t^{q^\prime}L^{r^\prime}_x}.
$$
\end{lem}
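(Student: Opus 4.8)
The plan is to prove this by the classical $TT^{*}$, or duality, argument, carried out in the subspace of radially symmetric functions. Fix $k\in\Z$ and introduce the linear operator $T\colon L^{2}_{rad}(\R^{N})\to L_{t}^{q}L^{r}_{x}$ given by $Tu_{0}:=S(\cdot)P_{k}u_{0}$; this maps into radial space-time functions because both $S(t)$ and $P_{k}$ commute with rotations. The hypothesis is exactly the statement that $T$ is bounded with $\|T\|\lesssim C(k)$, and since $\|T^{*}\|=\|T\|$, the whole lemma reduces to identifying the adjoint $T^{*}\colon L_{t}^{q'}L_{x}^{r'}\to L^{2}$ with the operator $f\mapsto\int_{\R}S(-\tau)P_{k}f(\tau,\cdot)\,d\tau$.

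The ingredients for that identification are: $S(t)$ is unitary on $L^{2}$ with $S(t)^{*}=S(-t)$, which is immediate from Plancherel since $e^{-it(|\xi|^{2s_{1}}+|\xi|^{2s_{2}})}$ has modulus one; and $P_{k}$ is self-adjoint with a real even symbol, hence commutes with $S(t)$. For $u_{0}\in L^{2}_{rad}$ and a test function $f$ (say a finite sum of terms $\chi(t)h(x)$ with $\chi\in C_{c}(\R)$ and $h$ radial Schwartz), Fubini gives
$$
\langle Tu_{0},f\rangle_{L_{t,x}^{2}}
=\int_{\R}\langle S(\tau)P_{k}u_{0},f(\tau)\rangle_{L_{x}^{2}}\,d\tau
=\Big\langle u_{0},\int_{\R}S(-\tau)P_{k}f(\tau)\,d\tau\Big\rangle_{L_{x}^{2}},
$$
so $T^{*}f=\int_{\R}S(-\tau)P_{k}f(\tau)\,d\tau$, and the bound $\|T^{*}f\|_{2}\lesssim C(k)\|f\|_{L_{t}^{q'}L_{x}^{r'}}$ follows. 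A density argument then removes the restriction on $f$.

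Alternatively --- and this may be the cleanest way to write it down --- I would argue directly by duality on the $L^{2}$-norm. Put $g:=\int_{\R}S(-\tau)P_{k}f(\tau)\,d\tau$, which is radial since $f$ is, and compute
$$
\|g\|_{2}^{2}=\int_{\R}\langle S(-\tau)P_{k}f(\tau),g\rangle_{L_{x}^{2}}\,d\tau
=\int_{\R}\langle f(\tau),S(\tau)P_{k}g\rangle_{L_{x}^{2}}\,d\tau
\le\|f\|_{L_{t}^{q'}L_{x}^{r'}}\,\|S(\cdot)P_{k}g\|_{L_{t}^{q}L_{x}^{r}},
$$
where the first equality is the definition of $g$, the second uses $S(-\tau)^{*}=S(\tau)$ and $P_{k}^{*}=P_{k}$, and the last step is H\"older in $x$ followed by H\"older in $t$ (after passing to absolute values). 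Applying the hypothesis with $u_{0}=g$ bounds the last factor by $C(k)\|g\|_{2}$; dividing through by $\|g\|_{2}$ yields $\|g\|_{2}\lesssim C(k)\|f\|_{L_{t}^{q'}L_{x}^{r'}}$, which is the assertion.

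The mathematics here is routine; the only point requiring care is justifying the formal steps --- that the vector-valued integral defining $g$ actually converges in $L^{2}(\R^{N})$ and that the Fubini interchanges are legitimate. I would dispatch this by first proving the inequality for $f$ in the dense class described above, where every manipulation is manifestly valid, and then extending to general $f\in L_{t}^{q'}L_{x}^{r'}$ by density, using the uniform bound just obtained. When $q$ or $r$ equals $\infty$ the endpoint duality is handled separately or simply not needed, since in the applications of this lemma one always has $q,r\ge 2$.
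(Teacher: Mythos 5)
Your proof is correct: the lemma is exactly the dual estimate to the hypothesis, and your $TT^{*}$/duality computation (using $S(t)^{*}=S(-t)$, self-adjointness of $P_{k}$, radiality of $g$ so the hypothesis applies with $u_{0}=g$, plus a density argument to justify the formal interchanges) is the standard proof. The paper itself gives no argument for this statement, citing it directly from \cite[Lemma 3.2]{GuWa14}, and your route coincides with the classical proof of that cited lemma, so there is nothing to flag.
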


\begin{lem}\label{lemma2}(Christ-Kiselev \cite{CK})
Assume $1\leq p_1,q_1,p_2,q_2\leq \infty$ with $p_1>p_2$. If for any $f\in L_t^{p_2}L^{q_2}_x$ radially symmetric in space, there exists $C(k)>0$ such that
$$
\left\|\int_{\R}S(t-\tau)P_kf(\tau,\cdot)d\tau\right\|_{L_t^{p_1}L^{q_1}_x}\lesssim C(k) \|f\|_{L_t^{p_2}L^{q_2}_x},
$$
then
$$
\left\|\int_0^tS(t-\tau)P_kf(\tau,\cdot)d\tau\right\|_{L_t^{p_1}L^{q_1}_x}\lesssim C(k) \|f\|_{L_t^{p_2}L^{q_2}_x}
$$
holds with the same bound $C(k)$ and for any $f\in L_t^{p_1}L^{q_1}_x$ radially symmetric in space.
\end{lem}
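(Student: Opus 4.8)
The plan is to recognize this as the abstract Christ--Kiselev lemma \cite{CK} and to reproduce its dyadic proof in the present vector-valued setting, with source and target spaces $L^{q_2}_{rad}(\R^N)$ and $L^{q_1}_{rad}(\R^N)$ and kernel operator $K(t,\tau):=S(t-\tau)P_k$, which preserves radial symmetry. Writing $(T_kf)(t):=\int_{\R}K(t,\tau)f(\tau,\cdot)\,d\tau$ for the full, non-retarded operator and $(\widetilde{T}_kf)(t):=\int_0^tK(t,\tau)f(\tau,\cdot)\,d\tau$ for the retarded one, the hypothesis reads $\|T_kf\|_{L_t^{p_1}L_x^{q_1}}\lesssim C(k)\|f\|_{L_t^{p_2}L_x^{q_2}}$, and the goal is to transfer this bound to $\widetilde{T}_k$. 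First I would split the time line into $\{t\geq0\}$ and $\{t<0\}$ so that, after relabelling, it suffices to bound the genuine forward truncation $(\widetilde{T}_kf)(t)=\int_{\{\tau<t\}}K(t,\tau)f(\tau,\cdot)\,d\tau$ on $[0,\infty)$, and by density I would reduce to $f$ that is compactly supported in time and radial in space, normalised so that $\|f\|_{L_t^{p_2}L_x^{q_2}}=1$.

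The core of the argument is a Whitney-type dyadic decomposition of $\{0\leq\tau<t\}$ adapted to the mass distribution of $f$. I would set $F(t):=\int_0^t\|f(\tau,\cdot)\|_{q_2}^{p_2}\,d\tau$, a continuous nondecreasing function running from $0$ to $1$, and for $j\geq0$ and $1\leq n\leq 2^j$ define the level sets $I_j^n:=\{t\geq0: F(t)\in[(n-1)2^{-j},\,n2^{-j})\}$; these are intervals up to the stretches where $F$ is constant, on which $\|f(\tau,\cdot)\|_{q_2}=0$ a.e., and they satisfy $I_j^n=I_{j+1}^{2n-1}\sqcup I_{j+1}^{2n}$ and $\int_{I_j^n}\|f(\tau,\cdot)\|_{q_2}^{p_2}\,d\tau=2^{-j}$. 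The identity I would then establish is
$$
\{(t,\tau):0\leq\tau<t\}=\bigsqcup_{j\geq1}\ \bigsqcup_{n=1}^{2^{j-1}}\bigl(I_j^{2n}\times I_j^{2n-1}\bigr)
$$
up to the null set $\{F(t)=F(\tau)\}$, on which the integrand vanishes; the point is that for $\tau<t$ with $F(\tau)<F(t)$ there is a unique minimal scale $j$ at which $\tau$ and $t$ lie in sibling dyadic intervals $I_j^{2n-1}$ and $I_j^{2n}$.

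Granting the decomposition, one has $\widetilde{T}_kf=\sum_{j\geq1}T_{k,j}f$ with $(T_{k,j}f)(t):=\sum_{n=1}^{2^{j-1}}\mathbf{1}_{I_j^{2n}}(t)\,T_k\bigl(\mathbf{1}_{I_j^{2n-1}}f\bigr)(t)$. For fixed $j$ the sets $I_j^{2n}$ are pairwise disjoint in $t$, so at almost every $t$ at most one summand is nonzero, and applying the hypothesis to each piece gives
$$
\|T_{k,j}f\|_{L_t^{p_1}L_x^{q_1}}^{p_1}=\sum_{n=1}^{2^{j-1}}\|T_k(\mathbf{1}_{I_j^{2n-1}}f)\|_{L^{p_1}(I_j^{2n};\,L_x^{q_1})}^{p_1}\leq C(k)^{p_1}\sum_{n=1}^{2^{j-1}}\|\mathbf{1}_{I_j^{2n-1}}f\|_{L_t^{p_2}L_x^{q_2}}^{p_1}.
$$
Since each level-$j$ interval carries $f$-mass $2^{-j}$, i.e. $\|\mathbf{1}_{I_j^{2n-1}}f\|_{L_t^{p_2}L_x^{q_2}}=2^{-j/p_2}$, the right-hand side equals $C(k)^{p_1}2^{j-1}2^{-jp_1/p_2}$, whence $\|T_{k,j}f\|_{L_t^{p_1}L_x^{q_1}}\lesssim C(k)\,2^{j(1/p_1-1/p_2)}$. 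Because $p_1>p_2$, the exponent $1/p_1-1/p_2$ is strictly negative, so summing the geometric series in $j\geq1$ yields $\|\widetilde{T}_kf\|_{L_t^{p_1}L_x^{q_1}}\leq\sum_{j\geq1}\|T_{k,j}f\|_{L_t^{p_1}L_x^{q_1}}\lesssim C(k)$, with implicit constant depending only on $p_1/p_2$; undoing the normalisation and the density reduction then finishes the proof. (When $p_1=\infty$ one replaces the $\ell^{p_1}$-summation over $n$ by a supremum and the argument is unchanged.)

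I expect the only genuine obstacle to be the measure-theoretic bookkeeping in the decomposition step: dealing with the flat parts of $F$ and with the half-open endpoint conventions so that the $I_j^n$ are honestly (essentially) intervals of exact $f$-mass $2^{-j}$, checking that the products $I_j^{2n}\times I_j^{2n-1}$ are pairwise disjoint across all scales, and verifying that the diagonal $\{F(t)=F(\tau)\}$ carries no mass. Everything after that --- the $t$-orthogonality within a fixed scale and the geometric summation over scales, which is exactly where the strict inequality $p_1>p_2$ is used --- is routine. One should also keep in mind throughout that $S(t)$ and $P_k$ preserve radial symmetry, so the whole argument stays within the class of functions that are radial in space, as required by the lemma.
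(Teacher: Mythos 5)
The paper itself gives no proof of this lemma: it is quoted as a known black-box result with a citation to Christ--Kiselev \cite{CK}, so there is no ``paper proof'' to compare against line by line. Your argument is a correct reproduction of the standard Christ--Kiselev proof from that reference: the Whitney decomposition of $\{0\leq\tau<t\}$ into products of sibling dyadic level sets of the mass function $F$, the $t$-disjointness at each fixed scale which lets you apply the non-retarded bound piecewise, and the geometric summation $\sum_j 2^{j(1/p_1-1/p_2)}$, which is exactly where $p_1>p_2$ enters; the operator $S(t-\tau)P_k$ indeed maps radial data to radial data, so the hypothesis is applicable to each localized piece $\mathbf{1}_{I_j^{2n-1}}f$. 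The only loose ends are the ones you yourself flag: the flat parts of $F$ (where $f$ vanishes a.e., so the corresponding contribution to the retarded integral is zero), the top level set $\{F(t)=\|f\|_{L^{p_2}_tL^{q_2}_x}^{p_2}\}$ which the half-open convention misses and must be absorbed (there $\widetilde T_kf(t)=T_kf(t)$ since $f$ vanishes a.e.\ beyond $t$), and the density/extension step needed to make sense of $\widetilde T_k$ on all of $L^{p_2}_tL^{q_2}_x$; all are routine. Two cosmetic remarks: the constant you obtain is $C(k)$ times a factor depending only on $p_1,p_2$ (consistent with the $\lesssim$ in the statement, which is what ``the same bound $C(k)$'' is meant to convey), and the hypothesis class in the paper's conclusion should read $f\in L^{p_2}_tL^{q_2}_x$ --- a typo in the statement, which your proof implicitly corrects.
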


Now, based on Lemmas \ref{lem6.1} \ref{lemme1} and \ref{lem6.4}, we are able to prove the following family of Strichartz estimates of solutions to \eqref{evolv pb}.

\begin{lem} \label{lem6.4}
Let $N\geq 2$, $\frac 12<s_2<s_1< 1$, $u, u_0$ and $F$ are radially symmetric in space and satisfy \eqref{evolv pb}. Then
\begin{align}\label{Strichartz estimate}
\|u\|_{L_t^q L^r_x} \lesssim \left\|C_{s_1,s_2}^{q,r}(k)\left\|P_k^2u_0\right\|_2\right\|_{l_k^2}+\left\|C_{s_1,s_2}^{\tilde{q},\tilde{r}}(k)\|P_kF\|_{L_t^{\tilde{q}^{\prime}}L^{\tilde{r}^{\prime}}_x}\right\|_{l_k^2},
\end{align}
if $(q,r)$ and $(\tilde{q}^{\prime},\tilde{r}^{\prime})$ are $N$-D radial Schr\"{o}dinger-admissible pairs, either $(\tilde{q}^{\prime},\tilde{r}^{\prime})\neq (2,\infty)$ or $(q,r)\neq (2,\infty)$, where $C_{s_1,s_2}^{q,r}>0$ is defined by \eqref{Csq}.
\end{lem}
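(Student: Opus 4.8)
The plan is to derive \eqref{Strichartz estimate} from the frequency-localized estimate of Lemma~\ref{lem6.1}, its $TT^\ast$-dual Lemma~\ref{lemme1}, the Christ--Kiselev Lemma~\ref{lemma2}, and a Littlewood--Paley square-function argument, following the scheme used for \cite[theorem 1.5]{GuWa14}. Throughout one exploits that the propagator $S(t)$ and every projector $P_k$ are Fourier multipliers, so that radial symmetry of $u_0$ and $F$ is inherited by $u(t)$ and by all the dyadic pieces, and hence the radial estimates of this section apply to each of them.

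First I would write the solution of \eqref{evolv pb} via Duhamel's formula,
\[
u(t)=S(t)u_0-i\int_0^t S(t-\tau)F(\tau)\,d\tau,
\]
apply $P_k$, and reduce the full bound to a frequency-by-frequency one. Since $q,r\geq 2$, the Littlewood--Paley square-function inequality followed by Minkowski's inequality gives
\[
\|u\|_{L_t^qL_x^r}\lesssim \Big\|\big(\textstyle\sum_k|P_ku|^2\big)^{1/2}\Big\|_{L_t^qL_x^r}\lesssim \big\|\,\|P_ku\|_{L_t^qL_x^r}\,\big\|_{\ell_k^2},
\]
so it suffices to estimate $\|P_ku\|_{L_t^qL_x^r}$ for each $k$. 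For the homogeneous part, since $S(t)$ commutes with $P_k$ one has $P_kS(t)u_0=S(t)P_k^2u_0$ up to finitely many neighbouring blocks, so Lemma~\ref{lem6.1} with the pair $(q,r)$ yields $\|P_kS(t)u_0\|_{L_t^qL_x^r}\lesssim C_{s_1,s_2}^{q,r}(k)\|P_k^2u_0\|_2$; taking the $\ell_k^2$-norm produces the first term on the right of \eqref{Strichartz estimate}.

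For the inhomogeneous part I would first treat the non-retarded operator $f\mapsto \int_{\R}S(t-\tau)P_kf(\tau)\,d\tau=S(t)\!\int_{\R}S(-\tau)P_kf(\tau)\,d\tau$: bounding the $L_t^qL_x^r$-norm of the outer propagator applied to the (frequency-localized) inner integral by Lemma~\ref{lem6.1}, and the $L^2$-norm of that inner integral by the dual estimate Lemma~\ref{lemme1} with the pair $(\tilde q,\tilde r)$, gives a frequency-localized inhomogeneous bound carrying the dyadic weight of \eqref{Csq}. Since $q,r\geq 2$ and, by hypothesis, we are away from the double endpoint, the Christ--Kiselev Lemma~\ref{lemma2} then upgrades this to the retarded estimate for $\int_0^tS(t-\tau)P_kF\,d\tau$ with the same constant $C_{s_1,s_2}^{\tilde q,\tilde r}(k)$. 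Taking the $\ell_k^2$-norm as before produces the second term of \eqref{Strichartz estimate}, and combining the two parts finishes the proof.

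The points needing care — the main obstacle — are twofold. First, the endpoint behaviour: the square-function reduction forces $r<\infty$, so pairs with $r=\infty$ (or $\tilde r'=\infty$) must be handled directly by Bernstein-type bounds, and Lemma~\ref{lemma2} requires a strict inequality between the two time-integrability exponents, which fails precisely at the double endpoint $(q,r)=(\tilde q',\tilde r')=(2,\infty)$ — exactly the configuration excluded by hypothesis. Second, the bookkeeping of the dyadic weights $C_{s_1,s_2}^{q,r}(k)$, which switch between the exponent $s_1$ for $k\geq0$ and $s_2$ for $k<0$: one must check that the $N$-D radial Schr\"odinger-admissibility of $(q,r)$ and $(\tilde q',\tilde r')$ makes the resulting $\ell_k^2$-sums of these weights converge and reassemble into the right-hand side of \eqref{Strichartz estimate}.
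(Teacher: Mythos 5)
Your proposal is correct and follows essentially the same route as the paper: Duhamel's formula, the frequency-localized estimate of Lemma \ref{lem6.1} for the homogeneous part, the dual bound of Lemma \ref{lemme1} combined with Lemma \ref{lem6.1} for the untruncated inhomogeneous operator, the Christ--Kiselev Lemma \ref{lemma2} to pass to the retarded integral, and an $\ell_k^2$ square-function reassembly. The only cosmetic difference is that the paper works with the iterated projectors $P_k^3=P_kP_k^2$ so that the factors $\|P_k^2u_0\|_2$ and $\|P_kF\|_{L_t^{\tilde q^{\prime}}L_x^{\tilde r^{\prime}}}$ come out exactly, where you use $P_k$ up to finitely many neighbouring blocks; also note that no convergence of the weighted $\ell_k^2$-sums is needed at this stage, since the weights are retained in \eqref{Strichartz estimate} and are only summed later in the proof of Theorem \ref{strichartz th}.
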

\begin{proof}
In view of Duhamel's principle, we first have that
$$
u=S(t)u_0-i\int_0^tS(t-\tau)F(\tau,\cdot)\,d\tau.
$$
Let $P_k^2$ be the Littelwood-Paley projector associated to $\psi^2$. Hence
\begin{align*}
P_k^2f = \mathcal{F}^{-1} \psi^2(2^{-k}|\xi|)\mathcal{F}f= \mathcal{F}^{-1} \psi(2^{-k}|\xi|)\psi(2^{-k}|\xi|)\mathcal{F}f = \mathcal{F}^{-1} \psi(2^{-k}|\xi|)\mathcal{F}P_kf= P_kP_kf.
\end{align*}
Similarly, one gets that $P_k^3=P_kP_k^2$. Note that
$$
P_k^3u=S(t)P_k^3u_0-i\int_0^tS(t-\tau)P_k^3F(\tau,\cdot)\,d\tau.
$$
It then yields that
\begin{align}\label{ineq}
\left\|P_k^3u\right\|_{L_t^q L^r_x} \lesssim \left\|S(t)P_k^3u_0\right\|_{L_t^q L^r_x}+\left\|\int_0^tS(t-\tau)P_k^3F(\tau,\cdot)\,d\tau\right\|_{L_t^qL^r_x}.
\end{align}
According to Lemmas \ref{lem6.1} and \ref{lemme1}, we have that
$$
\left\|\int_{\R}S(-\tau)P_k^2F(\tau,\cdot)\,d\tau\right\|_2\lesssim C_{s_1,s_2}^{\tilde{q},\tilde{r}}(k)\|P_kF\|_{L_t^{\tilde{q}^\prime}L^{\tilde{r}^\prime}_x}.
$$
Therefore, from Lemma \ref{lem6.1}, we derive that
\begin{align*}
\left\|\int_{\R}S(t-\tau)P_k^3 F(\tau,\cdot) \,d\tau\right\|_{L_t^q L^r_x}
& = \left\|S(t)P_k\int_{\R}S(-\tau)P_k^2 F(\tau,\cdot) \, d\tau\right\|_{L_t^q L^r_x}\\
& \lesssim \left \|\int_{\R}S(-\tau)P_k^2 F(\tau,\cdot) \, d\tau \right \|_2 \lesssim C_{s_1,s_2}^{\tilde{q},\tilde{r}}(k)\|P_kF\|_{L_t^{\tilde{q}^\prime}L^{\tilde{r}^\prime}_x}.
\end{align*}
It then follows from Lemma \ref{lemma2} that
\begin{align}\label{ineq0}
\left\|\int_0^t S(t-\tau)P_k^3 F(\tau,\cdot) \,d\tau\right\|_{L_t^qL^r_x}\lesssim C_{s_1,s_2}^{\tilde{q},\tilde{r}}(k)\|P_kF\|_{L_t^{\tilde{q}^\prime}L^{\tilde{r}^\prime}_x}.
\end{align}
Coming back to \eqref{ineq} and using Lemma \ref{lem6.1} and \eqref{ineq0} results in
$$
\left\|P_k^3u\right\|_{L_t^q L^r_x} \lesssim C_{s_1,s_2}^{q,r}(k)\left\|P_k^2u_0\right\|_2+C_{s_1,s_2}^{\tilde{q},\tilde{r}}(k)\|P_kF\|_{L_t^{\tilde{q}^\prime} L^{\tilde{r}^\prime}_x},
$$
from which we then conclude that
$$
\|u\|_{L_t^q L^r_x} \lesssim \left\|\left\|P_k^3u\right\|_{L_t^q L^r_x}\right\|_{l_2^k}\lesssim \left\|C_{s_1,s_2}^{q,r}(k)\|P_k^2u_0\|_2\right\|_{l_2^k}+\left\|C_{s_1,s_2}^{\tilde{q},\tilde{r}}(k)\|P_kF\|_{L_t^{\tilde{q}^\prime}L^{\tilde{r}^\prime}_x}\right\|_{l_2^k}.
$$
Thus the proof is completed.
\end{proof}

We are now ready to prove Theorem \ref{strichartz th}.

\begin{proof}[Proof of Theorem \ref{strichartz th}]
If $(q,r)\in \Gamma_{s_2}$, then \eqref{Csq} becomes
$$
C_{s_1,s_2}^{q,r}(k):=
\left\{
\begin{aligned}
&2^{\frac{2k}{q}(s_2-s_1)}, &\text{ for }k\geq 0,\\
&1, &\text{ for }k< 0.
\end{aligned}
\right.
$$
It then leads to
\begin{align*}
\left\|C_{s_1,s_2}^{q,r}(k)\left\|P_k^2u_0\right\|_2\right\|_{l_2^k}^2
  & \lesssim \sum_{k<0} \left\|P_k^2u_0\right\|_2^2+ \sum_{k\geq0} 2^{\frac{4k}{q}(s_2-s_1)}\left\|P_k^2u_0\right\|_2^2\\
  & \lesssim \sum_{k\in \Z} \left\|P_k^2u_0\right\|_2^2 \sim \|u_0\|_2^2.
\end{align*}
In the case when $(q,r)\in \Gamma_{s_1}$, one obtains that
$$
C_{s_1,s_2}^{q,r}(k):=
\left\{
\begin{aligned}
&1, &\text{ for }k\geq 0,\\
&2^{\frac{2k}{q}(s_1-s_2)}, &\text{ for }k< 0.
\end{aligned}
\right.
$$
As a consequence, there holds that
\begin{align*}
\left\|C_{s_1,s_2}^{q,r}(k)\left\|P_k^2u_0\right\|_2\right\|_{l_2^k}^2
&\lesssim \sum_{k<0} 2^{\frac{4k}{q}(s_1-s_2)}\left\|P_k^2u_0\right\|_2^2+\sum_{k\geq0}\left\|P_k^2u_0\right\|_2^2\\
&\lesssim \sum_{k\in \Z} \left\|P_k^2u_0\right\|_2^2\sim \|u_0\|_2^2.
\end{align*}
Similarly, whenever $(\tilde{q},\tilde{r})$ belongs to $\Gamma_{s_1}\cup \Gamma_{s_2}$, one also gets that
$$
\left\|C_{s_1,s_2}^{\tilde{q},\tilde{r}}(k)\|P_kF\right\|_{L_t^{\tilde{q}^\prime}L^{\tilde{r}^\prime}_x}\|_{l_2^k}^2\lesssim \|\|P_kF\|_{L_t^{\tilde{q}^\prime}L^{\tilde{r}^\prime}_x}\|_{l_2^k}^2\lesssim \|F\|_{L_t^{\tilde{q}^\prime}L^{\tilde{r}^\prime}_x}^2.
$$
Making use of Lemma \ref{lem6.4}, we then have the desired conclusion. Thus the proof is completed.
\end{proof}

Let us now present chain rules for fractional Laplacian adapted to prove Theorem \ref{pb wellposedness}.

\begin{lem}\label{chain rules}
Let $s\in (0,1]$ and $1<p, p_i, q_i<\infty$ satisfying $\frac{1}{p}=\frac{1}{p_i}+\frac{1}{q_i}$ and i=1,2.
\begin{itemize}
\item [$(\textnormal{i})$] There holds that
$$
\|(-\Delta)^{\frac{s}{2}}(uv)\|_{p}\lesssim \|(-\Delta)^{\frac{s}{2}}u\|_{p_1}\|v\|_{q_1}+\|u\|_{p_2}\|(-\Delta)^{\frac{s}{2}}v\|_{q_2}.
$$
\item [$(\textnormal{i})$] If $G\in C^1(\C)$, then
$$
\|(-\Delta)^{\frac{s}{2}}G(u)\|_{p}\lesssim \|G^\prime(u)\|_{p_1}\|(-\Delta)^{\frac{s}{2}}u\|_{q_1}.
$$
\end{itemize}
\end{lem}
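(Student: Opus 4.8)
The plan is to deduce both inequalities from standard Littlewood--Paley and Calder\'on--Zygmund machinery; the endpoint $s=1$ is just the ordinary Leibniz rule, respectively the ordinary chain rule, combined with H\"older's inequality, so throughout we may assume $0<s<1$. For $(\textnormal{i})$ I would start from the square--function characterization $\|(-\Delta)^{s/2}f\|_p \sim \big\|\big(\sum_{k\in\Z}2^{2sk}|P_kf|^2\big)^{1/2}\big\|_p$, valid for $1<p<\infty$, and decompose the product by the usual paraproduct trichotomy $uv=\sum_{j\leq k-2}P_ju\,P_kv+\sum_{|j-k|\leq 1}P_ju\,P_kv+\sum_{k\leq j-2}P_ju\,P_kv$. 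In the low--high piece the output is frequency--localized near $2^k$, so the factor $2^{sk}$ is transferred onto the high frequency $v$; bounding $\sum_{j\leq k-2}P_ju$ pointwise by the Hardy--Littlewood maximal function $Mu$, pulling it out of the $\ell^2_k$ sum, and applying H\"older with $\tfrac1p=\tfrac1{p_2}+\tfrac1{q_2}$ together with $\|Mu\|_{p_2}\lesssim\|u\|_{p_2}$ yields the term $\|u\|_{p_2}\|(-\Delta)^{s/2}v\|_{q_2}$. The symmetric high--low piece produces $\|(-\Delta)^{s/2}u\|_{p_1}\|v\|_{q_1}$, and in the high--high piece one uses $s>0$ to sum the low--frequency output geometrically, again arriving at a bound of this form. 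Alternatively, $(\textnormal{i})$ in exactly this range of exponents is the classical Kato--Ponce/Gulisashvili--Kon fractional Leibniz rule (see e.g. Grafakos--Oh) and may simply be quoted.

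For $(\textnormal{ii})$ I would use the pointwise singular--integral representation $(-\Delta)^{s/2}f(x)=c_{N,s}\,\mathrm{p.v.}\int_{\R^N}\frac{f(x)-f(y)}{|x-y|^{N+s}}\,dy$, which gives $|(-\Delta)^{s/2}G(u)(x)|\leq c_{N,s}\int_{\R^N}\frac{|G(u(x))-G(u(y))|}{|x-y|^{N+s}}\,dy$. The naive estimate $|G(u(x))-G(u(y))|\leq\|G'\|_\infty|u(x)-u(y)|$ only yields the $L^\infty$ version of the inequality; to recover the $L^{p_1}$ norm of $G'(u)$ one splits the $y$--integral at $|u(x)-u(y)|\leq\tfrac12|u(x)|$, where the mean value theorem legitimately produces a factor $|G'(u(x))|$ times a fractional expression in $u$ controlled pointwise by $M((-\Delta)^{s/2}u)(x)$, while the complementary region is absorbed into $M((-\Delta)^{s/2}u)(x)$ and a maximal function of $G'(u)$; H\"older and the boundedness of $M$ on $L^r$ for $1<r<\infty$ then close the estimate. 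This is precisely the Christ--Weinstein fractional chain rule (see also Kenig--Ponce--Vega, Staffilani, or Taylor's \emph{Tools for PDE}), which in a paper of this type I would most likely just cite.

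The step I expect to be the genuine obstacle is $(\textnormal{ii})$: unlike the Leibniz rule, the fractional chain rule does not reduce to a one-line pointwise bound, and a self-contained proof requires the good-region/bad-region dyadic splitting of Christ--Weinstein together with vector-valued maximal inequalities. If a fully self-contained treatment were demanded I would reproduce that argument in detail; otherwise, invoking it from the literature is the standard and adequate route here.
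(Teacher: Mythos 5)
The paper gives no proof of this lemma at all: it is stated as a known fact (the classical Kato--Ponce/Gulisashvili--Kon fractional Leibniz rule and the Christ--Weinstein fractional chain rule) and used directly in the proof of Theorem \ref{pb wellposedness}, so your plan of simply quoting these results, with the paraproduct and Christ--Weinstein arguments sketched as background, matches the paper's treatment. One small caution: your intermediate claim in (ii) that the relevant piece of the singular integral is ``controlled pointwise by $M((-\Delta)^{s/2}u)(x)$'' is not a genuine pointwise estimate, but since you yourself identify the correct argument as the dyadic Christ--Weinstein one (via Littlewood--Paley decomposition and vector-valued maximal inequalities) and propose citing it, nothing essential is missing.
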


\begin{proof}[Proof of Theorem \ref{pb wellposedness}]
To prove Theorem \ref{pb wellposedness}, we shall employ the contraction mapping principle. Let us first introduce some notations. Denote
$$
(q_j,r):=\left(\frac{4s_jp}{N(p-2)},p\right) \in \Gamma_{s_j}, \quad j=1,2.
$$
For $T,R>0$, we define
$$
Y_T:=C_T(H_{rad}^{s_1}(\R^N))\cap L_T^{q_1}(W^{s_1,r}(\R^N)) \,\,\, \text{   and    }\,\,\, B_T(R):=\{u\in Y_T : \|u\|_{T}\leq R\},
$$
where
$$
\|u\|_{T}:=\|u\|_{L_T^\infty H^{s_1}\cap L_T^{q_1} W^{s_1,r}}.
$$
The closed ball $B_T(R)$ is equipped with the complete distance
$$
d(u,v):=\|u-v\|_{L_T^\infty L^2_x\cap L_T^{q_1} L^r_x}.
$$
Given $u_0\in H_{rad}^{s_1}(\R^N)$, we define a mapping by
$$
\Phi(u)(t):=S(t)u_0+i\int_0^tS(t-s)|u(\tau)|^{p-2}u(\tau)\,d\tau.
$$
In the following, we are going to prove the existence of $T>0$ sufficiently small such that $\Phi$ defines a contraction mapping on $B_T(R)$. For any $u,v \in B_T(R)$, applying Strichartz estimate \eqref{Radial Strichartz}, one has that
$$
d(\Phi(u),\Phi(v))\lesssim \||u|^{p-2}u-|v|^{p-2}v\|_{L_T^{q_1^\prime}L^{r^\prime}_x}
$$
The mean value theorem gives that
$$
||u|^{p-2}u-|v|^{p-2}v|\lesssim (|u|^{p-2}+|v|^{p-2})|u-v|.
$$
From H\"{o}lder's inequality and the Sobolev embedding $H^{s_1}(\R^N)\hookrightarrow L^r(\R^N)$ for any $2 \leq r \leq \frac{2N}{N-2s_s}$, we then obtain that
\begin{align*}
d(\Phi(u),\Phi(v)) & \lesssim \||u|^{p-2}+|v|^{p-2}\|_{L_T^{\frac{2s_1p}{2s_1p-N(p-2)}}L^{\frac{p}{p-2}}_x}\|u-v\|_{L_T^{q_1}L^r_x}\\
& \lesssim T^{\frac{2s_1p-N(p-2)}{2s_1p}}\||u|^{p-2}+|v|^{p-2}\|_{L_T^{\infty}L^{\frac{p}{p-2}}_x}\|u-v\|_{L_T^{q_1}L^r_x}\\
& \lesssim T^{\frac{2s_1p-N(p-2)}{2s_1p}}\left(\|u\|_{L_T^\infty L^p_x}^{p-2}+\|v\|_{L_T^\infty L^p_x}^{p-2}\right)\|u-v\|_{L_T^{q_1}L^r_x}\\
& \lesssim T^{\frac{2s_1p-N(p-2)}{2s_1p}}\left(\|u\|_{L_T^\infty H^{s_1}}^{p-2}+\|v\|_{L_T^\infty H^{s_1}}^{p-2}\right)\|u-v\|_{L_T^{q_1}L^r_x}.
\end{align*}
This infers that
\begin{align}\label{phi contract}
d(\Phi(u),\Phi(v))\lesssim T^{\frac{2s_1p-N(p-2)}{2s_1p}}R^{p-2}d(u,v).
\end{align}
Note that the condition $2<p<\frac{2N}{N-2s_1}$ implies that $2s_1p-N(p-2)>0$. Next suppose
$\|S(\cdot)u_0\|_{T}<\frac{\widetilde{C}R}{2}$ and denote
$$
\theta :=\frac{2s_1p}{2s_1p-N(p-2)},
$$
where $\widetilde{C}>0$ is a small constant determined later. Taking $v=0$ and $T>0$ small enough in \eqref{phi contract}, one derives that
\begin{align*}
\|\Phi(u)\|_{L_T^\infty L^2_x\cap L_T^{q_1} L^r_x}
& \lesssim \frac{\widetilde{C}R}{2}+ T^{\frac{2s_1p-N(p-2)}{2s_1p}}R^{p-1}.
\end{align*}
Moreover, using Strichartz estimate \eqref{Radial Strichartz}, the chain rules, see Lemma \ref{chain rules}, H\"{o}lder's inequality and
Sobolev embedding, one gets that
\begin{align*}
\|\Phi(u)\|_{L_T^\infty \dot{H}^{s_1} \cap L_T^{q_1} \dot{W}^{s_1,r}}
& \lesssim \|S(\cdot)u_0\|_T+\|(-\Delta)^{\frac{s_1}{2}}(|u|^{p-2}u)\|_{L_T^{q_1^\prime} L^{r^\prime}_x}\\
& \lesssim \frac{\widetilde{C}R}{2}+ \|(-\Delta)^{\frac{s_1}{2}} u\|_{L_T^{q_1} L^{r}_x}\|u\|_{L_T^{\theta} L^{r}_x}^{p-2}\\
& \lesssim \frac{\widetilde{C}R}{2}+ \| u\|_{L_T^{q_1} W^{s_1,r}}\|u\|_{L_T^{\theta} H^{s_1}}^{p-2}\\
& \lesssim \frac{\widetilde{C}R}{2}+ T^{\frac{p-2}{\theta}}\| u\|_{L_T^{q_1} W^{s_1,r}}\|u\|_{L_T^\infty H^{s_1}}^{p-2}\\
& \lesssim \frac{\widetilde{C}R}{2}+ \frac{\widetilde{C}^{p-1}T^{\frac{p-2}{\theta}}R^{p-1}}{2^{p-1}}.
\end{align*}
In conclusion, by taking $\widetilde{C}>0$ small enough, we obtain that $\Phi$ is a contraction mapping on $B_T(R)$ for some $T>0$ small enough. This then leads to the local existence of solutions to \eqref{evolv pb0}. Uniqueness of maximal solutions to \eqref{evolv pb0} follows from \eqref{phi contract} for small time. Then, by using standard translation argument, one obtains uniqueness of solutions for all existing time.

Now, we focus on our attention to prove the conservation laws. Let $u\in C_{T}(H^{s_1}(\R^N))$ be a maximal solution to the evolving problem \eqref{evolv pb0}. Since $u(t)\in H^{s_1}(\R^N)$, then we can multiply the equation in \eqref{evolv pb0} by $i\bar{u}(t)$ and integrate over $\R^N$ to find that
$$
\frac{d}{dt}\|u(t)\|_2^2=0,\quad \forall \,\, t\in [0,T).
$$
This implies the conservation of mass.
To see the conservation of energy, let us introduce the operator
$$
J_{\varepsilon}:=(I+\varepsilon(-\Delta)^{s_1}+\varepsilon(-\Delta)^{s_2})^{-1},\quad \varepsilon>0.
$$
Using standard functional arguments in\cite{Caz89}, one can verify that
\begin{itemize}
\item [$(\textnormal{i})$] $J_{\varepsilon}$ defines a bounded mapping from $H^{-s_1}(\R^N)$ into $H^{s_1}(\R^N)$.
\item [$(\textnormal{ii})$] If $f\in L^p(\R^N)$, then $J_{\varepsilon}f\in L^p(\R^N)$ and $\|J_{\varepsilon}f\|_{L^p}\lesssim \|f\|_{L^p}$ for some $p\in [1,\infty)$.
\item [$(\textnormal{iii})$] If $X$ is either of the space $H^{s_1}(\R^N), L^2(\R^N), H^{-s_1}(\R^N)$, then for every $f\in X$, there holds that
$J_{\varepsilon}f\rightarrow f$ in $X$ as $\varepsilon \to 0^+$.
\end{itemize}
At this point, the energy conservation follows easily by adaptation of the arguments for Strichartz solutions developed in \cite{Oz06}.

Let us now prove the third assertion. Define
$$
X(t):=\|(-\Delta)^{\frac{s_1}{2}}u(t)\|_2^2+\|(-\Delta)^{\frac{s_2}{2}}u(t)\|_2^2, \quad t \in [0, T).
$$
Taking into account of the conservation laws and applying Gagliardi-Nirenberg inequality \eqref{gn}, we have that
\begin{align*}
E(u_0)=E(u(t))
& \geq \frac{1}{2}X(t)-\frac{C_{N,p,s_1}}{p}\|u_0\|_2^{p-\frac{N(p-2)}{2s_1}}
\|(-\Delta)^{\frac{s_1}{2}}u(t)\|_{2}^{\frac{N(p-2)}{2s_1}}\\
& \geq X(t)\left(\frac{1}{2}-\frac{C_{N,p,s_1}}{p}\|u_0\|_2^{p-\frac{N(p-2)}{2s_1}}
X(t)^{\frac{N(p-2)-4s_1}{4s_1}}\right).
\end{align*}
It then follows that $ \sup_{[0,T)}X(t)<\infty$ if $p<2+\frac{4s_1}{N}$ or $p=2+\frac{4s_1}{N}$ and
$$
\|u_0\|_2<\left(\frac{N+2s_1}{NC_{N,s_1}}\right)^{\frac{N}{4s_1}}.
$$
This completes the proof.
\end{proof}

\section{Blowup versus global existence of solutions to the Cauchy problem} \label{section7}

In this section, our aim is to prove Theorem \ref{blow-up vs global solt}, namely we shall derive general criteria with respect to the existence of global/non-global solutions to \eqref{evolv pb0}. For this, we need to introduce at first virial type inequality in the spirit of the recent work \cite{BoHiLe2016}.
Let us first introduce $\chi\in C_0^\infty(\R^N, \R^+)$ as a radial cut-off function satisfying
\begin{align}
\chi(r)=\chi(|x|):=
\left\{
\begin{aligned}
&\frac{1}{2}|x|^2,&\text{ for } |x|\leq 1, \\
&C,&\text{ for } |x|\geq 10,
\end{aligned}
\right.
\quad \text{ and }\,\,\,\chi^{\prime\prime}(r)\leq 1, \quad \forall \,\, r \geq 0.
\end{align}
For $R>0$, we define
$$
\chi_R:=R^2\chi\left(\frac{\cdot}{R}\right).
$$
It is simple to check that $\chi_R$ satisfies the following properties,
\begin{align}\label{bound psi}
\chi_R^{\prime\prime}(r)\leq 1, \quad \chi_R^{\prime}(r)\leq r, \quad \Delta \chi_R(r)\leq N, \quad r \geq 0.
\end{align}
The localized virial type quantity is defined by
$$
M_{\chi_R}[u]:=2 Im \int_{\R^N}\overline{u}\nabla\chi_R \cdot \nabla u \,dx.
$$

\begin{lem}\label{Virial}
Let $N\geq 2$, $\frac1 2<s_2<s_1<1$ and $2<p<\frac{2N}{N-2s_1}$. Assume that $u\in C_T(H_{rad}^{s_1}(\R^N))$ is a solution
to \eqref{evolv pb0}.
\begin{enumerate}
\item [$(\textnormal{i})$] For every $R>0$ and $\varepsilon>0$ small enough, then there holds that
\begin{align*}
\frac{d}{dt}M_{\chi_R}[u]
& \leq 2N(p-2)E(u_0)-(N(p-2)-4s_1)\|(-\Delta)^{\frac{s_1}{2}}u\|_{2}^2-(N(p-2)-4s_2)\|(-\Delta)^{\frac{s_2}{2}}u\|_{2}^2\\
& \quad +C\left(R^{-2s_2}+R^{-\frac{(p-2)(N-1)}{2}+\varepsilon s_1}\|(-\Delta)^{\frac{s_1}{2}}u\|_{2}^{\frac{p-2}{2s_1}+\varepsilon}\right).
\end{align*}
\item [$(\textnormal{ii})$] If $p=2+\frac{4s_1}{N}$ and $E(u_0)<0$ , then for some $R$ sufficiently large, there holds that
$$
\frac{d}{dt}M_{\chi_R}[u] < 4s_1E(u_0).
$$
\end{enumerate}
\end{lem}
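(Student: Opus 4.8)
The plan is to differentiate $M_{\chi_R}[u]$ along \eqref{evolv pb0} following the strategy of \cite{BoHiLe2016}. Since $\frac{d}{dt}M_{\chi_R}[u]$ involves the commutators of $(-\Delta)^{s_j}$ with the weighted first‑order operator $2\nabla\chi_R\cdot\nabla+\Delta\chi_R$, which cannot be handled directly, the first step is to use the Balakrishnan representation
$$
(-\Delta)^{s_j}f=c_{s_j}\int_0^\infty m^{s_j-1}\,\frac{-\Delta}{-\Delta+m}\,f\,dm ,\qquad j=1,2,
$$
which turns the computation into $m$-integrals of purely local quantities in $u_m:=(-\Delta+m)^{-1}u$ weighted by $\nabla^2\chi_R$, $\Delta\chi_R$ and $\Delta^2\chi_R$. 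Carrying out the time derivative, using that $u$ solves \eqref{evolv pb0} and the conservation of mass from Theorem \ref{pb wellposedness}, one reaches an identity of the form
$$
\frac{d}{dt}M_{\chi_R}[u]=4s_1\|(-\Delta)^{\frac{s_1}{2}}u\|_2^2+4s_2\|(-\Delta)^{\frac{s_2}{2}}u\|_2^2-\frac{2N(p-2)}{p}\int_{\R^N}|u|^p\,dx+\mathcal{R}_R[u],
$$
where the remainder $\mathcal{R}_R[u]$ collects the terms in which $\chi_R$ is replaced by $\chi_R-\tfrac12|x|^2$; it is supported in $\{|x|\geq R\}$ and vanishes identically when $\chi_R(x)=\tfrac12|x|^2$.

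Next I would insert the conservation of energy from Theorem \ref{pb wellposedness}, i.e. $\frac1p\int_{\R^N}|u|^p\,dx=\frac12\|(-\Delta)^{\frac{s_1}{2}}u\|_2^2+\frac12\|(-\Delta)^{\frac{s_2}{2}}u\|_2^2-E(u_0)$. This converts the first three terms above into exactly
$$
2N(p-2)E(u_0)-(N(p-2)-4s_1)\|(-\Delta)^{\frac{s_1}{2}}u\|_2^2-(N(p-2)-4s_2)\|(-\Delta)^{\frac{s_2}{2}}u\|_2^2 ,
$$
so that assertion (i) reduces to showing $\mathcal{R}_R[u]\lesssim R^{-2s_2}+R^{-\frac{(p-2)(N-1)}{2}+\eps s_1}\|(-\Delta)^{\frac{s_1}{2}}u\|_2^{\frac{p-2}{2s_1}+\eps}$. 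I would split $\mathcal{R}_R[u]$ into a kinetic part (the terms carrying $\nabla^2\chi_R-\mathrm{Id}$, $\Delta\chi_R-N$ and $\Delta^2\chi_R$, all $O(R^{-2})$ pointwise on $\{|x|\geq R\}$ by \eqref{bound psi}) and a potential part $\int_{|x|\gtrsim R}|u|^p\,dx$. For the kinetic part I would estimate the $m$-integral over $\{|x|\geq R\}$ by splitting at $m\sim R^{-2}$ and using elliptic bounds for $(-\Delta+m)^{-1}$; the worst contribution, coming from small $m$ and from the rougher power $s_2$, produces precisely the gain $R^{-2s_2}$. For the potential part I would invoke the radial (Strauss‑type) fractional Sobolev inequality, available because $s_1>\tfrac12$: for radial $u$ one has $\|u\|_{L^\infty(|x|\geq R)}\lesssim R^{-\frac{N-1}{2}}\|(-\Delta)^{\frac{s_1}{2}}u\|_2^{\frac{1}{2s_1}}\|u\|_2^{1-\frac{1}{2s_1}}$ up to an arbitrarily small loss in the exponents, so that $\int_{|x|\geq R}|u|^p\,dx\leq\|u\|_{L^\infty(|x|\geq R)}^{p-2}\|u\|_2^2$ yields the stated bound after absorbing powers of the conserved $\|u_0\|_2$ into the constant. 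This proves (i).

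For assertion (ii) I would set $p=2+\tfrac{4s_1}{N}$, so that $N(p-2)-4s_1=0$ and the $\dot H^{s_1}$ term in the main part disappears, while $N(p-2)-4s_2=4(s_1-s_2)>0$ has a favourable sign; hence $\frac{d}{dt}M_{\chi_R}[u]\leq 8s_1E(u_0)-4(s_1-s_2)\|(-\Delta)^{\frac{s_2}{2}}u\|_2^2+\mathcal{R}_R[u]$. I would then re‑run the potential estimate with the $\dot H^{s_2}$ radial Sobolev inequality (legitimate since $s_2>\tfrac12$), which replaces $\|(-\Delta)^{\frac{s_1}{2}}u\|_2^{\frac{p-2}{2s_1}}$ by $\|(-\Delta)^{\frac{s_2}{2}}u\|_2^{\frac{2s_1}{Ns_2}}$, whose exponent is strictly below $2$ for $N\geq2$ and $\tfrac12<s_2<s_1<1$ (because $Ns_2>N/2\geq1>s_1$). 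A Young inequality then absorbs this term into $2(s_1-s_2)\|(-\Delta)^{\frac{s_2}{2}}u\|_2^2$ at the price of a constant $o_R(1)$ as $R\to\infty$, so with $E(u_0)<0$ one gets $\frac{d}{dt}M_{\chi_R}[u]\leq 8s_1E(u_0)+o_R(1)<4s_1E(u_0)$ for $R$ large enough. The main obstacle is the rigorous treatment of the $m$-integral coming from the Balakrishnan formula: verifying absolute convergence uniformly in $m$ and in time, and extracting the sharp rates $R^{-2s_2}$ and the almost‑optimal radial‑Sobolev exponent with its unavoidable $\eps$-loss; once these are in place the two assertions follow by elementary algebra together with Hölder's and Young's inequalities.
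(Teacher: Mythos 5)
There is a genuine gap in your treatment of the ``kinetic'' part of the remainder in assertion (i). After the Balakrishnan/commutator computation, the Hessian contribution is $4\sum_{j=1,2}c_{s_j}\int_0^\infty m^{s_j}\int_{\R^N}\overline{\nabla u_m}\,\nabla^2\chi_R\,\nabla u_m\,dx\,dm$, and its deviation from the free virial carries the matrix $\nabla^2\chi_R-\mathrm{Id}$. Your claim that $\nabla^2\chi_R-\mathrm{Id}$ and $\Delta\chi_R-N$ are $O(R^{-2})$ pointwise on $\{|x|\geq R\}$ is false: $\chi_R$ is constant for $|x|\geq 10R$, so there $\nabla^2\chi_R-\mathrm{Id}=-\mathrm{Id}$ and $\Delta\chi_R-N=-N$, which are $O(1)$; only $\Delta^2\chi_R$ has the $R^{-2}$ decay, and it is that term which, after the $m$-integration, yields the $R^{-2s_1}+R^{-2s_2}$ error. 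Consequently an absolute-value estimate of the Hessian-deficit term cannot produce a gain $R^{-2s_2}$: it is only bounded by $\int_0^\infty m^{s_j}\|\nabla u_m\|_{L^2(|x|\geq R)}^2\,dm\lesssim\|(-\Delta)^{\frac{s_j}{2}}u\|_2^2$, which has no decay in $R$ uniformly in time, since the solution has no a priori spatial decay. The missing idea is the sign: by \eqref{bound psi} one has $\chi_R''\leq 1$ and $\chi_R'(r)\leq r$, hence $\nabla^2\chi_R\leq \mathrm{Id}$ and $N-\Delta\chi_R\geq 0$, so the Hessian-deficit term is nonpositive and is simply dropped for the upper bound (the $\Delta\chi_R-N$ deficit belongs to the nonlinear term and is exactly your tail $\int_{|x|\geq R}|u|^p\,dx$). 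This is precisely how the paper, following \cite{BoHiLe2016}, proceeds, with $\chi_1=1-\chi_R''\geq0$ and $\chi_2=N-\Delta\chi_R\geq0$. Once this step is repaired, the rest of your part (i) --- inserting energy conservation and controlling the tail by the radial Strauss-type estimate with the $\varepsilon$-loss, valid since $s_1>\frac12$ and $0<\varepsilon<\frac{(2s_1-1)(p-2)}{2s_1}$ --- coincides with the paper's argument.

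Your part (ii), by contrast, is a genuinely different route from the paper's and, modulo the repair above, it works. The paper follows \cite{BoHiLe2016} and absorbs the tail term $\int\chi_2|u|^p\,dx$ into the retained localized kinetic functional $\int_0^\infty m^{s_1}\int\chi_1|\nabla u_m|^2\,dx\,dm$, which is the delicate step of the $L^2$-critical argument; you instead exploit the mixed dispersion: at $p=2+\frac{4s_1}{N}$ the $\dot H^{s_1}$ coefficient vanishes but $-(N(p-2)-4s_2)=-4(s_1-s_2)<0$ survives, and the tail, estimated with the $s_2$-Strauss inequality (legitimate since $s_2>\frac12$, with subquadratic exponent $\frac{2s_1}{Ns_2}+\varepsilon<2$), is absorbed by Young's inequality at the price of an $o_R(1)$ constant, giving $\frac{d}{dt}M_{\chi_R}[u]\leq 8s_1E(u_0)+o_R(1)<4s_1E(u_0)$ for $R$ large. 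This is simpler than the paper's absorption and is a nice use of the structure, but it does not rescue the kinetic-remainder step, which still must be handled by the sign argument.
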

\begin{proof}
The proof here is an adaptation of the one mentioned in \cite{BoHiLe2016}. To begin with, let us first introduce a self-adjoint differential operator
$$
\Gamma_\chi:=-i(\nabla \cdot \nabla \chi +\nabla \chi \cdot\nabla).
$$
It acts on a function $f$ as follows,
$$
\Gamma_\chi f=-i(\nabla\cdot ((\nabla\chi)f)+\nabla\chi \cdot \nabla f).
$$
One can check that
$$
M_{\chi_R}[u(t)]=\langle u(t),\Gamma_{\chi_R} u(t)\rangle.
$$
For $m>0$, we also introduce the function
$$
u_m:=\sqrt{\frac{\sin(\pi s)}{\pi}}\frac{1}{m-\triangle}u=\sqrt{\frac{\sin(\pi s)}{\pi}}\mathcal{F}^{-1}(\frac{\mathcal{F}u}{|\cdot|^2+m}).
$$
If $[X,Y]:=XY-YX$ denotes the commutator of $X$ and $Y$, then, by taking the time derivative and using \eqref{evolv pb0}, one gets that
\begin{align*}
\frac{d}{dt}M_{\chi_R}[u(t)]
=\langle u(t), [(-\Delta)^{s_1}+(-\Delta)^{s_2},i\Gamma_{\chi_R}]u(t)\rangle
+ \langle u(t), [-|u|^{p-2}u,i\Gamma_{\chi_R}]u(t)\rangle.
\end{align*}
According to computations developed in \cite{BoHiLe2016}, one has that
\begin{align*}
\frac{d}{dt}M_{\chi_R}[u(t)]
& \leq 4s_1\|(-\Delta)^{\frac{s_1}{2}}u\|_{2}^2+4s_2\|(-\Delta)^{\frac{s_2}{2}}u\|_{2}^2
- \frac{2(p-2)}{p}\int_{\R^N}|u|^p\Delta \psi_R \,dx+C\left(R^{-2s_1}+R^{-2s_2}\right)\\
& \leq 4s_1\|(-\Delta)^{\frac{s_1}{2}}u\|_{2}^2+4s_2\|(-\Delta)^{\frac{s_2}{2}}u\|_{2}^2- \frac{2N(p-2)}{p}\int_{\R^N}|u|^p\,dx\\
& \quad +C\left(R^{-2s_1}+R^{-2s_2}+R^{-\frac{(p-2)(N-1)}{2}+\varepsilon s_1}\|(-\Delta)^{\frac{s_1}{2}}u\|_{2}^{\frac{p-2}{2s_1}+\varepsilon}\right),
\end{align*}
for any $0<\varepsilon <\frac{(2s_1-1)(p-2)}{2s_1}$  and some constant $C:=C(\|u_0\|_2,N,\varepsilon,s_1,p)>0$. Hence, by
conservation of energy, one obtains the virial type inequality in the energy subcritical case.

Now suppose $p=2+\frac{4s_1}{N}$ and denote $\chi_1:=1-\chi_R^{\prime\prime}$ and $\chi_2:=N-\Delta \chi_R(r)$. Recall that $\chi_1$ and $\chi_2$ are nonnegative by \eqref{bound psi}. Using similar computations from \cite{BoHiLe2016}, we derive that
\begin{align*}
\frac{d}{dt}M_{\chi_R}[u(t)]
& \leq 4s_1\|(-\Delta)^{\frac{s_1}{2}}u\|_{2}^2+4s_2\|(-\Delta)^{\frac{s_2}{2}}u\|_{2}^2 -4\int_{0}^{\infty}m^{s_1}\int_{\R^N}\chi_1|\nabla u_m|^2\,dxdm\\
& \quad -\frac{4s_1N}{N+2s_1}\int_{\R^N}|u|^p\,dx+\frac{4s_1N}{N+2s_1}\int_{|x|\geq R}\chi_2|u|^p\,dx +O(R^{-2s_1}+R^{-2s_1})\\
& \leq 8s_1E(u_0)-4\int_{0}^{\infty}m^{s_1}\int_{\R^N}\chi_1|\nabla u_m|^2\,dxdm +\frac{4s_1N}{N+2s_1}\int_{\R^N}\chi_2|u|^p\,dx +O(R^{-2s_1}).
\end{align*}
Estimating the term $\int_{\R^N}\chi_2|u|^p\,dx$ in the same way as in \cite{BoHiLe2016} and using properties of $\chi_R$ and $E(u_0)<0$, we then obtain
$$
\frac{d}{dt}M_{\chi_R}[u] < 4s_1E(u_0).
$$
Thus the proof is completed.
\end{proof}


In the following, we are going to present some useful auxiliary results employed to establish Theorem \ref{blow-up vs global solt}.


\begin{lem}\label{lem A1}\cite[Lemma A.1]{BoHiLe2016}
Let $N\geq 1$ and $\chi$ be a real valued function such that $\nabla \chi \in W^{1,\infty}(\R^N)$. Then for any $u\in H^{\frac{1}{2}}(\R^N)$, there holds that
\begin{align*}
\left|\int_{\R^N}\bar{u}\nabla \chi \cdot \nabla u\,dx\right|\leq C\left(\||\nabla|^{\frac{1}{2}}u\|_2^2+\|u\|_2\||\nabla|^{\frac{1}{2}}u\|_2\right),
\end{align*}
where the constant $C>0$ depends only on $N$ and $\|\nabla \chi\|_{W^{1,\infty}}$.
\end{lem}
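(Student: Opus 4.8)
The plan is to split the single derivative falling on $u$ into two half-derivatives, move one of them onto the factor $(\partial_j\chi)\,u$ using that $|\nabla|^{\frac12}$ is self-adjoint on $L^2$, and then estimate the two resulting $L^2$ pairings by Cauchy--Schwarz together with (i) the elementary Fourier multiplier bound $|\xi_j|\,|\xi|^{-\frac12}\le|\xi|^{\frac12}$ and (ii) the boundedness of multiplication by $\partial_j\chi$ on $H^{\frac12}(\R^N)$. Everything will be carried out first for Schwartz $u$ and then extended by density; for $u\in H^{\frac12}(\R^N)$ the integral $\int_{\R^N}\bar u\,\nabla\chi\cdot\nabla u\,dx$ is interpreted as the $H^{\frac12}$--$H^{-\frac12}$ duality pairing $\sum_{j=1}^N\langle(\partial_j\chi)\,u,\partial_j u\rangle$, which the manipulations below identify with a genuine $L^2$ inner product.

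\noindent First I would write $\partial_j u=|\nabla|^{\frac12}v_j$ with $v_j:=|\nabla|^{-\frac12}\partial_j u$; by Plancherel and $|\xi_j|\,|\xi|^{-\frac12}\le|\xi|^{\frac12}$ one gets $\|v_j\|_2\le\||\nabla|^{\frac12}u\|_2$. Using self-adjointness of $|\nabla|^{\frac12}$, for each $j\in\{1,\dots,N\}$,
\begin{align*}
\Big|\int_{\R^N}(\partial_j\chi)\,\bar u\,\partial_j u\,dx\Big|=\big|\langle(\partial_j\chi)\,u,\,|\nabla|^{\frac12}v_j\rangle\big|=\big|\langle|\nabla|^{\frac12}\big((\partial_j\chi)\,u\big),\,v_j\rangle\big|\le\big\||\nabla|^{\frac12}\big((\partial_j\chi)\,u\big)\big\|_2\,\||\nabla|^{\frac12}u\|_2 .
\end{align*}

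\noindent It then remains to control $\big\||\nabla|^{\frac12}\big((\partial_j\chi)\,u\big)\big\|_2$. Since $\partial_j\chi\in L^\infty$ and $\nabla(\partial_j\chi)\in L^\infty$, multiplication by $\partial_j\chi$ is bounded on $L^2=H^0$ with norm $\le\|\partial_j\chi\|_\infty$ and on $H^1$ with norm $\lesssim\|\partial_j\chi\|_{W^{1,\infty}}$ (because $\nabla((\partial_j\chi)u)=(\partial_j\chi)\nabla u+(\nabla\partial_j\chi)u$); hence, by complex interpolation $[H^0,H^1]_{\frac12}=H^{\frac12}$, it is bounded on $H^{\frac12}(\R^N)$ with norm $\lesssim_N\|\nabla\chi\|_{W^{1,\infty}}$. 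Therefore
\begin{align*}
\big\||\nabla|^{\frac12}\big((\partial_j\chi)\,u\big)\big\|_2\le\big\|(\partial_j\chi)\,u\big\|_{H^{\frac12}}\lesssim\|\nabla\chi\|_{W^{1,\infty}}\,\|u\|_{H^{\frac12}}\lesssim\|\nabla\chi\|_{W^{1,\infty}}\big(\|u\|_2+\||\nabla|^{\frac12}u\|_2\big).
\end{align*}
Combining the two displays, summing over $j$ and using $(\|u\|_2+\||\nabla|^{\frac12}u\|_2)\||\nabla|^{\frac12}u\|_2=\||\nabla|^{\frac12}u\|_2^2+\|u\|_2\||\nabla|^{\frac12}u\|_2$ gives the claimed bound with $C=C(N,\|\nabla\chi\|_{W^{1,\infty}})$. (Equivalently, one could split $|\nabla|^{\frac12}((\partial_j\chi)u)=(\partial_j\chi)|\nabla|^{\frac12}u+[|\nabla|^{\frac12},\partial_j\chi]u$ and invoke the classical Calder\'on--Kato--Ponce commutator estimate $\|[|\nabla|^{\frac12},b]u\|_2\lesssim\|\nabla b\|_\infty\|u\|_2$, which directly produces the two terms on the right-hand side.)

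\noindent The only non-elementary input, and therefore the main obstacle, is the $H^{\frac12}$-boundedness of multiplication by $\partial_j\chi$ (equivalently, the $|\nabla|^{\frac12}$ commutator estimate); the rest is Plancherel and Cauchy--Schwarz. Both are classical facts from harmonic analysis, so I would simply quote them — consistent with the lemma itself being taken from \cite{BoHiLe2016} — rather than reprove them here.
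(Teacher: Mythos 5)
Your main argument is correct, and it is worth noting that the paper itself offers no proof of this lemma at all: it is simply quoted from \cite[Lemma A.1]{BoHiLe2016}, so your write-up supplies a self-contained argument where the paper relies on a citation. The route you take (split $\partial_j u=|\nabla|^{\frac12}v_j$ with $\|v_j\|_2\le\||\nabla|^{\frac12}u\|_2$, move the half-derivative onto $(\partial_j\chi)u$ by self-adjointness, and control $\|(\partial_j\chi)u\|_{H^{1/2}}$ by the boundedness of multiplication by a $W^{1,\infty}$ function on $H^{1/2}$, obtained by interpolating the trivial $L^2$ and $H^1$ bounds) is sound, gives a constant depending only on $N$ and $\|\nabla\chi\|_{W^{1,\infty}}$ as required, and the density/duality remarks adequately cover the case of general $u\in H^{1/2}(\R^N)$; this is in the same spirit as the argument in the cited reference.

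One caveat on your parenthetical alternative: the commutator bound as you state it, $\|[|\nabla|^{\frac12},b]u\|_2\lesssim\|\nabla b\|_\infty\|u\|_2$, cannot be correct with only $\|\nabla b\|_\infty$ on the right, as one sees by scaling $b\mapsto b(\lambda\cdot)$, $u\mapsto u(\lambda\cdot)$: the left side scales like $\lambda^{\frac12-\frac N2}$ while the right side scales like $\lambda^{1-\frac N2}$. The correct versions involve either $\||\nabla|^{-\frac12}u\|_2$ on the right, or a constant of the form $\|\nabla b\|_\infty^{1/2}\|b\|_\infty^{1/2}$ (equivalently $\|b\|_{W^{1,\infty}}$), which is still harmless here since the lemma's constant is allowed to depend on $\|\nabla\chi\|_{W^{1,\infty}}$. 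Since this is only an aside and your main interpolation argument does not use it, it does not affect the validity of the proof, but the inequality should not be quoted in that form.
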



\begin{lem}\label{lem bup}
Let $N \geq 2$, $\frac 1 2 <s_2<s_1<1$ and $p \geq 2 + \frac{4s_1}{N}$. Let $u_0\in H_{rad}^{s_1}(\R^N)$ be such that $E(u_0)\neq 0$ and $u\in C_{T}(H_{rad}^{s_1}(\R^N))$ be the maximal solution of \eqref{evolv pb0} with initial datum $u_0$. If there exist $R>0$, $t_0>0$ and $C>0$ such that
\begin{align}\label{ineq bup}
M_{\chi_R}[u(t)]\leq -C\int_{t_0}^t\left(\|(-\Delta)^{\frac{s_1}{2}}u(\tau)\|_2+\|(-\Delta)^{\frac{s_2}{2}}u(\tau)\|_2\right)^2\,d\tau,
\end{align}
holds for any $t\geq t_0$. Then $u(t)$ cannot exist globally in time, i.e. $T<+\infty$.
\end{lem}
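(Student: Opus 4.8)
\emph{Proof proposal.} The plan is to argue by contradiction: assume the maximal solution is global, $T=+\infty$, and show that the hypothesis \eqref{ineq bup} then clashes with an a priori upper bound on $M_{\chi_R}[u(t)]$. Throughout write $f(t):=\|(-\Delta)^{\frac{s_1}{2}}u(t)\|_2+\|(-\Delta)^{\frac{s_2}{2}}u(t)\|_2$ and $g(t):=\int_{t_0}^{t}f(\tau)^2\,d\tau$, so that $g'(t)=f(t)^2\ge 0$ and \eqref{ineq bup} reads $M_{\chi_R}[u(t)]\le -C g(t)$ for $t\ge t_0$.

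First I would establish the a priori bound. Since $\chi_R(x)=R^2\chi(x/R)$ has bounded first and second derivatives, $\nabla\chi_R\in W^{1,\infty}(\R^N)$, and $u(t)\in H^{s_1}_{rad}(\R^N)\subset H^{1/2}(\R^N)$ because $s_1>\frac12$; hence Lemma \ref{lem A1} applies and, using $|M_{\chi_R}[u(t)]|\le 2\,|\int_{\R^N}\bar u\,\nabla\chi_R\cdot\nabla u\,dx|$, gives
$$
|M_{\chi_R}[u(t)]|\le C\big(\| |\nabla|^{\frac12}u(t)\|_2^2+\|u(t)\|_2\,\| |\nabla|^{\frac12}u(t)\|_2\big).
$$
Using the conservation of mass $\|u(t)\|_2=\|u_0\|_2$ from Theorem \ref{pb wellposedness}$(\textnormal{ii})$ together with the interpolation inequality $\| |\nabla|^{\frac12}u\|_2\le \|u_0\|_2^{1-\frac{1}{2s_1}}\|(-\Delta)^{\frac{s_1}{2}}u\|_2^{\frac{1}{2s_1}}$ (legitimate since $\frac12<s_1$), this yields a constant $C_1=C_1(\|u_0\|_2,N,s_1,R)>0$ with
$$
|M_{\chi_R}[u(t)]|\le C_1\big(f(t)^{1/s_1}+f(t)^{1/(2s_1)}\big),\qquad \tfrac{1}{2s_1}<\tfrac{1}{s_1}<2 .
$$

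The key dichotomy is on the boundedness of $g$. Because $E(u_0)\ne 0$ forces $u_0\ne 0$, hence $\|u_0\|_2>0$ and $u(t)\ne 0$, so $f(t)>0$ for all $t$; thus $g$ is strictly increasing and, fixing $t_1>t_0$, $g(t)\ge g(t_1)=:\delta>0$ and $M_{\chi_R}[u(t)]\le -C\delta<0$ for every $t\ge t_1$. If $g$ were bounded on $[t_0,\infty)$, then $\int_{t_0}^{\infty}f(\tau)^2\,d\tau<\infty$, so $f(t_n)\to0$ along some sequence $t_n\to\infty$; the a priori bound would force $M_{\chi_R}[u(t_n)]\to0$, contradicting $M_{\chi_R}[u(t_n)]\le -C\delta<0$. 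Hence $g(t)\to\infty$. Then, from $|M_{\chi_R}[u(t)]|\ge Cg(t)\to\infty$ and the a priori bound (whose right-hand side is $<2C_1$ whenever $f(t)<1$), we get $f(t)\ge1$ for all large $t$; for such $t$, $f(t)^{1/(2s_1)}\le f(t)^{1/s_1}$, so
$$
C g(t)\le -M_{\chi_R}[u(t)]\le 2C_1 f(t)^{1/s_1}=2C_1\,g'(t)^{1/(2s_1)},
$$
i.e. $g'(t)\ge c\,g(t)^{2s_1}$ with $2s_1>1$. Integrating this super-linear differential inequality from a time where $g>0$ shows that $g$ — and therefore $u$ — must blow up in finite time, contradicting $T=+\infty$. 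Hence $T<+\infty$.

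I expect the only real point of care to be the last step: the crude bound $|M_{\chi_R}[u(t)]|\lesssim 1+f(t)^2$ would give merely exponential growth of $g$, which is compatible with a global solution; it is essential to exploit the \emph{sublinear} dependence $|M_{\chi_R}[u(t)]|\lesssim f(t)^{1/s_1}=g'(t)^{1/(2s_1)}$ with $\frac{1}{2s_1}<1$ — precisely where the standing assumption $s_1>\frac12$ (equivalently $s_2>\frac12$) enters — so that the resulting differential inequality is genuinely super-linear and forces finite-time blowup of $g$. The remaining ingredients (the interpolation inequality, Lemma \ref{lem A1}, mass conservation, and the $\liminf$ argument in the bounded case) are routine.
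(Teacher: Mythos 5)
Your proposal is correct and follows essentially the same route as the paper: Lemma \ref{lem A1}, mass conservation and the interpolation $\||\nabla|^{\frac12}u\|_2\le\|(-\Delta)^{\frac{s_1}{2}}u\|_2^{\frac{1}{2s_1}}\|u\|_2^{1-\frac{1}{2s_1}}$ give $|M_{\chi_R}[u(t)]|\lesssim f(t)^{1/s_1}$ once $f\gtrsim 1$ (with your $f(t)=\|(-\Delta)^{\frac{s_1}{2}}u(t)\|_2+\|(-\Delta)^{\frac{s_2}{2}}u(t)\|_2$), and feeding this into \eqref{ineq bup} yields a superlinear Gronwall-type inequality with exponent $2s_1>1$ that forces finite-time blowup. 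The only minor variation is that the paper derives the uniform lower bound $f(t)\gtrsim 1$ for all $t$ from $E(u_0)\neq 0$, energy conservation and the Gagliardo--Nirenberg inequality, whereas you bootstrap $f(t)\ge 1$ for large times from the hypothesis itself; both are valid.
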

\begin{proof}
In light of Lemma \ref{lem A1}, the definition of $M_{\chi_R}[u]$ and the conservation of mass, we first get that
$$
|M_{\chi_R}[u(t)]|\leq C\left(\||\nabla|^{\frac{1}{2}}u\|_2^2+\||\nabla|^{\frac{1}{2}}u\|_2\right).
$$
Due to $s_1>\frac{1}{2}$, then the following interpolation estimate holds,
$$
\||\nabla|^{\frac{1}{2}}u\|_2\leq \|(-\Delta)^{\frac{s_1}{2}}u\|_2^{\frac{1}{2s_1}}\|u\|_2^{1-\frac{1}{2s_1}}.
$$
Thus
\begin{align}\label{MR inf frac u}
|M_{\chi_R}[u(t)]|\leq C\left(\|(-\Delta)^{\frac{s_1}{2}}u(t)\|_2^{\frac{1}{s_1}}+\|(-\Delta)^{\frac{s_1}{2}}u(t)\|_2^{\frac{1}{2s_1}}\right).
\end{align}
On the other hand, we claim that
\begin{align}\label{bound below}
\|(-\Delta)^{\frac{s_1}{2}}u(t)\|_2+\|(-\Delta)^{\frac{s_2}{2}}u(t)\|_2 \gtrsim 1, \quad \forall \,\, t \in [0, T).
\end{align}
Indeed, suppose that there exists a sequence of time $\{t_n\} \subset \R^+$ such that
$$
\|(-\Delta)^{\frac{s_1}{2}}u(t_n)\|_2+\|(-\Delta)^{\frac{s_2}{2}}u(t_n)\|_2=o_n(1).
$$
From Gagliardo-Nirenberg inequality \eqref{gn}, one then obtains that$\|u(t_n)\|_p=o_n(1)$. Hence $E(u(t_n))=o_n(1)$. This contradicts $E(u(t_n))=E(u_0) \neq 0$. Thus the claim follows. Combining \eqref{MR inf frac u} and \eqref{bound below} then implies that
\begin{align}\label{MR inf s}
|M_{\chi_R}[u(t)]|\leq C\left(\|(-\Delta)^{\frac{s_1}{2}}u(t)\|_2+\|(-\Delta)^{\frac{s_2}{2}}u(t)\|_2\right)^{\frac{1}{s_1}}.
\end{align}
Therefore, from the assumption \eqref{ineq bup}, we deduce that
$$
M_{\chi_R}[u(t)]\leq -C\int_{t_0}^t|M_{\psi_R}[u(\tau)]|^{2s_1}\,d\tau, \quad \forall \,\, t \geq t_0.
$$
By straightforward calculations, we then find that
$$
M_{\chi_R}[u(t)] \leq -C|t-t_1|^{1-2s_1}
$$
for some $0<t_1<\infty$. Consequently, we have that $M_{\chi_R}[u(t)] \rightarrow -\infty$ as $t \to t_1$, which implies that $u(t)$ cannot be global and then $T<+\infty$. This completes the proof.
\end{proof}


\begin{lem}\label{invariant condits}
If $s_c>0$, then the following conditions are invariant under the flow of \eqref{evolv pb0}.
\begin{enumerate}
\item [$(\textnormal{i})$] \eqref{energ u0 inf energ gs} and \eqref{mass u0 sup mass gs}.
\item [$(\textnormal{ii})$] \eqref{energ u0 inf energ gs} and \eqref{mass u0 sup mass gs1}.
\end{enumerate}
\end{lem}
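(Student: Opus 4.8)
The plan is to argue along the lines of the Holmer--Roudenko threshold method, reducing matters to the elementary analysis of a single real-variable function built from conserved quantities and from $\|(-\Delta)^{s_1/2}\psi(t)\|_2$. First, by the conservation of mass and energy in Theorem~\ref{pb wellposedness}, one has $M(\psi(t))=M(\psi_0)$ and $E(\psi(t))=E(\psi_0)$ for all $t\in[0,T)$; since both sides of \eqref{energ u0 inf energ gs} depend on the solution only through $M$ and $E$, this inequality is automatically preserved along the flow. Hence it suffices to propagate \eqref{mass u0 sup mass gs} in case (i) and \eqref{mass u0 sup mass gs1} in case (ii).

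Next I would drop the nonnegative summand $\tfrac12\|(-\Delta)^{s_2/2}\psi(t)\|_2^2$ in the energy and estimate $\|\psi(t)\|_p^p$ by the Gagliardo--Nirenberg inequality \eqref{gn} with $s=s_1$, using $\|\psi(t)\|_2=\|\psi_0\|_2$. After multiplying through by $M(\psi_0)^{\sigma_c}$ and invoking the identity
\begin{align*}
p-\frac{N(p-2)}{2s_1}+2\sigma_c=\sigma_c\,\frac{N(p-2)}{2s_1},
\end{align*}
which follows by a direct computation from $s_c=\frac N2-\frac{2s_1}{p-2}$ and $\sigma_c=\frac{s_1-s_c}{s_c}$, this yields
\begin{align*}
\mathcal E_0:=E(\psi_0)M(\psi_0)^{\sigma_c}\ \ge\ g\bigl(z(t)\bigr),\qquad z(t):=\|(-\Delta)^{\frac{s_1}{2}}\psi(t)\|_2\,\|\psi_0\|_2^{\sigma_c},
\end{align*}
where $g(z):=\tfrac12 z^2-\tfrac{C_{N,p,s_1}}{p}\,z^{\frac{N(p-2)}{2s_1}}$. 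Since $s_c>0$ is equivalent to $\frac{N(p-2)}{2s_1}>2$, the map $g$ is strictly increasing on $[0,z^\ast]$, strictly decreasing on $[z^\ast,\infty)$ with $g\to-\infty$, where $z^\ast$ denotes its unique positive critical point.

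The crucial --- and main computational --- step is to identify $z^\ast$ and $g(z^\ast)$ with the ground-state quantities. Using the Pohozaev relations \eqref{pohoz 1}, \eqref{pohoz 2} and the explicit best-constant formula \eqref{best const with GS} from Lemma~\ref{pohoz 0}, I would verify that $z^\ast=\|(-\Delta)^{\frac{s_1}{2}}\phi\|_2\,\|\phi\|_2^{\sigma_c}$ and $g(z^\ast)=\mathcal E(\phi)M(\phi)^{\sigma_c}$; this is precisely the fractional analogue of the classical threshold identity, and the extra $(-\Delta)^{s_2}$ term plays no role because it was discarded as a nonnegative quantity. Granting this, the argument closes by continuity: the map $t\mapsto z(t)$ is continuous on $[0,T)$ since $\psi\in C([0,T),H^{s_1}_{rad}(\R^N))$, and if $z(t_0)=z^\ast$ for some $t_0$ then $\mathcal E_0\ge g(z^\ast)=\mathcal E(\phi)M(\phi)^{\sigma_c}$, contradicting \eqref{energ u0 inf energ gs}; therefore $z(t)\neq z^\ast$ for every $t\in[0,T)$. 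In case (i), $z(0)<z^\ast$ by \eqref{mass u0 sup mass gs}, so the intermediate value theorem forces $z(t)<z^\ast$ throughout, which is \eqref{mass u0 sup mass gs} for $\psi(t)$; in case (ii), $z(0)>z^\ast$ by \eqref{mass u0 sup mass gs1}, and the same reasoning gives $z(t)>z^\ast$ for all $t$. Combined with the invariance of \eqref{energ u0 inf energ gs} noted above, this establishes both (i) and (ii).
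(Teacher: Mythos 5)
Your proposal is correct and follows essentially the same route as the paper: conservation of mass and energy preserves \eqref{energ u0 inf energ gs}, the Gagliardo--Nirenberg inequality (after discarding the nonnegative $(-\Delta)^{s_2}$ term) bounds $E(\psi(t))M(\psi(t))^{\sigma_c}$ from below by the same one-variable function of $\|(-\Delta)^{\frac{s_1}{2}}\psi(t)\|_2\|\psi(t)\|_2^{\sigma_c}$, whose critical point and maximum are identified with $\|(-\Delta)^{\frac{s_1}{2}}\phi\|_2\|\phi\|_2^{\sigma_c}$ and $\mathcal{E}(\phi)M(\phi)^{\sigma_c}$ via Lemma \ref{pohoz 0}, and a continuity argument concludes. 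Your exponent identity and the monotonicity discussion simply make explicit the steps the paper summarizes as ``direct computations'' and ``continuity arguments.''
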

\begin{proof}
From the conservation laws, it follows that \eqref{energ u0 inf energ gs} is invariant under the flow of \eqref{evolv pb0}. Next we shall prove that \eqref{mass u0 sup mass gs} and \eqref{mass u0 sup mass gs1} are invariant under the flow of \eqref{evolv pb0}. In view of Gagliardo-Nirenberg inequality \eqref{gn}, we first have that
\begin{align} \label{invariant1}
\begin{split}
\hspace{-1cm}E(u(t))M(u(t))^{\sigma_c}&=\frac 12 \left(\|(-\Delta)^{\frac{s_1}{2}}u(t)\|_2^2+\|(-\Delta)^{\frac{s_2}{2}}u(t)\|_2^2\right) \|u(t)\|_2^{2\sigma_c}-\frac 1 p \|u(t)\|_p^p\|u(t)\|_2^{2\sigma_c} \\
& \geq \frac 12 \left(\|(-\Delta)^{\frac{s_1}{2}}u(t)\|_2  \|u(t)\|_2^{\sigma_c}\right)^2 -\frac {C_{N,p,s_1}}{p} \left(\|(-\Delta)^{\frac{s_1}{2}}u(t)\|_2  \|u(t)\|_2^{\sigma_c}\right)^{\frac{N(p-2)}{2s_1}} \\
&=:f(\|(-\Delta)^{\frac{s_1}{2}}u(t)\|_2  \|u(t)\|_2^{\sigma_c}),
\end{split}
\end{align}
where $f : \R^+ \to \R$ is defined by
\begin{align} \label{deff}
f(x):=\frac 12 x^2 -\frac{C_{N,p,s_1}}{p}x^{\frac{N(p-2)}{2s_1}}.
\end{align}
Direct computations and Lemma \ref{pohoz 0} show that $f$ has a unique critical point
\begin{align} \label{critical}
x_0:=\left(\frac{2ps_1}{C_{N,p,s_1}N(p-2)}\right)^{\frac{2s_1}{N(p-2)-4s_1}}=\|(-\Delta)^{\frac{s_1}{2}} \phi\|_2\|\phi\|_2^{\sigma_c}
\end{align}
and
$$
\max_{x>0} f(x)=f(x_0)=\frac{N(p-2)-4s_1}{2N(p-2)} \left(\frac{2ps_1}{C_{N,p,s_1}N(p-2)}\right)^{\frac{4s_1}{N(p-2)-4s_1}}=\mathcal{E}(\phi)M(\phi)^{\sigma_c}.
$$
Therefore, by the conservation laws and \eqref{energ u0 inf energ gs}, we have that
$$
f(\|(-\Delta)^{\frac{s_1}{2}}u(t)\|_2  \|u(t)\|_2^{\sigma_c}) < \mathcal{E}(\phi)M(\phi)^{\sigma_c}=f(\|(-\Delta)^{\frac{s_1}{2}} \phi\|_2\|\phi\|_2^{\sigma_c}).
$$
It then follows from \eqref{mass u0 sup mass gs} and \eqref{mass u0 sup mass gs1} along with continuity arguments that \eqref{mass u0 sup mass gs} that \eqref{mass u0 sup mass gs1} are invariant under the flow of \eqref{evolv pb0}.
Thus the proof is completed.
\end{proof}

\begin{proof}[Proof of Theorem \ref{blow-up vs global solt}]

Let $u\in C_T(H_{rad}^{s_1}(\R^N))$ be a maximal solution to \eqref{evolv pb0} with initial datum $u_0 \in H^{s_1}_{rad}(\R^N)$. As a consequence of the conservation laws and Lemma \ref{invariant condits}, then the assertion $(\textnormal{i})$ follows immediately. Next we shall prove the assertion $(\textnormal{ii})$. Let us first consider the case $E(u_0)<0$. In this case, by Lemma \ref{Virial}, we obtain that
\begin{align*}
\frac{d}{dt}M_{\chi_R}[u]
&\leq N(p-2)E(u_0)-(N(p-2)-4s_1) \left( \|(-\Delta)^{\frac{s_1}{2}}u\|_2^2+\|(-\Delta)^{\frac{s_2}{2}}u\|_2^2 \right)\\
&\quad +C\left(R^{-2s_2}+R^{-\frac{(p-2)(N-1)}{2}+\varepsilon s_1}\|(-\Delta)^{\frac{s_1}{2}}u\|_{2}^{\frac{p-2}{2s_1}+\varepsilon}\right).
\end{align*}
Since $2<p<2+4s_1$, then we can choose $\varepsilon$ sufficiently small such that
$$
0<\frac{p-2}{2s_1}+\varepsilon<2.
$$
From the conservation laws and Gagliardo-Nirenberg inequality \eqref{gn}, we see that
$$
\|(-\Delta)^{\frac {s_1}{2}} u(t)\|_2+\|(-\Delta)^{\frac {s_2}{2}} u(t)\|_2 \gtrsim 1, \quad \forall \,\, t \in [0, T),
$$
Therefore, for any $R>0$ large enough, there holds that
\begin{align*}
\frac{d}{dt}M_{\chi_R}[u] \leq -\frac{N(p-2)-4s_1}{2} \left( \|(-\Delta)^{\frac{s_1}{2}}u\|_2^2+\|(-\Delta)^{\frac{s_2}{2}}u\|_2^2 \right).
\end{align*}
Suppose $T=+\infty$ and integrate above inequality on time, then there exists $t_0>0$ sufficiently large such that $M_{\chi_R}[u(t)] < 0$ for any $t\geq t_0$. Therefore, we derive that
$$
M_{\chi_R}[u(t)] \leq -\frac{N(p-2)-4s_1}{4}\int_{t_0}^{t}\left(\|(-\Delta)^{\frac{s_1}{2}}u(\tau)\|_{2}+\|(-\Delta)^{\frac{s_2}{2}}u\|_2^2\right)^2\,d\tau, \quad  \forall  \,\, t\geq t_0.
$$
In view of Lemma \ref{lem bup}, then the solution $u$ cannot exist for all time and $T$ must be finite.
Now we consider the case that $E(u_0) \geq 0$ and the assumptions \eqref{energ u0 inf energ gs} and \eqref{mass u0 sup mass gs1} hold. Note first that, by the Gagliardo- Nirenberg inequality \eqref{gn} and the conservation of mass, then
\begin{align*}
E(u(t)) &\geq \frac 12 \|(-\Delta)^{\frac{s_1}{2}}u(t)\|_{2}^2-\frac 1p \|u(t)\|^p_p \\
&\geq \frac 12 \|(-\Delta)^{\frac{s_1}{2}}u(t)\|_{2}^2-\frac{C_{N,p,s_1}}{p} \|u(t)\|_2^{p-\frac{N(p-2)}{2s_1}} \|(-\Delta)^{\frac{s_1}{2}}u\|_{2}^{\frac{N(p-2)}{2s_1}} \\
&=\frac 12 \|(-\Delta)^{\frac{s_1}{2}}u(t)\|_{2}^2-\frac{C_{N,p,s_1}}{p} \|u_0\|_2^{p-\frac{N(p-2)}{2s_1}} \|(-\Delta)^{\frac{s_1}{2}}u(t)\|_{2}^{\frac{N(p-2)}{2s_1}} \\
&=:g(\|(-\Delta)^{\frac{s_1}{2}}u(t)\|_{2}),
\end{align*}
where $g: \R^+ \to \R$ is defined by
$$
g(x):=\frac 12 x^2-\frac{C_{N,p,s_1}}{p} \|u_0\|_2^{p-\frac{N(p-2)}{2s_1}} x^{\frac{N(p-2)}{2s_1}}.
$$
In light of Lemma \ref{pohoz 0}, it is straightforward to compute that $g$ has a unique critical point
$$
x_1:=\left(\frac{2ps_1}{C_{N,p,s_1}N(p-2)}\right)^{\frac{2s_1}{N(p-2)-4s_1}} \|u_0\|_2^{-\frac{N(p-2)-2ps_1}{N(p-2)-4s_1}}=\|(-\Delta)^{\frac{s_1}{2}} \phi\|_2\|\phi\|_2^{\sigma_c}\|u_0\|_2^{-\sigma_c}
$$
and
\begin{align*}
\max_{x>0} g(x)=g(x_1)&=\frac{N(p-2)-4s_1}{2N(p-2)}\left(\frac{2ps_1}{C_{N,p,s_1}N(p-2)}\right)^{\frac{4s_1}{N(p-2)-4s_1}} \|u_0\|_2^{\frac{-2N(p-2)-4ps_1}{N(p-2)-4s_1}}\\
&=\mathcal{E}(\phi)M(\phi)^{\sigma_c}\|u_0\|_2^{-2\sigma_c}.
\end{align*}
From \eqref{energ u0 inf energ gs} and \eqref{mass u0 sup mass gs1}, we see that
$$
E(u_0)<g(x_1), \quad \|(-\Delta)^{\frac {s_1}{2}} u_0\|_2 >x_1.
$$
Therefore, by continuity arguments, we have that $\|(-\Delta)^{\frac {s_1}{2}} u(t)\|_2 >x_1$ for any $t \in [0, T)$. Let $0<\mu<1$ be such that
\begin{align}\label{Eu0 inf Ephi}
E(u_0)M(u_0)^{\sigma_c}< (1-\mu)\mathcal{E}(\phi)M(\phi)^{\sigma_c}.
\end{align}
This shows that
\begin{align*}
E(u_0)< (1-\mu)\mathcal{E}(\phi)M(\phi)^{\sigma_c}M(u_0)^{-\sigma_c}&=(1-\mu)\frac{N(p-2)-4s_1}{2N(p-2)}x_1^2 \\
&<(1-\mu)\frac{N(p-2)-4s_1}{2N(p-2)}\|(-\Delta)^{\frac {s_1}{2}} u(t)\|_2^2,
\end{align*}
from which we derive that
\begin{align}\label{last ineq}
(1-\mu)(N(p-2)-4s_1)\left(\|(-\Delta)^{\frac {s_1}{2}} u(t)\|_2^2+\|(-\Delta)^{\frac {s_2}{2}} u(t)\|_2^2\right)>2N(p-2)E(u_0).
\end{align}
Since $2<p<2+4s_1$, then we choose $\varepsilon>0$ small enough such that
$$
0<\frac{p-2}{2s_1}+\varepsilon<2.
$$
Note that
$$
\|(-\Delta)^{\frac {s_1}{2}} u(t)\|_2+\|(-\Delta)^{\frac {s_2}{2}} u(t)\|_2 \gtrsim 1, \quad \forall \,\, t \in [0, T).
$$
By Lemma \ref{Virial}, then
\begin{align*}
\frac{d}{dt}M_{\chi_R}[u(t)]
&\leq 2N(p-2)E(u_0)-(N(p-2)-4s_1)\left(\|(-\Delta)^{\frac {s_1}{2}} u(t)\|_2^2+\|(-\Delta)^{\frac {s_2}{2}} u(t)\|_2^2\right)\\
&+C\left(R^{-2s_2}+R^{-\frac{(p-2)(N-1)}{2}+\varepsilon s_1}\|(-\Delta)^{\frac{s_1}{2}}u(t)\|_{2}^{\frac{p-2}{2s_1}+\varepsilon}\right)\\
& \leq -\frac{\mu(N(p-2)-4s_1)}{2}\left(\|(-\Delta)^{\frac {s_1}{2}} u(t)\|_2^2+\|(-\Delta)^{\frac {s_2}{2}} u(t)\|_2^2\right) +o_R(1)\\
& \leq -\frac{\mu(N(p-2)-4s_1)}{4}\left(\|(-\Delta)^{\frac {s_1}{2}} u(t)\|_2+\|(-\Delta)^{\frac {s_2}{2}} u(t)\|_2\right)^2.
\end{align*}
Using Lemma \ref{lem bup}, we then have desired conclusion. Now we turn to prove the assertion $(\textnormal{iii})$.
In virtue of Lemma \ref{Virial}, we first have that
$$
\frac{d}{dt}M_{\chi_R}[u(t)]\leq 4s_1E(u_0), \quad \forall \,\, t\in [0,T).
$$
Suppose that $u(t)$ exists globally in time, then there exists a constant $t^*>0$ large such that
$$
M_{\chi_R}[u(t)] \leq -C t, \quad \forall \,\, t \geq t^*,
$$
This jointly with \eqref{MR inf s} then yields that
$$
\|(-\Delta)^{\frac {s_1}{2}} u(t)\|_2+\|(-\Delta)^{\frac {s_2}{2}} u(t)\|_2\geq Ct^{s_1}, \quad \forall \,\, t\geq t_*.
$$
Thus the proof is completed.
\end{proof}

\section{Orbital instability of ground state solutions} \label{section8}

In this section, we shall discuss orbital instability of ground state solutions to \eqref{fequ}-\eqref{mass} and present the proof of Theorem \ref{thm5}.

\begin{proof}[Proof of Theorem \ref{thm5}] Let $u_c \in S(c)$ be a ground state solution to \eqref{fequ}-\eqref{mass} at the level $\gamma(c)>0$. 
In view of Theorem \ref{thm6}, we may assume that $u_c$ is radially symmetric. Define
$$
\mathcal{Q}_c:=\{v \in S(c) : E(v) < E(u_c), Q(v)<0\}.
$$
Note first that $u_0:=(u_c)_{\tau} \in \mathcal{Q}_c$ for ant $\tau >1$, by Lemma \ref{monotonicity}. It then implies that $\mathcal{Q}_c \neq \emptyset$. Record that $u_0 \to u_c$ in $H^{s_1}(\R^N)$ as $\tau \to 1^+$. This suggests that $E(u_0) \to \gamma(c)$ as $\tau \to 1^+$. Let $u \in C([0, T), H_{rad}^{s_1}(\R^N))$ be the solution to \eqref{evolv pb0} with initial datum $u_0 \in H^{s_1}_{rad}(\R^N)$. In the following, we are going 
to demonstrate that $u(t)$ blows up in finite or infinite time. Observe first that $\mathcal{Q}_c$ is invariant under the flow of \eqref{evolv pb0}. Indeed, if not, by the conservation laws, then there exists $0<t_0<T$ such that $E(u(t_0))<E(u_c)$ and $Q(u(t_0))=0$. Hence
$$
\gamma(c) \leq E(u(t_0))<E(u_c).
$$
This is impossible, because of $E(u_c)=\gamma(c)$. For simplicity, we shall write $u=u(t)$. Due to $Q(u)<0$, it then follows from Lemma \ref{monotonicity} that there exists a constant $0<\tau_u<1$ such that $Q(u_{\tau_u})=0$. In addition, we know that the function $\tau \mapsto E(u_{\tau})$ is concave on $[\tau_u, 1]$. This then results in
$$
E(u_{\tau_u})-E(u) \leq (\tau_u-1) \frac{d}{d \tau} E(u_{\tau}) \mid_{\tau=1}=(\tau_u -1) Q(u).
$$
Since $Q(u)<0$ and $\gamma(c) \leq E(u_{\tau_u})$, by the conservation of energy, then
$$
Q(u) < (1-\tau_u) Q(u) \leq E(u)-E(u_{\tau_u}) \leq E(u)-\gamma(c)=E(u_0)-\gamma(c):=\delta<0,
$$
where $\delta>0$ is a constant.
If $2<p<2+4s_1$, then we can choose $\eps>0$ small enough such that
$$
0<\frac{p-2}{2s_1} +\eps<2.
$$
In addition, by the conservation laws $E(u)=E(u_0) \neq 0$, we have that
$$
\|(-\Delta)^{\frac {s_1}{2}} u\|_2^2+\|(-\Delta)^{\frac {s_2}{2}} u\|_2^2 \gtrsim 1.
$$
Using Lemma \ref{Virial}, Young's inequality and taking $R>0$ large enough, we then obtain that
\begin{align} \label{evolution}
\begin{split}
\frac{d}{dt}M_{\chi_R}[u]
&\leq 2N(p-2)E(u)-(N(p-2)-4s_1)\left(\|(-\Delta)^{\frac {s_1}{2}} u\|_2^2+\|(-\Delta)^{\frac {s_2}{2}} u\|_2^2\right)\\
&\quad +C\left(R^{-2s_2}+R^{-\frac{(p-2)(N-1)}{2}+\varepsilon s_1}\|(-\Delta)^{\frac{s_1}{2}}u\|_{2}^{\frac{p-2}{2s_1}+\varepsilon}\right)\\
& = 4Q(u)+C\left(R^{-2s_2}+R^{-\frac{(p-2)(N-1)}{2}+\varepsilon s_1}\|(-\Delta)^{\frac{s_1}{2}}u\|_{2}^{\frac{p-2}{2s_1}+\varepsilon}\right)\\
& \leq -\frac{\delta}{2}\left(\|(-\Delta)^{\frac {s_1}{2}} u\|_2^2+\|(-\Delta)^{\frac {s_2}{2}} u\|_2^2\right).
\end{split}
\end{align}
Thus, by Lemma \ref{lem bup}, we get that $u(t)$ cannot exist globally in time, i.e. $T<+\infty$. Let us now treat the case $p \geq 2+4s_1$. In this case, if $\|(-\Delta)^{\frac{s_1}{2}}u\|_{2}$ is unbounded, then $u(t)$ blows up in finite time or infinite time. If $\|(-\Delta)^{\frac{s_1}{2}}u\|_{2}$ is bounded,  namely $u(t)$ exists globally in time, by \eqref{evolution}, then for $R>0$ large enough,
$$
\frac{d}{dt}M_{\chi_R}[u] \leq -2\delta.
$$
Arguing as the proof of the assertion $(\textnormal{iii})$ of Theorem \ref{blow-up vs global solt}, we are able to achieve that there exists a constant $t^*>0$ large enough such that
\begin{align} \label{inst}
\|(-\Delta)^{\frac {s_1}{2}} u\|_2+\|(-\Delta)^{\frac {s_2}{2}} u\|_2\geq Ct^{s_1}, \quad \forall \,\, t\geq t_*.
\end{align}
Note that
$$
\|(-\Delta)^{\frac {s_2}{2}} u\|_2 \leq \|(-\Delta)^{\frac {s_1}{2}} u\|_2^{\frac{s_2}{s_1}} \|u\|_2^{1-\frac {s_2}{s_1}}=\|(-\Delta)^{\frac {s_1}{2}} u\|_2^{\frac{s_2}{s_1}} \|u_0\|_2^{1-\frac {s_2}{s_1}},
$$
where we used the conservation of mass. In virtue of \eqref{inst}, it then follows that $\|(-\Delta)^{\frac {s_1}{2}} u\|_2$ is unbounded. This contradicts with the assumption. Thus we have the desired conclusion and the proof is completed.
\end{proof}


\section{Appendix} \label{section9}


\begin{proof}[Proof of Lemma \ref{pohozaev}]
Utilizing scaling techniques, we only need to deduce Pohozaev identity of solutions to the following equation,
\begin{align} \label{zfequ11}
\left(\frac{k_{s_2}}{k_{s_1}}\right)^{\alpha s_1}(-\Delta)^{s_1} u +\left(\frac{k_{s_2}}{k_{s_1}}\right)^{\alpha s_2}(-\Delta)^{s_2} u + \lambda u=|u|^{p-2} u, \quad u \in H^{s_1}(\R^N),
\end{align}
where
$$
k_s=2^{1-2s} \frac{\Gamma(1-s)} {\Gamma(s)}, \quad \alpha=\frac{1}{s_1-s_2}.
$$
For this, by the harmonic extension theory from \cite{CS}, we are able to introduce the following extended problem,
\begin{align}\label{fequ111}
\left\{
\begin{aligned}
&-\mbox{div}(y^{1-2s_1} \nabla w+y^{1-2s_2} \nabla w)=0 \,\,\, &\mbox{in} \,\,\, \R^{N+1}_+, \\
&-\frac{\partial w}{\partial {\nu}}= k_{s_1, s_2} (|u|^{p-2}u-\lambda u) \,\,\, &\mbox{on} \,\, \R^N \times \{0\}.
\end{aligned}
\right.
\end{align}
where
$$
\frac{\partial w}{\partial {\nu}}:= \lim_{y \to 0^+} y^{1-2s_1} \frac{\partial w}{\partial y}(x, y)+y^{1-2s_2} \frac{\partial w}{\partial y}(x, y)  =- \frac{1}{k_{s_1}} (- \Delta )^{s_1} u(x)- \frac{1}{k_{s_2}} (- \Delta )^{s_2} u(x)
$$
and
$$
k_{s_1,s_2}:=\left(\frac{k_{s_1}^{s_2}}{k_{s_2}^{s_1}}\right)^{\alpha}.
$$
Multiplying \eqref{fequ111} by $(x, y) \cdot \nabla w$ and integrating on $\mathcal{B}^+(0, R)$, we get that
$$
\int_{\mathcal{B}^+(0, R)}\mbox{div}(y^{1-2s_1} \nabla w+y^{1-2s_2} \nabla w) ((x, y) \cdot \nabla w) \,dxdy=0,
$$
where
$$
\mathcal{B}^+(0, R):=\{(x, y) \in \R^{N+1}_+: |(x, y)| <R\}.
$$
Taking into account the divergence theorem, we find that
\begin{align} \label{ph1}
\begin{split}
&-\int_{\mathcal{B}^+(0, R)}(y^{1-2s_1} \nabla w+y^{1-2s_2} \nabla w)\cdot \nabla ((x, y) \cdot \nabla w)\,dxdy \\
&= k_{s_1, s_2} \int_{B(0, R)} (|u|^{p-2}u-\lambda u)(x \cdot \nabla u) \, dx -R \int_{\partial^+\mathcal{B}^+(0, R)} y^{1-2s_1} |\nabla w|^2 + y^{1-2s_2} |\nabla w|^2\,dS,
\end{split}
\end{align}
where $B(0, R):=\partial \mathcal{B}^+(0, R) \cap \R^N$ and $\partial^+\mathcal{B}^+(0, R):=\partial \mathcal{B}^+(0, R) \cap \R^{N+1}_+$. We are going to compute every term in \eqref{ph1}. Let us start with treating the first term in the right side hand of \eqref{ph1}. By the divergence theorem, then
\begin{align*}
k_{s_1, s_2} \int_{B(0, R)} (\lambda u -|u|^{p-2}u)(x \cdot \nabla u) \, dx&=\frac {\lambda k_{s_1, s_2}}{2} \int_{B(0, R)} x \cdot \nabla \left(|u|^2\right) \, dx -\frac {k_{s_1, s_2}}{p} \int_{B(0, R)} x \cdot \nabla \left(|u|^p\right) \, dx \\
&=-\frac{\lambda k_{s_1, s_2} N}{2} \int_{B(0, R)} |u|^2 \, dx +\frac {k_{s_1, s_2}N} {p} \int_{B(0, R)} |u|^p \, dx \\
&\quad +\frac{\lambda k_{s_1, s_2} R}{2} \int_{\partial B(0, R)} |u|^2\,dS- \frac {k_{s_1, s_2}R} {p} \int_{\partial B(0, R)} |u|^p \, dS.
\end{align*}
We next deal with the term in the left side hand of \eqref{ph1}. By the divergence theorem, then
\begin{align*}
&\int_{\mathcal{B}^+(0, R)} y^{1-2s_1} \nabla w \cdot \nabla ((x, y) \cdot \nabla w) \,dxdy\\
&=\frac 12 \int_{\mathcal{B}^+(0, R)} y^{1-2s_1} (x ,y) \cdot \nabla \left(|\nabla w|^2\right) \, dxdy
+\int_{\mathcal{B}^+(0, R)} y^{1-2s_1} |\nabla w|^2 \,dxdy\\
&=-\frac{N-2s_1}{2} \int_{\mathcal{B}^+(0, R)} y^{1-2s_1} |\nabla w|^2\,dxdy
+ \frac R 2 \int_{\partial^+\mathcal{B}^+(0, R)} y^{1-2s_1} |\nabla w|^2 \,dS.
 \end{align*}
Similarly, we can deduce that
\begin{align*}
&\int_{\mathcal{B}^+(0, R)}(y^{1-2s_2} \nabla w) \cdot \nabla ((x, y) \cdot \nabla w)\,dxdy\\
&=-\frac{N-2s_2}{2} \int_{\mathcal{B}^+(0, R)} y^{1-2s_2} |\nabla w|^2\,dxdy
+ \frac R 2 \int_{\partial^+\mathcal{B}^+(0, R)} y^{1-2s_2} |\nabla w|^2\,dS.
\end{align*}
Since $u \in H^{s_1}(\R^N)$ and $\nabla w \in L^2(y^{1-2s_1}, \R^{N+1}_+) \cap L^2(y^{1-2s_2}, \R^{N+1}_+)$, then there exists a sequence $\{R_n\} \subset \R$ such that
$$
R_n \left(\int_{\partial B(0, R_n)} |u|^2\,dS+ \int_{\partial B(0, R_n)} |u|^p \,dS \right)=o_n(1)
$$
and
$$
R_n \left(\int_{\partial^+\mathcal{B}^+(0, R_n)} y^{1-2s_1} |\nabla w|^2 \,dS+ \int_{\partial^+\mathcal{B}^+(0, R_n)} y^{1-2s_2} |\nabla w|^2 \,dS \right)=o_n(1).
$$
Making use of \eqref{ph1} with $R=R_n$ and taking $n \to \infty$, we then derive
\begin{align} \label{ph111}
\begin{split}
&\frac{N-2s_1}{2}\int_{\R^{N+1}_+} y^{1-2s_1} |\nabla w|^2 \, dxdy +\frac{N-2s_2}{2}\int_{\R^{N+1}_+} y^{1-2s_2} |\nabla w|^2\,dxdy \\
&=- \frac {\lambda k_{s_1, s_2} N} {2} \int_{\R^N} |u|^2 \,dx + \frac {k_{s_1, s_2}N} {p} \int_{\R^N} |u|^p\,dx.
\end{split}
\end{align}
On the other hand, multiplying \eqref{fequ111} by $w$ and integrating on $\R^{N+1}_+$, we obtain that
\begin{align} \label{ph112}
\hspace{-1cm}\int_{\R^{N+1}_+}y^{1-2s_1}|\nabla w|^2\,dxdy +\int_{\R^{N+1}_+}y^{1-2s_2}|\nabla w|^2\,dxdy
=-k_{s_1, s_2} \lambda \int_{\R^N} |u|^2\,dx + k_{s_1, s_2}\int_{\R^N} |u|^p\,dx.
\end{align}
Therefore, by combining \eqref{ph111} and \eqref{ph112}, we conclude that any solution $u \in H^{s_1}(\R^N)$ to \eqref{fequ111} satisfies the following identity,
$$
s_1\int_{\R^{N+1}_+}y^{1-2s_1}|\nabla w|^2\,dxdy +s_2 \int_{\R^{N+1}_+}y^{1-2s_2}|\nabla w|^2\,dxdy =\frac{k_{s_1, s_2}N(p-2)}{2p}\int_{\R^N}|u|^p\,dx.
$$
This completes the proof.
\end{proof}

\end{document}